\newcommand*{\QED}{}
\newcommand*{\Lap}{\upDelta}
\tikzset{cross/.style={cross out, draw=black, minimum size=2*(#1-\pgflinewidth), inner sep=0pt, outer sep=0pt},
cross/.default={1pt}}
\pgfplotsset{every tick label/.append style={font=\tiny}}
\newcommand{\C}{\mathbb{C}}
\newcommand{\N}{\mathbb{N}}
\newcommand{\R}{\mathbb{R}}
\let\oldphi\phi
\renewcommand{\phi}{\varphi}
\newcommand{\eps}{\varepsilon}
\renewcommand{\d}{\mathop{}\!\mathrm{d}}
\newcommand{\de}{\mathop{}\!\mathrm{d}}
\DeclarePairedDelimiter{\abs}{\lvert}{\rvert}
\DeclarePairedDelimiter{\norm}{\lVert}{\rVert}
\DeclarePairedDelimiter{\pair}{\langle}{\rangle}
\DeclarePairedDelimiter{\inner}{(}{)}
\DeclarePairedDelimiter{\set}{\lbrace}{\rbrace}
\DeclareMathOperator{\dist}{dist}
\DeclareMathOperator{\Id}{Id}
\DeclareMathOperator{\supp}{supp}
\DeclareMathOperator*{\argmax}{arg\,max}
\DeclareMathOperator{\conv}{conv}
\DeclareMathOperator{\Ran}{Ran}
\DeclareMathOperator{\Ker}{Ker}
\newenvironment{curlyeq}{\left\{\quad\begin{aligned}}{\end{aligned}\right.}
\newcommand*\rel@kern[1]{\kern#1\dimexpr\macc@kerna}
\newcommand*\widebar[1]{%
  \begingroup
  \def\mathaccent##1##2{%
    \rel@kern{0.8}%
    \overline{\rel@kern{-0.8}\macc@nucleus\rel@kern{0.2}}%
    \rel@kern{-0.2}%
  }%
  \macc@depth\@ne
  \let\math@bgroup\@empty \let\math@egroup\macc@set@skewchar
  \mathsurround\z@ \frozen@everymath{\mathgroup\macc@group\relax}%
  \macc@set@skewchar\relax
  \let\mathaccentV\macc@nested@a
  \macc@nested@a\relax111{#1}%
  \endgroup
}
\renewcommand*{\Re}{\operatorname{Re}}
\renewcommand*{\Im}{\operatorname{Im}}
\renewcommand*{\i}{\mathrm{i}}
\newcommand*{\kay}{k}
\newcommand{\M}{\mathcal{M}}
\newcommand{\Cc}{\mathcal{C}}
\newcommand{\Oc}{\Omega_c}
\newcommand{\GZ}{\Gamma_Z}
\newcommand{\GN}{\Gamma_N}
\newcommand{\Mcon}{\mathcal{M}(\Oc,\C^N)}
\newcommand{\Ccon}{\Cc(\Oc,\C^N)}
\newcommand{\Nn}{\mathcal{N}}
\newcommand{\J}{J}
\newcommand{\SO}{S}
\newcommand*{\Ogen}{D}
\newcommand*{\Mgen}{\mathcal{M}(\Ogen,H_1)}
\newcommand*{\Cgen}{\Cc_0(\Ogen,H_1)}
\newcommand*{\coeff}[1]{\boldsymbol{#1}}
\theoremstyle{plain}
\newtheorem{theorem}{Theorem}
\numberwithin{theorem}{section}
\newtheorem{proposition}[theorem]{Proposition}
\newtheorem{lemma}[theorem]{Lemma}
\newtheorem{corollary}[theorem]{Corollary}
\theoremstyle{definition}
\newtheorem{definition}{Definition}
\theoremstyle{remark}
\newtheorem{remark}{Remark}
\numberwithin{equation}{section}
\begin{document}

\title[Inverse point source location with the Helmholtz equation]{%
  Inverse point source location with the Helmholtz equation on a bounded domain
}

\author{Konstantin Pieper}
\address{%
  Department of Scientific Computing,
  Florida State University,
  400 Dirac Science Library,
  Tallahassee, FL 32306, USA
}
\email{kpieper@fsu.edu}
\author{Bao Quoc Tang}
\address{%
  Institute of Mathematics and Scientific Computing, University of Graz,
  Heinrichstra{\ss}e 36, 8010 Graz, Austria
}
\email{quoc.tang@uni-graz.at}
\author{Philip Trautmann}
\address{
  Institute of Mathematics and Scientific Computing, University of Graz,
  Heinrichstra{\ss}e 36, 8010 Graz, Austria
}
\email{philip.trautmann@uni-graz.at}
\author{Daniel Walter}
\address{%
  Center for Mathematical Sciences, Chair M17,
  Technische Universit{\"a}t M{\"u}nchen,
  Boltzmannstr. 3,
  85748 Garching bei M{\"u}nchen, Germany
}
\email{walter@ma.tum.de}
\thanks{%
The authors gratefully acknowledge support through the International Research
Training Group IGDK 1754, funded by the German Science Foundation (DFG) and the
Austrian Science Fund (FWF).
K. Pieper acknowledges funding by the US
Department of Energy Office of Science grant DE-SC0016591 and by the US
Air Force Office of Scientific Research grant FA9550-15-1-0001.
D. Walter acknowledges support from the TopMath Graduate
Center of TUM Graduate School and
from the TopMath Program at the Elite Network of Bavaria.
}


\begin{abstract}
  The problem of recovering acoustic sources, more specifically
  mono\-poles, from point-wise measurements of the
  corresponding acoustic pressure at a limited number of frequencies is addressed.
  To this purpose, a family of sparse optimization problems in measure space in
  combination with the Helmholtz equation on a bounded domain is considered.
  A weighted norm with unbounded weight near the observation points
  is incorporated into the formulation.  Optimality conditions and
  conditions for recovery in the small noise case are discussed, which motivates concrete
  choices of the weight.  The numerical realization is based on an accelerated conditional
  gradient method in measure space and a finite element discretization.
\end{abstract} 
  
  \keywords{%
    Inverse source location, Sparsity, Helmholtz equation, PDE-con\-strained
    optimization%
  }

  \subjclass[2010]{%
    35R30 
    (Primary)
    35Q93, 
    49J20, 
    90C46 
    (Secondary)%
  }

\maketitle

\section{Introduction}

In this paper, we consider the problem of recovering a sound source \(u\),
consisting of an unknown number time-harmonic monopoles, from pointwise measurements of
the acoustic pressure.
It is well known that under the assumption of a time-harmonic signal consisting of
\(N\) frequencies, the acoustic wave equation can be reduced to a family of
Helmholtz equations.
Concretely, let $\Omega\subset \mathbb \R^d$, $d\in\set{2,3}$ be a bounded, convex,
and polygonal (two dimensional) or polyhedral (three dimensional) domain.
The boundary $\partial\Omega$ is partitioned into
perfectly reflecting walls contained in \(\Gamma_N \subset \partial\Omega\),
and \(\GZ = \partial\Omega\setminus\GN\) modeling absorbing walls or artificial
boundaries arising from a truncation of an unbounded domain.
We model the acoustic pressure \(p_n \in L^2(\Omega)\) at the \(n\)-th frequency as the solution of
\begin{equation}
\label{eq:helm}
\begin{curlyeq}
- \Lap p_n - \kay_n^2 p_n &= u_n\rvert_{\Omega} &&\text{in } \Omega, \\
\partial_\nu p_n - \i\kappa_n p_n &= u_n\rvert_{\GZ} &&\text{on } \GZ, \\
\partial_\nu p_n &= u_n\rvert_{\GN} &&\text{on } \GN,
\end{curlyeq}
\end{equation}
where \(n = 1,2,\ldots,N\).
Here, \(\kay_n > 0\) is a sequence of wavenumbers, which are defined as
usual by \(\kay_n = \omega_n/c\), where \(c\) is the \emph{speed of sound} and
\(\set{\omega_n}_n\) a set of \emph{circular frequencies}. The numbers \(\kappa_n \in \C\) with
\(\Re\kappa_n \neq 0\) are related to the properties of walls that are modeled on the
boundary \(\GZ\); cf.\ \cite{BermudezGamalloRoriguez:2004}.
In the simplest case, we set
\(\kappa_n = \kay_n\), and obtain the well-known zeroth-order absorbing boundary
conditions~\cite{EngquistMajda:1977,Ihlenburg:1998}.
We model the source \(u_n\) by a superposition of \(N_d\) acoustic monopoles,
\begin{equation}
\label{eq:dirac_source}
u_n = \sum_{j=1}^{N_d} \coeff{u}_{j,n} \delta_{\hat x_j},
\end{equation}
where \(\coeff{u}_{j,n} \in \C\) and \(\hat x_j \in \Oc\), where
$\Oc \subset \widebar{\Omega}$ is a set containing all possible source locations.
We suppose that for a finite number of observation points
\(\Xi = \set{x_m \;|\; m=1,\dotsc,M}\) pressure values \(p^m_d \in \C^N\) of~\eqref{eq:helm} are
given (in the form of noisy recordings at \(M\) microphones, i.e.~$p^m_d = p(x_m) + z^m$,
$z^m\in \C^N$). Based on these observations the number of point sources $N_d$, the
positions $\hat{x}_j \in \Oc$ and coefficients
$\coeff{u}_j \in \C^N$ are to be reconstructed. Inverse problems of this kind are of
great importance in engineering applications such as
beamforming~\cite{VeenBuckley:1988,Suzuki:2011,XenakiGerstoftMosegaard:2014,XenakiGerstoft:2015}.
For instance, one is interested in locating a
source of noise pollution using processed data captured by a microphone array.

Due to the fact that we have only partial observations of the acoustic pressure, the problem is
under-determined, and therefore ill-posed. Thus we solve it based on a
regularized least-squares formulation. We follow the approach of
\cite{BrediesPikkarainen:2013} and consider the following convex problem:
\begin{equation}
\label{eq:problem_convex}
\begin{aligned}
\min_{u \in \mathcal{M}_w(\Oc,\C^N)} \quad&
\frac{1}{2} \sum_{m=1}^M \abs{p(x_m) - p^m_d}^2_{\C^N} + \alpha\norm{u}_{ \mathcal M_w(\Oc,\C^N)}, \\
\text{subject to } \quad&\eqref{eq:helm},
\end{aligned}
\end{equation}
where $p = (p_1,\ldots, p_N)$ and $u = (u_1, \ldots, u_N)$. In this problem, the solution
of \eqref{eq:problem_convex} is searched in the space of \(\C^N\)-valued Radon measures which satisfy
\[
\norm{u}_{\mathcal M_w(\Oc,\C^N)}= \int_{\Oc} \abs{w u'}_{\C^N}\de\abs{u}<\infty
\]
for a vector-valued weighting function $w\colon \Oc \to \C^N$. Here, the point-wise product
$w(x)u'(x) = (w_1(x)u'_1(x),\dotsc, w_N(x)u'_N(x))$ should be understood in the sense of the
Hadamard-product. The regularization functional promotes the
sparsity of the support of the solution in $\Oc$ independent of the frequency components
(also referred to as group or directional sparsity~\cite{HerzogStadlerWachsmuth:2012});
see~\cite{BrediesPikkarainen:2013,RouxBoufounosKangHershey:2013}. More concretely, it promotes solutions of the
structure~\eqref{eq:dirac_source}.

Note that, a more direct
reconstruction approach would be the solution of the problem
\begin{equation}
\label{eq:problem_nonconvex}
\begin{aligned}
\min_{x_j \in \Oc, \coeff{u}_j \in \C^N} \quad&
\frac{1}{2} \sum_{m=1}^M \abs{p(x_m) - p^m_d}^2_{\C^N}
 + \alpha\sum_{j=1}^{N_d}\abs{w(x_j)\coeff{u}_j}_{\C^{N}},
\\
\text{subject to } \quad&
\eqref{eq:helm}\quad\text{with } u = (u_n)_n \text{ as in } \eqref{eq:dirac_source},
\end{aligned}
\end{equation}
where the number of sources \(N_d\) is fixed, but can be regarded as an additional
discrete optimization variable.
Since the locations \(x_j\) are now considered optimization variables,
this is a non-convex finite-dimensional optimization problem with constraints \(x_j \in
\Oc\), which complicates the numerical solution. At first glance, the
problem formulation~\eqref{eq:problem_convex} seems to be more general than
\eqref{eq:problem_nonconvex} since we discard the structural assumption on the source $u$
by considering general Borel measures. However, the existence of minimizers to
\eqref{eq:problem_convex} of the form~\eqref{eq:dirac_source} can be
guaranteed for \(N_d \leq 2NM\).
Hence, if the number of sources \(N_d\) is left free, both problems are essentially
equivalent, i.e.~we can obtain a solution
to the nonconvex problem~\eqref{eq:problem_nonconvex} by solving the convex
version~\eqref{eq:problem_convex}.

The objective of this work is to provide a systematic theoretical development of the above
recovery approach, including analysis of the problem, conditions for recovery, and
algorithmic solution and numerical discretization strategies.
In the case $w\equiv1$ the analysis of the problem~\eqref{eq:problem_convex} relies on the
assumption that the observation points and the control set $\Oc$ are separated from each
other. However, by using weighting functions in the regularization functional with specific
properties this restriction can be overcome. Moreover, an optimal choice of the weight
function is shown to lead to improved theoretical and practical properties of the approach.

\subsection{Related works}

The analysis of the recovery approach is based on the analysis of the noise-free case,
which leads to the corresponding minimum norm problem
\begin{equation}
\label{eq:problem_convex_min_norm}
\begin{aligned}
\min_{u \in  \mathcal{M}_w(\Oc,\C^N)}\quad&\norm{u}_{\mathcal M_w(\Oc,\C^N)}\\
\text{subject to } \quad&p(x_m)=p^m_d,\quad m=1,\ldots,M,
\end{aligned}
\end{equation}
where $p$ is the solution of~\eqref{eq:helm};
see, e.g., \cite{BrediesPikkarainen:2013,CandesFernandezGranda:2013,DuvalPeyre:2015}. For
$w\equiv1$ it is shown in \cite{BrediesPikkarainen:2013} that the
solutions of~\eqref{eq:problem_convex} converge for $\alpha \to 0$ and
$\abs{z}_{\C^{MN}}^2/\alpha\to 0$ to a solution of \eqref{eq:problem_convex_min_norm} in
the weak-star sense; see also~\cite{BurgerOsher:2004,HofmannKaltenbacherPoeschlScherzer:2007}.
This can be carried over to the weighted case easily.  We also note that the inverse
problem under consideration
can be interpreted as a deconvolution problem for measures involving the Green's function
corresponding to \eqref{eq:helm} as convolution kernel. Problems of this form have been
studied recently
in~\cite{CandesFernandezGranda:2014,AzaisDeCastroGamboa:2015,DuvalPeyre:2015,CandesFernandezGranda:2013}. In
\cite{CandesFernandezGranda:2014} the recoverability of an exact source from convolutions with the F{\'e}jer
kernel is proven under the assumption that the
exact point sources are sufficiently well separated from each other. Concerning the use of a
non-constant weight $w \neq 1$ we refer to~\cite{SchiebingerRobevaRecht:2018}.
By an appropriate choice of
the weighting function the authors prove an exact recoverability result for a general 
deconvolution problem on a one-dimensional domain without requiring a minimum
separation distance between the exact source points. However, these results are not
directly applicable in our setting due to the more complicated structure of the
convolution kernel under consideration.

Robustness with respect to noise has been investigated
in~\cite{AzaisDeCastroGamboa:2015,CandesFernandezGranda:2013,DuvalPeyre:2015}.
In~\cite{DuvalPeyre:2015} it is shown that a strengthened source condition for small enough 
noise level $\delta$ and regularization parameter $\alpha$ the solution of
\eqref{eq:problem_convex} is unique and consists of the same number of point sources as
the exact solution. Convergence rates for coefficients and positions of the
reconstructed source to the exact coefficients and positions are derived.

Moreover, we mention that, after discretization on a finite grid,
the inverse problem under consideration
corresponds to an inverse problem involving an over-complete dictionary; see, e.g.,
\cite{Tropp:2004}. The dictionary is given by point-evaluations of the Green's functions
of~\eqref{eq:helm}. In the
noise-free case such problems are often solved by a problem formulation corresponding
to~\eqref{eq:problem_convex_min_norm} (Basis Pursuit), and
in the noisy case a problem corresponding to~\eqref{eq:problem_convex} is
solved (LASSO). In most of the literature concerning over-complete
dictionaries it is assumed that the entries of the dictionary have unit norm, in
order to prevent bias in the dictionary. In our problem this is not the case. However, a
particular form of the weight function $w(x)$ leads to reweighted versions of the problems
\eqref{eq:problem_convex} and \eqref{eq:problem_convex_min_norm} in the variable $v=wu$, which
have a dictionary with entries of unit norm.

Finally, concerning the discretization of the PDE-constrained optimization problem, a problem
similar to~\eqref{eq:problem_nonconvex} has been
proposed in~\cite{BermudezGamalloRoriguez:2004} for a fixed number \(N_d\)
and FE-discretizations have been analyzed (cf.\ also \cite{DokmanicVetterli:2012}).
Concerning the regularity and numerical analysis for sparse control problems with
measures, in combination with different PDEs, we also refer
to~\cite{CasasClasonKunisch:2012,CasasVexlerZuazua:2014,KunischPieperVexler:2014,KunischTrautmannVexler:2016}.

\subsection{Contribution}
Concerning the analysis of~\eqref{eq:problem_convex}, we first focus on the
case \(w \equiv 1\), which is complicated by the presence of point-wise sources (which
lead to unbounded solutions) with point-wise observations of the solution.
Nevertheless, based on regularity results for~\eqref{eq:helm}, we show
that~\eqref{eq:problem_convex} and~\eqref{eq:problem_convex_min_norm} are
well-posed if the sources are restricted to some compact set \(\Oc\) which does not
contain the set of observation points \(\Xi\).
Note that this implies \(\dist(\Oc,\Xi) > 0\).
While this may not seem like a severe restriction, it introduces
additional questions: On the one hand, a large distance restricts the
possible location from where sources can be recovered.
On the other hand, for a too small distance the
problem favors sources close to the observation points, which
introduces undesirable reconstruction artifacts. In fact, it can be proven that the problem
with $w\equiv1$  has no solutions if \(\Xi\cap \Oc \neq
\emptyset\); see Proposition~\ref{prop:counter_example_weight}.
By introduction of a weight function $w$ that is unbounded in the observation
points, well-posedness of \eqref{eq:problem_convex}
can be shown for arbitrary \(\Oc\); see Section~\ref{sec:weighted_norm}. Concerning the
structure of the solutions, we show both problems always admit solutions of the
form~\eqref{eq:dirac_source} with \(N_d \leq 2NM\).

Clearly, not all sources of the form~\eqref{eq:dirac_source} can be recovered
by~\eqref{eq:problem_convex}. However, we
show that all minimum norm solutions of~\eqref{eq:problem_convex_min_norm} fulfill a
\emph{source condition}, which allows
us to deduce convergence rates for the convergence of the solutions of
\eqref{eq:problem_convex} to solutions of \eqref{eq:problem_convex_min_norm} for vanishing
noise and appropriately chosen $\alpha$; see Section~\ref{sec:regularization}. Additionally,
we give numerical examples of recoverable and non-recoverable sources. Even in the simple case of
one unknown source, recoverability can fail unless an appropriate weight is employed. Moreover,
numerical experiments suggest that the use of specific weights increases the number of
recoverable sources. This is confirmed by statistical test involving randomly chosen
positions and coefficients of the exact sources. In the
case of a single point source we are able to prove that the exact source is the unique
solution of~\eqref{eq:problem_convex_min_norm} when using a specific weighting function
and under additional assumption on the forward operator; see
Proposition~\ref{prop:exact_reconstruction}.

Concerning the numerical solution of~\eqref{eq:problem_convex}, we adopt
the algorithmic strategy proposed in~\cite{BrediesPikkarainen:2013} (see also
\cite{BoydRechtSchiebinger:2017}),
which operates on the linear span of Dirac delta functions and combines point-insertion
and removal steps. Moreover, a function space convergence theory is available, which
bounds the number
of necessary steps to obtain a prescribed accuracy in the functional value. We augment
the procedure by an additional step which guarantees that the size
of the support of the iterations of the algorithm can not grow beyond \(2NM\). In
\cite{BrediesPikkarainen:2013} Dirac deltas are removed using one step of a
proximal gradient method applied to~\eqref{eq:problem_convex} for the magnitudes with
fixed positions. To further promote the sparsity of the iterates, this finite dimensional
non-smooth optimization problem is resolved in every iteration (cf.\ also
\cite{BoydRechtSchiebinger:2017}) by means of a globalized semi-smooth Newton method.
Additionally, we employ a discretization
of~\eqref{eq:problem_convex} with finite elements for
\(p\) and Dirac delta functions in the grid nodes. Although this
transforms~\eqref{eq:problem_convex} into a finite dimensional optimization problem
(amenable to a wide range of optimization algorithms), the function space
analysis of the presented algorithm ensures that the number of iterations stays
(uniformly) bounded for arbitrarily fine meshes.

This paper is organized in the following way. In Section~\ref{sec:existence_regularity} we
establish regularity properties of the Helmholtz equation needed for the analysis of the
optimization problem. Section~\ref{sec:opt_cont_prob} is devoted to the analysis of the
problem with $w\equiv 1$. Section~\ref{sec:weighted_norm} is concerned
with the weighted problem for a general weight. In Section~\ref{sec:regularization}, the
regularization properties of the reconstruction procedure are
investigated. Section~\ref{sec:optimization} describes the optimization algorithm we use
for the solution of the measure-valued optimal control problem. Finally, in
Section~\ref{sec:numerics} we conduct several numerical experiments.

\subsection{Notation and conventions}
Throughout the paper we adopt the following conventions: The complex numbers \(\C\) are
regarded as a \(\R\)-linear vector space endowed with the inner product
\((z,v)_\C = \Re (z\bar{v}) = \Re(z)\Re(v) + \Im(z)\Im(v)\).
Correspondingly, we denote the inner product on the Hilbert space \(L^2(\Omega) = L^2(\Omega,\C)\) by
\[
(v,\phi)_\Omega = \int_{\Omega} \Re(v\bar{\phi}) \de x.
\]
This convention extends to all other inner products or duality pairings defined on derived
spaces. We identify the space of \(\C^N\)-valued vector measures as
\[
\M(\Oc, \C)^N \cong
\M(\Oc, \C^N) \cong \Cc(\Oc, \C^N)^*,
\]
where the second isomorphism is isometric if \(\Cc(\Oc, \C^N)\), the space of continuous
functions with values in \(\C^N\), is endowed with the norm
\(\norm{\phi}_{\Cc(\Oc, \C^N)} = \sup_{x\in\Oc} \abs{\phi(x)}_{\C^N}\).
The duality pairing is defined by
\[
\pair{u, \phi} = \Re\left(\int_{\Oc} \bar{\phi} \de u\right) = \int_{\Oc} (u', \phi)_{\C^N} \de \abs{u}
= \sum_{n=1}^N \Re \left(\int_{\Oc} \bar{\phi}_n \de u_n \right),
\]
with the total variation measure \(\abs{u} \in \M^+(\Oc)\) (in the space of positive Borel
measures), the Radon-Nikodym derivative \(u' = \de u/\de\abs{u} \in L^1(\Oc,\C^N,\de\abs{u})\),
and \(u_n \in \M(\Oc)\) the signed real valued measures arising as the component measures
of \(u\). By \(C\) we denote a generic constant, which has different values at different
appearances.

\section{Analysis of the Helmholtz equation}
\label{sec:existence_regularity}

Let $\Omega\subset \mathbb R^d$, $d\in\set{2,3}$ be a bounded, convex, and polytopal domain.
Following~\cite{BermudezGamalloRoriguez:2004}, we assume that the boundary
is of the form $\Gamma = \partial\Omega = \Gamma_N \cup \Gamma_Z$ where
\(\GN = \cup_{m} \overline{\Gamma_m}\) can be written as the union of some subset of plane faces of
\(\Gamma\) and that \(\GZ = \partial\Omega\setminus\GN\).
We note that these assumption on the boundary could be relaxed considerably, at the
expense of making the following arguments more technical; see
Remark~\ref{rem:more_general_domain} below. For simplicity, we follow the setting
of~\cite{BermudezGamalloRoriguez:2004}. Moreover, we
assume that \(\GZ\) has positive measure, which is needed to ensure unique solvability for
all wave numbers. We denote the characteristic function of \(\GZ\) by \(\chi_{\GZ}\colon
\Gamma \to \set{0,1}\).

Denote by $\Oc \subset\widebar\Omega$ the control set, which
is required to be closed (and therefore compact).
The state equation problem reads as: find $p = (p_1, \ldots, p_N)$ for \(n \in
\set{1,2,\ldots,N}\) where $p_n\colon \Omega \rightarrow \C$ solves
\begin{equation}\label{eq:state_n}
  \begin{curlyeq}
    -\Lap p_n - \kay_n^2 p_n &= u_n\rvert_{\Omega}  && \text{in } \Omega,\\
    \partial_{\nu}p_n - \i\kappa_n \chi_{\GZ} p_n &= u_n\rvert_{\Gamma} && \text{on } \Gamma,
  \end{curlyeq}
\end{equation}
$\kay_n > 0$ are real numbers and $u = (u_1, u_2, \ldots, u_N) \in \M(\Oc,\C^N)$ is a vector
measure. Note that, in the interest of generality, we allow the measure to be supported on
the boundary. These contributions of the measure appear in the boundary conditions, but
are included in the weak formulation given below in a natural way.

In this section, we assume without restriction that \(N=1\) and suppress the dependency on
\(n\) of \(\kay\), \(\kappa\), \(u\) and \(p\). The general case of the results follows directly from the
(complex) scalar case.

\begin{definition}[Very weak solutions for~\eqref{eq:state_n}]
  \label{def:transposition}
  Let \(u \in \M(\bar\Omega)\) be a complex valued measure.
  A complex valued function $p \in L^2(\Omega)$ is said to be a solution \emph{by transposition}
  to~\eqref{eq:state_n} if it satisfies
  \begin{equation}\label{eq:duality}
    (p,q)_\Omega = \pair{u, r}  \quad\text{for all } q\in L^2(\Omega),
  \end{equation}
  where $r\in H^2(\Omega)$
  is the solution to the dual problem
  \begin{equation}\label{eq:dual_state}
    \begin{curlyeq}
      -\Lap r - \kay^2 r &= q &&\text{in }\Omega,\\
      \partial_{\nu}r + \i\bar{\kappa} \chi_{\GZ} r &= 0, &&\text{on }\Gamma.
    \end{curlyeq}
  \end{equation}
  Note, that the duality pairing \(\pair{u, r}\) is well defined due to the continuous
  embedding \(H^2(\Omega) \hookrightarrow \Cc(\overline{\Omega})\) for spatial dimension
  \(d \leq 3\).
  It can be shown that the solution by transposition also satisfies the following \emph{very weak} formulation:
  \begin{multline}\label{eq:very_weak}
    - (p, \Lap \phi+\kay^2\phi)_\Omega
    = \pair{u, \phi} \\
      \quad\text{for all } \phi \in H^2(\Omega) \;\;\text{with}\;
      \partial_{\nu}\phi + \i\bar{\kappa} \chi_{\GZ} \phi = 0 \;\text{on } \Gamma.
  \end{multline}
\end{definition}

\begin{theorem}\label{thm:L2sol}
  For any $u\in \M(\Oc)$, there exists a unique very weak solution $p\in L^2(\Omega)$ to
  \eqref{eq:state_n} and there holds
  \begin{equation*}
    \norm{p}_{L^2(\Omega)} \leq C\norm{u}_{\M(\Oc)}.
  \end{equation*}
\end{theorem}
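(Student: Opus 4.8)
The plan is to establish well-posedness of the dual problem \eqref{eq:dual_state} first, and then use duality to transfer existence, uniqueness, and the norm estimate to the very weak solution. The key observation is that \eqref{eq:duality} defines $p$ implicitly through the solution map of the dual problem, so the entire argument hinges on showing that for every $q \in L^2(\Omega)$ the dual problem admits a unique solution $r \in H^2(\Omega)$ with a bound of the form $\norm{r}_{H^2(\Omega)} \leq C\norm{q}_{L^2(\Omega)}$.

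First I would set up the variational formulation of the dual problem on $H^1(\Omega)$ and establish existence and uniqueness of a weak solution. The sesquilinear form associated with \eqref{eq:dual_state} is $a(r,v) = \int_\Omega (\nabla r \cdot \nabla \bar v - \kay^2 r \bar v)\,\de x + \i\bar\kappa\int_{\GZ} r\bar v\,\de s$. This form is not coercive because of the sign of the $-\kay^2$ term, so I would not appeal to Lax--Milgram directly. Instead, the standard route is a Fredholm-alternative argument: the form satisfies a G\aa rding inequality (the $H^1$ part is coercive up to a compact lower-order perturbation via the compact embedding $H^1(\Omega)\hookrightarrow L^2(\Omega)$), so existence follows once uniqueness is shown. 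Uniqueness is exactly where the impedance boundary condition on $\GZ$ does the work: testing the homogeneous equation against $r$ itself and taking imaginary parts forces $r = 0$ on $\GZ$ (using $\Re\kappa \neq 0$ and $\abs{\GZ} > 0$), after which a unique-continuation or Holmgren-type argument propagates $r \equiv 0$ into $\Omega$. This is the step I expect to be the main obstacle, since it requires carefully exploiting the absorbing boundary condition together with the positive-measure assumption on $\GZ$ to rule out spurious resonances at every wavenumber $\kay$.

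Having secured a unique $H^1$ weak solution, I would upgrade its regularity to $H^2(\Omega)$ with the quantitative estimate $\norm{r}_{H^2(\Omega)} \leq C\norm{q}_{L^2(\Omega)}$. Here I would invoke elliptic regularity on the convex polytopal domain: convexity guarantees $H^2$-regularity for the Laplacian despite the corners and edges, and the mixed Neumann/impedance boundary conditions of the type in \eqref{eq:dual_state} are covered by the regularity theory for such domains (this is the point where the structural assumptions on $\Gamma_N$ as a union of plane faces are used, as flagged in Remark~\ref{rem:more_general_domain}). Combined with the continuous embedding $H^2(\Omega)\hookrightarrow \Cc(\widebar\Omega)$ valid for $d \leq 3$, this yields a linear bounded solution operator $S\colon L^2(\Omega) \to H^2(\Omega)$, $q \mapsto r$, with $\norm{Sq}_{\Cc(\widebar\Omega)} \leq C\norm{q}_{L^2(\Omega)}$.

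Finally I would close the argument by duality. The functional $q \mapsto \pair{u, Sq}$ is linear in $q$, and the estimate $\abs{\pair{u, Sq}} \leq \norm{u}_{\M(\Oc)}\norm{Sq}_{\Cc(\widebar\Omega)} \leq C\norm{u}_{\M(\Oc)}\norm{q}_{L^2(\Omega)}$ shows it is bounded on $L^2(\Omega)$. By the Riesz representation theorem there is a unique $p \in L^2(\Omega)$ with $(p,q)_\Omega = \pair{u, Sq}$ for all $q$, which is precisely \eqref{eq:duality}, and the Riesz isometry delivers $\norm{p}_{L^2(\Omega)} \leq C\norm{u}_{\M(\Oc)}$. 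Uniqueness of $p$ is immediate since \eqref{eq:duality} determines every inner product $(p,q)_\Omega$. The equivalence with the very weak formulation \eqref{eq:very_weak} follows by choosing $q = -(\Lap\phi + \kay^2\phi)$ for admissible test functions $\phi$, noting that $S$ maps onto the space of such $\phi$.
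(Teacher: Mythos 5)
Your proposal is correct and follows essentially the same route as the paper: the method of transposition, resting on unique solvability and $H^2(\Omega)$-regularity of the dual problem \eqref{eq:dual_state} together with the embedding $H^2(\Omega)\hookrightarrow \Cc(\widebar\Omega)$ and a Riesz-representation argument. The only difference is one of detail, not of substance: the paper delegates the well-posedness and regularity of the dual problem entirely to \cite[Theorem~3.3]{BermudezGamalloRoriguez:2004} (with \cite{Dauge:1988,Grisvard:1985} for the underlying theory), whereas you sketch the Fredholm-alternative, impedance-uniqueness, and unique-continuation steps yourself.
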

\begin{proof}
  This result is proven by the method of transposition as in Definition~\ref{def:transposition}
  (cf.~\cite{LionsMagenes:1972}) using the $H^2(\Omega)$-regularity of the unique
  solution of the dual equation~\eqref{eq:dual_state};
  see~\cite[Theorem~3.3]{BermudezGamalloRoriguez:2004}. For the underlying regularity
  theory for the Neumann problem on convex polytopal domains we refer also
  to~\cite{Dauge:1988,Grisvard:1985}.
\QED
\end{proof}

\begin{lemma}\label{lem:Wones}
  The very weak solution \(p \in L^2(\Omega)\) from Theorem~\ref{thm:L2sol} has the improved
  regularity \(p \in W^{1,s}(\Omega)\) for any \(s < d/(d-1)\) and there holds
  \begin{equation*}
    \norm{p}_{W^{1,s}(\Omega)} \leq C\norm{u}_{\M(\Oc)}.
  \end{equation*}
\end{lemma}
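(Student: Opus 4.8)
The plan is to run a duality (transposition) argument in the spirit of Theorem~\ref{thm:L2sol}, but now testing against the dual of $W^{1,s}(\Omega)$ instead of against $L^2(\Omega)$, combined with an approximation of the measure by smooth data. Throughout write $s' = s/(s-1)$ for the conjugate exponent; the assumption $s < d/(d-1)$ is exactly equivalent to $s' > d$, which is the threshold for the Sobolev embedding $W^{1,s'}(\Omega) \hookrightarrow \Cc(\widebar\Omega)$ on the Lipschitz domain $\Omega$. This matches the sharpness of the result: near a point source $p$ behaves like the fundamental solution, whose gradient decays like $\abs{x}^{1-d}$, and $\abs{x}^{1-d} \in L^s_{\mathrm{loc}}$ precisely when $s < d/(d-1)$.

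First I would reduce the statement to a uniform a priori estimate for smooth data. Mollifying $u$ (shifting any boundary mass slightly into $\Omega$) yields a sequence $u_\eps \in \Cc(\widebar\Omega)$ with $\norm{u_\eps}_{L^1(\Omega)} \le C\norm{u}_{\M(\Oc)}$ and $u_\eps \rightharpoonup^* u$, together with smooth solutions $p_\eps$ of~\eqref{eq:state_n}. Granted the bound $\norm{p_\eps}_{W^{1,s}(\Omega)} \le C\norm{u}_{\M(\Oc)}$ with $C$ independent of $\eps$, the sequence $p_\eps$ is bounded in the reflexive space $W^{1,s}(\Omega)$, hence converges weakly along a subsequence to some $\tilde p$; passing to the limit in~\eqref{eq:very_weak} and invoking uniqueness from Theorem~\ref{thm:L2sol} identifies $\tilde p = p$, and weak lower semicontinuity of the norm then delivers the claimed estimate.

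The heart of the matter is the uniform estimate for $p_\eps$, obtained by duality. To bound $\norm{\nabla p_\eps}_{L^s}$ I would take an arbitrary vector field $g \in \Cc^\infty(\Omega,\C^d)$ and let $\psi$ solve the dual problem $-\Lap\psi - \kay^2\psi = -\operatorname{div} g$ in $\Omega$ with the homogeneous boundary condition $\partial_\nu\psi + \i\bar\kappa\chi_{\GZ}\psi = 0$ on $\Gamma$. Since $\operatorname{div} g \in W^{-1,s'}(\Omega)$, the $W^{1,s'}$-regularity theory for the Helmholtz operator on the convex polytopal domain gives $\psi \in W^{1,s'}(\Omega)$ with $\norm{\psi}_{W^{1,s'}(\Omega)} \le C\norm{g}_{L^{s'}(\Omega)}$, and $s' > d$ allows the embedding $W^{1,s'}(\Omega)\hookrightarrow\Cc(\widebar\Omega)$, whence $\norm{\psi}_{\Cc(\widebar\Omega)} \le C\norm{g}_{L^{s'}(\Omega)}$. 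Integrating by parts twice, with all boundary terms cancelling by the complementary boundary conditions, produces the identity $\int_\Omega \nabla p_\eps \cdot g \,\de x = \pair{u_\eps,\psi}$, so that $\abs{\int_\Omega \nabla p_\eps\cdot g\,\de x} \le \norm{u_\eps}_{L^1(\Omega)}\norm{\psi}_{\Cc(\widebar\Omega)} \le C\norm{u}_{\M(\Oc)}\norm{g}_{L^{s'}(\Omega)}$. Taking the supremum over $g$ bounds $\norm{\nabla p_\eps}_{L^s}$; an entirely analogous but simpler computation, using $L^{s'}$ data in the dual problem and the embedding $W^{2,s'}(\Omega)\hookrightarrow\Cc(\widebar\Omega)$, controls $\norm{p_\eps}_{L^s}$, and the two estimates together give the full $W^{1,s}$-bound.

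I expect the main obstacle to be the dual regularity estimate $\norm{\psi}_{W^{1,s'}(\Omega)} \le C\norm{g}_{L^{s'}(\Omega)}$ for $\operatorname{div}$-type data across the full range $s' > d$ on the convex polytopal domain with the mixed Robin--Neumann conditions, since this is exactly where the geometry of $\Omega$ and the boundary conditions enter and where convexity is needed to make such $W^{1,p}$ estimates available. A technically cleaner alternative that avoids the approximation step is to represent $p(x) = \int_{\Oc} G(x,y)\,\de u(y)$ through the Green's function $G$ of~\eqref{eq:state_n} and to show $\norm{\nabla_x G(\cdot,y)}_{L^s(\Omega)} \le C$ uniformly in $y$ via the fundamental-solution bound $\abs{\nabla_x G(x,y)} \le C\abs{x-y}^{1-d}$ (which lies in $L^s$ precisely when $s < d/(d-1)$); Minkowski's integral inequality then yields $\norm{\nabla p}_{L^s} \le \int_{\Oc}\norm{\nabla_x G(\cdot,y)}_{L^s(\Omega)}\de\abs{u}(y) \le C\norm{u}_{\M(\Oc)}$, at the cost of establishing the pointwise gradient bound on $G$ up to the boundary.
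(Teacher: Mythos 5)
Your overall strategy coincides with the paper's: both are transposition arguments whose decisive input is that the dual problem with data in $W^{-1,s'}(\widebar\Omega)$, $s'>d$, has a solution bounded in $\Cc(\widebar\Omega)$, which is then paired against the measure. The gap is in how you obtain that input. You assert a maximal regularity estimate $\norm{\psi}_{W^{1,s'}(\Omega)} \le C\norm{g}_{L^{s'}(\Omega)}$ for the mixed Robin--Neumann problem with divergence-form data, for every $s'>d$, and then embed $W^{1,s'}(\Omega)\hookrightarrow \Cc(\widebar\Omega)$. No proof or reference is given, and this is precisely the step that is not available: $W^{1,p}$ (Calder\'on--Zygmund) theory for mixed boundary-value problems (Gr\"oger-type results) only guarantees such estimates for $p$ in a window around $2$; for large $p$ (here $s'>d\ge 2$, and in 3D $s'>3$) the estimate is delicate near the set where the boundary condition changes type and near the edges and corners of the polytope, and it is at least as hard as the lemma you are trying to prove. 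You yourself flag this as ``the main obstacle,'' so the proposal as written reduces the lemma to an unproven, strictly stronger claim.

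The paper avoids this entirely: it never claims $W^{1,s'}$ regularity of the dual solution. It instead cites the H\"older-continuity result of Griepentrog and Recke (Theorem~7.1~(i) there), which gives directly
\[
\norm{r}_{\Cc(\widebar\Omega)} \le C\norm{q}_{W^{-1,s'}(\widebar\Omega)}, \qquad s'>d,
\]
for the weak solution of the dual problem. This is a De Giorgi--Nash--Moser/Campanato-type estimate: it yields continuity without ever passing through $\nabla r \in L^{s'}(\Omega)$, it tolerates the mixed boundary conditions and the nonsmooth domain, and it is exactly what the pairing $\pair{u,r}$ requires. To repair your write-up, replace your $W^{1,s'}$ claim by this estimate (splitting into real and imaginary parts, since the cited result is stated for real systems); the rest of your duality computation then goes through. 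Your mollification step is also unnecessary, since $p$ is already defined by transposition and one can test directly with arbitrary $q\in (W^{1,s}(\Omega))^*$, as the paper does. The same criticism applies to your alternative Green's-function route: the uniform bound $\abs{\nabla_x G(x,y)} \le C\abs{x-y}^{1-d}$ up to the boundary of the polytope with mixed boundary conditions is again an unproven claim of the same order of difficulty.
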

\begin{proof}
This result can be proved by using a H{\"o}lder continuity result for the dual
equation~\eqref{eq:dual_state} with weak formulation
\[
(\nabla \phi, \nabla r)_\Omega - (\kay^2\phi,r)_\Omega
 - (\i\bar{\kappa}\chi_{\GZ} \phi, r)_{\Gamma} = \pair{q, \phi}_{W^{-1,s'}(\widebar\Omega),W^{1,s}(\Omega)}
\]
with data
\(q \in W^{-1,s'}(\widebar\Omega) = (W^{1,s}(\Omega))^*\), i.e.,
with \(1/s' + 1/s = 1\) and the corresponding a priori  estimate
\[
  \norm{r}_{\Cc(\widebar\Omega)} \leq C \norm{q}_{W^{-1,s'}(\widebar\Omega)}.
\]
Such a result can be found, e.g., in~\cite{GriepentrogRecke:2001} (cf.\ also~\cite{Droniou:2000a}).
To apply the result, which is derived for real systems of equations, we split the solution into
real and imaginary part, apply~\cite[Theorem~7.1~(i)]{GriepentrogRecke:2001}, and use the
embedding properties of Sobolev-Campanato spaces; see,
e.g.,~\cite[Theorem~2.1~(i)]{GriepentrogRecke:2001}.
\QED
\end{proof}

Based on the previous existence and regularity results, certain observations of
the state solution \(p\) (e.g., in \(L^2(\Omega)\) or \(L^s(\Gamma)\) for \(s \leq
d/(d-1)\)) are possible. To obtain the
continuity of point evaluations, we use the smoothness of the solution away from the
support of the source \(u\). First we analyze the fundamental solutions.

\begin{lemma}\label{lem:Green}
  Let $y \in \widebar\Omega$. Then the very weak solution $G^y$ to the
  equation
  \begin{equation}\label{eq:source_y}
    \begin{curlyeq}
      - \Lap G^y - \kay^2 G^y &= \delta_{y}\rvert_{\Omega},  &&\text{in } \Omega,\\
      \partial_{\nu}G^y - \i\kappa \chi_{\GZ} G^y &= \delta_{y}\rvert_{\Gamma}, &&\text{on } \Gamma,
    \end{curlyeq}
  \end{equation}
  satisfies for $\eps > 0$ the estimate
  \begin{equation}\label{estimate_Green}
    \norm{G^y}_{H^2(\Omega\setminus \widebar{B_{\eps}}(y))} \leq C(\eps),
  \end{equation}
  where $B_\eps(y)$ is the $\eps$-ball around $y$, and \(C\) depends continuously
  on \(\eps\).
\end{lemma}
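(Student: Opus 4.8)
The plan is to exploit that in \eqref{eq:source_y} the source \(\delta_y\) is supported at the single point \(y\): on the compact set \(\widebar\Omega\setminus B_\eps(y)\), which is separated from \(y\), the function \(G^y\) solves the \emph{homogeneous} problem, and one may invoke elliptic regularity there. Concretely, testing the very weak formulation \eqref{eq:very_weak} with functions \(\phi\) that vanish in a neighbourhood of \(y\) makes the right-hand side \(\pair{\delta_y,\phi}\) vanish, so \(G^y\) is a weak solution of \(-\Lap G^y-\kay^2 G^y=0\) in \(\Omega\setminus\set{y}\) together with the homogeneous boundary condition \(\partial_\nu G^y-\i\kappa\chi_{\GZ}G^y=0\) on \(\Gamma\setminus\set{y}\). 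As a priori information I would use \(G^y\in L^2(\Omega)\) from Theorem~\ref{thm:L2sol} and \(G^y\in W^{1,s}(\Omega)\) for \(s<d/(d-1)\) from Lemma~\ref{lem:Wones}, both with norms bounded by \(\norm{\delta_y}_{\M(\widebar\Omega)}=1\) independently of \(y\).

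In the interior, hypoellipticity of the constant-coefficient elliptic operator \(-\Lap-\kay^2\) already gives \(G^y\in\Cc^\infty(\Omega\setminus\set{y})\), so the only real work is near the boundary; I would nonetheless organize the whole estimate by a finite bootstrap on nested cutoffs. Fixing radii \(\eps/2=\rho_0<\rho_1<\dots<\rho_K=\eps\) and smooth cutoffs \(\zeta_j\) with \(\zeta_j\equiv1\) on \(\widebar\Omega\setminus B_{\rho_j}(y)\) and \(\zeta_j\equiv0\) on \(B_{\rho_{j-1}}(y)\), the product \(w_j=\zeta_j G^y\) solves the Robin problem with right-hand side \(g_j=-2\nabla\zeta_j\cdot\nabla G^y-(\Lap\zeta_j)G^y\) supported in the transition annulus, plus the inhomogeneous boundary datum \((\partial_\nu\zeta_j)G^y\) (which one may remove by choosing \(\partial_\nu\zeta_j=0\) on \(\Gamma\), or else control by the trace of \(G^y\)). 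Since the support of \(g_j\) lies where \(\zeta_{j-1}\equiv1\), one estimates it using the improved integrability of \(\nabla G^y\) gained in the previous step; combining the \(W^{2,p}\)-a priori estimate for this Robin problem, valid for \(1<p\le2\) on the convex polytope \(\Omega\), with the embedding \(W^{2,p}\hookrightarrow W^{1,dp/(d-p)}\), the integrability exponent strictly increases at each step. Starting from \(p_0=s\) and iterating finitely many times (with \(K\) depending only on \(d\) and \(s\)), the exponent reaches \(2\); the final step then has right-hand side in \(L^2(\Omega)\) and yields \(w_K\in H^2(\Omega)\), whence \(G^y=w_K\) on \(\widebar\Omega\setminus B_\eps(y)\) gives the claimed \(H^2\)-bound.

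For the dependence of the constant, each application of the a priori estimate contributes a fixed domain constant together with factors \(\norm{\nabla\zeta_j}_\infty\le C\eps^{-1}\) and \(\norm{\Lap\zeta_j}_\infty\le C\eps^{-2}\); as the number of steps is fixed, the resulting \(C(\eps)\) is a product of finitely many such factors, hence a constant times a fixed negative power of \(\eps\), which in particular depends continuously on \(\eps\) and can be taken independent of \(y\in\widebar\Omega\) by the uniformity of the corner-regularity constants over the finitely many faces, edges and vertices of \(\Omega\). I expect the main obstacle to be exactly this boundary regularity: since the a priori regularity of \(G^y\) is strictly below \(H^1\), no direct application of \(H^2\)-regularity is possible and the integrability bootstrap is genuinely needed, and since \(\Omega\) is only a convex polytope the \(W^{2,p}\) and \(H^2\) estimates up to edges and vertices must be drawn from the corner-regularity theory for the Robin/Neumann problem on convex domains (as already used for Theorem~\ref{thm:L2sol}; cf.\ \cite{Grisvard:1985,Dauge:1988}), where the convexity is precisely what keeps the admissible range at \(p\le2\). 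The remaining technical points are the bookkeeping of the cutoff-induced Robin data and treating the boundary case \(y\in\Gamma\) on the same footing as \(y\in\Omega\).
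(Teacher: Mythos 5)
Your localization strategy is exactly the one the paper uses: multiply $G^y$ by cutoffs vanishing near $y$, observe that the product solves a Robin/Neumann problem whose data are the commutator terms $-2\nabla\zeta\cdot\nabla G^y-(\Lap\zeta)G^y$ in $\Omega$ and $(\partial_\nu\zeta)G^y$ on $\Gamma$, and bootstrap, keeping track of the cutoff derivatives to get the $\eps$-dependence of the constant. The difference is the scale in which you bootstrap. The paper stays entirely inside the Hilbert theory and needs only two steps: since $\nabla G^y\in L^s(\Omega)$ for all $s<d/(d-1)$, one may pick $s>2d/(d+2)$ and $s>2-2/d$, so that by Sobolev duality $\nabla G^y\in H^{-1}(\Omega)$ and $G^y\rvert_\Gamma\in H^{-1/2}(\Gamma)$; the variational $H^1$ well-posedness of the Robin--Helmholtz problem then gives $\zeta_\eps G^y\in H^1(\Omega)$ with norm $C/\eps^2$, and a second cutoff produces $L^2(\Omega)$ interior data and $H^{1/2}(\Gamma)$ boundary data, to which the already-cited $H^2$ result (Bermudez et al., Theorem 3.3) applies. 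You instead climb the integrability ladder via $W^{2,p}$ a priori estimates for $1<p\le 2$ combined with $W^{2,p}\hookrightarrow W^{1,dp/(d-p)}$.

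This is where your argument has a genuine gap in justification rather than a mere stylistic difference. Your first step must apply a $W^{2,p}$ estimate with $p=s<d/(d-1)$ (so $p<3/2$ when $d=3$) to a problem whose boundary condition switches between Robin on $\GZ$ and Neumann on $\GN$ across the faces of a polytope. The reference behind the paper's $H^2$ theory covers only $p=2$, and $W^{2,p}$ edge/vertex regularity for mixed Robin--Neumann conditions on convex polyhedra at small $p>1$ is not an off-the-shelf result: in 2D Grisvard's theory delivers it (convex corner angles $\omega<\pi$ give Neumann singular exponents $\pi/\omega>1$, harmless for $p\le2$), but in 3D the corresponding statement requires its own analysis of edge and vertex exponents in Dauge's framework, which you invoke but do not supply. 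So the heaviest ingredient of your proof is precisely the one that is not available from the sources the paper relies on. The repair is cheap and is exactly the paper's device: replace your first $W^{2,p}$ step by the embeddings $L^s(\Omega)\hookrightarrow (H^1(\Omega))^*$ and $L^s(\Gamma)\hookrightarrow H^{-1/2}(\Gamma)$, which lift $\zeta_\eps G^y$ to $H^1(\Omega)$ using only Fredholm/variational well-posedness; after that single step the data are already in $L^2(\Omega)$ and $H^{1/2}(\Gamma)$, and the cited $H^2$ result finishes the proof. Your remarks on the $\eps$-powers from the cutoffs and the uniformity in $y\in\widebar\Omega$ are consistent with what the paper obtains.
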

\begin{proof}
  We follow standard arguments based on a smoothed indicator function. For completeness,
  we give a short sketch of the proof.
  Multiply \(G^y\) with a weight function \(\zeta_\eps \in
  \Cc^\infty_c(\widebar\Omega\setminus \widebar{{B}_{\eps/2}}(y))\), such that
  \(\zeta_\eps(x) = 1\) for \(x \in \widebar\Omega\setminus \widebar{{B}_{\eps}}(y)\). Now, by
  the chain rule and~\eqref{eq:source_y}, the product
  \(G_{\zeta}^y = \zeta_\eps G^y\) fulfills
  \begin{equation}\label{eq:xitilde}
    \begin{curlyeq}
      - \Lap G_\zeta^y - \kay^2G_\zeta^y
      &= -\Lap \zeta_\eps G^y - 2\nabla \zeta_\eps\nabla G^y, &&\text{in }\Omega,\\
      \partial_{\nu}G_{\zeta}^y - \i\kappa \chi_{\GZ} G_{\zeta}^y
      &=  G^y \partial_{\nu} \zeta_\eps - \i\kappa \chi_{\GZ} G^y \zeta_\eps, &&\text{on }\Gamma.
     \end{curlyeq}
   \end{equation}
  Now, we use the facts that \(G^y \in L^2(\Omega)\) with Theorem~\ref{thm:L2sol} and
  \(\nabla G^y \in L^s(\Omega)\) for \(s < d/(d-1)\) arbitrary with Lemma~\ref{lem:Wones}.
  With the trace theorem it additionally follows \(G^y\rvert_{\Gamma} \in
  L^{s}(\Gamma)\). By the Sobolev embedding in dimensions \(d \leq 3\), we obtain
  \(\nabla G^y \in H^{-1}(\Omega)\) (choose \(s > 2d/(d+2)\)) and
  \(G^y\rvert _{\Gamma} \in H^{-1/2}(\Gamma)\) (choose \(s > 2-2/d\)).
  Together with  \(\norm{\nabla^2 \zeta_\eps}_{L^{\infty}(\bar\Omega)} \leq C \eps^{-2}\) it follows
  now from a classical result for~\eqref{eq:xitilde} that \(G_{\zeta}^y \in H^1(\Omega)\) with
  \(\norm{G_{\zeta}^y}_{H^1(\Omega)} \leq C/\eps^2\). By the trace theorem, it follows that
  \(G_{\zeta}^y\rvert_{\Gamma} \in H^{1/2}(\Gamma)\). Now, we introduce \(G_{\zeta^2}^y = \zeta_\eps
  G^y_\zeta\) and repeat the argument to derive regularity of \(G_{\zeta^2}^y\) from the previous
  results for \(G_{\zeta}^y\).
  By a \(H^2\) regularity result (see, e.g.,
  \cite[Theorem~3.3]{BermudezGamalloRoriguez:2004}), we obtain \(G_{\zeta^2}^y \in H^2(\Omega)\),
  with norm bounded by \(C/\eps^{-4}\).
  Since by construction \(G^y_{\zeta^2}(x) = G^y(x)\) for all \(x \in \Omega\) with
  \(\abs{x-y} \geq \eps\), we obtain~\eqref{estimate_Green}.
\QED
\end{proof}

\begin{lemma}\label{lem:improved}
  Let \(\Nn_\eps(\Oc) = \set{x\in\Omega \;|\; \operatorname{dist}(x,\Oc) < \eps}\).
  The solution $p$ to~\eqref{eq:state_n} belongs to $\Cc(\widebar{\Omega}\setminus\Nn_\eps(\Oc))$ for
  all $\eps > 0$ together with
  \begin{equation*}
    \norm{p}_{\Cc(\widebar{\Omega}\setminus\Nn_\eps(\Oc))} \leq C(\eps)\norm{u}_{\M(\Oc)}.
  \end{equation*}
\end{lemma}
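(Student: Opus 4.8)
The plan is to exploit the linearity of the very weak solution map together with the uniform regularity of the fundamental solutions from Lemma~\ref{lem:Green}. Let $S\colon\M(\Oc)\to L^2(\Omega)$ denote the solution operator of Theorem~\ref{thm:L2sol}, so that $S\delta_y=G^y$, and set $K:=\widebar{\Omega}\setminus\Nn_\eps(\Oc)$. The guiding representation is $p(x)=\int_{\Oc}G^y(x)\,\de u(y)$ for $x\in K$, from which I would read off both the asserted bound and the continuity once the family $\set{G^y}_{y\in\Oc}$ is controlled uniformly on $K$. The relevant geometric fact is that every $x\in K$ has $\dist(x,y)\ge\eps$ for all $y\in\Oc$, so $x$ lies in the region $\widebar{\Omega}\setminus\overline{B_{\eps/2}}(y)$ with a fixed buffer, i.e.\ precisely where Lemma~\ref{lem:Green} provides $H^2$-regularity of $G^y$.

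First I would record that the constant $C(\eps)$ in Lemma~\ref{lem:Green} is uniform in $y$: it originates only from $\norm{\nabla^2\zeta_\eps}_{L^\infty}\le C\eps^{-2}$ and from the source-independent estimates of Theorem~\ref{thm:L2sol} and Lemma~\ref{lem:Wones}, since $\norm{\delta_y}_{\M}=1$ for every $y$. Combined with the embedding $H^2\hookrightarrow\Cc^{0,\alpha}$ valid for $d\le 3$, this yields $\sup_{y\in\Oc}\norm{G^y}_{\Cc^{0,\alpha}(K)}\le C(\eps)$; in particular $\set{G^y}_{y\in\Oc}$ is uniformly bounded and uniformly equicontinuous on $K$.

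To turn the representation into a proof I would approximate $u$ in the weak-$*$ sense by finitely supported measures $u_k=\sum_j c_j\delta_{y_j}$ with $y_j\in\Oc$ and $\norm{u_k}_{\M}\le\norm{u}_{\M}$. For such $u_k$ the solution $Su_k=\sum_j c_jG^{y_j}$ is genuinely a pointwise object on $K$, satisfying $\norm{Su_k}_{\Cc(K)}\le C(\eps)\norm{u}_{\M}$ with a common modulus of continuity. On the one hand, testing the transposition identity~\eqref{eq:duality} against the continuous dual states $r$ shows that $S$ is weak-$*$-to-weak continuous into $L^2(\Omega)$, so $Su_k\rightharpoonup p$ in $L^2(\Omega)$. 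On the other hand, Arzel\`a--Ascoli extracts a subsequence with $Su_k\to\widetilde p$ in $\Cc(K)$, where $\norm{\widetilde p}_{\Cc(K)}\le C(\eps)\norm{u}_{\M}$. Since $\Cc(K)$-convergence implies $L^2(K)$-convergence, the two limits must agree, giving $p=\widetilde p$ almost everywhere on $K$; hence $p$ possesses a continuous representative on $K$ with the claimed estimate.

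I expect the main obstacle to be the uniformity in $y$ of the H\"older embedding used in the second step: the perforated domain $\widebar{\Omega}\setminus\overline{B_{\eps/2}}(y)$ underlying the $\Cc^{0,\alpha}(K)$ bound varies with $y$, and one must keep its embedding constant bounded as $y$ ranges over the compact set $\Oc$, including when $y$ approaches $\Gamma$. I would resolve this by a continuous-dependence argument on this geometry, or sidestep it altogether by mimicking the cut-off bootstrap of Lemma~\ref{lem:Green} with $\Nn_\eps(\Oc)$ in place of the ball $B_\eps(y)$: multiplying $p$ by a cut-off $\zeta_\eps\in\Cc^\infty(\widebar\Omega)$ that vanishes on $\Nn_{\eps/2}(\Oc)\supset\Oc$ eliminates the source term and produces the same $H^1$-then-$H^2$ regularity, after which $H^2\hookrightarrow\Cc(\widebar\Omega)$ delivers the statement directly, with the constant $C(\eps)$ again tracked through $\norm{\nabla^2\zeta_\eps}_{L^\infty}\le C\eps^{-2}$.
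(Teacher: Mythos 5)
Your main argument is essentially the paper's own proof: approximate $u$ in the weak-$*$ sense by Dirac sums, use the $y$-uniform constant from Lemma~\ref{lem:Green}, identify the limit through the transposition identity~\eqref{eq:duality}, and conclude by a compactness argument --- the only difference being that the paper takes weak compactness in $H^2(\Omega\setminus\widebar{\Nn}_{\eps}(\Oc))$ on that \emph{fixed} domain followed by the Sobolev embedding (which, incidentally, dissolves your worry about $y$-dependent embedding constants: one restricts each $H^2(\Omega\setminus\widebar{B_{\eps}}(y))$ bound to the single fixed set before ever invoking an embedding), whereas you use uniform H{\"o}lder bounds and Arzel\`a--Ascoli. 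Your fallback --- running the cut-off bootstrap of Lemma~\ref{lem:Green} directly on $p$ with $\zeta_\eps$ vanishing on $\Nn_{\eps/2}(\Oc)$, which works since Theorem~\ref{thm:L2sol} and Lemma~\ref{lem:Wones} already hold for general $u\in\M(\Oc)$ --- would in fact be a more direct route that avoids the Dirac approximation and compactness altogether, but it is not the one the paper follows.
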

\begin{proof}
  We approximate $u$ by a sequence of finite sum of Dirac delta measures, i.e.,
  there exists a sequence $u_K \rightharpoonup^* u$ in $\M(\Oc)$ with \(\norm{u_K}_{\M(\Oc)} \leq
  \norm{u}_{\M(\Oc)}\) and
  \begin{equation*}
    u_K = \sum_{k=1}^{K}\coeff{u}_k\delta_{y_k}
  \end{equation*}
  with $\coeff{u}_k\in \C$ and $y_k \in \Oc$.
  By linearity, we have for the unique solution $p_K$ of~\eqref{eq:state_n} corresponding to
  \(u_K\) that \(p_K = \sum_{k=1}^K \coeff{u}_k G^{y_k}\) where $G^{y_k}$ is the solution of \eqref{eq:source_y} with $\delta_{y_k}$ in place of $\delta_y$.
  For every \(\eps > 0\) there exists a \(C = C(\eps)\) with
  \begin{equation*}
    \norm{p_K}_{L^2(\Omega)} + \norm{p_K}_{H^2(\Omega\setminus\widebar{{\Nn}_{\eps}}(\Oc))}
    \leq  C\sum_{k=1}^{K}\abs{\coeff{u}_k} = C\norm{u_K}_{\M(\Oc)} \leq C\norm{u}_{\M(\Omega_c)}
  \end{equation*}
  using Theorem~\ref{thm:L2sol} and Lemma~\ref{lem:Green}.
  Hence, there exists a function $p \in L^2(\Omega) \cap H^2(\Omega\setminus\widebar{{\Nn}_{\eps}}(\Oc))$ such that
  \begin{equation*}
    p_K \rightharpoonup p \quad \text{in} \quad  L^2(\Omega) \cap H^2(\Omega\setminus\widebar{{\Nn}_{\eps}}(\Oc))
  \end{equation*}
  up to a subsequence. Using this weak convergence and $u_K \rightharpoonup^* u$ in
  $\M(\Oc)$ we can pass to the limit $K \rightarrow \infty$ to obtain that
  $p$ is the very weak solution to the problem \eqref{eq:state_n} and the estimate
  \begin{equation*}
    \norm{p}_{L^2(\Omega)} + \norm{p}_{H^2(\Omega\setminus\widebar{{\Nn}_{\eps}}(\Oc))} \leq C(\eps)\norm{u}_{\M(\Omega)}.
  \end{equation*}
  holds for some $C(\eps) > 0$. Thus, the proof is complete when
  we use the embedding $H^2(\Omega\setminus\widebar{{\Nn}_{\eps}}(\Oc)) \hookrightarrow
  \Cc(\widebar{\Omega}\setminus\Nn_{\eps}(\Oc))$ for dimensions $d\leq 3$.
\QED
\end{proof}

Clearly, the same regularity results also hold for the dual equation,
\begin{equation}\label{eq:dual_source_y}
  \begin{curlyeq}
    - \Lap \bar{G}^y - \kay^2 \bar{G}^y &= \delta_{y},  &&\text{in } \Omega,\\
    \partial_{\nu}\bar{G}^y + \i\bar{\kappa}\chi_{\GZ} \bar{G}^y &= 0, &&\text{on } \Gamma.
  \end{curlyeq}
\end{equation}
Note that the only difference between~\eqref{eq:source_y}
and~\eqref{eq:dual_source_y} occurs in the boundary conditions on \(\GZ\). It is
therefore easy to see that the solutions to~\eqref{eq:source_y} are~\eqref{eq:dual_source_y} are
the same up to complex conjugation, which justifies the notation \(\bar{G}^y\).
In the case \(y \in \Omega\) (and not on \(\Gamma\)), we can give a more precise
description of the nature of the singularity. We will need this for the adjoint equation
in section~\ref{sec:weighted_norm}.

\begin{proposition}
\label{prop:splitting}
Let $y \in \Omega$. Then the very weak solution $\bar{G}^y$ to the dual
equation~\eqref{eq:dual_source_y} can be written as \(\bar{G}^y(x) = \bar{\Phi}^y(x) +
\bar{\xi}^y(x)\) for \(x\in\widebar{\Omega}\), where
\begin{equation}
\label{eq:fundamental_sol}
\Phi^y(x) =
\oldphi_\kay(\abs{x-y}) =
  \begin{cases}
    (i/4){H_0^{(1)}(\kay\abs{x-y})}
    & \text{for } d = 2, \\
    {\exp(\i\kay\abs{x-y})}/({4\pi\abs{x-y}}) & \text{for } d = 3,
  \end{cases}
\end{equation}
is a fundamental solution of the free space Helmholtz equation
\begin{equation}
\label{eq:free_space_helm}
  -\Lap \Phi^y - \kay^2 \Phi^y = \delta_y, \quad x\in \R^n,
\end{equation}
(fulfilling the Sommerfeld radiation condition),
and \(\xi^y \in H^2(\Omega)\) is the solution to \eqref{eq:xi}.
The special function \(H_0^{(1)}\) is the Hankel function of the first kind; see,
e.g.,~\cite[Section~3.4]{ColtonKress:2013}.
\end{proposition}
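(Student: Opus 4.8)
The plan is to remove the explicitly known free-space singularity and to show that what remains is a regular function. Since \(y \in \Omega\) lies at positive distance from \(\Gamma\), I would set \(\bar\xi^y \coloneqq \bar{G}^y - \bar\Phi^y\) on \(\widebar\Omega\) and argue that \(\bar\xi^y\) is the claimed \(H^2\)-part. Two preliminary observations make this meaningful: first, in dimensions \(d \leq 3\) the singularity of the fundamental solution~\eqref{eq:fundamental_sol} at \(y\) (logarithmic for \(d = 2\), of order \(\abs{x-y}^{-1}\) for \(d = 3\)) is square integrable, so that \(\bar\Phi^y\rvert_\Omega \in L^2(\Omega)\) and the splitting is well defined at the \(L^2\)-level; second, because \(y\) is separated from the boundary, \(\bar\Phi^y\) is real-analytic in a neighborhood of \(\Gamma\), so its trace and normal derivative are smooth on each plane face of \(\Gamma\).

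Next I would identify the boundary value problem solved by \(\bar\xi^y\). Conjugating~\eqref{eq:free_space_helm} and using that both \(\delta_y\) and \(\kay\) are real shows that \(\bar\Phi^y\) is itself a fundamental solution, \(-\Lap\bar\Phi^y - \kay^2\bar\Phi^y = \delta_y\) in \(\Omega\). Subtracting this from the interior equation in~\eqref{eq:dual_source_y} cancels the Dirac source, while the homogeneous impedance condition for \(\bar{G}^y\) becomes an inhomogeneous one, so that \(\bar\xi^y\) solves
\[
\begin{curlyeq}
-\Lap\bar\xi^y - \kay^2\bar\xi^y &= 0 && \text{in } \Omega, \\
\partial_{\nu}\bar\xi^y + \i\bar\kappa\chi_{\GZ}\bar\xi^y &= -\partial_{\nu}\bar\Phi^y - \i\bar\kappa\chi_{\GZ}\bar\Phi^y && \text{on } \Gamma,
\end{curlyeq}
\]
which is precisely problem~\eqref{eq:xi}. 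To make this rigorous within the transposition framework of Definition~\ref{def:transposition}, I would insert \(\bar\xi^y = \bar{G}^y - \bar\Phi^y\) into the very weak identity and integrate the \(\bar\Phi^y\)-contribution by parts on \(\Omega \setminus \widebar{B_\eps}(y)\), passing to the limit \(\eps \to 0\) and using \(\bar\Phi^y \in L^2(\Omega)\) together with the regularity of Lemma~\ref{lem:Green}.

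Finally I would establish \(\bar\xi^y \in H^2(\Omega)\). By the previous step the interior source vanishes and the boundary datum \(-\partial_{\nu}\bar\Phi^y - \i\bar\kappa\chi_{\GZ}\bar\Phi^y\) is smooth on each face of \(\Gamma\); hence it is admissible for the \(H^2\)-regularity theory of the Neumann/impedance problem on convex polytopal domains, and \(\bar\xi^y \in H^2(\Omega)\) would follow from the same regularity result already used in Theorem~\ref{thm:L2sol} (see~\cite[Theorem~3.3]{BermudezGamalloRoriguez:2004}, and~\cite{Dauge:1988,Grisvard:1985}). The identity \(\bar{G}^y = \bar\Phi^y + \bar\xi^y\) then holds by construction.

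The step I expect to be the main obstacle is this last one. On a polytopal domain with \emph{mixed} Neumann and impedance conditions the datum is only piecewise smooth, exhibiting jumps across the interface \(\widebar{\GN} \cap \widebar{\GZ}\) and across the edges of \(\Gamma\), and one must verify that these interfaces and the corner singularities do not spoil \(H^2\)-regularity. This is exactly where the convexity of \(\Omega\) and the structural assumptions on the decomposition of \(\Gamma\) into plane faces enter, in the same way as in the regularity theory invoked for the dual state in Theorem~\ref{thm:L2sol}.
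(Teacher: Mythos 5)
Your proposal is correct and takes essentially the same route as the paper's proof: subtract the explicit free-space fundamental solution, observe that the remainder solves the homogeneous Helmholtz equation with inhomogeneous impedance/Neumann boundary data (the complex conjugate of \eqref{eq:xi}), and obtain \(H^2\) regularity from \cite[Theorem~3.3]{BermudezGamalloRoriguez:2004}. The interface/corner issue you single out as the main obstacle is precisely what that cited theorem handles under the paper's standing assumptions (convex polytopal domain, boundary conditions changing only across plane faces), so the paper concludes by simply invoking it together with the resulting bound \(\norm{\xi^y}_{H^2(\Omega)} \leq C(\operatorname{dist}(y,\Gamma))\).
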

\begin{proof}
We follow~\cite{BermudezGamalloRoriguez:2004}.
First, we consider a fundamental solution
$\Phi^y$ to the Helmholtz equation in the whole domain~\eqref{eq:free_space_helm}.
In fact $\Phi^y$ can be written explicitly as in~\eqref{eq:fundamental_sol}; see,
e.g.,~\cite{ColtonKress:2013}. We will
use the facts that $\Phi^y \in C^{\infty}(\R^n\setminus \{y\})$ and
\(\norm{\Phi^y}_{\Cc^1(K)} \leq C(\operatorname{dist}(y,K)) \abs{K}\) for any
\(K \subset\subset \Omega\). Then $\bar{G}^y$
is a solution of~\eqref{eq:dual_source_y} if and only if $\bar{G}^y = \bar{\Phi}^y + \bar{\xi}^y$,
with $\xi^y$ satisfying
\begin{equation}\label{eq:xi}
  \begin{curlyeq}
    -\Lap \xi^y - \kay^2\xi^y &= 0, &&\text{in }\Omega,\\
    \partial_{\nu}\xi^y - \i\kappa\chi_{\GZ}\xi^y
    &= -\partial_{\nu}\Phi^y + \i\kappa\chi_{\GZ}\Phi^y, &&\text{on }\Gamma.
  \end{curlyeq}
\end{equation}
We have the following estimate for $\xi^y$ (see, e.g., \cite[Theorem 3.3]{BermudezGamalloRoriguez:2004}):
\begin{equation*}
  \norm{\xi^y}_{H^2(\Omega)}
  \leq C\left(\norm{\partial_{\nu}\Phi^y - \i\kappa\Phi^y}_{H^{1/2}(\GZ)}
    + \norm{\partial_{\nu}\Phi^y}_{H^{1/2}(\GN)}\right).
\end{equation*}
Thus, it follows directly
\(\norm{\xi^y}_{H^2(\Omega)} \leq C(\operatorname{dist}(y,\Gamma)).\)
\QED
\end{proof}

\begin{remark}
  \label{rem:more_general_domain}
  The $H^2$ regularity of $G^y$ in Lemma~\ref{lem:Green} (and of \(\xi^y\) in
  Proposition~\ref{prop:splitting}) uses the structural assumption on
  the polygonal domain, namely that the boundary conditions can only change on different
  plane faces of the boundary (based on the results in \cite{BermudezGamalloRoriguez:2004}).
  It is possible to relax this assumption, and consider more general domains $\Omega$ in two
  or three dimensions. We will comment on two possible options, which we however do not pursue
  here for the sake of brevity.

  \emph{H{\"o}lder-regularity}: By using the regularity results from, e.g.,
  \cite{GriepentrogRecke:2001,Droniou:2000a} (as in Lemma~\ref{lem:Wones}), which are
  valid for much more general configurations of the boundary, we can get continuous
  solutions without \(H^2\) regularity. The solution by transposition can be based on
  these regularity results directly;
  cf.~\cite{Stampacchia:1965,Troianiello:1987}. Additionally, Lemma~\ref{lem:Green} can be
  modified to show local H{\"o}lder-continuity, which again leads to the result of Lemma~\ref{lem:improved}.
  A similar comment applies to Proposition~\ref{prop:splitting}.

  \emph{Interior regularity}:
  If we introduce a \(\Omega'\subset\subset \Omega\), we can show alternative to
  Lemma~\ref{lem:Green} the result
  \(G^y \in H^2(\Omega' \setminus \overline{B_{\eps}}(y))\)
  without using any assumptions on the boundary beyond Lipschitz-continuity.
  The proof can be done as in Lemma~\ref{lem:Green}, by suitably modifying the smoothed
  indicator function. For interior regularity results of elliptic equations cf.\ also
  \cite[Theorem~47.1]{QS07} \cite[Theorems~9.11 and 9.13]{GilbargTrudinger:2015}.
  However, interior results do not allow to include point sources or pointwise
  observations on the boundary of the domain.
\end{remark}

\section{Analysis of the optimization problem}
\label{sec:opt_cont_prob}

We suppose that for some points $\{x_m\}_{m=1,2,\ldots,M} \subset \widebar\Omega \setminus
\Oc$ the acoustic pressure values $p_d^m \in \C^N$ are given.
We consider the following optimization problem:
\begin{align}
\label{eq:pre_opt_prob}
  & \min_{u\in \Mcon}\J(p, u) = \frac{1}{2} \sum_{m=1}^M\abs{p(x_m) - p_d^m}_{\C^N}^2 +
    \alpha\norm{u}_{\Mcon},\\
\label{eq:state}
  & \text{ subject to}\quad
    \begin{curlyeq}
      -\Lap p_n - \kay^2_n p_n &= u_n\rvert_{\Omega}, &&\text{in }\Omega,\\
      \partial_{\nu}p_n - \i\kappa_n\chi_{\GZ} p_n &= u_n\rvert_{\Gamma}, &&\text{on }\Gamma,
    \end{curlyeq} \quad n = 1,2,\ldots, N.
\end{align}
Since $x_m \notin \Oc$, there
exists $\eps_0>0$ such that $x_m \notin \Nn_{\eps_0}(\Oc)$ for all $m=1,2,\ldots,M$. Due to
Lemma~\ref{lem:improved} we can evaluate $p_n$ at $x_k$ and thus define the
control-to-observation operator
\[
\SO: \Mcon \rightarrow (\C^N)^M \quad \text{as} \quad \SO u = (p(x_1), p(x_2), \ldots, p(x_M)).
\]
We introduce the reduced optimal control problem
\begin{equation}\label{eq:reduced_cost}\tag{$P_\alpha$}
  \begin{aligned}
    &\min_{u\in \Mcon} j(u)= \frac{1}{2}\sum_{m=1}^{M}\abs{(\SO u)_m - p_d^m}_{\C^N}^2 + \alpha\norm{u}_{\Mcon},
  \end{aligned}
\end{equation}
which is clearly equivalent to~\eqref{eq:pre_opt_prob}--\eqref{eq:state}.

We will see that \(\SO\) can alternatively be defined as the dual of a linear bounded
operator \(\SO^*\), to be introduced below.
\begin{lemma}\label{lem:weak_to_strong}
  $u_n \rightharpoonup^* u$ in $\Mcon$ implies \(\SO u_n \rightarrow \SO u\) in \(\C^{NM}\).
\end{lemma}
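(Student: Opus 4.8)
The plan is to realise $\SO$ as the adjoint of a bounded linear operator $\SO^*\colon \C^{NM}\to\Ccon$, exactly as foreshadowed before the lemma. Once this is done, weak-* convergence in $\Mcon$ is transported to $\C^{NM}$ for free, and because the target is finite dimensional, the resulting coordinatewise convergence is automatically convergence in norm; so no quantitative estimate on the sequence is required for the conclusion itself.

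First I would construct $\SO^*$. Fix $g=(g^m)_{m=1}^M\in\C^{NM}$ and, for each frequency index $n$, let $P_n$ be the very weak solution of the dual equation~\eqref{eq:dual_source_y} with the finite combination $\sum_{m}\overline{g^m_n}\,\delta_{x_m}$ in place of $\delta_y$. By linearity $P_n$ is a finite sum of dual fundamental solutions $\bar{G}^{x_m}_n$. Since every observation point satisfies $x_m\in\widebar{\Omega}\setminus\Oc$, we have $\dist(\Oc,x_m)\ge\eps_0>0$, so each $\bar{G}^{x_m}_n$ is regular on a neighbourhood of $\Oc$; applying Lemma~\ref{lem:improved} (equivalently Lemma~\ref{lem:Green}) with the roles of the source set and the observation region interchanged gives $P_n\rvert_{\Oc}\in\Cc(\Oc)$ together with a uniform bound $\norm{P_n}_{\Cc(\Oc)}\le C(\eps_0)\,\abs{g}_{\C^{NM}}$. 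Setting $\SO^* g=(P_n\rvert_{\Oc})_n$ then defines a bounded linear map into $\Ccon$.

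Next I would establish the adjoint identity $(\SO u, g)_{\C^{NM}}=\pair{u, \SO^* g}$ for all $u\in\Mcon$ and $g\in\C^{NM}$. I would verify it first on a finite Dirac sum $u=\sum_k\coeff{u}_k\delta_{y_k}$, where the explicit representation $p=\sum_k\coeff{u}_k G^{y_k}$ from the proof of Lemma~\ref{lem:improved} reduces the identity to a reciprocity relation that expresses $y\mapsto G^{y}_n(x_m)$ as the restriction to $\Oc$ of a dual fundamental solution — a manipulation that is legitimate precisely because the evaluation points lie away from the singularities. Both sides of the identity are continuous in $u$ (the left by Theorem~\ref{thm:L2sol} and Lemma~\ref{lem:improved}, the right since $\SO^* g\in\Ccon$), so the identity extends from the dense set of finite Dirac combinations used in Lemma~\ref{lem:improved} to all of $\Mcon$. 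I expect this to be the main obstacle: the delicate part is the bookkeeping of the conjugations in the complex-to-real duality pairing $\pair{u,\phi}=\Re\int_{\Oc}\bar{\phi}\,\de u$ together with the reciprocity of the Green's functions, and the justification that the pairing of $p$ against Dirac data at each $x_m$ is meaningful, which again rests on the continuity of $p$ near the $x_m$ provided by Lemma~\ref{lem:improved}.

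Finally the conclusion is immediate. By definition of weak-* convergence in $\Mcon=\Ccon^*$, the hypothesis $u_n\rightharpoonup^* u$ means $\pair{u_n,\phi}\to\pair{u,\phi}$ for every fixed $\phi\in\Ccon$. Choosing $\phi=\SO^* g$ and letting $g$ range over the finitely many coordinate vectors of $\C^{NM}$, the adjoint identity yields $(\SO u_n, g)\to(\SO u, g)$ for each such $g$; since $\C^{NM}$ is finite dimensional, coordinatewise convergence is convergence in norm, i.e.\ $\SO u_n\to\SO u$ in $\C^{NM}$.
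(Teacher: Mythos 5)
Your overall strategy---realize $\SO$ as the dual of the bounded operator $\SO^*$, verify the adjoint identity, and then let weak-$*$ convergence plus finite dimensionality finish the job---is exactly what the paper announces in the sentence preceding the lemma and carries out in Propositions~\ref{prop:SO} and~\ref{prop:SO_dual}. Your construction of $\SO^*$ and the verification of the identity on finite Dirac sums (via reciprocity of the Green's functions) are sound. The genuine gap is the extension step: you pass from finite Dirac combinations to all of $\Mcon$ by claiming both sides of $(\SO u, g)_{\C^{NM}} = \pair{u, \SO^* g}$ are ``continuous in $u$'' and invoking density. But Theorem~\ref{thm:L2sol} and Lemma~\ref{lem:improved} only give \emph{norm} continuity, $\abs{\SO u}_{\C^{NM}} \leq C \norm{u}_{\Mcon}$, whereas finite Dirac sums are dense in $\Mcon$ only in the weak-$*$ sense---they are emphatically not norm dense (for a non-atomic $\mu$ and any discrete $\nu$ one has $\norm{\mu - \nu}_{\Mcon} = \norm{\mu}_{\Mcon} + \norm{\nu}_{\Mcon}$ by mutual singularity). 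To run a density argument along a weak-$*$ approximating sequence you would need weak-$*$ continuity of $u \mapsto \SO u$, which is precisely the lemma you are proving: the argument is circular. Note also that you cannot appeal to Proposition~\ref{prop:SO_dual} to close the loop, since the paper's proof of that proposition invokes Lemma~\ref{lem:weak_to_strong}; in the paper's logical order the lemma must come first.

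The repair uses the machinery already present in the proof of Lemma~\ref{lem:improved} and avoids the duality entirely. A weak-$*$ convergent sequence $u_n$ is norm bounded (Banach--Steinhaus), so by Theorem~\ref{thm:L2sol} and Lemma~\ref{lem:Green} the states $p_n$ are bounded in $L^2(\Omega)\cap H^2(\Omega\setminus\widebar{{\Nn}_{\eps}}(\Oc))$ for $\eps < \eps_0$. Any subsequence admits a further subsequence converging weakly in that space; the limit is identified as the very weak solution associated with $u$ by passing to the limit in~\eqref{eq:duality}, using that the test functions $r \in H^2(\Omega) \hookrightarrow \Cc(\widebar\Omega)$ are admissible in the weak-$*$ pairing, so $\pair{u_n, r} \to \pair{u, r}$. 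The embedding of $H^2$ into $\Cc$ on a neighborhood of each observation point $x_m$ is compact for $d \leq 3$, which upgrades weak $H^2$ convergence to convergence of the point values $p_n(x_m) \to p(x_m)$; a subsequence--subsequence argument then yields convergence of the full sequence, i.e.\ $\SO u_n \to \SO u$ in $\C^{NM}$. (Alternatively, your duality route can be salvaged by proving the representation $(\SO u)_{n,m} = \pair{u_n, \bar{G}^{x_m}_n}$ for \emph{general} $u$ directly from the transposition definition---e.g.\ testing with mollified Dirac data at $x_m$ and using the stability of Lemma~\ref{lem:Green}---rather than by density from Dirac sums.)
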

By established arguments, we obtain the following basic existence result.
\begin{proposition}\label{prop:exist_opt}
  The problem~\eqref{eq:reduced_cost} has an optimal solution $\widehat{u}$.
\end{proposition}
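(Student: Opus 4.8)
The plan is to apply the direct method of the calculus of variations. Since $j$ is a sum of two nonnegative terms, it is bounded below, and evaluating at $u = 0$ shows the infimum is finite; hence I may fix a minimizing sequence $(u_k)_k \subset \Mcon$ with $j(u_k) \to j^* := \inf j \in [0,\infty)$.

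First I would establish coercivity and extract a convergent subsequence. Because the data-fidelity term is nonnegative, we have $j(u_k) \geq \alpha\,\norm{u_k}_{\Mcon}$, so the minimizing sequence is bounded in the norm of $\Mcon$. Recalling the identification $\Mcon \cong \Ccon^*$ with separable predual $\Ccon$ (continuous functions on the compact set $\Oc$ with values in $\C^N$), the sequential Banach--Alaoglu theorem yields a subsequence (not relabeled) and some $\widehat{u} \in \Mcon$ with $u_k \rightharpoonup^* \widehat{u}$.

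Next I would pass to the limit in the two parts of the objective separately. The norm $\norm{\cdot}_{\Mcon}$, being the dual norm on $\Ccon^*$, is weak-$*$ lower semicontinuous, so $\norm{\widehat{u}}_{\Mcon} \leq \liminf_k \norm{u_k}_{\Mcon}$. For the fidelity term, Lemma~\ref{lem:weak_to_strong} supplies the decisive continuity $\SO u_k \to \SO \widehat{u}$ in $\C^{NM}$; since this is finite-dimensional strong convergence, each summand satisfies $\abs{(\SO u_k)_m - p_d^m}_{\C^N}^2 \to \abs{(\SO \widehat{u})_m - p_d^m}_{\C^N}^2$. Adding the two contributions gives $j(\widehat{u}) \leq \liminf_k j(u_k) = j^*$, so $\widehat{u}$ attains the infimum and is the desired minimizer.

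The argument is entirely standard once the compactness and lower-semicontinuity ingredients are assembled, so there is no genuine obstacle at this stage. The only place where nontrivial prior work is invoked is the weak-$*$-to-strong continuity of the observation operator $\SO$ in Lemma~\ref{lem:weak_to_strong}, which itself rests on the regularity theory of Section~\ref{sec:existence_regularity}---in particular Lemma~\ref{lem:improved}, which guarantees that the point evaluations defining $\SO$ away from $\Oc$ are well defined and continuous. Without this, the fidelity term would not even be meaningful on all of $\Mcon$, let alone sequentially continuous along the minimizing sequence.
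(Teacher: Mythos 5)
Your proof is correct and matches the paper's intent: the paper dispatches this result with the phrase ``by established arguments'' immediately after Lemma~\ref{lem:weak_to_strong}, and the argument it has in mind is precisely the direct method you carry out (coercivity from the $\alpha\norm{\cdot}_{\Mcon}$ term, sequential Banach--Alaoglu via the separable predual $\Ccon$, weak-$*$ lower semicontinuity of the dual norm, and weak-$*$-to-strong continuity of $\SO$ from Lemma~\ref{lem:weak_to_strong} for the fidelity term). No discrepancies to report.
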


To derive optimality conditions, we consider the adjoint equation,
\begin{equation}\label{eq:dual_prob}
  \begin{curlyeq}
    -\Lap \xi_n - \kay^2_n\xi_n
    &= \textstyle\sum_{\set{m \;|\; x_m \in \Omega}} q_{n,m}\delta_{x_m}, &&\text{in }\Omega,\\
    \partial_{\nu}\xi_n + \i\bar{\kappa}_n\chi_{\GZ}\xi_n
    &= \textstyle\sum_{\set{m \;|\; x_m\in\Gamma}} q_{n,m}\delta_{x_m}, &&\text{on }\Gamma,
  \end{curlyeq}
\quad n = 1,2,\ldots, N,
\end{equation}
for given $q\in \C^{NM}$. We denote the by \(\SO^*\) the operator that maps a given \(q\)
to the restriction \(\xi\rvert_{\Oc}\), where $\xi = (\xi_1, \ldots, \xi_N)$ is the corresponding solution
to~\eqref{eq:dual_prob}.
\begin{proposition}\label{prop:SO}
  The linear operator $\SO^*\colon \C^{NM} \to \Cc$ is bounded.
\end{proposition}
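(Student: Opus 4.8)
The plan is to recognize that, componentwise, \eqref{eq:dual_prob} is precisely a dual Helmholtz problem of the form \eqref{eq:dual_source_y}, except that its singular sources are now located at the observation points $\{x_m\}_{m}$ rather than inside $\Oc$. Since those points are separated from $\Oc$, the regularity theory of Section~\ref{sec:existence_regularity} yields continuity together with a norm bound for the restriction $\xi\rvert_{\Oc}$, which is exactly $\SO^* q$.

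First I would fix $q \in \C^{NM}$ and exploit linearity: the $n$-th component of the solution of \eqref{eq:dual_prob} is $\xi_n = \sum_{m=1}^M q_{n,m}\,\bar{G}^{x_m}$, where $\bar{G}^{x_m}$ denotes the very weak solution of the dual fundamental problem \eqref{eq:dual_source_y} with pole at $x_m$. Because the observation points lie in $\widebar\Omega\setminus\Oc$ and $\Oc$ is compact, there is an $\eps_0 > 0$ with $\dist(\Oc,\{x_m\}_m) \geq \eps_0$, so that $\Oc \subset \widebar\Omega\setminus\Nn_{\eps_0/2}(\{x_m\}_m)$.

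Next I would invoke the dual counterpart of Lemma~\ref{lem:improved} --- available because, as noted after Proposition~\ref{prop:splitting}, \eqref{eq:source_y} and \eqref{eq:dual_source_y} differ only by complex conjugation --- now with the finite set $\{x_m\}_m$ playing the role of the source support. Applied to the measure $\sum_m q_{n,m}\delta_{x_m}$ this gives $\xi_n \in \Cc(\widebar\Omega\setminus\Nn_{\eps_0/2}(\{x_m\}_m))$ with
\[
  \norm{\xi_n}_{\Cc(\Oc)} \;\leq\; C(\eps_0/2)\,\sum_{m=1}^M\abs{q_{n,m}}.
\]
Equivalently, one applies Lemma~\ref{lem:Green} to each $\bar{G}^{x_m}$ and uses the embedding $H^2 \hookrightarrow \Cc$ for $d \leq 3$.

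Finally I would assemble the components. Using the norm $\norm{\phi}_{\Ccon} = \sup_{x\in\Oc}\abs{\phi(x)}_{\C^N}$, the elementary estimate $\sup_{x\in\Oc}\big(\sum_n \abs{\xi_n(x)}^2\big)^{1/2} \leq \big(\sum_n \norm{\xi_n}_{\Cc(\Oc)}^2\big)^{1/2}$, and the equivalence of the $\ell^1$ and $\ell^2$ norms on $\C^M$, this yields $\norm{\SO^* q}_{\Ccon} \leq C\,\abs{q}_{\C^{NM}}$, i.e.\ boundedness of $\SO^*$. The decisive step --- and the only place any real content enters --- is the separation $\dist(\Oc,\{x_m\}_m) > 0$: it is what keeps $\Oc$ away from the singularities of the $\bar{G}^{x_m}$, so that $\SO^*$ actually maps into $\Ccon$. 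If the observation points were allowed to touch $\Oc$, these fundamental solutions would be unbounded there and the proposition would fail; this is precisely the $w\equiv 1$ limitation that the weighted norm of Section~\ref{sec:weighted_norm} is designed to remove.
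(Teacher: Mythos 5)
Your proof is correct and follows essentially the same route as the paper: both exploit the separation $\dist(\Oc,\{x_m\}_m)\geq\eps_0>0$ together with Lemma~\ref{lem:Green} (regularity of the Green's functions away from their poles, plus the embedding $H^2\hookrightarrow\Cc$ for $d\leq 3$) to conclude continuity of $\xi\rvert_{\Oc}$, and then obtain boundedness from linearity and the finite dimensionality of $\C^{NM}$. The paper's proof is just a terser version of yours, citing Lemma~\ref{lem:Green} directly rather than writing out the expansion $\xi_n=\sum_m q_{n,m}\bar{G}_n^{x_m}$ and the componentwise norm estimates.
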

\begin{proof}
  First we note that the equation~\eqref{eq:dual_prob} has a measure right-hand
  side. However, since $x_m \in \Omega\setminus \Nn_{\eps_0}(\Oc)$ for all $m=1,2,\ldots, M$, we
  have $\xi_n\in \Cc(\Nn_{\eps_0}(\Oc))\subset \Cc(\Oc)$ thanks to Lemma \ref{lem:Green}. Thus the operator $\SO^*$ is well
  defined. The linearity of $\SO^*$ is trivial.  The boundedness of $\SO^*$ follows with
  linearity from Lemma~\ref{lem:Green}.
\QED
\end{proof}
\begin{proposition}\label{prop:SO_dual}
  The operator $\SO$ is the dual of the operator $\SO^*$, that is
  \begin{equation}\label{eq:dual}
    \inner{\SO u, q} = \pair{u, \SO^* q}
    = \sum_{m=1}^M \sum_{n=1}^N \pair{u_n, \bar{G}_n^{x_m}q_{n,m}}
  \end{equation}
  for all $q\in \C^{NM}$ and all $u \in \Mcon$, where \(\bar{G}_n^{x_m}\) is defined
  in~\eqref{eq:dual_source_y} with \(\kay = \kay_n\).
\end{proposition}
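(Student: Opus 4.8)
The plan is to reduce the asserted duality to the reciprocity of the fundamental solutions, treating the two equalities separately.

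The second equality is the easier one, and I would dispatch it first by linearity. Since the adjoint problem~\eqref{eq:dual_prob} is linear in its data, its solution $\xi = \SO^* q$ decomposes componentwise as $\xi_n = \sum_{m=1}^M q_{n,m}\bar{G}_n^{x_m}$, with $\bar{G}_n^{x_m}$ the solution of~\eqref{eq:dual_source_y} for $\kay=\kay_n$ and $y=x_m$. Each $\bar{G}_n^{x_m}$ is continuous on $\Oc$ (Proposition~\ref{prop:SO}, using $x_m\notin\Oc$), so inserting this decomposition into the definition of the duality pairing and expanding over $n$ gives $\pair{u,\SO^* q}=\sum_{m,n}\pair{u_n,\bar{G}_n^{x_m}q_{n,m}}$ at once.

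For the first equality $\inner{\SO u,q}=\pair{u,\SO^* q}$ I would first reduce to a single Dirac source. Both sides are weak-* continuous linear functionals of $u$: the left-hand side because $u\mapsto\SO u$ is weak-* continuous by Lemma~\ref{lem:weak_to_strong}, the right-hand side because $\SO^* q\in\Ccon$ is a fixed continuous function. Since finite linear combinations of Dirac measures are weak-* dense in $\Mcon$ (as already exploited in Lemma~\ref{lem:improved}), it suffices to treat $u=\coeff{c}\,\delta_y$ with $\coeff{c}\in\C^N$ and $y\in\Oc$. For such $u$ one has $p_n=\coeff{c}_n G_n^y$, and, using $(z,v)_\C=\Re(z\bar v)$ together with $\overline{\bar{G}_n^{x_m}(y)}=G_n^{x_m}(y)$, the identity collapses to the reciprocity relation
\[
G_n^y(x_m)=G_n^{x_m}(y)\qquad\text{for all }n=1,\dots,N,\ m=1,\dots,M,
\]
where $y\in\Oc$ and $x_m\notin\Oc$ guarantee $y\neq x_m$.

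The hard part will be establishing this reciprocity rigorously in view of the singularities of the fundamental solutions. Fixing $n$ and suppressing it, I would argue from the transposition characterization~\eqref{eq:duality}: choosing data $\psi_\eps\in L^2(\Omega)$ concentrating at $x_m$ with $\psi_\eps\rightharpoonup^*\delta_{x_m}$, the corresponding dual solutions $r_{\psi_\eps}$ of~\eqref{eq:dual_state} converge to $\bar{G}^{x_m}$, and in the limit the left-hand side $(G^y,\psi_\eps)_\Omega$ tends to $G^y(x_m)$ (by continuity of $G^y$ near $x_m$, Lemma~\ref{lem:improved}, since $x_m\neq y$), while the right-hand side $\pair{\delta_y,r_{\psi_\eps}}=\Re(r_{\psi_\eps}(y))$ tends to the corresponding value of $\bar{G}^{x_m}(y)$ by the uniform convergence of $r_{\psi_\eps}$ near $y$ furnished by Lemma~\ref{lem:Green}; running the argument for real and imaginary data yields $G^y(x_m)=\bar{G}^{x_m}(y)$, hence $G^y(x_m)=G^{x_m}(y)$ after conjugation. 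Equivalently, one may apply Green's second identity to $G^y$ and $G^{x_m}$ on $\Omega$ with small balls excised around the two distinct singular points: the $\kay^2$ volume terms cancel, the Robin contributions on $\GZ$ cancel because both functions solve the \emph{same} equation (identical factor $\i\kay$), and the integrals over the excised spheres converge to $G^{x_m}(y)$ and $G^y(x_m)$ as the radii vanish, using the explicit singularity $\Phi$ from Proposition~\ref{prop:splitting}. The chief technical obstacle in either variant is exactly this limit passage, and in particular the case where $y$ or $x_m$ lies on $\Gamma$, so that the Dirac enters through the boundary condition rather than the interior; the $H^2$-bounds away from the singularities (Lemma~\ref{lem:Green}) and the splitting of Proposition~\ref{prop:splitting} are what make this controllable.
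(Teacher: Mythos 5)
Your proposal is correct, and it shares the paper's outer skeleton --- approximate \(u\) by finite sums of Dirac measures, use weak-\(*\) continuity of both sides (Lemma~\ref{lem:weak_to_strong} for \(\SO\), the fixed continuous function \(\SO^*q\in\Ccon\) for the pairing) to pass to the limit, and obtain the second equality in \eqref{eq:dual} by linearity of \(\SO^*\) --- but it differs in the one step that carries the mathematical weight. For a Dirac source the paper simply invokes \cite[Theorem~7.2]{BermudezGamalloRoriguez:2004} ``with a slight modification'', whereas you prove that step yourself: you reduce it, via the conjugation identity \(\bar{G}_n^{x_m} = \overline{G_n^{x_m}}\) noted after \eqref{eq:dual_source_y}, to the reciprocity relation \(G_n^y(x_m) = G_n^{x_m}(y)\), and then establish reciprocity either through the transposition identity \eqref{eq:duality} with \(L^2\) data concentrating at \(x_m\), or through Green's second identity on the domain with small balls excised, where the Robin terms cancel precisely because the operator is complex-symmetric (the same factor \(\i\kappa_n\) appears in both problems under the non-conjugated pairing). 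Your real/imaginary-part bookkeeping is consistent with the paper's real duality pairing, and the reduction to a single Dirac is legitimate. What your route buys is a self-contained argument that makes explicit \emph{why} the duality holds; what it costs is exactly the technical work you flag. One concrete caution: Proposition~\ref{prop:splitting} is stated only for interior points \(y\in\Omega\), so for source or observation points lying on \(\Gamma\) (which the setting allows, cf.~\eqref{eq:dual_prob}) the Green's-identity variant is not covered by the splitting as written; your transposition variant, which needs only the continuity statements of Lemmas~\ref{lem:Green} and~\ref{lem:improved} together with local uniform convergence of the approximating dual solutions away from \(x_m\), is the more robust of your two options there and should be the one carried out in full.
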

\begin{proof}
  Similar to Lemma~\ref{lem:improved}, we approximate $u$ by a sequence $u_K$ of the form
  \(u_K = \sum_{k=1,\ldots,K}\coeff{u}_k\delta_{y_k}\).
  From \cite[Theorem 7.2]{BermudezGamalloRoriguez:2004}, with a slight modification, we have for all $K$ that
  \begin{equation*}
    (\SO u_K, q) = \pair{u_K, \SO^* q}.
  \end{equation*}
  Passing to the limit as $K\rightarrow \infty$ and using Lemma~\ref{lem:weak_to_strong} and
  $u_K \rightharpoonup^* u$ we get the desired result. The last equality in~\eqref{eq:dual}
  follows by linearity of \(\SO^*\).
\QED
\end{proof}

As in \cite{BrediesPikkarainen:2013}, the following optimality conditions system can be derived.
\begin{proposition}
  \label{prop:optimality_conditions}
  A measure $\widehat{u} \in \Mcon$ is a solution to~\eqref{eq:reduced_cost} if and only
  if $\widehat{\xi} = - \SO^*(\SO\widehat{u} - p_d)$ satisfies
  $\norm{\widehat{\xi}}_{\Ccon} \leq \alpha$
  and the polar decomposition $\de \widehat{u} = \widehat{u}'\de\abs{\widehat{u}}$, with
  $\widehat{u}'\in L^1(\Oc,\abs{\widehat{u}}, \C^N)$, satisfies
  \begin{equation*}
    \alpha \widehat{u}' = \widehat{\xi} \qquad \abs{\widehat{u}}\text{-almost everywhere}.
  \end{equation*}
  Thereby,
  $\supp\abs{\widehat{u}} \subset \set{x\in\Oc \;|\; \abs{\widehat{\xi}(x)}_{\C^N} = \alpha}$
  for each solution $\widehat{u}$.
\end{proposition}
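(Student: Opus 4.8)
The plan is to use the convexity of $(P_\alpha)$. I decompose $j(u) = F(\SO u) + \alpha\norm{u}_{\Mcon}$ with the misfit $F(q) = \tfrac12\sum_{m=1}^M \abs{q_m - p_d^m}_{\C^N}^2$. Since $F$ is convex and continuously differentiable with $\nabla F(q) = (q_m - p_d^m)_m$, and $\SO$ is linear and bounded (Lemma~\ref{lem:weak_to_strong}), the map $u \mapsto F(\SO u)$ is convex and G\^{a}teaux differentiable, while $u \mapsto \alpha\norm{u}_{\Mcon}$ is convex and finite. Hence $\widehat{u}$ minimizes $j$ if and only if the variational inequality $\pair{\nabla F(\SO\widehat{u}), \SO(u-\widehat{u})} + \alpha\norm{u}_{\Mcon} - \alpha\norm{\widehat{u}}_{\Mcon} \ge 0$ holds for all $u\in\Mcon$. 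Using Proposition~\ref{prop:SO_dual} to move $\SO$ onto the measure and the definition $\widehat{\xi} = -\SO^*(\SO\widehat{u} - p_d)$, this reads
\[
  \pair{\widehat{\xi}, u - \widehat{u}} \le \alpha\bigl(\norm{u}_{\Mcon} - \norm{\widehat{u}}_{\Mcon}\bigr) \quad\text{for all } u\in\Mcon,
\]
that is, $\widehat{\xi}\in\alpha\,\partial\norm{\cdot}_{\Mcon}(\widehat{u})$. Crucially, $\widehat{\xi}\in\Ccon$ by Proposition~\ref{prop:SO}, so the subgradient lies in the predual and can be used pointwise.

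Next I extract the two stated conditions from this inequality. Testing with $u=0$ and $u = 2\widehat{u}$ gives $\pair{\widehat{\xi}, \widehat{u}} = \alpha\norm{\widehat{u}}_{\Mcon}$, after which the inequality collapses to $\pair{\widehat{\xi}, u} \le \alpha\norm{u}_{\Mcon}$ for all $u$; taking the supremum over the unit ball and invoking the isometric duality $\Mcon = \Ccon^*$ yields $\norm{\widehat{\xi}}_{\Ccon}\le\alpha$. For the pointwise relation I insert the polar decomposition $\de\widehat{u} = \widehat{u}'\de\abs{\widehat{u}}$ with $\abs{\widehat{u}'}_{\C^N}=1$ $\abs{\widehat{u}}$-a.e.\ into the equality, writing it as $\int_{\Oc}(\widehat{u}',\widehat{\xi})_{\C^N}\de\abs{\widehat{u}} = \int_{\Oc}\alpha\abs{\widehat{u}'}_{\C^N}\de\abs{\widehat{u}}$. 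Since $\norm{\widehat{\xi}}_{\Ccon}\le\alpha$, Cauchy--Schwarz bounds the left integrand by the right one pointwise, so equality of the integrals forces $(\widehat{u}',\widehat{\xi})_{\C^N} = \alpha\abs{\widehat{u}'}_{\C^N}$ $\abs{\widehat{u}}$-a.e.; together with $\abs{\widehat{u}'}_{\C^N}=1$ and $\abs{\widehat{\xi}}_{\C^N}\le\alpha$ the equality case of Cauchy--Schwarz gives $\widehat{\xi} = \alpha\widehat{u}'$ $\abs{\widehat{u}}$-a.e. Conversely, if $\norm{\widehat{\xi}}_{\Ccon}\le\alpha$ and $\alpha\widehat{u}'=\widehat{\xi}$ $\abs{\widehat{u}}$-a.e., then $\pair{\widehat{\xi}, u}\le\alpha\norm{u}_{\Mcon}$ and $\pair{\widehat{\xi},\widehat{u}}=\alpha\norm{\widehat{u}}_{\Mcon}$ reproduce the variational inequality, so the characterization is an equivalence.

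For the support statement I use that $\widehat{\xi}\in\Ccon$ is continuous, so $\set{x\in\Oc \;|\; \abs{\widehat{\xi}(x)}_{\C^N} = \alpha}$ is closed; the identity $\widehat{\xi}=\alpha\widehat{u}'$ with $\abs{\widehat{u}'}_{\C^N}=1$ shows $\abs{\widehat{\xi}}_{\C^N}=\alpha$ holds $\abs{\widehat{u}}$-a.e., so $\abs{\widehat{u}}$ is concentrated on this closed set and its support is contained in it. I expect the only genuinely delicate point to be the passage from the scalar integral equality to the $\abs{\widehat{u}}$-a.e.\ identity $\widehat{\xi}=\alpha\widehat{u}'$ — i.e.\ the equality case of the Cauchy--Schwarz/duality inequality in the measure setting; the differentiability of $F\circ\SO$, the validity of the variational inequality characterization, and the continuity of $\widehat{\xi}$ are all routine consequences of convexity and the regularity results established above.
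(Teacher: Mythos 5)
Your proof is correct, and it is essentially the argument the paper invokes: the paper's proof simply defers to \cite[Proposition~3.6]{BrediesPikkarainen:2013}, whose reasoning is exactly your convex-analysis route (variational inequality for the smooth-plus-norm splitting, the bound $\norm{\widehat{\xi}}_{\Ccon}\leq\alpha$ from testing/duality, and the pointwise identity $\alpha\widehat{u}'=\widehat{\xi}$ from the equality case in the polar decomposition), adapted to complex-valued measures on the compact set $\Oc$ just as you have done.
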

\begin{proof}
  The proof follows the one of~\cite[Proposition~3.6]{BrediesPikkarainen:2013} with minor
  modification concerning the complex valued measure and the compact control domain.
\QED
\end{proof}

Since the operator \(S\) maps into a finite dimensional space, the solution set
of~\eqref{eq:reduced_cost} always contains linear combinations of Dirac delta
function. This can be seen by interpreting the corresponding dual problem as a
\emph{semi-infinite} optimization problem; see, e.g.,
\cite[Section~5.4]{BonnansShapiro:2000}. For the convenience of the reader, we provide an
independent exposition in Appendix~\ref{app:extremal_solutions}.
\begin{corollary}\label{cor:dirac_solutions}
There exists an optimal solution \(\widehat{u}\) to~\eqref{eq:reduced_cost} which consists
of \(N_d \leq 2NM\) point sources,
\[
\widehat{u} = \sum_{j=1}^{N_d} \widehat{\coeff{u}}_j \delta_{\widehat{x}_j}
\quad\text{where } \widehat{\coeff{u}}_j \in \C^N,\; \widehat{x}_j \in \Oc.
\]
\end{corollary}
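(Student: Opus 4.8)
The plan is to reduce the statement to a finite-dimensional convex-geometry argument, exploiting that $\SO$ takes values in $\C^{NM}\cong\R^{2NM}$. First I would fix a minimizer $\widehat u\in\Mcon$, which exists by Proposition~\ref{prop:exist_opt}, and set $\widehat y=\SO\widehat u$ and $\widehat t=\norm{\widehat u}_{\Mcon}$. The key preliminary observation is that optimality forces $\widehat t$ to be the \emph{minimal} norm among all measures reproducing the observation $\widehat y$: were there a $v$ with $\SO v=\widehat y$ and $\norm{v}_{\Mcon}<\widehat t$, then $j(v)<j(\widehat u)$, a contradiction. Consequently, any $\widetilde u$ with $\SO\widetilde u=\widehat y$ and $\norm{\widetilde u}_{\Mcon}\le\widehat t$ is itself a minimizer, so it suffices to exhibit such a $\widetilde u$ supported on at most $2NM$ points.

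Next I would introduce the atom map $\Phi\colon\Oc\times\mathbb S\to\C^{NM}$, $\Phi(x,d)=\SO(d\,\delta_x)$, where $\mathbb S=\set{d\in\C^N \;|\; \abs{d}_{\C^N}=1}$. By Proposition~\ref{prop:SO_dual} the entries of $\Phi(x,d)$ are linear in $d$ and depend on $x$ only through the values $\bar G_n^{x_m}(x)$, which are continuous on $\Oc$ by Lemma~\ref{lem:Green}; hence $\Phi$ is continuous and its image $R=\Phi(\Oc\times\mathbb S)$ is a compact, centrally symmetric subset of $\C^{NM}$ (central symmetry because $\Phi(x,-d)=-\Phi(x,d)$). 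Writing the polar decomposition $\de\widehat u=\widehat u'\de\abs{\widehat u}$ with $\abs{\widehat u'}_{\C^N}=1$ almost everywhere, one has $\widehat y=\SO\widehat u=\int_{\Oc}\Phi(x,\widehat u'(x))\de\abs{\widehat u}(x)$, so $\widehat y/\widehat t$ is the barycentre of the pushforward of the probability measure $\abs{\widehat u}/\widehat t$ under $x\mapsto\Phi(x,\widehat u'(x))$. In particular $\widehat y/\widehat t\in\conv R$, the closure being unnecessary since $R$ is compact in finite dimensions.

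A first application of Carathéodory's theorem gives $\widehat y/\widehat t=\sum_j\lambda_j\Phi(x_j,d_j)$ with $\lambda_j>0$ and $\sum_j\lambda_j=1$; setting $\widetilde u=\widehat t\sum_j\lambda_j\,d_j\delta_{x_j}$ one checks directly that $\SO\widetilde u=\widehat y$ and $\norm{\widetilde u}_{\Mcon}=\widehat t\sum_j\lambda_j\abs{d_j}_{\C^N}=\widehat t$, so $\widetilde u$ is a minimizer of the required structure once coinciding support points are merged. The delicate point, and the one I expect to be the main obstacle, is the sharp bound $N_d\le 2NM$ rather than the $2NM+1$ produced by the plain Carathéodory bound. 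Here I would again use the minimality of $\widehat t$. Working inside the subspace $V=\spann R=\Ran\SO$, on which $\conv R$ is a symmetric convex body containing the origin in its relative interior, minimality says precisely that $\widehat y/\widehat t$ lies on the relative boundary of $\conv R$ (its Minkowski gauge equals one). Choosing a supporting hyperplane $H\subset V$ at this point, the supporting property forces every active atom into $H$, so that $\widehat y/\widehat t\in\conv(R\cap H)$; since $\dim H=\dim V-1\le 2NM-1$, Carathéodory applied within $H$ now yields at most $2NM$ atoms. This single dimension-saving step, powered by the optimality of $\widehat u$, upgrades the generic bound to the claimed $2NM$ and reproduces the count obtained by the semi-infinite duality argument in Appendix~\ref{app:extremal_solutions}.
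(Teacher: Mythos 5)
Your proof is correct, but it takes a genuinely different route from the paper's. The paper applies the Krein--Milman theorem to the weak-\(*\) compact convex solution set (Proposition~\ref{prop:krein_milman}) and shows that every extremal point of that set consists of at most \(\dim\Ran\SO \le 2NM\) Dirac deltas (Theorem~\ref{thm:extremal_dirac}); the extremality argument there is a splitting construction, in which a solution with more support points admits a nontrivial perturbation along a null direction of \(\SO\), exhibiting it as a midpoint of two other solutions. You work instead entirely in the finite-dimensional image space: the atoms are encoded in the compact, centrally symmetric set \(R=\SO(\set{d\,\delta_x})\), the barycentre representation places \(\SO\widehat u/\norm{\widehat u}_{\Mcon}\) in \(\conv R\), and Carath\'eodory produces a discrete minimizer; the sharpening from \(2NM+1\) to \(2NM\) atoms comes from norm-minimality of \(\widehat u\), which places this point on the relative boundary of \(\conv R\), where the supporting-hyperplane refinement of Carath\'eodory applies. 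This is the same dimension-saving mechanism as in the paper --- optimality buys one dimension --- but realized with elementary convex geometry instead of Krein--Milman and weak-\(*\) compactness. What the paper's route buys is a stronger structural statement (every extremal solution is discrete), whose constructive splitting argument (Proposition~\ref{prop:remove_diracs}) is reused algorithmically in Corollary~\ref{cor:remove_diracs}; your route is self-contained and likewise constructive, converting an arbitrary minimizer into a discrete one. Two points you leave implicit but that are easily supplied: the degenerate case \(\widehat u = 0\), where the rescaling by \(\widehat t\) is impossible and the claim holds trivially with \(N_d = 0\), and the fact that the gauge-equals-one characterization of the relative boundary uses that \(\conv R\) is compact and that \(0\) lies in its relative interior, both of which follow from the compactness and central symmetry of \(R\) that you establish.
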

\begin{proof}
This follows by combining Proposition~\ref{prop:krein_milman} with
Theorem~\ref{thm:extremal_dirac}. Note that it holds \(\dim \Ran S \leq \dim \C^{NM} = 2NM\), since
\(\C\) is regarded as a real vector space.
\QED
\end{proof}
\begin{corollary}
  \label{cor:optimality_conditions_dirac}
Any solution \(\widehat{u} = \sum_{j=1}^{N_d} \widehat{\coeff{u}}_j \delta_{\widehat{x}_j}\)
from Corollary~\ref{cor:dirac_solutions} is uniquely characterized by the optimality conditions
\begin{align*}
\norm{\widehat{\xi}}_{\Cc(\Oc,\C^N)} \leq \alpha, \qquad
\alpha\,\widehat{\coeff{u}}_j = \abs{\widehat{\coeff{u}}_j}_{\C^N}\widehat{\xi}(\widehat{x}_j),
\quad j \in \set{1,2,\ldots,N_d},
\end{align*}
where \(\widehat{\xi} = - \SO^* (\SO\widehat{u} - p_d)\) is the associated adjoint state.
\end{corollary}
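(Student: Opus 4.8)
The plan is to derive the stated relations by specializing the general, measure-valued optimality system of Proposition~\ref{prop:optimality_conditions} to a source that is a finite sum of Dirac deltas, thereby turning the abstract condition into coordinates on the atoms. Throughout, I would assume without loss of generality that the points \(\widehat{x}_j\) are pairwise distinct, merging beforehand the coefficients attached to any coincident points; this changes neither the measure \(\widehat{u}\) nor the asserted conditions. Before specializing, I would first record that the adjoint state \(\widehat{\xi} = -\SO^*(\SO\widehat{u} - p_d)\) lies in \(\Ccon\) by Proposition~\ref{prop:SO}, so that the pointwise values \(\widehat{\xi}(\widehat{x}_j)\) appearing in the statement are well defined.

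The central computation is the polar decomposition of \(\widehat{u}\). I would compute the total variation \(\abs{\widehat{u}} = \sum_{j=1}^{N_d} \abs{\widehat{\coeff{u}}_j}_{\C^N}\,\delta_{\widehat{x}_j}\), a purely atomic measure carried by the atoms with \(\widehat{\coeff{u}}_j \neq 0\), whose Radon--Nikodym density is \(\widehat{u}'(\widehat{x}_j) = \widehat{\coeff{u}}_j/\abs{\widehat{\coeff{u}}_j}_{\C^N}\) at each such atom. Proposition~\ref{prop:optimality_conditions} then says that \(\widehat{u}\) solves~\eqref{eq:reduced_cost} if and only if \(\norm{\widehat{\xi}}_{\Ccon}\leq\alpha\) together with \(\alpha\widehat{u}' = \widehat{\xi}\) holding \(\abs{\widehat{u}}\)-almost everywhere. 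Since \(\abs{\widehat{u}}\) is concentrated on the atoms with nonzero coefficient, the almost-everywhere identity reduces to the pointwise relation \(\alpha\,\widehat{u}'(\widehat{x}_j) = \widehat{\xi}(\widehat{x}_j)\) at each such \(j\); clearing the denominator yields \(\alpha\,\widehat{\coeff{u}}_j = \abs{\widehat{\coeff{u}}_j}_{\C^N}\widehat{\xi}(\widehat{x}_j)\), which I would then observe holds trivially at any atom with \(\widehat{\coeff{u}}_j = 0\) as well, both sides vanishing there. The reverse implication is the same chain read backwards: dividing the coefficient relation by \(\abs{\widehat{\coeff{u}}_j}_{\C^N}\) at the atoms with nonzero coefficient reconstructs \(\alpha\widehat{u}' = \widehat{\xi}\) \(\abs{\widehat{u}}\)-a.e., the indices with vanishing coefficient forming an \(\abs{\widehat{u}}\)-null set that is irrelevant.

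Finally, I would check that the contact-set condition \(\supp\abs{\widehat{u}} \subset \set{x\in\Oc : \abs{\widehat{\xi}(x)}_{\C^N} = \alpha}\) of Proposition~\ref{prop:optimality_conditions} is automatically subsumed: taking \(\C^N\)-norms in \(\alpha\,\widehat{\coeff{u}}_j = \abs{\widehat{\coeff{u}}_j}_{\C^N}\widehat{\xi}(\widehat{x}_j)\) gives \(\abs{\widehat{\xi}(\widehat{x}_j)}_{\C^N} = \alpha\) whenever \(\widehat{\coeff{u}}_j \neq 0\), i.e.\ every atom of \(\widehat{u}\) lies in the contact set. Because the argument is only a translation of an already-established equivalence into coordinates on the atoms, I do not expect a genuine obstacle; the sole point demanding care is the bookkeeping of the polar decomposition, in particular remembering that the relation is vacuous at atoms with vanishing coefficient, so that the finite-dimensional conditions are exactly equivalent to, and not merely necessary for, the measure-theoretic ones.
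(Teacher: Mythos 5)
Your proposal is correct and is exactly the argument the paper intends: the corollary is stated without proof precisely because it is the direct specialization of the measure-valued optimality system of Proposition~\ref{prop:optimality_conditions} to atomic measures, which is what you carry out. Your bookkeeping of the polar decomposition (total variation $\sum_j \abs{\widehat{\coeff{u}}_j}_{\C^N}\delta_{\widehat{x}_j}$, density $\widehat{\coeff{u}}_j/\abs{\widehat{\coeff{u}}_j}_{\C^N}$ at the atoms, vacuity at zero coefficients, and the contact-set condition recovered by taking norms) is accurate, and the equivalence in both directions is established as required by the phrase ``uniquely characterized.''
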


\section{Weighted norm approach}
\label{sec:weighted_norm}

In practical computations, the recovery based on~\eqref{eq:reduced_cost}
succeeds only in some cases. In particular, there exist single point-sources which can not be
recovered even in the noise-free case. These cases occur when the boundary of the
set \(\Oc\) is close to the observation points (in which case several spurious sources
tend to be placed in these spots), or if the exact source is located in a spot with ``bad''
acoustical properties; see section~\ref{sec:numerics}.
Consider for a moment the case \(N = 1\), and assume that the exact source is given by
\(u^\star = \coeff{u}^\star \delta_{x^\star}\). The magnitude of the observed signal is given by
\[
\abs{S u^\star}_{\C^M} = \abs{\coeff{u}^\star}\sqrt{\sum_{m=1}^M \abs{G^{x_{m}}(x^\star)}^2}
  = \abs{\coeff{u}^\star} \hat{w}(x^\star)
\]
Thus, the magnitude of the observation for a unit source
originating from \(x \in \Omega\) is
described by the function \(\hat{w}\colon \Omega \to \R_+\cup\set{+\infty}\). Empirically,
the cases of non-identifiability coincide with the cases where \(\hat{w}(x^\star)\) is
small, compared to a global value such as, e.g., \(\max_{x\in\Oc}\hat{w}(x)\) or the mean
of \(\hat{w}\). However, if the magnitude of each source is computed in the weighted norm,
\[
\norm{u^\star}_{\M_{\hat{w}}(\Oc,C^N)} = \int_{\Oc} \hat{w}\de \abs{u^\star} =
\abs{\coeff{u}^\star} \hat{w}(x^\star),
\]
a source of unit size leads to an observation of unit size.

Motivated by this, we introduce for each frequency \(n\) a weight \(w^n\) and consider a
weighted problem:
\begin{equation}
\label{eq:weighted_problem}
\begin{aligned}
  & \min_{u\in \M_w(\Oc,\C^N)} \J_w(p, u) = \frac{1}{2} \sum_{m=1}^M \abs{p(x_m) - p_d^m}_{\C^N}^2
  + \alpha\norm{u}_{\M_w(\Oc,\C^N)}, \\
  & \text{subject to}\quad \eqref{eq:state}.
\end{aligned}
\end{equation}
In the interest of generality, we consider a
formulation with a general class of weights.
We will define the weighted norm $\norm{\cdot}_{\M_w(\Omega, \mathbb C^N)}$ for admissible
choices of the weight \(w\) below.

In a weighted problem formulation, the technical
condition on the observation points \(x_m \notin \Oc\) can be avoided. Therefore, in the
following, we only assume that \(\Oc \subset \Omega\) is closed in \(\Omega\).
Let $\Xi = \set{x_m \;|\; m=1,2,\ldots,M } \subset \Omega$ be the observation points (pairwise
distinct). For simplicity, we do not consider boundary observation in this section.
Note that the original problem~\eqref{eq:pre_opt_prob}--\eqref{eq:state} is not
necessarily well-posed in such cases.
\begin{proposition}
\label{prop:counter_example_weight}
Suppose that \(\Oc\) does not contain isolated points and that \(\dist(\Xi,\Oc) = 0\). Then,
without restriction, \(x_m\in\Oc\) for \(1\leq m \leq M_1 \leq M\) and \(x_m\notin\Oc\)
for \(m > M_1\). If \((p_d^m)_{m=1,\ldots,M_1} \in \C^{N M_1}\) is sufficiently
large (or \(M_1 = M\)),~\eqref{eq:pre_opt_prob}--\eqref{eq:state} does
not admit a solution.
\end{proposition}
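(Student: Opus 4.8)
The plan is to show that the infimum in \eqref{eq:reduced_cost} (equivalently in \eqref{eq:pre_opt_prob}--\eqref{eq:state}) is never attained, by exploiting the singularity of the fundamental solution at the observation points lying in \(\Oc\). The starting point is that, by Proposition~\ref{prop:splitting}, \(\bar G^{x_m} = \bar\Phi^{x_m} + \bar\xi^{x_m}\) splits the Green's function into the free-space fundamental solution \(\Phi^{x_m}\), which is singular at \(x_m\), and a regular remainder \(\bar\xi^{x_m}\in H^2(\Omega)\hookrightarrow\Cc(\widebar{\Omega})\) that stays bounded near \(x_m\). Hence \(\abs{G^{y}(x_m)}\to\infty\) as \(y\to x_m\), while \(G^{y}(x_{m'})\to G^{x_m}(x_{m'})\) remains finite for \(m'\neq m\). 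Since \(\Oc\) has no isolated points, for each \(m\le M_1\) one may choose \(y_k^m\in\Oc\setminus\set{x_m}\) with \(y_k^m\to x_m\); a Dirac source at \(y_k^m\) with coefficient of order \(1/\abs{G^{y_k^m}(x_m)}\to 0\) then reproduces any prescribed value at \(x_m\) with vanishing total variation and, by the previous remark, with vanishing influence on all other observations. Superimposing such corrections onto an arbitrary admissible measure shows that the data terms at the points \(x_m\in\Oc\) can be matched ``for free'', so that
\[
\inf_{u} j(u) = V := \inf_{u\in\Mcon}\Bigl[\tfrac{1}{2}\sum_{m>M_1}\abs{(\SO u)_m - p_d^m}_{\C^N}^2 + \alpha\norm{u}_{\Mcon}\Bigr],
\]
the infimum of the reduced functional \(j_{\mathrm{red}}\) in which the terms \(m\le M_1\) are discarded. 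Note that \(V\) depends only on \(\set{p_d^m}_{m>M_1}\) and is attained, since the points \(x_m\), \(m>M_1\), lie outside \(\Oc\) and the reduced problem falls into the well-posed setting of Section~\ref{sec:opt_cont_prob}.

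Next I would show that any minimizer \(\widehat u\) must match the interior data exactly, i.e.\ \((\SO\widehat u)_m = p_d^m\) for all \(m\le M_1\). This uses the optimality system of Proposition~\ref{prop:optimality_conditions}: with residual \(r = p_d - \SO\widehat u\), the adjoint state \(\widehat\xi = \SO^* r\) must satisfy \(\norm{\widehat\xi}_{\Ccon}\le\alpha\). By Proposition~\ref{prop:SO_dual}, \(\widehat\xi_n = \sum_m r_{n,m}\,\bar G_n^{x_m}\) on \(\Oc\); letting \(x\to x_m\) along points of \(\Oc\) (which accumulate at \(x_m\), as it is not isolated), the contribution \(r_{n,m}\bar\Phi_n^{x_m}(x)\) blows up whenever \(r_{n,m}\neq 0\), while every other term stays bounded near \(x_m\). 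Boundedness of \(\widehat\xi\) on \(\Oc\) therefore forces \(r_{n,m}=0\) for all \(n\) and all \(m\le M_1\).

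Combining the two steps yields the contradiction. A minimizer \(\widehat u\) matches the interior data, so its residual is supported on \(\set{m>M_1}\) with \(\abs{r}\le\sqrt{2V}\), a bound independent of \(\set{p_d^m}_{m\le M_1}\); moreover \(\widehat u\) is simultaneously a minimizer of \(j_{\mathrm{red}}\), whence \(\norm{\widehat u}_{\Mcon}\le V/\alpha\). The fitted observations \((\SO\widehat u)_m\), \(m>M_1\), and hence the adjoint \(\widehat\xi_{\mathrm{red}} = \sum_{m>M_1} r_m\bar G^{x_m}\), are the same for all reduced minimizers; this \(\widehat\xi_{\mathrm{red}}\) is continuous on \(\Oc\) (since \(x_m\notin\Oc\) for \(m>M_1\)), and \(\supp\abs{\widehat u}\subseteq K := \set{x\in\Oc : \abs{\widehat\xi_{\mathrm{red}}(x)}_{\C^N}=\alpha}\). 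In the main case \(x_m\notin K\) for some \(m\le M_1\), one has \(\dist(x_m,K)\ge d_0>0\) with \(d_0\) depending only on the reduced data; then \(\supp\widehat u\) stays at distance \(\ge d_0\) from \(x_m\), and Lemma~\ref{lem:improved} gives \(\abs{(\SO\widehat u)_m}_{\C^N}\le C(d_0)\norm{\widehat u}_{\Mcon}\le C(d_0)V/\alpha =: C_1\), a bound independent of \(p_d^m\). Choosing \(\abs{p_d^m}_{\C^N} > C_1\) (this is the meaning of ``sufficiently large'') contradicts \((\SO\widehat u)_m = p_d^m\). In the special case \(M_1=M\) everything collapses: \(V=0\), the only reduced minimizer is \(\widehat u = 0\) with \(\SO\widehat u = 0\), so any \(p_d\neq 0\) already gives a contradiction.

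I expect the main obstacle to be the degenerate case \(\abs{\widehat\xi_{\mathrm{red}}(x_m)}_{\C^N}=\alpha\), i.e.\ \(x_m\in K\): then the support of a putative minimizer may cluster at \(x_m\), the uniform distance \(d_0\) is lost, and the clean bound above breaks down. I would rule this out by a direct perturbation in the spirit of the first paragraph — moving the offending mass slightly closer to \(x_m\) along \(\Oc\) strictly decreases the norm while both the reduced data fit and the exact match at \(x_m\) are preserved in the limit — or by observing that support accumulating at \(x_m\in\Oc\) destroys the continuity (Lemma~\ref{lem:improved}) needed for the evaluation \((\SO\widehat u)_m=p_d^m\) to be finite in the first place. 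Some care is also required throughout to guarantee that all point evaluations appearing in the constructions are well defined, which is precisely what Lemma~\ref{lem:improved} provides once the relevant supports are separated from \(\Xi\).
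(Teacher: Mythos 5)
Your strategy is the same as the paper's: introduce the reduced functional \(j_{\mathrm{red}}\) (the paper's auxiliary problem \((P_{\mathrm{aux}})\)) in which the observations at the points \(x_m\in\Oc\) are discarded, prove \(\inf j = V := \min j_{\mathrm{red}}\) by superposing Dirac corrections with vanishing coefficients near those points, deduce that a putative minimizer \(\widehat u\) must satisfy \((\SO\widehat u)_m=p_d^m\) for \(m\le M_1\) while simultaneously minimizing \(j_{\mathrm{red}}\), and then contradict this for large data by a bound depending only on the reduced data. Your first step is a sound simplification of the paper's construction (the paper matches the interior data exactly by inverting the matrix \(G^{x^n_k}(x_m)\), you only match it asymptotically, which suffices for the infimum claim). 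Your second step, however, cites Proposition~\ref{prop:optimality_conditions} outside its hypotheses: when \(\Xi\cap\Oc\neq\emptyset\) the operator \(\SO^*\) does not map into \(\Ccon\), so that proposition is not available as stated. This is repairable, either by the elementary variation \(j(\widehat u + tc\delta_y)\ge j(\widehat u)\) for \(y\in\Oc\setminus\Xi\), \(c\in\C^N\), \(t\downarrow 0\), which yields \(\abs{\widehat\xi(y)}_{\C^N}\le\alpha\) on \(\Oc\setminus\Xi\) and then your blow-up argument, or, more simply, by the paper's value comparison: \(j_{\mathrm{red}}(\widehat u)\ge V = j(\widehat u) = j_{\mathrm{red}}(\widehat u)+\tfrac12\sum_{m\le M_1}\abs{(\SO\widehat u)_m-p_d^m}^2\) forces the interior residuals to vanish.

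The genuine gap is the degenerate case \(x_m\in K\) that you flag at the end, and neither of your proposed repairs closes it. The perturbation idea only shows that the infimum can be approached by measures sliding mass into \(x_m\); it does not exclude attainment by some \emph{other} measure, and in fact attainment occurs. Take \(N=1\), \(M-M_1=1\): then \(\widehat\xi_{\mathrm{red}}=-r\,\bar{G}^{x_M}\) with the residual \(r\) common to all reduced minimizers, and \(K=\set{y\in\Oc : \abs{r}\abs{G^{x_M}(y)}=\alpha}\). By reciprocity \(G^{y}(x_M)=G^{x_M}(y)\), so \(G^y(x_M)\,\widehat\xi_{\mathrm{red}}(y)=-r\abs{G^{x_M}(y)}^2\) is \emph{constant} on \(K\); hence the single observation constraint fixes only the total mass, and \emph{every} measure \(\de u=(\widehat\xi_{\mathrm{red}}/\alpha)\de\mu\) with \(\mu\ge0\) supported on \(K\) and \(\mu(K)=T\) is a reduced minimizer. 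If \(x_m\in K\), the choice \(\mu=T\delta_y\) with \(y\in K\), \(y\to x_m\), produces reduced minimizers with arbitrarily large \emph{finite} values \((\SO u)_m\); setting \(p_d^{m'}:=(\SO u)_{m'}\) for \(m'\le M_1\) then exhibits arbitrarily large interior data for which the original problem \emph{does} admit a solution (namely \(u\), since \(j(u)=j_{\mathrm{red}}(u)=V=\inf j\)). So in the degenerate case the desired contradiction cannot be produced by any a priori bound of the kind you use. Your second repair idea is false as well: support accumulating at \(x_m\) is compatible with a finite evaluation, since masses \(c_k\) at distances \(r_k\) from \(x_m\) give \(\abs{p(x_m)}<\infty\) whenever \(\sum_k c_k\abs{\oldphi_{\kay}(r_k)}<\infty\). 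For fairness: the paper's own proof glosses over exactly this point — the bound \(\norm{\SO u_0}_{\C^M}\le C\norm{(p_d^m)_{m>M_1}}^2\) claimed ``by continuity \(\dots\) for any solution of \((P_{\mathrm{aux}})\)'' is justified by Lemma~\ref{lem:improved} only for the components \(m>M_1\), and for \(m\le M_1\) it presupposes that supports of auxiliary minimizers stay away from \(\Xi\), i.e.\ precisely your nondegeneracy condition \(x_m\notin K\). You have therefore isolated a real subtlety; but as written your proof (like the paper's) is complete only under that condition, or under the weaker, existential reading that \emph{some} arbitrarily large data admit no solution (which one can expect to survive the degenerate case, since the achievable values \((\SO u)_m\) remain confined to a half-plane determined by \(\arg\widehat\xi_{\mathrm{red}}(x_m)\)).
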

\begin{proof}
For simplicity of notation, we assume without
restriction that \(N = 1\). Denote the optimization
problem~\eqref{eq:pre_opt_prob}--\eqref{eq:state} by \((P_{\textrm{orig}})\).
Consider first a modified
optimization problem, where we minimize
\[
J_{\mathrm{aux}}(p,u) = \frac{1}{2}\sum_{m=M_1+1}^M \abs{p(x_m) - p_d^m}^2_{\C}
+ \alpha\norm{u}_{\M(\Oc, \C)},
\]
subject to~\eqref{eq:state}. We denote the corresponding optimization problem by
\((P_{\mathrm{aux}})\). By similar arguments as in section~\ref{sec:opt_cont_prob},
there exists an optimal solution \(u_0 \in \M(\Oc, \C)\) to the modified problem
\((P_{\mathrm{aux}})\). By optimality, we obtain that
\[
  \norm{u_0}_{\M(\Oc, \C)} \leq \frac{\norm{(p_d^m)_{m=M_1+1,\ldots,M}}^2_{\C^{M-M_1}}}{2\alpha}.
\]
By continuity, it holds \(\norm{S u_0}_{\C^{M}} \leq
C\norm{(p_d^m)_{m=M_1+1,\ldots,M}}^2_{\C^{M-M_1}}\) for a generic \(C > 0\) and any
solution of \((P_{\mathrm{aux}})\).
Clearly, \(\min (P_{\mathrm{aux}}) \leq \inf(P_{\textrm{orig}})\).
In fact, equality holds: We show that for
\begin{equation}\label{eq:add}
 u^n = u_0 + \sum_{m=1}^{M_1} \coeff{u}^n_m \delta_{{x}^n_m},
\end{equation}
with appropriate \(\coeff{u}^n_m \to 0 \in \C\), \({x}^n_m \to x_m\) it holds
\(J(\SO{u}^n,{u}^n)  \to \min(P_{\mathrm{aux}})\).
To this purpose, we first fix \({x}^n_m \in \Oc\) with \(\abs{x_m-{x}^n_m} = r^n_m\),
for \(r^n_m > 0\) with \(r^n_m \to 0\) as \(n\to\infty\). Then, we consider the matrix
\(M^n \in \C^{M_1\times M_1}\), which results from the restriction of \(S\) to the span of
\(\delta_{{x}^n_m}\) in the domain space and to the first \(M_1\) observations in the
image space, that is
\[
M^n_{m,k} = G^{{x}^n_k}(x_m)\quad\text{for }m,k = 1,\ldots,M_1.
\]
Moreover, recalling the definition of $\oldphi_{\kay}$, see \eqref{eq:fundamental_sol}, we introduce the diagonal matrix
\[
D^n=\mathrm{diag}\left(\frac{1}{\abs{\oldphi_{\kay_1}(r^n_1)}}, \ldots, \frac{1}{\abs{\oldphi_{\kay_1}(r^n_{M_1})}}\right).
\]
By Proposition~\ref{prop:splitting} and the properties of the Green's functions, we derive that
\[
D^n M^n \to \Id_{\C^{M_1}} \quad\text{for } n \to \infty.
\]
Thus we have $\abs{\det(D^n)}\abs{\det(M^n)}=\abs{\det(D^nM^n)}>1/2$ for $n$ large
enough. Consequently, for \(n\) large enough the matrix $M^n$ is invertible. We can
therefore choose \(\coeff{u}^n = (\coeff{u}^n_1, \ldots, \coeff{u}^n_{M_1})\) to be the
solution of the system of equations \((M^n \coeff{u}^n)_m = p_d^m - (Su_0)_m\) for
\(m=1,\ldots,M_1\).
Therefore we have \((S(u^n))_m = p_d^m\) for \(m=1,\ldots,M_1\), thanks to~\eqref{eq:add}, and since
\(\abs{\oldphi_{\kay_1}(r_m^n)} \to \infty\) for \(n\to\infty\), it follows
additionally that \(\coeff u^n_m \to 0\) for $m=1,\ldots,M_1$. This shows that \(u^n
\to u_0\) strongly in \(\Mcon\) and \(\inf (P_{\textrm{orig}})\leq J(\SO u^n,u^n)
\to \min(P_{\mathrm{aux}})\leq \inf (P_{\textrm{orig}})\) for \(n\to\infty\).
Assume now that \((P_{\textrm{orig}})\) admits a solution \(\widehat{u}\). With
\(J(\SO\widehat{u},\widehat{u}) = \inf(P_{\textrm{orig}}) = \min(P_{\mathrm{aux}})\) we immediately deduce that
\((S\widehat{u})_m = p_d^m\) for \(m=1,\ldots,M_1\), and \(\widehat{u}\) also
solves~\eqref{eq:reduced_cost}.
However, choosing \(\norm{(p_d^m)_{m=1,\ldots,M_1}}_{\C^{M_1}}\) large enough contradicts
the bound \(\norm{S\widehat{u}}_{\C^M} \leq C\norm{(p_d^m)_{m=M_1+1,\ldots,M}}^2_{\C^{M-M_1}}\)
which follows from the optimality of
\(\widehat{u}\) for \((P_{\mathrm{aux}})\).
\QED
\end{proof}

Now, we introduce the class of admissible weight functions.
\begin{definition}[Admissible weights]
\label{def:admissible}
We call a family of weight functions
\(w^n\colon \Omega_c \rightarrow \R \cup \set{+\infty}\),
\(n \in \set{1,2,\ldots,N}\) admissible, if they fulfill the following properties:
\begin{enumerate}[label=\roman*)]
\item\label{it:w_pos}
  \(\inf_{x\in \widebar\Omega} w^n(x) > 0\),
\item\label{it:w_cont}
  \(w^n\) is upper semi-continuous and
  \(w^n\) restricted to \(\Oc \setminus \Xi\) is continuous.
\item\label{it:wG_cont}
  The function $G^{x_m}_n/w^n$ can be continuously extended from \(\Oc\setminus\Xi\) to
  $\Oc$.
\end{enumerate}
For admissible weights, we denote \([G^{x_m}_n/w^n](x_m) = \lim_{x\to x_m}
G^{x_m}_n(x)/w^n(x)\). The case \([G^{x_m}_n/w^n](x_m) = 0\) for all \(m\) is of special
interest.
\end{definition}
Due to the fact that \(\abs{G^{x_m}_n(x)} \to \infty\) for \(x \to x_m\), the
upper semi-continuity of \(w^n\) and Property~\ref{it:wG_cont} imply that \(w^n(x_m) = +\infty\).
Now, we construct functions \(w^n\) such that the above conditions hold. With regard
to the representation formula from Lemma~\ref{lem:Green}, we can take for instance the functions
\begin{align}
  \label{eq:example_w_free}
  w^n_{\mathrm{free}} = \sum_{m=1}^M \abs{\Phi_n^{x_m}}
\end{align}
In the following, we will again suppress the dependency on \(n\), for convenience of
notation.
\begin{proposition}
\label{prop:weight_free_admissible}
The weights given in~\eqref{eq:example_w_free} are admissible.
\end{proposition}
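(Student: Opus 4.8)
The plan is to verify the three defining properties of admissible weights (Definition~\ref{def:admissible}) for the concrete choice $w^n_{\mathrm{free}} = \sum_{m=1}^M \abs{\Phi_n^{x_m}}$, where $\Phi_n^{x_m}$ is the free-space fundamental solution from~\eqref{eq:fundamental_sol}. Since each term in the sum is the modulus of a fundamental solution, the behavior of $w^n_{\mathrm{free}}$ is governed entirely by the known singularity structure of $\oldphi_\kay$, so the proof should reduce to elementary facts about $\abs{\oldphi_\kay(r)}$ as a function of $r = \abs{x - x_m}$.

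First I would establish property~\ref{it:w_pos}, the uniform positivity. Away from the singularities $\Phi_n^{x_m}$ is smooth and non-vanishing on the compact set $\widebar\Omega$ (the Hankel function $H_0^{(1)}$ and the exponential kernel have no real zeros of their modulus, and near each $x_m$ the modulus blows up). Since the sum of nonnegative terms is bounded below by each individual term, and each $\abs{\Phi_n^{x_m}}$ is continuous and strictly positive on the compact closure of $\Omega$ (with value $+\infty$ at the center), I expect $\inf_{x\in\widebar\Omega} w^n_{\mathrm{free}}(x) > 0$ to follow directly. Next, for property~\ref{it:w_cont}, away from the points $x_m$ each term $\abs{\Phi_n^{x_m}}$ is continuous (indeed smooth), hence so is the finite sum; this gives continuity on $\Oc\setminus\Xi$. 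Upper semi-continuity on all of $\Oc$ then holds because the only points of concern are the $x_m$, where the function tends to $+\infty$, and a function tending to $+\infty$ at a point is automatically upper semi-continuous there (the limsup equals the value $+\infty$).

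The key property is~\ref{it:wG_cont}, the continuous extendability of $G^{x_m}_n/w^n$ to all of $\Oc$, including the diagonal point $x_m$. Here I would use the splitting from Proposition~\ref{prop:splitting}, writing $G^{x_m}_n = \Phi_n^{x_m} + \xi^{x_m}_n$ with $\xi^{x_m}_n \in H^2(\Omega) \hookrightarrow \Cc(\widebar\Omega)$ bounded. Fixing one evaluation point $x_\ell$ and studying the quotient near $x_\ell$, the dominant contribution to the denominator $w^n_{\mathrm{free}}(x) = \sum_m \abs{\Phi_n^{x_m}(x)}$ is the single blowing-up term $\abs{\Phi_n^{x_\ell}(x)}$, while all other terms $\abs{\Phi_n^{x_m}(x)}$ for $m\neq\ell$ together with $\xi^{x_\ell}_n(x)$ remain bounded. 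Consequently $G^{x_\ell}_n(x)/w^n_{\mathrm{free}}(x)$ behaves like $\Phi_n^{x_\ell}(x)/\abs{\Phi_n^{x_\ell}(x)}$ up to lower-order corrections, and since this ratio has modulus one it stays bounded; the correction terms vanish because their numerators are bounded while the denominator diverges. The limit as $x\to x_\ell$ therefore exists and equals the limiting phase of $\Phi_n^{x_\ell}$, giving a continuous extension.

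The main obstacle I anticipate is the careful analysis of the quotient in property~\ref{it:wG_cont}: one must confirm that $\Phi_n^{x_\ell}(x)/\abs{\Phi_n^{x_\ell}(x)}$ actually converges (and not merely stays bounded) as $x\to x_\ell$, which requires inspecting the precise asymptotics of the fundamental solution. In dimension $d=3$ this is transparent since $\Phi_n^{x_\ell}(x) = \exp(\i\kay\abs{x-x_\ell})/(4\pi\abs{x-x_\ell})$ and the phase $\exp(\i\kay\abs{x-x_\ell}) \to 1$, so the quotient tends to the real constant $1$. In dimension $d=2$ one must invoke the logarithmic singularity of $H_0^{(1)}(\kay r)$ as $r\to 0$; the leading term is $\sim (\i/4)(2\i/\pi)\log r = -(1/(2\pi))\log r$, which is real and negative for small $r$, so $\Phi_n^{x_\ell}/\abs{\Phi_n^{x_\ell}} \to -1$ and the limit again exists. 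In either case the extension is well defined, completing the verification. I would consult the asymptotic expansions of $H_0^{(1)}$ (e.g.~as in the standard references cited for the special functions) to make the $d=2$ case rigorous, but I expect no genuine difficulty beyond bookkeeping.
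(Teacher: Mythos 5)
Your proof follows essentially the same route as the paper's: positivity and upper semi-continuity from the elementary behavior of \(\abs{\Phi_n^{x_m}}\), then property (iii) via the splitting \(G_n^{x_m} = \Phi_n^{x_m} + \xi_n^{x_m}\) with \(\xi_n^{x_m}\in H^2(\Omega)\), dominance of the single diagonal singular term in the denominator, and a computation of the limiting phase \(\Phi_n^{x_m}/\abs{\Phi_n^{x_m}}\) (transparent for \(d=3\), via the asymptotics of \(H_0^{(1)}\) for \(d=2\)). One slip in your \(d=2\) computation: the leading term \(-(1/(2\pi))\log r\) is \emph{positive} for small \(r\) (since \(\log r<0\)), so the limiting phase is \(+1\), not \(-1\); this does not affect the conclusion, since admissibility only requires that the limit exists, not its value.
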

\begin{proof}
Property~\ref{it:w_pos} holds by the properties of the Green's functions. In both the two-
and three-dimensional case, the functions \(\abs{\Phi^{0}(x)}\) are radially symmetric and
monotonously decreasing towards zero for \(\abs{x} \to \infty\). Therefore,
\(\abs{\Phi^{x_m}(x)} = \abs{\Phi^{0}(x-x_m)}\) is uniformly bounded from below on
\(\Omega\) for all \(m\).
By a similar argument, property~\ref{it:w_cont} follows. It remains to verify~\ref{it:wG_cont}.
With Lemma~\ref{lem:Green}, we notice that
\[
\frac{G^{x_m}(x)}{w_{\mathrm{free}}(x)}
= \frac{\xi^{x_m}(x) + \Phi^{x_m}(x)}{w_{\mathrm{free}}(x)}
= \frac{\xi^{x_m}(x)}{w_{\mathrm{free}}(x)} + \frac{\Phi^{x_m}(x)}{w_{\mathrm{free}}(x)}
\]
with \(\xi^{x_m} \in H^2(\Omega)\).
Since \(\inf_{x\in \widebar\Omega} w_{\mathrm{free}}(x) > 0\) and for all
points \(\hat{x}\) where \(w_{\mathrm{free}}\) is discontinuous it holds
\(\lim_{x\to\hat{x}} w_{\mathrm{free}}(x) = +\infty\), the first term is continuous and we have
\[
\lim_{x\rightarrow x_m}\frac{\xi^{x_m}(x)}{w_{\mathrm{free}}(x)}=0.
\]
Furthermore, \(w_{\mathrm{free}}\) has the form
\(w_{\mathrm{free}}(x) = f_m(x) + \abs{\Phi^{x_m}(x)}\)
for an \(f_m\colon \Omega \to \R_+\cup\set{+\infty}\), which is finite and continuous in a
neighborhood of \(x_m\). Thus we have
\[
\lim_{x\rightarrow x_m}\frac{\Phi^{x_m}(x)}{w_{\mathrm{free}}(x)}
= \lim_{x\rightarrow x_m}\frac{\Phi^{x_m}(x)}{\abs{\Phi^{x_m}(x)}} = 1.
\]
In fact, for this, we use the concrete formulas for \(\Phi^{x_m}\); see
Lemma~\ref{lem:Green}. In the case \(d = 3\), it holds that \(\Phi^{x_m}(x) =
\exp(\i\kay\abs{x-x_m})/4\pi\abs{x-x_m}\), and the equality follows directly.
In the case \(d = 2\), we use that for \(t = \kay\abs{x-x_m}\) we have
\[
\Phi^{x_m}(x) =
\frac{i}{4} H_0^{(1)}(t)
= - \frac{1}{4} Y_0(t) + \frac{i}{4} J_0(t),
\]
where \(J_0\colon \R_+ \to \R\) and \(Y_0\colon \R_+ \to \R\) are the Bessel functions of
the first and second kind. It is known that \(J_0\) is continuous at \(t = 0\) and \(Y_0\) is
diverging towards \(+\infty\) at \(t = 0\);
see, e.g.,~\cite[Section~3.4]{ColtonKress:2013}.
\QED
\end{proof}
\begin{remark}
We verify that \(w^n_{\mathrm{free}}\) is independent of the wave number
\(\kay_n\) in three dimensions, since \(\abs{\Phi_n^{x_m}(x)} = 1/(4\pi\abs{x-x_m})\).
In two dimensions, the singularity of \(\abs{\Phi_n^{x_m}}\) is of same type as the
singularity of the Green's function of the Laplacian,
\(g(x) = - 1/(2\pi) \ln\abs{x-x_m}\), and \(\kay_n\) enters only in an additive
constant; see, e.g.,~\cite[Section~3.4]{ColtonKress:2013}. Therefore, we could
alternatively take the same weight for all \(n\).
\end{remark}
Other families of weight functions can be based on the Green's function on the domain. For
instance, they are given by
\begin{align}
  w^n_{\Omega,1} = \sum_{m=1}^M\abs{G_n^{x_m}}, \qquad
  w^n_{\Omega,2} = \sqrt{\sum_{m=1}^M\abs{G_n^{x_m}}^2} \label{eq:example_w_omega}.
\end{align}
Note that these weights depend on
the shape of \(\Omega\) and the wave number \(\kay_n\).
As for~\eqref{eq:example_w_free}, we obtain the admissibility of~\eqref{eq:example_w_omega}.
\begin{proposition}
\label{prop:weight_omega_admissible}
Suppose that for any \(n\) there exists no \(x \in \bar\Omega\), such that
\(G^{x_m}_n(x) = 0\) for all \(m\).
Then, the weights given in~\eqref{eq:example_w_omega} are admissible.
\end{proposition}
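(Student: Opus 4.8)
The plan is to verify the three conditions of Definition~\ref{def:admissible} for both $w = w_{\Omega,1}$ and $w = w_{\Omega,2}$, following the template of the proof of Proposition~\ref{prop:weight_free_admissible} but with the explicit free-space solution replaced by the domain Green's function. As there, I would fix $n$ and suppress it in the notation. The basic structural input is the splitting $G^{x_m} = \Phi^{x_m} + \xi^{x_m}$ from Proposition~\ref{prop:splitting}, where $\xi^{x_m} \in H^2(\Omega) \hookrightarrow \Cc(\widebar\Omega)$; consequently each $\abs{G^{x_m}}$ is continuous on $\widebar\Omega \setminus \set{x_m}$ and diverges to $+\infty$ at $x_m$, since $\abs{\Phi^{x_m}(x)} \to +\infty$ while $\xi^{x_m}$ remains bounded. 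In particular, both candidate weights are continuous on $\widebar\Omega \setminus \Xi$ and equal $+\infty$ at every $x_m$.

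For Property~\ref{it:w_pos}, which is exactly where the no-common-zero hypothesis enters, I would cover $\widebar\Omega$ by the compact set $K = \widebar\Omega \setminus \bigcup_m B_\eps(x_m)$ (with $\eps$ small enough that the balls are disjoint and contained in $\Omega$) together with the balls $B_\eps(x_m)$. On $K$ the weight is continuous, and by assumption strictly positive at every point, so it attains a positive minimum by compactness. On each $B_\eps(x_m)$ one estimates $w \geq \abs{G^{x_m}} \geq \abs{\Phi^{x_m}} - \norm{\xi^{x_m}}_{\Cc(\widebar\Omega)}$, which is bounded below by a positive constant once $\eps$ is chosen small enough that $\abs{\Phi^{x_m}}$ dominates the bounded term on the ball. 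Combining the two bounds yields $\inf_{\widebar\Omega} w > 0$. Property~\ref{it:w_cont} is then immediate: continuity on $\Oc \setminus \Xi$ follows from continuity of the finite sum (respectively the square root of the sum of squares) of the $\abs{G^{x_m}}$ there, while upper semi-continuity on all of $\widebar\Omega$ reduces to the points $x_m$, where $w(x_m) = +\infty$ makes the defining inequality trivial.

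The main work, and the step I expect to be the crux, is Property~\ref{it:wG_cont}: the continuous extension of $G^{x_m}/w$ across the points of $\Xi \cap \Oc$. Away from these points the quotient is already continuous, so only the limits as $x \to x_k$ for $x_k \in \Xi$ need to be computed. When $k \neq m$ the numerator $G^{x_m}$ stays continuous while $w(x) \to +\infty$, so the quotient tends to $0$. When $k = m$ both the numerator and the dominant denominator term $\abs{G^{x_m}}$ blow up, and I would factor out $\abs{\Phi^{x_m}(x)}$ from top and bottom. Using $\xi^{x_m}/\abs{\Phi^{x_m}} \to 0$, together with $\abs{G^{x_k}}/\abs{\Phi^{x_m}} \to 0$ for $k \neq m$ and $\abs{G^{x_m}}/\abs{\Phi^{x_m}} \to 1$, the limit collapses to $\lim_{x\to x_m} \Phi^{x_m}(x)/\abs{\Phi^{x_m}(x)}$, which equals $1$ by precisely the explicit Bessel/exponential computation already carried out in the proof of Proposition~\ref{prop:weight_free_admissible}.

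The same factorization applies verbatim to $w_{\Omega,2}$, since near $x_m$ one has $w_{\Omega,2} = \sqrt{\abs{G^{x_m}}^2 + \sum_{k\neq m}\abs{G^{x_k}}^2} \sim \abs{G^{x_m}} \sim \abs{\Phi^{x_m}}$, the cross terms being bounded. Hence for either weight $G^{x_m}/w$ extends continuously with value $1$ at $x_m$ and value $0$ at the remaining observation points, completing the verification of admissibility. The only genuinely delicate point is the reduction of the $k=m$ limit to the free-space quotient, which is why the splitting of Proposition~\ref{prop:splitting} and the reuse of Proposition~\ref{prop:weight_free_admissible} are essential.
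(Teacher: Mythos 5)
Your proof is correct and follows essentially the same route as the paper's (much terser) argument: the paper also reduces properties~\ref{it:w_cont} and~\ref{it:wG_cont} to the fact that the local behavior of $w_{\Omega,1}$, $w_{\Omega,2}$ at the observation points coincides with that of $w_{\mathrm{free}}$ via the splitting $G^{x_m} = \Phi^{x_m} + \xi^{x_m}$, and obtains property~\ref{it:w_pos} from positivity on $\Oc\setminus\Xi$ (the no-common-zero hypothesis), continuity there, and the blow-up $w \to \infty$ at the $x_m$, which is exactly your compactness covering argument spelled out in detail.
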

\begin{proof}
With Lemma~\ref{lem:Green}, the verification
of~\ref{it:w_cont} and~\ref{it:wG_cont} follows by straightforward computations, since the
local behavior of \(w_{\mathrm{free}}\) and \(w^n_{\Omega,1}\), \(w^n_{\Omega,2}\) at the
observation points are the same. For the uniform boundedness from below it
suffices to observe that \(w^n(x) > 0\) for all \(x \in \Oc\setminus\Xi\), the \(w^n\) are
continuous on the same set, and \(w^n(x) \to \infty\) for \(x \to x_m\).
\QED
\end{proof}
\begin{remark}
Certainly, there are many more possibilities to define admissible weights. For instance,
we can use a different discrete norm for the absolute values of the Green's functions
associated with the \(x_m\) or employ a weighed sum. Moreover, the
weight for each \(m\) could be used as a separate regularization parameter, to obtain a
more flexible regularization strategy.
\end{remark}

For any vectors \(v,w \in \C^N\), we define by \(v w \in \C^N\) the coordinate-wise, or
Hadamard product. Define now the weighted norm
\[
\norm{u}_{ M_w(\Oc,\C^N)}
= \int_{\Oc} \abs{w u'}\de\abs{u}
= \int_{\Oc} \sqrt{\sum_{n=1}^N (w^n(x) \abs{u_n'(x)})^2}\de\abs{u}(x)
\]
Since \(u' \in L^\infty(\Oc,\abs{u},\C^N)\) and \(w\) is upper semi-continuous, the
function under the integral is positive and Borel-measurable, and the integral
is well-defined for any \(u \in \Mcon\) (but not necessarily finite).
Note that if \(w^n = w\) for all \(n\), we obtain the more intuitive form
\[
\norm{u}_{\M_w(\Oc,\C^N)} = \int_{\Oc} w\de\abs{u}.
\]
We define the corresponding subspace of \(\Mcon\) as
\[
\M_w(\Oc,\C^N) = \left\{u \in \Mcon \;\Big|\; \int_{\Oc} \abs{w u'}\de\abs{u} < \infty\right\}.
\]
Next, we introduce the mapping $W\colon \M(\Oc, \C^N)\rightarrow \M_w(\Oc,\C^N)$ defined by
\[
\de W(v)=\frac{v'}{w} \de\abs{v}.
\]
Again, the division \(v/w\) for \(v,w\in\C^N\) is understood in a coordinate-wise fashion.
We adopt the convention \(z/(+\infty) = 0\) for any \(z \in \C\).
\begin{proposition}
Let \(w\) fulfill property~\ref{it:w_pos} and~\ref{it:w_cont} and
\(w(\Xi\cap\Oc) \equiv +\infty\).
The mapping $W$ is well-defined and surjective. Moreover, the restriction
\[
W\rvert_{\M(\Oc\setminus\Xi, \C^N)} \colon \M(\Oc\setminus\Xi, \C^N)
\to \M_w(\Oc, \C^N)
\]
is an isometric isomorphism.
\end{proposition}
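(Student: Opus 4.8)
The plan is to recognize $W$ as coordinate-wise rescaling of the component measures, to exhibit its inverse explicitly, and to establish the isometry by a direct Radon--Nikodym computation that respects the Euclidean coupling of the components in the weighted norm. Writing $v'=\de v/\de\abs{v}$ for the polar density, the defining relation $\de W(v)=(v'/w)\,\de\abs{v}$ means, read componentwise, that $W$ multiplies the $n$-th component measure $v_n$ by the function $1/w^n$ (with the convention $1/(+\infty)=0$). In particular $W$ is $\R$-linear, and the natural candidate for its inverse is the map $V$ multiplying the $n$-th component by $w^n$, that is $\de V(\mu)=(w\mu')\,\de\abs{\mu}$.

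First I would verify that $W$ is well-defined. Setting $c=\min_n\inf_{x\in\widebar\Omega}w^n(x)>0$ by property~\ref{it:w_pos}, and noting that $v'/w$ is Borel measurable since each $w^n$ is Borel by property~\ref{it:w_cont}, the pointwise bound $\abs{v'/w}_{\C^N}\leq\abs{v'}_{\C^N}/c=1/c$ holds $\abs{v}$-almost everywhere, so $v'/w\in L^1(\Oc,\abs{v},\C^N)$ and $W(v)$ is a finite measure. The crucial algebraic fact is the componentwise cancellation $w\,(v'/w)=v'$, valid wherever $w$ is finite, that is on $\Oc\setminus\Xi$. Computing $\de\abs{W(v)}=\abs{v'/w}_{\C^N}\de\abs{v}$ and the polar density $(W(v))'=(v'/w)/\abs{v'/w}_{\C^N}$, this cancellation together with $\abs{v'}_{\C^N}=1$ yields
\[
\norm{W(v)}_{\M_w(\Oc,\C^N)}=\int_{\Oc\setminus\Xi}\abs{v'}_{\C^N}\de\abs{v}=\abs{v}(\Oc\setminus\Xi)\leq\norm{v}_{\Mcon}<\infty,
\]
so that indeed $W(v)\in\M_w(\Oc,\C^N)$. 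Since $w\equiv+\infty$ on $\Xi\cap\Oc$ forces the density $v'/w$ to vanish there, the mass of $v$ on $\Xi$ is annihilated; this explains both the appearance of $\Oc\setminus\Xi$ above and the loss of injectivity of $W$ on all of $\Mcon$.

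Next I would treat surjectivity together with the structure of $\M_w(\Oc,\C^N)$. I would first observe that finiteness of the weighted norm already forces $\abs{\mu}(\Xi\cap\Oc)=0$ for every $\mu\in\M_w(\Oc,\C^N)$: an atom of $\mu$ at a point $x_m\in\Xi\cap\Oc$ would make the integrand $\abs{w\mu'}_{\C^N}$ equal to $+\infty$ there against positive mass, contradicting $\mu\in\M_w(\Oc,\C^N)$. Hence $\M_w(\Oc,\C^N)\subset\M(\Oc\setminus\Xi,\C^N)$, and $V(\mu)$ is a finite measure charging no point of $\Xi$, with $\norm{V(\mu)}_{\Mcon}=\int_{\Oc}\abs{w\mu'}_{\C^N}\de\abs{\mu}=\norm{\mu}_{\M_w(\Oc,\C^N)}$. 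The symmetric cancellation $(w\mu')/w=\mu'$ on $\Oc\setminus\Xi$ then gives $W(V(\mu))=\mu$ after the analogous Radon--Nikodym bookkeeping, proving that $W$ is surjective onto $\M_w(\Oc,\C^N)$.

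Finally, for the restriction I would combine the previous computations. For $v\in\M(\Oc\setminus\Xi,\C^N)$ we have $\abs{v}(\Xi)=0$, so the displayed identity becomes $\norm{W(v)}_{\M_w(\Oc,\C^N)}=\abs{v}(\Oc)=\norm{v}_{\Mcon}$, i.e. $W$ is isometric on this subspace, and the same manipulation using $w\,(v'/w)=v'$ on the full-measure set $\Oc\setminus\Xi$ gives $V(W(v))=v$. Together with $W\circ V=\mathrm{id}$ from the preceding step, this shows that $W\rvert_{\M(\Oc\setminus\Xi,\C^N)}$ is a norm-preserving linear bijection onto $\M_w(\Oc,\C^N)$, hence an isometric isomorphism. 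I expect the only genuinely delicate point to be the consistent handling of the exceptional set $\Xi\cap\Oc$, where $w=+\infty$ and the convention $z/(+\infty)=0$ is in force; away from it every step reduces to the elementary coordinate-wise cancellations above, with all integrals controlled by the uniform bound $c>0$.
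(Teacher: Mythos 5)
Your proof is correct, and it reaches the conclusion by a more hands-on route than the paper. The paper argues abstractly: it establishes surjectivity via the product map $u\mapsto uw$ (your $V$), characterizes $\ker W=\M(\Xi\cap\Oc,\C^N)$ from $\int_{\Oc}1/w\,\de\abs{v}=0$, and then invokes the isomorphism theorem to identify the quotient $\M(\Oc,\C^N)/\ker W$ with $\M(\Oc\setminus\Xi,\C^N)$, leaving the isometry computation implicit (``trivially'', ``directly verified''). You instead prove the inclusion $\M_w(\Oc,\C^N)\subset\M(\Oc\setminus\Xi,\C^N)$ directly (finite weighted norm forbids atoms on $\Xi\cap\Oc$, since $w=+\infty$ there meets a unit polar density), exhibit $V$ as a two-sided inverse of the restriction $W\rvert_{\M(\Oc\setminus\Xi,\C^N)}$, and verify the isometry by the explicit Radon--Nikodym computation $\norm{W(v)}_{\M_w(\Oc,\C^N)}=\abs{v}(\Oc\setminus\Xi)$. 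What this buys: the quotient-space machinery disappears, the norm identities the paper leaves unproved are actually checked, and the $\R$-linearity of $W$ (obscured by its polar-decomposition definition) becomes transparent through your componentwise reading. One further point in your favor: the paper opens by claiming that $1/w$ is \emph{continuous} on $\Oc$, which does not follow from the stated hypotheses alone --- upper semi-continuity imposes no constraint at a point where $w=+\infty$, so $w$ need not blow up near $x_m$ and $1/w$ need not be continuous there (it is only lower semi-continuous). Your argument sidesteps this entirely by using only Borel measurability of $v'/w$, which does follow from upper semi-continuity and is all that is needed for $W(v)$ to be a well-defined finite measure. Both proofs share the implicit (and intended) convention that $w$ is finite on $\Oc\setminus\Xi$, which is what makes the cancellation $w\,(v'/w)=v'$ valid there.
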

\begin{proof}
The function $x\mapsto 1/w(x)$ is continuous on $\Oc$ according to the
assumptions. Thus $W(v)$ is an element of $\M(\Oc, \C^N)$. Trivially, there holds $W(v)\in
\M_w(\Oc,\C^N)$ for any \(v \in \Mcon\). Additionally, for any
$u\in \M_w(\Oc,\C^N)$, the product $u w$ defined by
\[
\de(u w)=wu'\de \abs{u}
\]
gives an element in $\Mcon$ since $w$ is
upper semi-continuous. Clearly, we have $W(u w)=u$ and thus $W$ is surjective.
However, \(W\) is not injective, and the kernel of \(W\) can be characterized as
\[
\ker W = \M(\Xi,\C^N)
= \left\{\sum_{m=1}^M u_m\delta_{x_m} \;\Big|\; u_m\in \C^N\right\}
\]
In fact, let $v$ be an element of $\ker W$. Thus there holds
\(\norm{W(v)}_{\Mcon} = \int_{\Oc} 1 / w \de\abs{v} = 0\),
which is equivalent to
\[
\supp{v} = \supp\abs{v} \subseteq \set{x\in \Oc \;|\; 1/w(x)=0} = \Xi\cap\Oc.
\]
As a direct consequence of the isomorphism theorem, we obtain that
\[
W\colon \M(\Oc, \C^N)/\ker W \to \M_w(\Oc, \C^N)
\]
is an isomorphism.
It can be directly verified that the quotient space
is isomorphic to $\M(\Oc\setminus\Xi, \C^N)$; see, e.g., \cite[Theorem~4.9~a)]{Rudin:1991}.
\QED
\end{proof}
Based on these observations, we transform the weighted problem to one with weight one,
which enables us to reuse the general results. We introduce a new optimization variable
$v = u w \in \Mcon$ and employ a reduced formulation in terms of
\(v\). The corresponding observation operator and its adjoint are
defined as
\begin{equation}
\label{eq:so_u}
(\SO^w v, q) = \pair{v, (\SO^w)^*q}
 = \sum_{n=1}^N\sum_{m=1}^M \pair{v, (\bar{G}_n^{x_m} / w_n)\, q_{n,m}},
\end{equation}
for any \(v\in \Mcon, q\in \C^{NM}\).
For any admissible weight, due to property~\ref{it:wG_cont}, this yields a well defined
operator.
\begin{proposition}
\label{prop:so_u}
For any admissible \(w\), the operators \(\SO^w \colon \Mcon \to \C^{NM}\)
and \((\SO^w)^*\colon \C^{NM} \to \Ccon\) are well-defined and continuous with respect
to the weak-\(*\) topology and bounded, respectively.
\end{proposition}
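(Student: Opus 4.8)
The plan is to read everything off the defining duality relation~\eqref{eq:so_u} and to treat $(\SO^w)^*$ first, since $\SO^w$ will then be obtained as its pre-adjoint. Fix $q = (q_{n,m}) \in \C^{NM}$ and consider the candidate function $(\SO^w)^* q \in \Ccon$ whose $n$-th component is $\sum_{m=1}^M (\bar{G}_n^{x_m}/w^n)\, q_{n,m}$. The crux is to verify that this is a genuine element of $\Ccon$, i.e.\ that each coefficient function $\bar{G}_n^{x_m}/w^n$ is continuous on \emph{all} of $\Oc$, including the observation points, where $G_n^{x_m}$ itself is unbounded. This is exactly what admissibility property~\ref{it:wG_cont} delivers: $G_n^{x_m}/w^n$ extends continuously from $\Oc\setminus\Xi$ to $\Oc$, and since $w^n$ is real-valued we have $\bar{G}_n^{x_m}/w^n = \overline{G_n^{x_m}/w^n}$, so its conjugate extends continuously as well. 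Hence each $\bar{G}_n^{x_m}/w^n \in \Cc(\Oc)$, and $(\SO^w)^* q$, being a finite linear combination of such functions, lies in $\Ccon$.

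Boundedness of $(\SO^w)^*$ is then immediate. Since $\Oc$ is compact, each continuous coefficient $\bar{G}_n^{x_m}/w^n$ is bounded, so the triangle inequality gives $\norm{(\SO^w)^* q}_{\Ccon} \leq \sum_{n,m} \norm{\bar{G}_n^{x_m}/w^n}_{\Cc(\Oc)}\,\abs{q_{n,m}} \leq C\,\abs{q}_{\C^{NM}}$ with $C$ independent of $q$. Linearity is clear from the formula, so $(\SO^w)^*\colon \C^{NM}\to\Ccon$ is a bounded linear operator.

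For $\SO^w$ I would observe that~\eqref{eq:so_u} defines it precisely as the Banach-space adjoint of $(\SO^w)^*$, under the identifications $\Ccon^* = \Mcon$ and $(\C^{NM})^* = \C^{NM}$. Indeed, for fixed $v\in\Mcon$ the assignment $q\mapsto \pair{v, (\SO^w)^* q}$ is a finite, hence finite-valued, linear functional on $\C^{NM}$ (each term pairs the fixed measure $v$ with the continuous function $(\SO^w)^* q$), and is therefore represented by a unique vector $\SO^w v \in \C^{NM}$; this makes $\SO^w$ well-defined. Weak-$*$ continuity then follows from the pre-adjoint structure together with finite dimensionality: if $v_k \rightharpoonup^* v$ in $\Mcon$, then for every fixed $q$ we have $(\SO^w v_k, q) = \pair{v_k, (\SO^w)^* q} \to \pair{v, (\SO^w)^* q} = (\SO^w v, q)$, because $(\SO^w)^* q$ is a fixed element of $\Ccon$. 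Testing against the standard basis of $\C^{NM}$ shows each component converges, i.e.\ $\SO^w v_k \to \SO^w v$ in $\C^{NM}$, which is the asserted continuity.

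I expect the only genuinely non-routine step to be the continuity of the coefficient functions $\bar{G}_n^{x_m}/w^n$ at the observation points; this is where the unboundedness of the Green's functions must be cancelled by the blow-up of the weight, and it is precisely this cancellation that property~\ref{it:wG_cont} encodes. Everything after that is abstract duality, made painless by the finite dimensionality of $\C^{NM}$.
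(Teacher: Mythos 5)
Your proposal is correct and follows essentially the same route as the paper, which treats this proposition as an immediate consequence of the definition~\eqref{eq:so_u} together with admissibility property~\ref{it:wG_cont}: the coefficient functions \(\bar{G}_n^{x_m}/w^n\) extend continuously to \(\Oc\), so \((\SO^w)^*\) maps boundedly into \(\Ccon\), and \(\SO^w\) is its pre-adjoint, whence weak-\(*\) continuity follows by testing against fixed \(q\in\C^{NM}\). Your write-up simply makes explicit the duality bookkeeping that the paper leaves implicit.
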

Now, we consider the reduced optimization problem
\begin{equation}\label{eq:reduced_cost_w}\tag{$P_{\alpha, w}$}
  \begin{aligned}
    &\min_{v\in \mathcal M(\Oc,\C^N)} j_w (v)
    = \frac{1}{2}\sum_{m=1}^{M}\abs{(\SO^w v)_m - p_d^m}_{\C^N}^2 + \alpha\norm{v}_{\M(\Oc,\C^N)}.
  \end{aligned}
\end{equation}
Since the reweighed problem~\eqref{eq:reduced_cost_w} has exactly the same structural
properties as the reduced problem~\eqref{eq:reduced_cost}, all results from
sections~\ref{sec:opt_cont_prob} and~\ref{sec:regularization} can be transferred without
modification.
In particular, for any admissible weight the problem~\eqref{eq:reduced_cost_w} admits
optimal solutions \(\widehat{v} \in \Mcon\) consisting of at most \(2NM\) Dirac delta functions.

Given a solution \(\widehat{v}\) of~\eqref{eq:reduced_cost_w} which does not contain any
Dirac delta functions in the observation points (i.e., \(\widehat{v} \in
\M(\Oc\setminus\Xi,\C^N)\)), we can apply \(W\) to obtain a solution of the original
problem. First, we need some result to connect the algebraically defined operator
\(\SO^w\) to the point evaluations of the solutions of~\eqref{eq:state}.
\begin{lemma}
\label{lem:solution_operator_w}
For \(\eps > 0\) define \(\Xi^\eps = \cup_{m=1,\ldots,M} B_\eps(x_m)\).
Let the observation operator \(\SO_\eps \colon \M(\Oc\setminus \Xi^\eps,\C^N) \to \C^{NM}\)
be defined as \(S_\eps(u) = (p(x_m))_{m=1,\ldots,M}\), where \(p\) is the solution
to~\eqref{eq:state} (defined with Lemma~\ref{lem:improved}).

If \(w\) is admissible, the operator \(\SO^w\colon \M(\Oc\setminus \Xi,\C^N) \to \C^{NM}\) is the
unique weak-\(*\) continuous extension of the family of operators
\(\SO^w_\eps = \SO_\eps \circ W\).
\end{lemma}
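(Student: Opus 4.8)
The plan is to split the assertion into three parts: first, that $\SO^w$ and the PDE\nobreakdash-based operators $\SO^w_\eps = \SO_\eps \circ W$ agree on the common domain $\M(\Oc\setminus\Xi^\eps, \C^N)$; second, that $\bigcup_{\eps > 0}\M(\Oc\setminus\Xi^\eps, \C^N)$ is weak\nobreakdash-$*$ dense in $\M(\Oc\setminus\Xi, \C^N)$; and third, that together with the weak\nobreakdash-$*$ continuity of $\SO^w$ established in Proposition~\ref{prop:so_u}, these two facts single out $\SO^w$ as the unique weak\nobreakdash-$*$ continuous extension. Throughout I would use that, by its very definition $\de W(v) = (v'/w)\,\de\abs{v}$, the map $W$ does not enlarge supports, so $\supp W(v) \subseteq \supp v$; in particular, for $v \in \M(\Oc\setminus\Xi^\eps, \C^N)$ the measure $W(v)$ is again supported in $\Oc\setminus\Xi^\eps$, whence $\SO_\eps(W(v))$ is well defined by Lemma~\ref{lem:improved}.

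For the agreement step I would first rewrite both operators component\nobreakdash-wise. Testing the duality formula~\eqref{eq:so_u} against the unit vectors of $\C^{NM}$ and their imaginary multiples yields the explicit expression $(\SO^w v)_{n,m} = \int_{\Oc} (G_n^{x_m}/w_n)\,\de v_n$, where $G_n^{x_m}/w_n \in \Cc(\Oc)$ is the continuous extension furnished by admissibility~\ref{it:wG_cont}. On the PDE side, I would invoke the Green's function representation of Proposition~\ref{prop:SO_dual} with the separated control set $\Oc\setminus\Xi^\eps$ in place of $\Oc$, which is legitimate since $\dist(\Oc\setminus\Xi^\eps, \Xi) \geq \eps > 0$; this gives $\SO_\eps(\mu)_{n,m} = \int_{\Oc}G_n^{x_m}\,\de\mu_n$ for any $\mu$ supported in $\Oc\setminus\Xi^\eps$. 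Substituting $\mu = W(v)$ and using $\de W(v)_n = (v_n'/w_n)\,\de\abs{v}$ turns this into $\int_{\Oc}(G_n^{x_m}/w_n)\,\de v_n$, matching the expression for $\SO^w$. The one point requiring care is that the kernel appearing in $\SO^w$ is the \emph{extended} function on all of $\Oc$, whereas in the integral against $v$ only its values on $\Oc\setminus\Xi^\eps$ enter; there the extension coincides with the genuine quotient because $w_n$ is finite, positive and continuous and $G_n^{x_m}$ is continuous by Lemma~\ref{lem:improved}. I expect this reconciliation of the algebraically defined kernel with the PDE\nobreakdash-based point evaluation to be the main obstacle of the proof.

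For the density step I would, given $v \in \M(\Oc\setminus\Xi, \C^N)$, set $v_\eps = \Ind_{\Oc\setminus\Xi^\eps}\,v$, so that $v_\eps \in \M(\Oc\setminus\Xi^\eps, \C^N)$ and $\norm{v - v_\eps}_{\Mcon} = \abs{v}(\Xi^\eps \cap \Oc)$. Since $v$ charges no point of $\Xi$, we have $\abs{v}(\Xi\cap\Oc) = 0$, and continuity from above of the finite measure $\abs{v}$ gives $\abs{v}(\Xi^\eps\cap\Oc) \to 0$ as $\eps \downarrow 0$. Hence $v_\eps \to v$ in total variation, and in particular $v_\eps \rightharpoonup^* v$.

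Finally, I would conclude as follows. By the agreement step $\SO^w v_\eps = \SO^w_\eps(v_\eps)$, and by Proposition~\ref{prop:so_u} the convergence $v_\eps \rightharpoonup^* v$ forces $\SO^w v_\eps \to \SO^w v$; thus $\SO^w v = \lim_{\eps\to 0}\SO^w_\eps(v_\eps)$ is determined by the family alone. If $T\colon \M(\Oc\setminus\Xi,\C^N)\to\C^{NM}$ is any weak\nobreakdash-$*$ continuous operator with $T|_{\M(\Oc\setminus\Xi^\eps,\C^N)} = \SO^w_\eps$ for every $\eps$, the same limit gives $Tv = \lim_{\eps\to0} \SO^w_\eps(v_\eps) = \SO^w v$ for all $v$, so $T = \SO^w$, which establishes uniqueness.
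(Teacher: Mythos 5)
Your proposal is correct and follows the same route as the paper's (very terse) proof: show that \(\SO^w\) restricted to \(\M(\Oc\setminus\Xi,\C^N)\) extends each \(\SO^w_\eps\), invoke the weak-\(*\) continuity from Proposition~\ref{prop:so_u}, and use weak-\(*\) density of the spaces \(\M(\Oc\setminus\Xi^\eps,\C^N)\) to get uniqueness. The only difference is that you supply the details the paper dismisses as ``a simple computation'' and ``clearly'' — namely the Green's-function reconciliation of the two kernels and the truncation argument \(v_\eps = \Ind_{\Oc\setminus\Xi^\eps}v\) with continuity from above — and these details are sound.
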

\begin{proof}
By a simple computation, \(\SO^w\rvert_{\M(\Oc\setminus\Xi,\C^N)}\) extends all
\(\SO^w_\eps\) and by Proposition~\ref{prop:so_u} it is continuous.
Clearly, the spaces \(\M(\Oc\setminus\Xi^\eps,\C^N)\) are
weak-\(*\) dense in \(\M(\Oc\setminus\Xi,\C^N)\), which proves the uniqueness of the
extension.
\QED
\end{proof}

\begin{lemma}
\label{lem:well_posed_weight}
Let \(w\) be admissible.
\begin{enumerate}[label=\roman*)]
\item
  Suppose that~\eqref{eq:reduced_cost_w} possesses a solution
  \(\widehat{v}\) with \(\supp\widehat{v} \subset \Oc\setminus\Xi\). Then, \(\widehat{u}
  = W\widehat{v}\) is a solution of~\eqref{eq:weighted_problem}.
\item
  Conversely, suppose that any solution of~\eqref{eq:reduced_cost_w} fulfills
  \(\abs{\widehat{v}}(\Xi) > 0\) and that \(\Oc\) contains no isolated
  points. Then~\eqref{eq:weighted_problem} possesses no solution.
\end{enumerate}
\end{lemma}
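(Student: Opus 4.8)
The plan is to carry both statements across the isometry $W$ that links the reweighted problem~\eqref{eq:reduced_cost_w} to the weighted problem~\eqref{eq:weighted_problem}. The starting point is the identity
\[
\J_w(p,u) = j_w(v) \quad\text{whenever } u = Wv,\ v \in \M(\Oc\setminus\Xi,\C^N),
\]
with $p$ the state associated with $u$. It combines two facts established above: $\norm{u}_{\M_w(\Oc,\C^N)} = \norm{v}_{\M(\Oc,\C^N)}$, since $W$ restricts to an isometric isomorphism from $\M(\Oc\setminus\Xi,\C^N)$ onto $\M_w(\Oc,\C^N)$, and $(\SO^w v)_m = p(x_m)$ by Lemma~\ref{lem:solution_operator_w}, so the discrepancy terms coincide as well. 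Both parts then reduce to comparing the optimal values of the two problems, and the technical heart is showing that these values agree.

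For part~i), I would take an arbitrary $u\in\M_w(\Oc,\C^N)$ and use surjectivity of $W|_{\M(\Oc\setminus\Xi,\C^N)}$ to write $u = Wv_u$ with $v_u\in\M(\Oc\setminus\Xi,\C^N)$. The identity above gives $\J_w(p,u)=j_w(v_u)\geq j_w(\widehat v)$, the inequality being the global optimality of $\widehat v$ for~\eqref{eq:reduced_cost_w}. Because $\supp\widehat v\subset\Oc\setminus\Xi$, injectivity of $W$ on that set identifies $\widehat v$ as the preimage of $\widehat u=W\widehat v$, whence $\J_w(p,\widehat u)=j_w(\widehat v)$; combined with the previous bound this shows that $\widehat u$ minimizes~\eqref{eq:weighted_problem}.

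The value equality required for part~ii) has the easy direction $\inf\eqref{eq:weighted_problem}\geq\min\eqref{eq:reduced_cost_w}$ already contained in the pullback of part~i). For the reverse inequality I would proceed by approximation. Given any $v\in\M(\Oc,\C^N)$, split it as $v = v_0 + v_\Xi$ with $v_0 = v|_{\Oc\setminus\Xi}$ and $v_\Xi = \sum_{m=1}^M \coeff{c}_m\delta_{x_m}$, and move the atoms off $\Xi$ by choosing $x_m^k\in\Oc\setminus\Xi$ with $x_m^k\to x_m$, which is possible precisely because $\Oc$ has no isolated points; one arranges in addition that the $x_m^k$ are mutually distinct and avoid the countably many atoms of $v_0$. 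The measures $v_k = v_0 + \sum_m\coeff{c}_m\delta_{x_m^k}$ then lie in $\M(\Oc\setminus\Xi,\C^N)$, satisfy $\norm{v_k}_{\M(\Oc,\C^N)} = \norm{v}_{\M(\Oc,\C^N)}$, and—by continuity of the kernels $\bar G_n^{x_m}/w^n$ on $\Oc$ (admissibility property~\ref{it:wG_cont})—obey $\SO^w v_k\to\SO^w v$, so that $j_w(v_k)\to j_w(v)$. Since $Wv_k\in\M_w(\Oc,\C^N)$ and $\J_w(p,Wv_k)=j_w(v_k)$ by the opening identity, passing to the limit gives $\inf\eqref{eq:weighted_problem}\leq j_w(v)$, and an infimum over $v$ yields $\inf\eqref{eq:weighted_problem}=\min\eqref{eq:reduced_cost_w}$, the minimum being attained because~\eqref{eq:reduced_cost_w} admits solutions.

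For part~ii) I then argue by contradiction: a solution $\widehat u$ of~\eqref{eq:weighted_problem} would possess a preimage $v_{\widehat u}\in\M(\Oc\setminus\Xi,\C^N)$ with $j_w(v_{\widehat u})=\J_w(p,\widehat u)=\inf\eqref{eq:weighted_problem}=\min\eqref{eq:reduced_cost_w}$, so $v_{\widehat u}$ would be a minimizer of~\eqref{eq:reduced_cost_w} with $\abs{v_{\widehat u}}(\Xi)=0$, contradicting the hypothesis that every minimizer charges $\Xi$. I expect the approximation step to be the main obstacle: one must guarantee at once that displacing the atoms leaves the total-variation term unchanged (which forces the bookkeeping with the atoms of $v_0$ and relies on the absence of isolated points) and that the observation term passes to the limit, for which the continuous extendability of $G_n^{x_m}/w^n$ up to the observation points is exactly the ingredient that keeps $\SO^w$ well behaved near $\Xi$.
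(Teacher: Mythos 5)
Your proof is correct and follows essentially the same route as the paper: both reduce the weighted problem~\eqref{eq:weighted_problem} to the reweighted one through the isometry \(W\) together with Lemma~\ref{lem:solution_operator_w}, and both handle part~ii) by displacing the atoms sitting on \(\Xi\) to nearby points of \(\Oc\) (possible since \(\Oc\) has no isolated points) and passing to the limit via the weak-\(*\) continuity of \(\SO^w\), concluding with the same contradiction. The only cosmetic difference is that the paper perturbs a sparse solution of~\eqref{eq:reduced_cost_w}, whereas you perturb an arbitrary measure after splitting off its atoms on \(\Xi\) (automatically finitely many, since \(\Xi\) is finite); the underlying idea is identical.
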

\begin{proof}
Based on Proposition~\ref{prop:so_u} and Lemma~\ref{lem:solution_operator_w}, the point
evaluations of the solutions to~\eqref{eq:state} with sources in \(\M_w(\Oc,\C^N)\) are
well-defined. Moreover, using the isometric isomorphism property of \(W\)
from Proposition~\ref{prop:so_u}, the infimum
of~\eqref{eq:weighted_problem} is equal to
\begin{equation}
\label{eq:inf_help_jw}
\widehat\jmath = \inf_{v \in \M(\Oc\setminus\Xi,\C^N)} j_w(v).
\end{equation}
Clearly, the minimum of~\eqref{eq:reduced_cost_w} fulfills
\(\min_{v\in\Mcon} j_w(v) \leq \widehat\jmath\).

Now, if~\eqref{eq:reduced_cost_w} admits a solution \(\widehat{v} \in
\M(\Oc\setminus\Xi,\C^N)\), it follows that \(j_w(\widehat{v}) =
\widehat\jmath\) and the infimum of~\eqref{eq:weighted_problem}
is assumed by \(\widehat{u} = W\widehat{v}\).

Conversely, if any solution to~\eqref{eq:reduced_cost_w} is not in
\(\M(\Oc\setminus\Xi,\C^N)\), the infimum in~\eqref{eq:inf_help_jw} is not assumed.
To see this, we first show that it in fact holds that \(\min_{v\in\Mcon} j_w(v) =
\widehat\jmath\). Take any sparse solution \(\widehat{v}\) of~\eqref{eq:reduced_cost_w}.
By the assumption, it contains Dirac delta functions supported on \(\Xi\). Since the support
points which coincide with observation points are not isolated in \(\Oc\), we can slightly
perturb them, such that \(x_m \neq \tilde{x}_m^\ell \to x_m\) for \(\ell \to
\infty\). Denote the perturbed measure by \(\tilde{v}_\ell\). It holds \(\norm{\tilde{v}_\ell}_{\Mcon} =
\norm{\widehat{v}}_{\Mcon}\) and \(\tilde{v}_\ell \in \M(\Oc\setminus\Xi,\C^N)\) for \(\ell\)
big enough and with the weak-\(*\) continuity of \(\SO^w\) we obtain \(\widehat\jmath \leq
\lim_{n\to\infty} j_w(\tilde{v}_\ell) = j_w(\widehat{v})\). Therefore, \(j_w\) can not assume
its minimum on \(\M(\Oc\setminus\Xi,\C^N)\), which
directly implies that~\eqref{eq:weighted_problem} has no minimum, using
again Proposition~\ref{prop:so_u} and Lemma~\ref{lem:solution_operator_w}.
\QED
\end{proof}

To obtain well-posedness of the weighted
problem~\eqref{eq:weighted_problem} without any assumptions on the
structure of the solutions of the auxiliary problem~\eqref{eq:reduced_cost_w}, we can
impose the additional condition \([G^{x_m}/w](x_m) = 0\) for all \(m\).
For instance, for any admissible weight \(w\) (such as given in~\eqref{eq:example_w_free}
or~\eqref{eq:example_w_omega}) and some monotonously increasing function \(\psi\colon \R
\to \R_+\) with \(\psi(0) = 0\), \(\psi(t) > 0\) for \(t > 0\), and \(\psi(t)/t \to
\infty\) for \(t\to\infty\), the weight \(\tilde{w} = \psi\circ w\) has this property.
\begin{proposition}
Suppose that \(w\) is admissible with \([G^{x_m}/w](x_m) = 0\) for all \(m\). Then, the
operator \(S^w\) is weak-\(*\) continuous on the space \(\M(\Oc\setminus\Xi,\C^N)\).
\end{proposition}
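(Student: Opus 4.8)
The plan is to identify $\SO^w$ restricted to $\M(\Oc\setminus\Xi,\C^N)$ as the Banach-space adjoint of a bounded operator into the \emph{correct} predual, and to show that the extra condition $[G^{x_m}/w](x_m) = 0$ is precisely what forces the range of $(\SO^w)^*$ into that predual. The point is that the weak-$*$ topology on $\M(\Oc\setminus\Xi,\C^N) = \Cc_0(\Oc\setminus\Xi,\C^N)^*$ is generated by testing against $\Cc_0(\Oc\setminus\Xi,\C^N)$, which is genuinely finer information than the topology inherited from $\Mcon = \Ccon^*$; hence, although $\SO^w$ is already weak-$*$ continuous on $\Mcon$ by Proposition~\ref{prop:so_u}, continuity with respect to the intrinsic weak-$*$ topology of the smaller space needs a separate argument. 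Since the target $\C^{NM}$ is finite dimensional, $\SO^w|_{\M(\Oc\setminus\Xi,\C^N)}$ is weak-$*$ continuous if and only if, for every $q\in\C^{NM}$, the functional $v\mapsto(\SO^w v,q)=\pair{v,(\SO^w)^*q}$ is represented by an element of the predual, that is, $(\SO^w)^*q\in\Cc_0(\Oc\setminus\Xi,\C^N)$.

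First I would characterize this predual concretely. Since $\Oc$ is compact and $\Xi\cap\Oc$ consists of finitely many points, a function lies in $\Cc_0(\Oc\setminus\Xi,\C^N)$ exactly when it is continuous on $\Oc\setminus\Xi$ and admits a continuous extension to $\Oc$ vanishing at every point of $\Xi\cap\Oc$. By~\eqref{eq:so_u}, the $n$-th component of $(\SO^w)^*q$ is $g_n=\sum_{m=1}^M(\bar{G}_n^{x_m}/w_n)\,q_{n,m}$, and admissibility (property~\ref{it:wG_cont}) already guarantees that each summand, and hence $g_n$, extends continuously to $\Oc$. It therefore remains only to verify that $g_n(x_{m_0})=0$ for every observation point $x_{m_0}\in\Xi$ and every $n$.

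The substantive step is the evaluation at $x_{m_0}$, which I would split into off-diagonal and diagonal contributions. For $m\neq m_0$ the function $\bar{G}_n^{x_m}$ is continuous and bounded near $x_{m_0}$, while $1/w_n$ is continuous on $\Oc$ with $1/w_n(x_{m_0})=0$, using $w_n(x_{m_0})=+\infty$; hence the continuous extension of $\bar{G}_n^{x_m}/w_n$ takes the value $0$ at $x_{m_0}$. For $m=m_0$ the continuous extension of $\bar{G}_n^{x_{m_0}}/w_n$ at $x_{m_0}$ equals $\overline{[G_n^{x_{m_0}}/w_n](x_{m_0})}$, since $\bar{G}_n^{x_{m_0}}=\overline{G_n^{x_{m_0}}}$ and $w_n$ is real-valued, and this vanishes by the hypothesis $[G^{x_m}/w](x_m)=0$. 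Summing over $m$ yields $g_n(x_{m_0})=0$, so $(\SO^w)^*q\in\Cc_0(\Oc\setminus\Xi,\C^N)$.

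Finally I would conclude by the adjoint principle: $(\SO^w)^*\colon\C^{NM}\to\Cc_0(\Oc\setminus\Xi,\C^N)$ is bounded, and the defining identity $(\SO^w v,q)=\pair{v,(\SO^w)^*q}$ identifies $\SO^w|_{\M(\Oc\setminus\Xi,\C^N)}$ with its adjoint, which is automatically continuous from the weak-$*$ topology of $\M(\Oc\setminus\Xi,\C^N)$ into $\C^{NM}$. The hard part will be the bookkeeping of the first paragraph, namely distinguishing the intrinsic weak-$*$ topology of $\M(\Oc\setminus\Xi,\C^N)$ from the one inherited from $\Mcon$ and pinning down $\Cc_0(\Oc\setminus\Xi,\C^N)$ as the relevant predual; once that is settled, the vanishing of $g_n$ on $\Xi$ is the only analytic input and the remainder is the standard duality argument.
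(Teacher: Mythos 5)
Your proposal is correct and follows essentially the same route as the paper: the paper's (very terse) proof consists precisely of the observation that \((\SO^w)^*\) maps into \(\Cc_0(\Oc\setminus\Xi,\C^N) = \set{v\in \Cc(\Oc,\C^N) \;|\; v(x_m)=0,\ x_m \in \Xi\cap\Oc}\) together with the identification \(\Cc_0(\Oc\setminus\Xi,\C^N)^* = \M(\Oc\setminus\Xi,\C^N)\). What you add is the explicit verification that the adjoint's components vanish at each \(x_{m_0}\) — splitting into the diagonal term (killed by the hypothesis \([G^{x_{m_0}}/w](x_{m_0})=0\)) and the off-diagonal terms (killed by continuity of \(1/w\) with value zero at \(\Xi\)) — which the paper leaves implicit.
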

\begin{proof}
This follows directly from the observation that
\[
(S^w)^* \colon \C^N \to \Cc_0(\Oc\setminus\Xi,\C^N)
= \set{v\in \Cc(\Oc,\C^N) \;|\; v(x_m)=0,x_m \in \Xi\cap\Oc}.
\]
and the identification
\(\Cc_0(\Oc\setminus\Xi,\C^N)^* = \M(\Oc\setminus\Xi,\C^N)\).
\QED
\end{proof}
In this case, the solutions of~\eqref{eq:reduced_cost_w} are always supported on
\(\Oc\setminus \Xi\), which follows from the optimality conditions and the fact that
\(\widehat\xi = - (S^w)^*(S^w(\widehat{v}) - p_d)\) fulfills \(\widehat{\xi}(\Xi\cap\Oc) = 0\).
We summarize all results in the following theorem.
\begin{theorem}
  \label{thm:well_posed_weighted}
  Let \(w\) be admissible and suppose that~\eqref{eq:reduced_cost_w} admits solutions in
  the space \(\M(\Oc\setminus\Xi,\C^N)\) or that \([G^{x_m}/w](x_m) = 0\) for all \(m\).
  Then, the problem~\eqref{eq:weighted_problem} has a minimum
  \(\widehat{u} \in \M_w(\Oc,\C^{N})\) which consists of finitely many Dirac delta
  functions, \(\widehat{u} = \sum_{j=1}^{N_d} \widehat{\coeff{u}}_{j}
  \delta_{\widehat{x}_j}\). Together with the associated
  \[
    \widehat{\xi}= - \SO^* (\SO\widehat{u} - p_d),
  \]
  it is uniquely characterized by the optimality conditions
\begin{align*}
  \norm{\widehat{\xi}/w}_{\Cc(\Oc,\C^N)} \leq \alpha, \qquad
  \alpha\,w(\widehat{x}_j)\widehat{\coeff{u}}_j
  = \abs{w(\widehat{x}_j)\widehat{\coeff{u}}_j}_{\C^N} \, \widehat{\xi}(\widehat{x}_j)/w(\widehat{x}_j),
\end{align*}
\(j \in \set{1,2,\ldots,N_d}\). Moreover,
$\supp\abs{\widehat{u}} \subset \set{x\in\Oc \;|\; \abs{\widehat{\xi}(x)/w(x)}_{\C^N} = \alpha}$
for each solution $\widehat{u}$.
\end{theorem}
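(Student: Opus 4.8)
The statement is a synthesis of the machinery built up in this section, so the plan is to transport everything through the isometric isomorphism $W$ and the correspondence between the weighted problem \eqref{eq:weighted_problem} and the reweighted problem \eqref{eq:reduced_cost_w}. First I would secure a \emph{sparse} minimizer of \eqref{eq:reduced_cost_w} supported away from $\Xi$, handling the two hypotheses separately. If $[G^{x_m}/w](x_m)=0$ for all $m$, the proposition preceding this theorem shows $\SO^w$ is weak-$*$ continuous on $\M(\Oc\setminus\Xi,\C^N)$ and, as noted there, forces \emph{every} solution of \eqref{eq:reduced_cost_w} to be supported in $\Oc\setminus\Xi$; since \eqref{eq:reduced_cost_w} shares the structure of \eqref{eq:reduced_cost}, Corollary~\ref{cor:dirac_solutions} furnishes a solution $\widehat v=\sum_{j=1}^{N_d}\widehat{\coeff{v}}_j\delta_{\widehat x_j}$ with $N_d\le 2NM$, automatically supported off $\Xi$. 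If instead \eqref{eq:reduced_cost_w} merely admits \emph{some} solution $\widehat v_0\in\M(\Oc\setminus\Xi,\C^N)$, I would restrict the problem to the compact support $\Sigma=\supp\widehat v_0\subset\Oc\setminus\Xi$: as $\widehat v_0$ attains the global infimum and lies in $\M(\Sigma,\C^N)$, the minimum of $j_w$ over $\M(\Sigma,\C^N)$ coincides with the global one, and the extreme-point argument underlying Corollary~\ref{cor:dirac_solutions}, applied with control set $\Sigma$, yields a sparse global minimizer supported in $\Sigma$. In either case Lemma~\ref{lem:well_posed_weight}(i) gives that $\widehat u=W\widehat v$ solves \eqref{eq:weighted_problem}; since admissibility makes $w$ real-valued, continuous, and bounded below by a positive constant on $\Oc\setminus\Xi$, $W$ preserves the atomic structure and $\widehat u=\sum_j(\widehat{\coeff{v}}_j/w(\widehat x_j))\delta_{\widehat x_j}$ is again a sum of $N_d\le 2NM$ Dirac deltas.

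The second step is to translate the optimality system of Corollary~\ref{cor:optimality_conditions_dirac} for $\widehat v$ into the claimed conditions for $\widehat u$. The key bookkeeping is the identification of the two adjoint states: comparing \eqref{eq:so_u} with \eqref{eq:dual} gives $(\SO^w)^*q=(\SO^*q)/w$ coordinate-wise, while by Lemma~\ref{lem:solution_operator_w} the algebraic value $\SO^w\widehat v$ equals the genuine point evaluations $\SO(W\widehat v)=\SO\widehat u$, hence $\SO^w\widehat v-p_d=\SO\widehat u-p_d$. Consequently the reweighted adjoint state $\widehat\eta=-(\SO^w)^*(\SO^w\widehat v-p_d)$ equals $\widehat\xi/w$, where $\widehat\xi=-\SO^*(\SO\widehat u-p_d)$ as in the statement. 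Substituting $\widehat\eta=\widehat\xi/w$ into $\norm{\widehat\eta}_{\Cc(\Oc,\C^N)}\le\alpha$ yields the first condition, and substituting it together with $\widehat{\coeff{v}}_j=w(\widehat x_j)\widehat{\coeff{u}}_j$ into $\alpha\widehat{\coeff{v}}_j=\abs{\widehat{\coeff{v}}_j}_{\C^N}\widehat\eta(\widehat x_j)$ yields the second. The support inclusion follows from $\supp\abs{\widehat v}\subset\set{x : \abs{\widehat\eta(x)}_{\C^N}=\alpha}$ (Proposition~\ref{prop:optimality_conditions} transferred to \eqref{eq:reduced_cost_w}) together with $\supp\abs{\widehat u}=\supp\abs{\widehat v}$, valid because $1/w$ is finite and positive at every support point. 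Since Corollary~\ref{cor:optimality_conditions_dirac} states these conditions are necessary and sufficient for $\widehat v$, and $W$ is a value-preserving bijection onto $\M_w(\Oc,\C^N)$, the transported conditions characterize the solutions $\widehat u$ uniquely.

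The genuinely delicate point is the first step under the first hypothesis, where one is handed an arbitrary, possibly diffuse minimizer off $\Xi$ and must extract a sparse one \emph{without} reintroducing mass on the observation points; the restriction to the compact set $\Sigma=\supp\widehat v_0$ is exactly what keeps the extreme-point machinery of the appendix applicable while confining the support to $\Oc\setminus\Xi$. Everything else is careful bookkeeping, the only remaining points needing attention being the well-definedness of the division by $w$ in $\widehat\eta=\widehat\xi/w$ and in the coefficient relation (guaranteed by finiteness and positivity of $w$ on $\Oc\setminus\Xi$), and the identification $\SO^w\widehat v=\SO\widehat u$ of the algebraic operator with the true point evaluations, which is precisely the content of Lemma~\ref{lem:solution_operator_w}.
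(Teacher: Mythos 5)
Your overall strategy --- produce a sparse minimizer of \eqref{eq:reduced_cost_w} supported away from \(\Xi\), push it through \(W\) via Lemma~\ref{lem:well_posed_weight}(i), and transfer the optimality system of Corollary~\ref{cor:optimality_conditions_dirac} using the identities \((\SO^w)^*q = (\SO^*q)/w\) and \(\SO^w\widehat{v} = \SO(W\widehat{v})\) from Lemma~\ref{lem:solution_operator_w} --- is exactly the assembly the paper intends (the theorem is stated there as a summary of the preceding results). Your handling of the case \([G^{x_m}/w](x_m)=0\) (every solution of \eqref{eq:reduced_cost_w} has support off \(\Xi\) because the common dual certificate vanishes on \(\Xi\cap\Oc\), so the sparse solution of Corollary~\ref{cor:dirac_solutions} qualifies) and your bookkeeping for the optimality conditions, the support inclusion, and the equivalence of the characterizations are all correct.

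The gap is in the other case. From \(\widehat{v}_0\in\M(\Oc\setminus\Xi,\C^N)\) you conclude \(\Sigma=\supp\widehat{v}_0\subset\Oc\setminus\Xi\). But membership in \(\M(\Oc\setminus\Xi,\C^N)\) only means \(\abs{\widehat{v}_0}(\Xi)=0\); the support is a closed set and may contain an observation point \(x_m\) whenever \(\widehat{v}_0\) is diffuse with mass accumulating at \(x_m\) (zero mass \emph{at} a point does not exclude that point from the support). In that situation the restricted problem over \(\M(\Sigma,\C^N)\) is still well posed and its minimum still equals the global one, but nothing prevents its sparse extremal minimizer from placing an atom exactly at \(x_m\in\Sigma\cap\Xi\): without the condition \([G^{x_m}/w](x_m)=0\) the common dual certificate \(\widehat{\eta}=-(\SO^w)^*(\SO^w\widehat{v}-p_d)\) need not vanish at \(x_m\), so the constraint \(\supp\abs{\widehat{v}}\subset\set{x \;|\; \abs{\widehat{\eta}(x)}_{\C^N}=\alpha}\) does not exclude \(x_m\). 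Such a sparse solution is partially annihilated by \(W\) and then no longer corresponds to a minimizer of \eqref{eq:weighted_problem}, so the chain of implications breaks at its first link. Your argument is sound if the hypothesis is read as in Lemma~\ref{lem:well_posed_weight}(i) --- a solution whose \emph{support} avoids \(\Xi\) --- since then \(\Sigma\) really is disjoint from \(\Xi\) and the extremal-point machinery with control set \(\Sigma\) delivers a sparse global minimizer off \(\Xi\); this is in fact a nice way to fill in a step the paper leaves implicit. But under the literal hypothesis ``admits solutions in the space \(\M(\Oc\setminus\Xi,\C^N)\)'' an additional argument is needed (and is supplied neither by you nor, it should be said, by the paper, which states the lemma and the theorem with these two inequivalent conditions) to show that zero mass on \(\Xi\) for \emph{some} solution forces the existence of a \emph{sparse} solution with no atoms on \(\Xi\).
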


\section{Regularization properties}
\label{sec:regularization}
In this section, we study (loosely speaking) if the minimization problem
delivers an appropriate solution for the inverse problem:
solve \(S u = p\) for \(u\). We mainly rely on general results for nonsmooth Tikhonov
regularization~\cite{BurgerOsher:2004,HofmannKaltenbacherPoeschlScherzer:2007} and sparse
spike deconvolution~\cite{BrediesPikkarainen:2013,DuvalPeyre:2015}.
To that purpose, we assume that we are given the exact source $u^\star$ of the form
\begin{equation}
\label{eq:exact_source}
u^\star=\sum_{j=1}^{N^\star}\coeff{u}^\star_j\delta_{x^\star_j},
\quad\text{where }\coeff{u}^\star_j\in \C^N\setminus\set{0},~x^\star_j\in \Oc\setminus\Xi
\end{equation}
and noisy observations \(p_d = \SO u^\star + f = p^\star + f\) with small noise
\(\norm{f}_{\C^{NM}} \leq \delta\). In the following we state conditions on $u^\star$ and
a parameter choice rule for \(\alpha\) in dependence of $\delta$ which are sufficient for
the convergence of the solutions \(\widehat{u}_\alpha\) of~\eqref{eq:pre_opt_prob}--\eqref{eq:state} (or the
weighted problem~\eqref{eq:weighted_problem})
towards the exact solution $u^\star$ for vanishing noise
\(\delta \to 0\) and for \(\alpha(\delta) \to 0\).
Moreover, convergence rates are given.

Without loss of generality, we only study the reduced weighted
problem~\eqref{eq:reduced_cost_w} for a general admissible weight \(w\). The case
of \(w\equiv 1\) with \(\Oc\cap\Xi = \emptyset\) from section~\ref{sec:opt_cont_prob} is
then included as a simple special case. In the case of solutions \(\widehat{v}_\alpha\) of
formulation~\eqref{eq:reduced_cost_w}, we are interested in the convergence of
\(W\widehat{v}_\alpha\) towards \(u^\star\). We define
\begin{equation}
\label{eq:exact_source_w}
v^\star=\sum_{j=1}^{N^\star}\coeff{v}^\star_j\delta_{x^\star_j},
\quad\text{where }\coeff{v}^\star_j = w(x^\star_j)\coeff{u}^\star_j,
\end{equation}
In the following, we study the convergence of solutions \(\widehat{v}_{\alpha(\delta)}\) towards \(v^\star\).
Clearly, since \(1/w\) is a continuous function on \(\Oc\), this
implies convergence of \(W\widehat{v}_\alpha\) towards \(Wv^\star = u^\star\).
We first analyse the following minimum norm problem, (cf., e.g.,
\cite{HofmannKaltenbacherPoeschlScherzer:2007,BrediesPikkarainen:2013,DuvalPeyre:2015}):
\begin{align}
\label{eq:reduced_cost_zero_w}\tag{$P_{0,w}$}
  \min_{v\in \Mcon} \norm{v}_{\Mcon} \quad\text{subject to } \SO^w v = p^\star.
\end{align}
By assumption, the admissible set of~\eqref{eq:reduced_cost_zero_w} is not empty, since
\(p^\star = S^w v^\star\).
Therefore, with Lemma~\ref{lem:weak_to_strong}, we can derive the
following basic result; see Appendix~\ref{app:extremal_solutions}.
\begin{proposition}
\label{prop:existence_zero}
There exists a solution \(v^\dagger \in \Mcon\) to~\eqref{eq:reduced_cost_zero_w}, which
consists of \(N_d \leq 2NM\) point sources,
\[
v^\dagger = \sum_{j=1}^{N_d} \coeff{v}^\dagger_j \delta_{x^\dagger_j}
\quad\text{where } \coeff{v}^\dagger_j \in \C^N,\; x^\dagger_j \in \Oc.
\]
\end{proposition}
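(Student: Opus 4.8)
The plan is to combine the direct method for existence with a Krein--Milman argument for sparsity, mirroring the proof of Corollary~\ref{cor:dirac_solutions}. The feasible set of~\eqref{eq:reduced_cost_zero_w} is nonempty, since by construction \(\SO^w v^\star = p^\star\); let \(m\) denote the infimum of the objective. Any minimizing sequence \((v_k)\) satisfies \(\norm{v_k}_{\Mcon} \to m\) and is therefore bounded in \(\Mcon\), so by the Banach--Alaoglu theorem I would pass to a weak-\(*\) convergent subsequence \(v_k \rightharpoonup^* v^\dagger\). Weak-\(*\) continuity of \(\SO^w\) (Proposition~\ref{prop:so_u}, cf.\ Lemma~\ref{lem:weak_to_strong}) gives \(\SO^w v^\dagger = \lim_k \SO^w v_k = p^\star\), so \(v^\dagger\) is feasible, while weak-\(*\) lower semicontinuity of the total variation norm yields \(\norm{v^\dagger}_{\Mcon} \leq \liminf_k \norm{v_k}_{\Mcon} = m\). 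Hence the minimum is attained.

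For the sparsity bound I would exhibit an \emph{extreme} minimizer. Since \(m\) is the minimal norm over the feasible set, the solution set coincides with
\[
\mathcal{S} = \set{v \in \Mcon \;|\; \SO^w v = p^\star} \cap \set{v \in \Mcon \;|\; \norm{v}_{\Mcon} \leq m},
\]
the intersection of a weak-\(*\) closed affine subspace (closed by weak-\(*\) continuity of \(\SO^w\)) with the closed ball of radius \(m\), which is weak-\(*\) compact by Banach--Alaoglu. Thus \(\mathcal{S}\) is nonempty, convex, and weak-\(*\) compact. By the Krein--Milman theorem (Proposition~\ref{prop:krein_milman}) it possesses an extreme point \(v^\dagger\), and Theorem~\ref{thm:extremal_dirac} characterizes such extreme points as finite combinations of Dirac deltas, located in \(\Oc\), with at most \(\dim \Ran \SO^w\) atoms. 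As in Corollary~\ref{cor:dirac_solutions}, \(\dim \Ran \SO^w \leq \dim_{\R} \C^{NM} = 2NM\) because \(\C\) is treated as a real vector space, which gives the claimed bound \(N_d \leq 2NM\).

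The main point to verify carefully is that \(\mathcal{S}\) is genuinely weak-\(*\) compact, which rests on the weak-\(*\) closedness of the feasibility constraint; this in turn is precisely the weak-\(*\) continuity of \(\SO^w\) supplied by Proposition~\ref{prop:so_u}, and is the reason the weighted formulation (or the separation \(\dist(\Oc,\Xi) > 0\) in the unweighted case) is needed. The only other subtlety is that the extreme-point and dimension argument of Theorem~\ref{thm:extremal_dirac}, originally applied to the penalized functional~\eqref{eq:reduced_cost}, is here applied to the equality-constrained minimum-norm problem; but both reduce to intersecting the total variation ball with finitely many real linear constraints, so the same argument transfers without change.
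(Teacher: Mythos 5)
Your proof is correct and follows essentially the same route as the paper: existence by the direct method using the weak-\(*\) continuity of \(\SO^w\) (Lemma~\ref{lem:weak_to_strong}, Proposition~\ref{prop:so_u}), and sparsity by applying Proposition~\ref{prop:krein_milman} together with Theorem~\ref{thm:extremal_dirac} to the weak-\(*\) compact solution set, with \(\dim \Ran \SO^w \leq 2NM\). Your final caveat is unnecessary, since the appendix results are formulated for the solution set \(U_{p_d,\alpha}\) with \(\alpha \geq 0\), which explicitly includes the minimum-norm problem~\eqref{eq:Pzero_gen}, so no transfer argument from the penalized problem is needed.
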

We now turn to the limiting behavior of~\eqref{eq:reduced_cost_w} for small \(\alpha\)
and \(\delta\).
From~\cite{HofmannKaltenbacherPoeschlScherzer:2007} (cf.\
\cite[Section~4]{BrediesPikkarainen:2013}), we have the following result.
\begin{theorem}
\label{thm:variational_reg}
For any monotonously increasing parameter choice rule \(\alpha(\delta)\) for which
$\delta^2/\alpha(\delta) \to 0$ and \(\alpha(\delta) \to 0\) for \(\delta \to 0\),
any sequence \(\widehat{v}_{\alpha(\delta)}\) of solutions to~\eqref{eq:reduced_cost_w}
contain a subsequence which converges towards a solution $v^\dagger$
of~\eqref{eq:reduced_cost_zero_w} (weakly-\(*\) in \(\M(\Oc,\C^N)\)).
If additionally \(v^\dagger\) is unique, the whole sequence converges towards \(v^\dagger\).
\end{theorem}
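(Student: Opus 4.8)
The plan is to follow the classical variational convergence analysis for Tikhonov-type regularization developed in~\cite{HofmannKaltenbacherPoeschlScherzer:2007} and adapted to the measure-space setting in~\cite[Section~4]{BrediesPikkarainen:2013}. I fix a sequence $\delta_k \to 0$, write $\alpha_k = \alpha(\delta_k)$, and let $\widehat{v}_k$ be a corresponding solution of~\eqref{eq:reduced_cost_w} with data $p_d = p^\star + f_k$, $\norm{f_k}_{\C^{NM}} \leq \delta_k$. The starting point is the a priori estimate coming from the optimality of $\widehat{v}_k$: since a minimum norm solution $v^\dagger$ from Proposition~\ref{prop:existence_zero} is feasible, i.e.\ $\SO^w v^\dagger = p^\star$, testing $j_w$ at $v^\dagger$ and using $\SO^w v^\dagger - p_d = -f_k$ yields
\[
\tfrac{1}{2}\norm{\SO^w \widehat{v}_k - p_d}_{\C^{NM}}^2 + \alpha_k \norm{\widehat{v}_k}_{\Mcon}
\leq \tfrac{1}{2}\norm{f_k}_{\C^{NM}}^2 + \alpha_k \norm{v^\dagger}_{\Mcon}
\leq \tfrac{1}{2}\delta_k^2 + \alpha_k \norm{v^\dagger}_{\Mcon}.
\]
Dividing the norm term by $\alpha_k$ and using $\delta_k^2/\alpha_k \to 0$ gives $\limsup_k \norm{\widehat{v}_k}_{\Mcon} \leq \norm{v^\dagger}_{\Mcon}$, so the iterates are uniformly bounded; retaining the residual term and using $\alpha_k \to 0$ and $\delta_k \to 0$ shows $\norm{\SO^w \widehat{v}_k - p_d}_{\C^{NM}} \to 0$, hence $\SO^w \widehat{v}_k \to p^\star$ after adding $\norm{p_d - p^\star}_{\C^{NM}} = \norm{f_k}_{\C^{NM}} \to 0$.

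Next I would extract a convergent subsequence and identify its limit. Since $\Oc$ is compact, the predual $\Ccon$ is separable, so bounded sets in $\Mcon = \Ccon^*$ are sequentially weak-$*$ compact; along a subsequence (not relabeled) $\widehat{v}_k \rightharpoonup^* \bar v$. By the weak-$*$ continuity of $\SO^w$ (Proposition~\ref{prop:so_u}, cf.\ Lemma~\ref{lem:weak_to_strong}) the limit satisfies $\SO^w \bar v = \lim_k \SO^w \widehat{v}_k = p^\star$, so $\bar v$ is feasible for~\eqref{eq:reduced_cost_zero_w}. As the total variation norm is weak-$*$ lower semicontinuous (being the dual norm on $\Ccon^*$),
\[
\norm{\bar v}_{\Mcon} \leq \liminf_k \norm{\widehat{v}_k}_{\Mcon} \leq \limsup_k \norm{\widehat{v}_k}_{\Mcon} \leq \norm{v^\dagger}_{\Mcon}.
\]
Since $v^\dagger$ minimizes the norm over the feasible set and $\bar v$ is feasible, the reverse inequality $\norm{v^\dagger}_{\Mcon} \leq \norm{\bar v}_{\Mcon}$ holds as well; thus $\bar v$ is itself a solution of~\eqref{eq:reduced_cost_zero_w}, which establishes the first assertion.

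For the final claim I would invoke the standard subsequence principle: if~\eqref{eq:reduced_cost_zero_w} has a unique solution $v^\dagger$, then the argument above shows that \emph{every} weak-$*$ convergent subsequence of $(\widehat{v}_k)$ has the same limit $v^\dagger$; since $(\widehat{v}_k)$ is bounded and every subsequence admits a further subsequence converging to $v^\dagger$, the whole sequence converges weak-$*$ to $v^\dagger$. The point requiring care is that the entire argument hinges on the parameter choice rule: the bound $\limsup_k \norm{\widehat{v}_k}_{\Mcon} \leq \norm{v^\dagger}_{\Mcon}$ uses precisely $\delta_k^2/\alpha_k \to 0$, while $\alpha_k \to 0$ is what forces the residual to vanish, so neither condition alone suffices. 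Beyond this, the only structural ingredients are weak-$*$ sequential compactness of bounded sets, weak-$*$ continuity of $\SO^w$, and weak-$*$ lower semicontinuity of the norm, all available in the present setting.
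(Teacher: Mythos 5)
Your proof is correct and is precisely the standard variational argument that the paper delegates to the cited references (\cite{HofmannKaltenbacherPoeschlScherzer:2007} and \cite[Section~4]{BrediesPikkarainen:2013}): a priori estimate from optimality against a minimum norm solution, weak-\(*\) compactness and lower semicontinuity to identify the limit, and the subsequence principle under uniqueness. The only cosmetic remark is that in the weighted setting \(\Oc\) is merely closed in \(\Omega\) rather than compact, but since the relevant predual (\(\Cc_0\) of a bounded set) is still separable, your compactness and metrizability steps go through unchanged.
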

Under a \emph{source condition}
convergence rates can be derived in a generalized Bregman distance (see,
e.g.,~\cite{BurgerOsher:2004}). It has the following form:
\begin{equation}
\label{eq:source_condition}
\text{There exists a}~y^\dagger \in \C^{NM},
\;\text{such that}~(\SO^w)^* y^\dagger \in \partial \norm{v^\star}_{\M(\Oc,\C^N)}.
\end{equation}
A concrete form of this condition can be given by using the characterization of the
subdifferential.
\begin{proposition}
\label{prop:source_condition_write_out}
The source condition~\eqref{eq:source_condition} can be equivalently expressed as: There
exists a \(y^\dagger \in \C^{NM}\), such that the associated adjoint state \(\xi^\dagger = \SO^*
y^\dagger\) fulfills
\begin{align*}
\norm{\xi^\dagger/w}_{\Cc(\Oc,\C^N)} \leq 1, \qquad
\coeff{v}^\star_j = \abs{\coeff{v}^\star_j}_{\C^N}\,\xi^\dagger(x^\star_j)/w(x^\star_j),
\quad j \in \set{1,2,\ldots,N^\star}.
\end{align*}
The last condition can also be given by \(w(x^\star_j)\coeff{u}^\star_j =
\abs{w(x^\star_j)\coeff{u}^\star_j}_{\C^N}\,\xi^\dagger(x^\star_j)/w(x^\star_j)\).
\end{proposition}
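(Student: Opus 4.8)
The plan is to unfold the abstract source condition~\eqref{eq:source_condition} into a pointwise statement by combining the standard characterization of the subdifferential of the total variation norm with the explicit relation between the weighted adjoint $(\SO^w)^*$ and the unweighted adjoint $\SO^*$. Since $v^\star \neq 0$, the subdifferential of the norm $\norm{\cdot}_{\Mcon}$ at $v^\star$, restricted to subgradients lying in the predual $\Ccon$, is characterized by
\[
\partial\norm{v^\star}_{\Mcon} \cap \Ccon = \set{\phi \in \Ccon \;|\; \norm{\phi}_{\Ccon} \leq 1, \; \pair{v^\star, \phi} = \norm{v^\star}_{\Mcon}}.
\]
Because $(\SO^w)^* y^\dagger$ automatically belongs to $\Ccon$ by Proposition~\ref{prop:so_u}, the source condition is equivalent to these two relations with $\phi = (\SO^w)^* y^\dagger$.

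First I would record the identity $(\SO^w)^* y^\dagger = \xi^\dagger/w$, which follows immediately by comparing the defining formula~\eqref{eq:so_u} for $(\SO^w)^*$ with the formula~\eqref{eq:dual} for $\SO^*$: the $n$-th component of $(\SO^w)^* q$ is $\sum_m (\bar{G}_n^{x_m}/w^n)\,q_{n,m}$, that is, the $n$-th component of $\SO^* q$ divided by $w^n$ in the Hadamard sense. Substituting this into the first relation yields directly $\norm{\xi^\dagger/w}_{\Ccon} \leq 1$.

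For the second relation I would use the Dirac structure of $v^\star$. Writing out the pairing gives $\pair{v^\star, \xi^\dagger/w} = \sum_{j=1}^{N^\star} (\coeff{v}^\star_j, \xi^\dagger(x^\star_j)/w(x^\star_j))_{\C^N}$, while $\norm{v^\star}_{\Mcon} = \sum_{j} \abs{\coeff{v}^\star_j}_{\C^N}$. Combining the norm bound with a termwise Cauchy--Schwarz estimate, each summand satisfies $(\coeff{v}^\star_j, \xi^\dagger(x^\star_j)/w(x^\star_j))_{\C^N} \leq \abs{\coeff{v}^\star_j}_{\C^N}\, \abs{\xi^\dagger(x^\star_j)/w(x^\star_j)}_{\C^N} \leq \abs{\coeff{v}^\star_j}_{\C^N}$, so equality of the two sums forces equality in every term. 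Since $\coeff{v}^\star_j = w(x^\star_j)\coeff{u}^\star_j \neq 0$, the equality case of Cauchy--Schwarz (together with $\abs{\xi^\dagger(x^\star_j)/w(x^\star_j)}_{\C^N} = 1$) yields precisely $\coeff{v}^\star_j = \abs{\coeff{v}^\star_j}_{\C^N}\,\xi^\dagger(x^\star_j)/w(x^\star_j)$. The converse direction is routine: these pointwise conditions reproduce $\pair{v^\star,\xi^\dagger/w} = \norm{v^\star}_{\Mcon}$ and $\norm{\xi^\dagger/w}_{\Ccon}\leq 1$, hence membership in the subdifferential.

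The main obstacle, such as it is, lies in the bookkeeping for the predual subdifferential: one must justify that it suffices to characterize $\partial\norm{v^\star}_{\Mcon}$ by its elements in $\Ccon$ rather than in the full bidual, which is legitimate here only because the candidate subgradient $(\SO^w)^* y^\dagger$ is guaranteed to lie in $\Ccon$ and its norm there coincides with its norm as a functional on $\Mcon$. The final reformulation in terms of $\coeff{u}^\star_j$ follows by substituting $\coeff{v}^\star_j = w(x^\star_j)\coeff{u}^\star_j$ from~\eqref{eq:exact_source_w}.
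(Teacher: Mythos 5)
Your proposal is correct, and it follows the route the paper itself intends: the paper states this proposition without a separate proof, pointing only to ``the characterization of the subdifferential,'' and your argument supplies exactly those details --- the identification \((\SO^w)^* y^\dagger = \xi^\dagger/w\) from~\eqref{eq:so_u} and~\eqref{eq:dual}, the predual subdifferential characterization \(\norm{\phi}_{\Ccon}\leq 1\), \(\pair{v^\star,\phi}=\norm{v^\star}_{\Mcon}\), and the termwise Cauchy--Schwarz equality argument, which for a discrete measure is equivalent to the polar-decomposition condition used in Proposition~\ref{prop:optimality_conditions}. No gaps: the collinearity conclusion legitimately uses \(\coeff{v}^\star_j\neq 0\) (from \(\coeff{u}^\star_j\neq 0\) and property~(i) of admissible weights), and the pairwise distinctness of the \(x^\star_j\) needed for \(\norm{v^\star}_{\Mcon}=\sum_j\abs{\coeff{v}^\star_j}_{\C^N}\) is part of the standing assumptions on the exact source.
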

In our situation, the source condition is satisfied if $v^\star$ is a
minimum norm solution, since~\eqref{eq:source_condition} is a necessary and
sufficient optimality condition of the minimum norm problem
problem~\eqref{eq:reduced_cost_zero_w}.
\begin{proposition}
\label{prop:source_condition_fulfilled}
For any solution \(v^\dagger\) of \eqref{eq:reduced_cost_zero_w} there exists a corresponding \(y^\dagger\) such that \((\SO^w)^* y^\dagger \in \partial \norm{v^\dagger}_{\M(\Oc,\C^N)}\). Conversely, for any
pair \(y^\dagger\) and \(v^\star\) fulfilling~\eqref{eq:source_condition}, \(v^\star\) is
a solution of~\eqref{eq:reduced_cost_zero_w}.
\end{proposition}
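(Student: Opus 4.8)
The plan is to read \eqref{eq:reduced_cost_zero_w} as a convex program, namely the minimization of \(\norm{\cdot}_{\Mcon}\) over the affine feasible set \(\set{v\in\Mcon : \SO^w v = p^\star}\), and to recognize the source condition \eqref{eq:source_condition} as precisely its first-order (Lagrangian) optimality system. The two assertions of the proposition then amount to the sufficiency and the necessity of these optimality conditions, which I would establish separately.

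I would treat the converse (sufficiency) first, as it is elementary and uses only convexity. Note that \(v^\star\) is feasible, \(\SO^w v^\star = p^\star\), by the definition of \(v^\star\) and \(p^\star\) (cf.\ \eqref{eq:exact_source_w} and the identity \(\SO^w = \SO\circ W\)). For an arbitrary competitor \(v\in\Mcon\) with \(\SO^w v = p^\star\), the defining subgradient inequality for \((\SO^w)^* y^\dagger \in \partial\norm{v^\star}_{\Mcon}\) together with the adjoint relation \eqref{eq:so_u} gives
\[
\norm{v}_{\Mcon} \geq \norm{v^\star}_{\Mcon} + \pair{v - v^\star, (\SO^w)^* y^\dagger}
= \norm{v^\star}_{\Mcon} + \inner{\SO^w(v - v^\star), y^\dagger}.
\]
Since \(v\) and \(v^\star\) are both feasible, \(\SO^w(v - v^\star) = 0\) and the last term vanishes, whence \(\norm{v}_{\Mcon}\geq\norm{v^\star}_{\Mcon}\) and \(v^\star\) solves \eqref{eq:reduced_cost_zero_w}.

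For the forward direction (necessity) I would pass to Fenchel--Rockafellar duality for the splitting \(F(v) + G(\SO^w v)\) with \(F = \norm{\cdot}_{\Mcon}\) and \(G\) the indicator of the singleton \(\set{p^\star}\). The conjugate \(F^*\) is the indicator of the dual unit ball \(\set{\phi\in\Ccon : \norm{\phi}_{\Ccon}\leq 1}\) in the predual, while \(G^*(y) = \inner{y, p^\star}\), so the resulting dual problem is the maximization of \(\inner{y, p^\star}\) subject to \(\norm{(\SO^w)^* y}_{\Ccon}\leq 1\). Strong duality and the attainment of the primal--dual extremality relations then produce a multiplier \(y^\dagger\) with \((\SO^w)^* y^\dagger \in \partial\norm{v^\dagger}_{\Mcon}\); here the boundedness of \((\SO^w)^*\) into \(\Ccon\) from Proposition~\ref{prop:so_u} ensures that this subgradient is realized by an element of the predual, matching the form of \eqref{eq:source_condition}.

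The main obstacle is verifying the constraint qualification required for strong duality: the feasible set is cut out by an equality constraint whose datum \(p^\star\) lies inside the proper subspace \(\Ran\SO^w\) of \(\C^{NM}\) whenever \(\SO^w\) fails to be surjective, so \(p^\star\) is not an interior point of the image and the naive regularity condition fails. I would circumvent this by exploiting the finite dimension of the observation space: replacing \(\C^{NM}\) by the closed subspace \(Y = \Ran\SO^w\), the corestricted operator \(\SO^w\colon\Mcon\to Y\) is onto, so its image has nonempty interior in \(Y\) and the standard regularity condition holds at the feasible point \(p^\star\in Y\); any multiplier \(\tilde y\in Y^*\) then extends to \(y^\dagger\in\C^{NM}\) without changing the value of \((\SO^w)^* y^\dagger\). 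Since the image is finite-dimensional, this is precisely the setting in which strong duality and dual attainment for such minimum-norm problems are classical (see, e.g., \cite{BonnansShapiro:2000}).
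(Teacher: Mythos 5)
Your proposal is correct and takes essentially the same route as the paper: the paper's proof is exactly an application of Fenchel duality (Proposition~\ref{prop:strong_duality}) together with dual attainment for the finite-rank operator (Proposition~\ref{prop:existence_dual_zero}), whose proof uses the same restriction-to-the-range device you invoke to circumvent the failure of surjectivity and the resulting constraint-qualification issue. Your direct subgradient-inequality argument for the converse is just a self-contained unwinding of the characterization stated in Proposition~\ref{prop:strong_duality}, so nothing essential differs.
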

\begin{proof}
This result follows by an application of Fenchel duality (see
Propositions~\ref{prop:strong_duality} and~\ref{prop:existence_dual_zero} in the Appendix).
\QED
\end{proof}
\begin{corollary}
The element $v^\star$ satisfies the source condition~\eqref{eq:source_condition} if and
only if $v^\star$ is a solution of the minimum norm problem~\eqref{eq:reduced_cost_zero_w}.
\end{corollary}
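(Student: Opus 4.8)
The plan is to read off the corollary directly from Proposition~\ref{prop:source_condition_fulfilled}, whose two halves already encode exactly the two implications of the claimed equivalence; the only work is to specialize both halves to the single fixed element $v^\star$ and to recall that the source condition~\eqref{eq:source_condition} is, by definition, the statement that $(\SO^w)^* y^\dagger \in \partial \norm{v^\star}_{\M(\Oc,\C^N)}$ for some $y^\dagger \in \C^{NM}$.

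First I would treat the ``if'' direction: assume $v^\star$ satisfies~\eqref{eq:source_condition}. Then there is a $y^\dagger \in \C^{NM}$ with $(\SO^w)^* y^\dagger \in \partial \norm{v^\star}_{\M(\Oc,\C^N)}$, so the pair $(y^\dagger, v^\star)$ is admissible for the second (``converse'') assertion of Proposition~\ref{prop:source_condition_fulfilled}. That assertion then yields immediately that $v^\star$ solves the minimum norm problem~\eqref{eq:reduced_cost_zero_w}.

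For the ``only if'' direction, suppose $v^\star$ is a solution of~\eqref{eq:reduced_cost_zero_w}. Applying the first assertion of Proposition~\ref{prop:source_condition_fulfilled} with $v^\dagger = v^\star$ produces a corresponding $y^\dagger$ with $(\SO^w)^* y^\dagger \in \partial \norm{v^\star}_{\M(\Oc,\C^N)}$, which is precisely~\eqref{eq:source_condition}. Combining the two directions gives the stated equivalence.

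I do not expect any genuine obstacle here: all the analytic substance, namely the Fenchel-duality argument linking the subdifferential inclusion to optimality for~\eqref{eq:reduced_cost_zero_w}, is already contained in Proposition~\ref{prop:source_condition_fulfilled}. The one point requiring (minor) care is purely notational: the proposition is phrased for a generic minimum norm solution $v^\dagger$ and a generic feasible element $v^\star$, and one must simply observe that taking these to be the same fixed measure in each half turns the two one-directional statements into the biconditional asserted by the corollary.
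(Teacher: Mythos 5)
Your proposal is correct and matches the paper's intent exactly: the paper states this corollary without a separate proof precisely because it is the immediate specialization of the two halves of Proposition~\ref{prop:source_condition_fulfilled} to the single element $v^\star$, which is what you carry out. No gaps; the Fenchel-duality substance indeed lives entirely in that proposition.
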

\begin{remark}
The equivalence between the minimum norm problem and the
source condition is due the semi-infinite character of the
dual problem of~\eqref{eq:reduced_cost}; see Appendix~\ref{app:minimization_finite_rank}.
In an general setting (with infinite dimensional observation) this equivalence is not
always given; cf.~\cite{HofmannKaltenbacherPoeschlScherzer:2007}.
\end{remark}
The convergence rates for the regularized solutions will now be given in terms of a
generalized, set-valued Bregman distance $D\colon \Mcon\times \Mcon\rightarrow \R$ defined by
\[
D(v_1,v_2)=\set{\norm{v_1}_{\Mcon}-\norm{v_2}_{\Mcon}-\pair{\xi,v_1-v_2} \;|\; \xi\in \partial\norm{v_2}_{\Mcon}}
\]
for any $v_1,v_2\in \Mcon$.
In \cite[Theorem~2]{BurgerOsher:2004} (cf.\ also~\cite[Section~4]{BrediesPikkarainen:2013})
the following convergence result is proven.
\begin{theorem}\label{thm:conv_breg}
Let the source condition \eqref{eq:source_condition} be satisfied and let
\(\delta/c_1 \leq \alpha(\delta) \leq c_1\delta\) for some fixed \(c_1 \geq 1\). Then for
each minimizer $\widehat{u}_\alpha$ of \eqref{eq:pre_opt_prob} there exists a
$d\in D(\widehat u_\alpha,u^\star)$  such that $d \leq C\delta$ holds (for some generic
constant \(C\)).
\end{theorem}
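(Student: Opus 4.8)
The plan is to follow the classical variational argument of Burger and Osher. I would carry out the estimate for the reweighted problem \eqref{eq:reduced_cost_w}, writing the fidelity term as \(F(v) = \tfrac{1}{2}\sum_{m}\abs{(\SO^w v)_m - p_d^m}_{\C^N}^2\) and the regularizer as \(R(v)=\norm{v}_{\Mcon}\); the statement for \eqref{eq:pre_opt_prob} with \(u^\star\) then follows from the special case \(w\equiv 1\), \(\Oc\cap\Xi=\emptyset\) (equivalently, by applying the isometry \(W\)). Let \(\xi^\dagger = (\SO^w)^* y^\dagger \in \partial R(v^\star)\) be the subgradient furnished by the source condition~\eqref{eq:source_condition}, in the explicit form of Proposition~\ref{prop:source_condition_write_out}, and fix the corresponding element
\[
d = R(\widehat v_\alpha) - R(v^\star) - \pair{\xi^\dagger,\; \widehat v_\alpha - v^\star} \in D(\widehat v_\alpha, v^\star).
\]

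First I would use optimality of \(\widehat v_\alpha\) against the competitor \(v^\star\), which gives \(F(\widehat v_\alpha) + \alpha R(\widehat v_\alpha) \leq F(v^\star) + \alpha R(v^\star)\). Since \(\SO^w v^\star = p^\star\) and \(p_d = p^\star + f\) with \(\norm{f}_{\C^{NM}}\leq\delta\), the fidelity at the exact source is exactly \(F(v^\star)=\tfrac{1}{2}\abs{f}^2\leq\tfrac{1}{2}\delta^2\). Rearranging and multiplying the definition of \(d\) by \(\alpha\) yields
\[
\alpha d \leq \tfrac{1}{2}\delta^2 - F(\widehat v_\alpha) - \alpha\pair{\xi^\dagger,\; \widehat v_\alpha - v^\star}.
\]

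The key algebraic step is to push \(\xi^\dagger\) through the adjoint identity \(\pair{\xi^\dagger, \widehat v_\alpha - v^\star} = \pair{y^\dagger,\; \SO^w(\widehat v_\alpha - v^\star)}_{\C^{NM}}\) and to write \(\SO^w(\widehat v_\alpha - v^\star) = r + f\), where \(r = \SO^w\widehat v_\alpha - p_d\) is the residual. Using \(F(\widehat v_\alpha)=\tfrac{1}{2}\abs{r}^2\) and completing the square in \(r\) gives \(-\tfrac{1}{2}\abs{r}^2 - \alpha\pair{y^\dagger, r} \leq \tfrac{1}{2}\alpha^2\abs{y^\dagger}^2\), while the noise contribution is controlled by \(-\alpha\pair{y^\dagger, f}\leq \alpha\abs{y^\dagger}\delta\). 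Collecting terms and dividing by \(\alpha\),
\[
d \leq \frac{\delta^2}{2\alpha} + \frac{\alpha}{2}\abs{y^\dagger}^2 + \abs{y^\dagger}\delta.
\]

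Finally I would insert the parameter choice \(\delta/c_1\leq\alpha(\delta)\leq c_1\delta\), under which \(\delta^2/\alpha\leq c_1\delta\) and \(\alpha\leq c_1\delta\), so that all three terms are of order \(\delta\) and \(d\leq C\delta\) with \(C = \tfrac{c_1}{2}(1+\abs{y^\dagger}^2)+\abs{y^\dagger}\). I do not expect a genuine obstacle here, as the estimate is a routine computation; the only points requiring care are the bookkeeping for the real inner product convention on \(\C^N\) and \(\C^{NM}\), the correct use of the duality \(\pair{(\SO^w)^*y^\dagger, \cdot} = \pair{y^\dagger, \SO^w\cdot}\), and the fact that \(D\) is set-valued, so that one must single out precisely the element \(d\) built from the subgradient \(\xi^\dagger\) supplied by the source condition rather than an arbitrary element of \(D(\widehat v_\alpha, v^\star)\).
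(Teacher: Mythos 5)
Your proposal is correct and is precisely the classical Burger--Osher estimate that the paper invokes: the paper proves Theorem~\ref{thm:conv_breg} simply by citing \cite[Theorem~2]{BurgerOsher:2004}, and your argument (optimality against the competitor \(v^\star\), the adjoint identity \(\pair{\xi^\dagger,\cdot}=\pair{y^\dagger,\SO^w\cdot}\), the splitting of \(\SO^w(\widehat v_\alpha-v^\star)\) into residual plus noise, and completing the square) is exactly the proof of that cited result, with the correct constant \(C=\tfrac{c_1}{2}(1+\abs{y^\dagger}^2)+\abs{y^\dagger}\) under the stated parameter choice. Your handling of the transfer between the weighted problem~\eqref{eq:reduced_cost_w} and~\eqref{eq:pre_opt_prob} via \(w\equiv 1\) (or the isometry \(W\)) is also consistent with the conventions of Section~\ref{sec:regularization}.
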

Based on Theorem~\ref{thm:variational_reg} and
Proposition~\ref{prop:source_condition_fulfilled} we see that the only missing part for the
convergence of $\widehat{v}_{\alpha(\delta)}$ to $v^\star$ is the uniqueness of the solution of
the minimum norm problem. Due to
Proposition~\ref{prop:existence_zero} unique solutions must necessarily consist of
finitely many Dirac delta functions.
Additionally, criteria for uniqueness based on the source condition can be derived.
We give without proof the following popular one; cf.\ \cite[Lemma~1.1]{DeCastroGamboa:2012}
or \cite[Proposition~5]{DuvalPeyre:2015}:
\begin{proposition}
\label{prop:criterion_uniqueness}
Let \(v^\star \in \Mcon\)  with \(v^\star = \sum_{j=1,\ldots,N_d} \coeff{v}^\star_j \delta_{x^\star_j}\),
where \(\coeff{v}_j \in \C^N\setminus\set{0}\), \(x_j^\star\in \Oc\setminus\Xi\) pairwise different.
Suppose further that the source condition~\eqref{eq:source_condition} holds with
\(\xi^\dagger/w = (S^w)^* y^\dagger\), the vectors \(z_j = \SO^w (\xi_w^\dagger(x^\star_j)
\delta_{x^\star_j}) \in \C^{MN}\) form a \(\R\)-linearly independent set, and for every
\(x \in \Oc \setminus \set{x^\star_j \;|\; j = 1,2,\ldots,N^\star}\)
there holds \(\norm{\xi^\dagger(x)/w(x)}_{\C^{NM}} < 1\).
Then \(v^\star\) is the unique solution of~\eqref{eq:reduced_cost_zero_w}.
\end{proposition}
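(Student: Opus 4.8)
The plan is to treat $\eta := \xi^\dagger/w = \xi_w^\dagger = (\SO^w)^* y^\dagger$ as a \emph{non-degenerate dual certificate} and to combine the pairing identity it induces with the strict inequality of the third hypothesis to pin down the support and polar decomposition of any competing minimizer, thereby reducing the problem to finite-dimensional linear algebra. By Proposition~\ref{prop:source_condition_fulfilled}, the source condition already ensures that $v^\star$ solves~\eqref{eq:reduced_cost_zero_w}, so only uniqueness remains. Let $\tilde v$ be any minimizer, so $\SO^w \tilde v = p^\star = \SO^w v^\star$ and $\norm{\tilde v}_{\Mcon} = \norm{v^\star}_{\Mcon}$. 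I would first move $\eta$ across the pairing,
\[
\pair{\tilde v, \eta} = \pair{\tilde v, (\SO^w)^* y^\dagger} = (\SO^w \tilde v, y^\dagger) = (\SO^w v^\star, y^\dagger) = \pair{v^\star, \eta},
\]
and evaluate the right-hand side using the polar decomposition of $v^\star$ together with the support part of the source condition, $\eta(x^\star_j) = \coeff{v}^\star_j/\abs{\coeff{v}^\star_j}_{\C^N}$; since each $\eta(x^\star_j)$ is then a unit vector, this gives $\pair{v^\star,\eta} = \sum_j \abs{\coeff{v}^\star_j}_{\C^N} = \norm{v^\star}_{\Mcon}$.

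Combining this with $\norm{\tilde v}_{\Mcon} = \norm{v^\star}_{\Mcon}$ produces the chain
\[
\norm{\tilde v}_{\Mcon} = \pair{\tilde v, \eta} = \int_{\Oc} (\tilde v', \eta)_{\C^N}\de\abs{\tilde v} \leq \int_{\Oc} \abs{\eta}_{\C^N}\de\abs{\tilde v} \leq \int_{\Oc} 1 \de\abs{\tilde v} = \norm{\tilde v}_{\Mcon},
\]
so both inequalities are in fact equalities. The outer equality forces $\int_{\Oc}(1 - \abs{\eta}_{\C^N})\de\abs{\tilde v} = 0$, and because the third hypothesis gives $\abs{\eta(x)}_{\C^N} < 1$ for every $x \notin \set{x^\star_j}$, the measure $\abs{\tilde v}$ must be concentrated on the finite set $\set{x^\star_j \;|\; j=1,\ldots,N^\star}$. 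The Cauchy--Schwarz equality, in turn, forces $\tilde v'(x^\star_j)$ to be a nonnegative real multiple of $\eta(x^\star_j)$; as both are unit vectors in $\C^N \cong \R^{2N}$, we obtain $\tilde v'(x^\star_j) = \eta(x^\star_j) = \coeff{v}^\star_j/\abs{\coeff{v}^\star_j}_{\C^N}$. Hence $\tilde v = \sum_j \tilde c_j \delta_{x^\star_j}$ with $\tilde c_j = \abs{\tilde c_j}\,\eta(x^\star_j)$, i.e.\ each $\tilde c_j$ points in the same direction as $\coeff{v}^\star_j$.

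It then remains to identify the magnitudes. Using the $\R$-linearity of $\SO^w$ and the definition $z_j = \SO^w(\eta(x^\star_j)\delta_{x^\star_j})$, together with $\coeff{v}^\star_j = \abs{\coeff{v}^\star_j}_{\C^N}\,\eta(x^\star_j)$, the constraint $\SO^w \tilde v = \SO^w v^\star$ rewrites as $\sum_{j=1}^{N^\star} (\abs{\tilde c_j} - \abs{\coeff{v}^\star_j}_{\C^N})\, z_j = 0$. The $\R$-linear independence of $\set{z_j}$ then yields $\abs{\tilde c_j} = \abs{\coeff{v}^\star_j}_{\C^N}$ for every $j$, whence $\tilde c_j = \coeff{v}^\star_j$ and $\tilde v = v^\star$. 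The step I expect to require the most care is the equality case of Cauchy--Schwarz: since $\C^N$ carries the \emph{real} inner product $(z,v)_{\C^N} = \sum_n \Re(z_n \bar v_n)$, one must check that equality enforces a genuine nonnegative-real alignment of $\tilde v'(x^\star_j)$ with $\eta(x^\star_j)$ (not merely complex proportionality), and correspondingly that $\SO^w$ and the independence of the $z_j$ are all understood over $\R$.
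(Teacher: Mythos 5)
Your proof is correct and complete: it is precisely the standard dual-certificate argument that the paper invokes by reference (the statement is given without proof, citing \cite{DeCastroGamboa:2012} and \cite{DuvalPeyre:2015}, whose proofs proceed exactly this way). Your care about the equality case of Cauchy--Schwarz in the \emph{real} inner product on \(\C^N\), and about the coefficients \(\abs{\tilde c_j}-\abs{\coeff{v}^\star_j}\) being real so that \(\R\)-linear independence of the \(z_j\) suffices, addresses the only delicate points in the argument.
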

Finally, we sum up the findings of this section.
\begin{corollary}
\label{cor:inverse_problem_convergence}
Let $u^\star = W v^\star$, where \(v^\star\) is a solution of~\eqref{eq:reduced_cost_zero_w}
(or equivalently let $v^\star$ satisfy the source condition~\eqref{eq:source_condition})
and let the conditions from Proposition~\ref{prop:criterion_uniqueness} be
satisfied. Furthermore, let $\delta/c_1 \leq \alpha(\delta) \leq
c_1\delta$ for some \(c_1 > 0\) as \(\delta \to 0\). Then for any sequence of minimizers
$\widehat{u}_{\alpha(\delta)}$ of~\eqref{eq:reduced_cost_w} it holds
\[
W \widehat{v}_{\alpha(\delta)} \rightharpoonup^* u^\star,
\]
and there exists $d\in D(\widehat{v}_{\alpha(\delta)},v^\star)$ such that $d \leq C\delta$
(for some generic \(C\)).
\end{corollary}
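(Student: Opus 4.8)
The plan is to assemble the preceding results of this section into a single chain of implications; no genuinely new estimate is required, only a verification that each hypothesis is met and a passage between the $v$- and $u$-variables through the map $W$. First I would check that the two-sided rule $\delta/c_1 \leq \alpha(\delta) \leq c_1\delta$ simultaneously feeds both convergence statements. It yields $\alpha(\delta) \to 0$ together with $\delta^2/\alpha(\delta) \leq c_1\delta \to 0$, which are exactly the hypotheses of Theorem~\ref{thm:variational_reg}, while being identical to the bound demanded by Theorem~\ref{thm:conv_breg}.

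Next I would pin down that the weak-$*$ limit is unique rather than merely subsequential. By Proposition~\ref{prop:source_condition_fulfilled} the source condition~\eqref{eq:source_condition} for $v^\star$ is equivalent to $v^\star$ being a solution of the minimum norm problem~\eqref{eq:reduced_cost_zero_w}; invoking the structural hypotheses inherited from Proposition~\ref{prop:criterion_uniqueness} (the $\R$-linear independence of the $z_j$ and the strict bound $\norm{\xi^\dagger(x)/w(x)}_{\C^{NM}} < 1$ off the support) shows that $v^\star$ is the \emph{unique} minimizer of~\eqref{eq:reduced_cost_zero_w}. Theorem~\ref{thm:variational_reg} then promotes subsequential convergence to convergence of the full sequence, giving $\widehat{v}_{\alpha(\delta)} \rightharpoonup^* v^\star$ in $\M(\Oc,\C^N)$.

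To transfer this to the $u$-variable I would exploit the weak-$*$ continuity of $W$. For any test function $\phi \in \Ccon$ one has $\pair{W(v),\phi} = \pair{v,\phi/w}$, and since each $1/w^n$ is continuous on the compact set $\Oc$ and bounded by property~\ref{it:w_pos}, the quotient $\phi/w$ again lies in $\Ccon$; hence $\widehat{v}_{\alpha(\delta)} \rightharpoonup^* v^\star$ implies $W\widehat{v}_{\alpha(\delta)} \rightharpoonup^* Wv^\star = u^\star$, which is the first assertion. The Bregman-distance estimate follows last: since~\eqref{eq:reduced_cost_w} shares the structure of~\eqref{eq:reduced_cost}, Theorem~\ref{thm:conv_breg} applies verbatim in the $v$-variable with $v^\star$ in place of $u^\star$, producing a $d \in D(\widehat{v}_{\alpha(\delta)},v^\star)$ with $d \leq C\delta$.

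I do not expect a substantive obstacle, as the corollary is purely a synthesis. The only points that need care are bookkeeping: reconciling the single stated parameter rule with the two distinct conditions it must imply, and justifying the passage through $W$ by its continuity (and boundedness of $1/w$) rather than by its isometry, since the latter is only available on $\M(\Oc\setminus\Xi,\C^N)$. One should also tacitly correct the harmless notational slip by which the hypotheses and conclusion mix the symbols $\widehat{u}$ and $\widehat{v}$.
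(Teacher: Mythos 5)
Your proposal is correct and is essentially the argument the paper intends: the corollary is stated without a displayed proof precisely because it is the synthesis of Theorem~\ref{thm:variational_reg}, Propositions~\ref{prop:source_condition_fulfilled} and~\ref{prop:criterion_uniqueness} (giving uniqueness and hence full-sequence weak-$*$ convergence), the continuity of $1/w$ on $\Oc$ to pass through $W$, and Theorem~\ref{thm:conv_breg} for the Bregman rate. Your verification that the single rule $\delta/c_1 \leq \alpha(\delta) \leq c_1\delta$ implies both $\alpha(\delta)\to 0$ and $\delta^2/\alpha(\delta)\to 0$, and your use of continuity of $1/w$ rather than the isometry of $W$, match the paper's reasoning exactly.
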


Due to the complex geometrical setup of~\eqref{eq:helm} (in the general case,
analytical solutions are not known), we know of no way to further characterize
the set of sources for which the assumptions of
Corollary~\ref{cor:inverse_problem_convergence} hold. However, we refer
to~\cite{DuvalPeyre:2015,AzaisDeCastroGamboa:2015,CandesFernandezGranda:2013}, where for
certain classes of analytically given convolution operators similar results to
Corollary~\ref{cor:inverse_problem_convergence} can be guaranteed under simple structural
assumptions on the source, such as, e.g., a minimum separation distance between the
support points of~\eqref{eq:exact_source}.
In our situation, we will investigate the assumptions of
Corollary~\ref{cor:inverse_problem_convergence} numerically in
section~\ref{sec:numerics}. The numerical results suggest that, even in the case of
an arbitrary number of measurements, the source condition holds
only in some cases. However, for a special choice of the weight, reconstruction of a
single point source can be guaranteed.

\subsection{Exact reconstruction of a single source}

In this section we prove that, using the weight \(w_{\Omega,2}\)
as defined in~\eqref{eq:example_w_omega}, a source consisting of a
single Dirac-delta function can always be reconstructed using the weighted
problem. We first consider the noise free case:
\begin{proposition}
  \label{prop:exact_reconstruction}
  Suppose that \(w^n_{\Omega,2}(x) > 0\) for all \(x\in\Oc\), \(n = 1,\ldots,N\).
  Let \(u^\star = \coeff{u}^\star \delta_{x^\star}\) with \(x^\star \in
  \Oc\setminus\Xi\), \(\coeff{u}^\star \in \C^N\) and consider noise-free observations
  \(p_d = \SO u^\star\). Then, for any \(\alpha > 0\) and \(w = w_{\Omega,2}\) the function
  \[
    \widehat{u}
    = \widehat{\coeff{u}} \delta_{x^\star}
    \quad\text{with}\quad
    \widehat{\coeff{u}}
    = \max \set{0,\; 1 - \alpha /\abs{\coeff{u}^\star w(x^{\star})}_{\C^N}}\coeff{u}^\star
  \]
  is a solution of~\eqref{eq:weighted_problem}. Furthermore,
  \(u^\dagger = u^\star\) solves the corresponding minimum norm problem defined as in
  section~\ref{sec:regularization}.
\end{proposition}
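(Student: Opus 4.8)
The plan is to verify directly the optimality conditions characterizing solutions of the weighted problem given in Theorem~\ref{thm:well_posed_weighted}, and, for the minimum-norm statement, to exhibit a dual certificate fulfilling the source condition of Proposition~\ref{prop:source_condition_write_out}, which by Proposition~\ref{prop:source_condition_fulfilled} is equivalent to optimality for~\eqref{eq:reduced_cost_zero_w}. Throughout, write $\widehat{u} = \beta\,\coeff{u}^\star\delta_{x^\star}$ with the scalar $\beta = \max\set{0,\,1-\alpha/A}$ and $A = \abs{\coeff{u}^\star w(x^\star)}_{\C^N}$, and recall that for $w = w_{\Omega,2}$ one has $w_n(x^\star)^2 = \sum_{m=1}^M \abs{G_n^{x_m}(x^\star)}^2$ by~\eqref{eq:example_w_omega}.

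First I would compute the adjoint state. Since $p_d = \SO u^\star$, the residual is $\SO\widehat{u} - p_d = \SO(\widehat{u}-u^\star)$ with $\widehat{u}-u^\star = (\beta-1)\coeff{u}^\star\delta_{x^\star}$. Using the representation~\eqref{eq:dual} of $\SO$ and $\SO^*$ in terms of the fundamental solutions, a direct calculation gives $(\SO(\widehat{u}-u^\star))_{n,m} = (\beta-1)\coeff{u}^\star_n G_n^{x_m}(x^\star)$ and hence, for each frequency $n$,
\[
  \widehat{\xi}_n(x)
  = -\,(\SO^*(\SO\widehat{u}-p_d))_n(x)
  = (1-\beta)\,\coeff{u}^\star_n \sum_{m=1}^M \bar{G}_n^{x_m}(x)\,G_n^{x_m}(x^\star).
\]
Evaluating at $x = x^\star$ and inserting the definition of $w_{\Omega,2}$ yields the clean identity $\widehat{\xi}_n(x^\star) = (1-\beta)\,\coeff{u}^\star_n\, w_n(x^\star)^2$, so that $\widehat{\xi}(x^\star)/w(x^\star) = (1-\beta)\,w(x^\star)\coeff{u}^\star$ in the Hadamard sense. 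Because $w$ is real and positive, this vector is a nonnegative real multiple of $\coeff{u}^\star$ componentwise, which is exactly the algebraic structure required by the equality condition of Theorem~\ref{thm:well_posed_weighted}; plugging in reduces that condition to the scalar identity $\alpha = (1-\beta)A$, which holds by the very choice of $\beta$ whenever $\beta>0$ (and is vacuous when $\beta=0$, i.e.\ $\widehat{u}=0$).

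The decisive step is the global inequality $\norm{\widehat{\xi}/w}_{\Cc(\Oc,\C^N)}\le\alpha$, and this is where the particular form of $w_{\Omega,2}$ enters. Applying the Cauchy--Schwarz inequality to the sum over $m$ gives, for every $x\in\Oc$,
\[
  \abs[\Big]{\sum_{m=1}^M \bar{G}_n^{x_m}(x)\,G_n^{x_m}(x^\star)}
  \le \Big(\sum_{m=1}^M\abs{G_n^{x_m}(x)}^2\Big)^{1/2}\Big(\sum_{m=1}^M\abs{G_n^{x_m}(x^\star)}^2\Big)^{1/2}
  = w_n(x)\,w_n(x^\star).
\]
Dividing by $w_n(x)$ and summing the squares over $n$ then bounds $\abs{\widehat{\xi}(x)/w(x)}_{\C^N}$ by $(1-\beta)A$ uniformly in $x$, with equality attained at $x=x^\star$. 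Since $(1-\beta)A \le \alpha$ in both cases $\beta = 1-\alpha/A$ and $\beta = 0$, the inequality condition follows, completing the verification that $\widehat{u}$ solves~\eqref{eq:weighted_problem}. I expect this Cauchy--Schwarz estimate --- tight precisely because $w_{\Omega,2}$ is the Euclidean norm of the vector $(G_n^{x_m})_m$ --- to be the heart of the argument and the reason this weight is singled out; the rest is bookkeeping.

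For the minimum-norm claim, recall $v^\star = w(x^\star)\coeff{u}^\star\delta_{x^\star}$ from~\eqref{eq:exact_source_w}, so that $A = \abs{\coeff{v}^\star}_{\C^N}$ and $Wv^\star = u^\star$. I would take the certificate $y^\dagger\in\C^{NM}$ with $y^\dagger_{n,m} = A^{-1}\coeff{u}^\star_n G_n^{x_m}(x^\star)$, so that $\xi^\dagger = \SO^* y^\dagger$ has components $\xi^\dagger_n(x) = A^{-1}\coeff{u}^\star_n\sum_{m=1}^M \bar{G}_n^{x_m}(x)G_n^{x_m}(x^\star)$ --- the same quadratic expression in the fundamental solutions as $\widehat{\xi}$, up to a positive scalar. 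Evaluating at $x^\star$ gives $\xi^\dagger(x^\star)/w(x^\star) = \coeff{v}^\star/A$, which satisfies $\coeff{v}^\star = \abs{\coeff{v}^\star}_{\C^N}\,\xi^\dagger(x^\star)/w(x^\star)$, while the identical Cauchy--Schwarz estimate yields $\norm{\xi^\dagger/w}_{\Cc(\Oc,\C^N)}\le 1$. Hence the source condition of Proposition~\ref{prop:source_condition_write_out} holds, and Proposition~\ref{prop:source_condition_fulfilled} shows that $v^\star$ solves~\eqref{eq:reduced_cost_zero_w}; applying $W$ yields $u^\dagger = u^\star$ as a minimum-norm solution in the original variables.
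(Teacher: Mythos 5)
Your proposal is correct and follows essentially the same route as the paper's proof: verification of the optimality conditions of Theorem~\ref{thm:well_posed_weighted} via the explicit adjoint state, the tight Cauchy--Schwarz estimate that exploits $\abs{(G_n^{x_m}(x))_m}_{\C^M} = w^n_{\Omega,2}(x)$, and the dual certificate for the minimum-norm problem (your componentwise $y^\dagger_{n,m} = A^{-1}\coeff{u}^\star_n G_n^{x_m}(x^\star)$ is exactly the paper's $y^\dagger = p_d/\abs{w(x^\star)\coeff{u}^\star}_{\C^N}$) combined with Propositions~\ref{prop:source_condition_write_out} and~\ref{prop:source_condition_fulfilled}. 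The only cosmetic differences are that the paper normalizes the Green's functions as $h^n_m = G_n^{x_m}/w^n$ before applying Cauchy--Schwarz and treats the case $\alpha < \abs{w(x^\star)\coeff{u}^\star}_{\C^N}$ separately, whereas you reduce the equality condition to the scalar identity $\alpha = (1-\beta)A$; these are equivalent.
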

\begin{proof}
  We verify that the first order conditions from
  Theorem~\ref{thm:well_posed_weighted} are fulfilled.
  First, we compute \(\widehat{\xi} = - \SO^*(\SO\widehat{u} - p_d)\) at
  every point and frequency. We directly obtain that
  \[
    \widehat{\xi}_n(x) = \sum_{m=1}^M \bar{G}_n^{x_m}(x) G_n^{x_m}(x^\star)
    (\coeff{u}^\star_n - \widehat{\coeff{u}}_n),
    \quad x \in\Oc,\, n = 1,\ldots,N.
  \]
  We compute that
  \((\coeff{u}^\star_n - \widehat{\coeff{u}}_n)
  = \min\set{1,\; \alpha/ \abs{w(x^\star)\coeff{u}^\star}_{\C^N}} \coeff{u}^\star_n\).
  Introducing the rescaled Green's functions \(h^n_{m} = G_n^{x_m}/w^n\), we obtain
  \begin{equation}
    \label{eq:grad_expanded}
    \widehat{\xi}_n(x)/w^n(x)
    = \sum_{m=1}^M \bar{h}^n_{m}(x) h^n_{m}(x^\star)
    \min\set{1,\; \alpha / \abs{w(x^\star)\coeff{u}^\star}_{\C^N}} w^n(x^\star)\coeff{u}^\star_n.
  \end{equation}
  By the definition of \(w = w_{\Omega,2}\), we compute that \(\abs{h^n(x)}_{\C^M} =
  \sqrt{\sum_{m} \abs{h^n_m(x)}^2} = \abs{G^n(x)}_{\C^M}/w_{\Omega,2}(x) = 1\) for all
  \(x\in\Oc\setminus\Xi\). Therefore, we can apply the Cauchy-Schwarz inequality to the
  term \(\sum_{m} \bar{h}^n_{m}(x) h^n_{m}(x^\star)\) in~\eqref{eq:grad_expanded} and obtain
  \[
    \abs{\widehat{\xi}_n(x)}/w^n(x)
    \leq \min\set{1,\;\alpha/\abs{w(x^\star)\coeff{u}^\star}_{\C^N}} w^n(x^\star)\abs{\coeff{u}^\star_n}.
  \]
  Summing the squares of both sides and taking the square root, we derive that
  \[
    \abs{\widehat{\xi}(x)/w(x)}_{\C^N}
    \leq \min\set{\abs{w(x^\star)\coeff{u}^\star}_{\C^N},\; \alpha} \leq \alpha,
    \quad x \in\Oc.
  \]
  In the case that \(\alpha < \abs{w(x^\star)\coeff{u}^\star}_{\C^N}\), it remains to
  verify the optimality condition for \(\widehat{\coeff{u}}\):
  Taking \(x = x^\star\), we have
  \(\sum_{m} \bar{h}^n_{m}(x^\star) h^n_{m}(x^\star) = 1\) in~\eqref{eq:grad_expanded},
  and it follows that
  \[
    \widehat{\xi}_n(x^\star)/w^n(x^\star)
    = \alpha w^n(x^\star)\coeff{u}^\star_n/\abs{w(x^\star)\coeff{u}^\star}_{\C^N},
    \quad n = 1,\ldots,N,
  \]
  which implies the desired condition,
  since \(\widehat{\coeff{u}}\) and \(\coeff{u}^\star\) are scalar multiples of each other.
  Thus, \(\widehat{u}_\alpha\) solves the weighted problem by
  Theorem~\ref{thm:well_posed_weighted}.

  In the case \(\alpha = 0\), we show that the solution of the dual problem is given by
  \(y^\dagger = p_d/\abs{w(x^\star)\coeff{u}^\star}_{\C^N}\).
  In light of Proposition~\ref{prop:source_condition_fulfilled}, we have to
  verify that \(\xi^\dagger = \SO^*y^\dagger\) fulfills the source condition, i.e.,
  \(\xi^\dagger/w \in \partial\norm{w u^\star}_{\Mcon}\). We have
  \begin{multline*}
    \xi_n^\dagger(x)/w^n(x) = \sum_{m=1}^M \bar{h}^n_{m}(x) p^m_d/\abs{w(x^\star)\coeff{u}^\star}_{\C^N}
    \\
    = \sum_{m=1}^M \bar{h}^n_{m}(x) h_m^n(x^\star)  w^n(x^\star)\coeff{u}^\star_n/\abs{w(x^\star)\coeff{u}^\star}_{\C^N}.
  \end{multline*}
  Similarly, it follows \(\norm{\xi^\dagger/w}_{\Ccon} \leq 1\)
  and \(\xi^\dagger(x^\star)/w(x^\star) =
  w(x^\star)\coeff{u}^\star/\abs{w(x^\star)\coeff{u}^\star}_{\C^N}\), which implies the
  result by Proposition~\ref{prop:source_condition_write_out}.
\QED
\end{proof}
Note that~\eqref{prop:exact_reconstruction} also applies in the case of only one
measurement, i.e.~\(M = 1\). In this case, for any \(\xi = S^* y\) with \(y \in \C^N\),
the expression \(\norm{\xi/w}_{\C^N}\) is constant in the domain \(\Omega\), and any
source \(\coeff{u} \delta_x\) for arbitrary \(x\in\Oc\setminus\Xi\) and appropriate
\(\coeff{u} \in \C^N\) solves the minimum norm problem. A criterion for
\(u^\star\) to be the unique solution, which can be derived by straightforward
extension of the previous result, is given next.
\begin{proposition}
In addition to the requirements of Proposition~\ref{prop:exact_reconstruction}, assume
that the observations for different source locations are complex linearly independent
(i.e., there exist no \(x,x' \in \Oc\), such that \(S\delta_{x} = z
S\delta_{x'}\) for \(z \in \C\)).

Then the functions given in Proposition~\ref{prop:exact_reconstruction} are the unique
solutions of the respective problems.
\end{proposition}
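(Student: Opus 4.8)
The plan is to establish that the rescaled dual certificate associated with the candidate of Proposition~\ref{prop:exact_reconstruction} attains its maximal modulus \emph{only} at the source location \(x^\star\); this will force the support of every solution to be \(\{x^\star\}\), after which the coefficient is uniquely determined. I would treat the regularized problem (\(\alpha > 0\)) and the minimum norm problem (\(\alpha = 0\)) in parallel, since both hinge on the same certificate. For \(\alpha > 0\), the data-fidelity term \(\frac{1}{2}\sum_m \abs{(\SO^w v)_m - p_d^m}_{\C^N}^2\) is strictly convex in \(\SO^w v\); hence any two solutions of~\eqref{eq:reduced_cost_w} share the same observation \(\SO^w\widehat v\), the same residual, and therefore the same adjoint state \(\widehat\xi = -\SO^*(\SO\widehat u - p_d)\). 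In the case \(\alpha = 0\) the relevant certificate is \(\xi^\dagger = \SO^* y^\dagger\) produced by the source condition, already verified in Proposition~\ref{prop:exact_reconstruction} via Proposition~\ref{prop:source_condition_fulfilled}. In both cases the rescaled certificate \(\eta = \widehat\xi/w\) (respectively \(\xi^\dagger/w\)) is exactly the object computed there, with components \(\eta_n(x) = \bigl(\sum_{m} \bar h^n_m(x)\, h^n_m(x^\star)\bigr)\, b_n\), where \(b_n = w^n(x^\star)\coeff{u}^\star_n/\abs{w(x^\star)\coeff{u}^\star}_{\C^N}\) satisfies \(\sum_n \abs{b_n}^2 = 1\), and \(h^n_m = G_n^{x_m}/w^n\) are normalized so that \(\abs{(h^n_m(x))_m}_{\C^M} = 1\) for all \(x \in \Oc\setminus\Xi\) by the choice \(w = w_{\Omega,2}\).

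The crux is the strict inequality \(\abs{\eta(x)}_{\C^N} < 1\) (respectively \(< \alpha\) after unscaling) for every \(x \in \Oc\setminus\{x^\star\}\). I would revisit the Cauchy--Schwarz step of Proposition~\ref{prop:exact_reconstruction}: from \(\abs{\eta(x)}_{\C^N}^2 = \sum_n \abs{\sum_m \bar h^n_m(x) h^n_m(x^\star)}^2\abs{b_n}^2 \leq \sum_n \abs{b_n}^2 = 1\), equality can hold only if \(\abs{\sum_m \bar h^n_m(x) h^n_m(x^\star)} = 1\) for every \(n\) with \(\coeff{u}^\star_n \neq 0\). Equality in Cauchy--Schwarz forces the unit vectors \((h^n_m(x))_m\) and \((h^n_m(x^\star))_m\) to be complex collinear, i.e.\ \(G_n^{x_m}(x) = c_n\, G_n^{x_m}(x^\star)\) for all \(m\) and some \(c_n \in \C\). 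Assembling these relations across the frequencies shows that the observation vector \(\SO\delta_x = (G_n^{x_m}(x))_{n,m}\) is a blockwise scalar rescaling of \(\SO\delta_{x^\star}\), which contradicts the hypothesis that observations from distinct source locations are not complex proportional. The main obstacle is precisely this equality analysis: it is transparent for a single frequency \(N = 1\), where Cauchy--Schwarz equality reads literally \(\SO\delta_x = z\,\SO\delta_{x^\star}\), while for \(N > 1\) one must carefully track the blockwise factors \(c_n\) to derive the contradiction from the linear-independence assumption.

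With the strict inequality established, I would invoke the optimality conditions of Theorem~\ref{thm:well_posed_weighted} for \(\alpha > 0\) (and Proposition~\ref{prop:criterion_uniqueness} for \(\alpha = 0\), whose remaining requirements --- nonvanishing of the single vector \(z_1 = \SO^w(\eta(x^\star)\delta_{x^\star})\) and the strict-inequality condition --- are met for \(N^\star = 1\)). These confine \(\supp\abs{\widehat v}\) to the saturation set \(\{x : \abs{\eta(x)}_{\C^N} = 1\} = \{x^\star\}\). Hence every solution has the form \(\widehat v = \coeff{c}\,\delta_{x^\star}\); evaluating the shared observation \(\SO^w\widehat v = \SO^w v^\star\) componentwise gives \(\coeff{c}_n\, G_n^{x_m}(x^\star) = \coeff{v}^\star_n\, G_n^{x_m}(x^\star)\) for all \(m\), and since \((G_n^{x_m}(x^\star))_m \neq 0\) this yields \(\coeff{c} = \coeff{v}^\star\). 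Transporting the conclusion back through the isometry \(W\) then proves that the functions of Proposition~\ref{prop:exact_reconstruction} are the unique solutions of the respective problems.
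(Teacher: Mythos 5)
The paper itself gives no proof of this proposition (it is announced as a ``straightforward extension'' of Proposition~\ref{prop:exact_reconstruction}), and your architecture is certainly the intended one: all solutions of~\eqref{eq:reduced_cost_w} share the observation and hence the adjoint state, the rescaled certificate \(\eta = \widehat{\xi}/(\alpha w)\) is exactly the object computed in Proposition~\ref{prop:exact_reconstruction}, and once the strict inequality \(\abs{\eta(x)}_{\C^N} < 1\) for \(x \neq x^\star\) is established, the confinement of the support to \(\{x^\star\}\) and the recovery of the coefficient from the shared observation go through exactly as you describe (with Proposition~\ref{prop:criterion_uniqueness} covering \(\alpha = 0\)). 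For \(N = 1\) your Cauchy--Schwarz equality analysis is complete and correct: equality forces \((G^{x_m}(x))_m\) and \((G^{x_m}(x^\star))_m\) to be complex collinear, which is literally the excluded relation \(S\delta_x = z\,S\delta_{x^\star}\).

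The gap is precisely the step you flag and then assert: for \(N > 1\), ``assembling these relations across the frequencies'' does not contradict the hypothesis as stated. Equality in Cauchy--Schwarz yields \(G_n^{x_m}(x) = \tilde{c}_n\, G_n^{x_m}(x^\star)\) for all \(m\), with a factor \(\tilde{c}_n\) that may differ from frequency to frequency, and it yields this \emph{only} for those \(n\) with \(\coeff{u}^\star_n \neq 0\); frequencies with \(\coeff{u}^\star_n = 0\) impose no constraint at all. The hypothesis, however, excludes only a \emph{single} scalar \(z\) valid simultaneously for the whole stacked vector \((G_n^{x_m})_{n,m}\). Blockwise collinearity with distinct factors, or collinearity in the active frequencies combined with failure of collinearity in an inactive one, is fully compatible with the hypothesis, so no contradiction follows. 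Worse, this is not a bookkeeping defect that more care could repair: in either of those situations uniqueness genuinely fails. Take \(N = 2\), \(\coeff{u}^\star_2 = 0\), and suppose some \(x' \in \Oc\setminus\Xi\), \(x' \neq x^\star\), satisfies \(h^1_m(x') = c_1 h^1_m(x^\star)\) for all \(m\) with \(\abs{c_1} = 1\) (forced, since both vectors are unit) but no such relation in frequency \(2\), so the hypothesis holds. Writing \(\coeff{v}^\star = w(x^\star)\coeff{u}^\star\), the measures \(v_\lambda = (1-\lambda)\coeff{v}^\star\delta_{x^\star} + \lambda\,\bar{c}_1\coeff{v}^\star\delta_{x'}\), \(\lambda \in [0,1]\), all produce the same observation (since \(\bar{c}_1 c_1 = 1\) and the frequency-\(2\) component vanishes) and the same weighted norm (since \(\abs{\bar{c}_1} = 1\)), hence all are solutions of the minimum norm problem; the analogous splitting of \(\widehat{\coeff{u}}\) breaks uniqueness for every \(\alpha > 0\), and the same construction works when the active frequencies have distinct unimodular factors \(c_n\). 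Consequently your proof can only be closed by reading (or strengthening) the independence assumption \emph{per frequency}: for every \(n\) with \(\coeff{u}^\star_n \neq 0\) there exist no distinct \(x, x' \in \Oc\) with \((G_n^{x_m}(x))_m = z\,(G_n^{x_m}(x'))_m\) for some \(z \in \C\). Under that reading your equality analysis immediately gives the strict inequality, and the remainder of your argument stands.
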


\section{Optimization algorithm}
\label{sec:optimization}

We base the numerical optimization of~\eqref{eq:problem_convex} upon the successive peak
insertion and thresholding algorithm proposed in~\cite{BrediesPikkarainen:2013}.
It is based on iterates of the form \(u^k = \sum_{j=1,\ldots,N_d^k}
\coeff{u}^k_j\delta_{x^k_j}\) (with distinct \(x^k_j\) and \(\coeff{u}^k_j \neq 0\)) and
performs alternating steps, combining insertion of Dirac delta functions at new locations
with removal steps.

For the convenience of the reader, we give a general description of the resulting
procedure in Algorithm~\ref{alg:SPINAT}.
Note, that the point insertion is performed at the maximum of the norm of the current adjoint state.
For more details we refer to~\cite[Section~5]{BrediesPikkarainen:2013}.
\begin{algorithm}
\begin{algorithmic}
 \WHILE {``duality-gap large''}
 \STATE 1.
 Compute $\xi^k = \SO^*(\SO u^k - p_d)$.
 Determine $\hat{x}^k \in \argmax_{x\in\Oc}\abs{\xi^k(x)}_{\C^N}$.
 \STATE 2.
 Set $\theta^k = \begin{cases} 0, & \norm{\xi^k}_{\Ccon} \leq \alpha, \\
   -\left[\alpha^{-2}\norm{p_d}^2/2\right] \xi^k(\hat{x}^k), & \text{else}. \end{cases}$
 \STATE 3.
 Select stepsize \(s^k \in (0,1]\) and set
 \(u^{k+1/2} = (1-s^k)u^k + s^k \theta^k \delta_{\hat{x}_k}\).
 \STATE 4.
 Set \(\mathcal{A} = \supp(u^{k+1/2})\) and
 find \(\coeff{u}^{k+1} \in \C^{N\#{\mathcal{A}}}\) such that \(u^{k+1} =
 U_{\mathcal{A}}(\coeff{u}^{k+1})\)  with \(j(u^{k+1}) \leq j(u^{k+1/2})\).
 \ENDWHILE
\end{algorithmic}
\caption{Successive peak insertion framework~\cite{BrediesPikkarainen:2013}}\label{alg:SPINAT}
\end{algorithm}
The following convergence result is obtained there:
\begin{theorem}[{\cite[Theorem~5.8]{BrediesPikkarainen:2013}}]
\label{thm:sublinear_convergence}
  Let the sequence $u^k$ be generated by Algorithm~\ref{alg:SPINAT}. Then
  every subsequence of $u^k$ has a weak-\(*\) convergent
  subsequence that converges to a minimizer $\widehat{u}$. Furthermore:
  \begin{align*}
    j(u^k)-j(\widehat{u})
    \leq \frac{C}{k}.
  \end{align*}
\end{theorem}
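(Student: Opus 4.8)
The plan is to recognize Algorithm~\ref{alg:SPINAT} as a conditional gradient (Frank--Wolfe) method for the convex composite objective $j = F + G$, where $F(u) = \frac{1}{2}\sum_m \abs{(\SO u)_m - p_d^m}^2_{\C^N}$ is smooth with derivative $F'(u) = \SO^*(\SO u - p_d) = \xi$ acting on measures through the duality pairing (this is precisely the $\xi^k$ of step~1), and $G(u) = \alpha\norm{u}_{\Mcon}$ is the convex, positively homogeneous regularizer. Since $F$ is quadratic, the descent identity
\[
F((1-s)u + s\,\tilde u) = F(u) + s\,\pair{\xi, \tilde u - u} + \tfrac{s^2}{2}\abs{\SO(\tilde u - u)}^2_{\C^{NM}}
\]
holds exactly for $s\in[0,1]$, and by boundedness of $\SO$ (Proposition~\ref{prop:SO}) the quadratic term is controlled by a curvature constant $L$ uniform over the bounded set of iterates. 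Boundedness itself follows from monotonicity: steps~3 and~4 are built so that $j(u^{k+1}) \leq j(u^{k+1/2}) \leq j(u^k)$, whence $\alpha\norm{u^k}_{\Mcon} \leq j(u^k) \leq j(u^0)$, and Banach--Alaoglu then furnishes weak-$*$ convergent subsequences.

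First I would make the Frank--Wolfe gap precise. When $\norm{\xi^k}_{\Ccon} > \alpha$, the vertex $\theta^k\delta_{\hat x^k}$ chosen in step~2 minimizes the linearized problem $\min_v \pair{\xi^k, v} + \alpha\norm{v}_{\Mcon}$ over a total-variation ball whose radius $\norm{p_d}^2/(2\alpha)$, by the monotonicity bound started at $u^0 = 0$, is guaranteed to contain every minimizer $\widehat u$. Defining the surrogate gap $\mathrm{gap}_k = \pair{\xi^k, u^k - \theta^k\delta_{\hat x^k}} + \alpha\norm{u^k}_{\Mcon} - \alpha\norm{\theta^k\delta_{\hat x^k}}_{\Mcon}$, I would linearize the convex $F$ at $u^k$ and use that $\theta^k\delta_{\hat x^k}$ realizes the minimum of the linearization over the ball to conclude $j(u^k) - j(\widehat u) \leq \mathrm{gap}_k$.

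Next I would close the recursion. Inserting $\tilde u = \theta^k\delta_{\hat x^k}$ into the descent identity and bounding $G$ along the segment by convexity gives
\[
j(u^{k+1/2}) \leq j(u^k) - s^k\,\mathrm{gap}_k + \tfrac{(s^k)^2}{2}\,L,
\]
and combining with $\mathrm{gap}_k \geq r_k := j(u^k) - j(\widehat u) \geq 0$, together with the standard stepsize $s^k \in (0,1]$ (e.g.\ $s^k = 2/(k+2)$ or exact line search), yields $r_{k+1} \leq (1 - s^k) r_k + \tfrac{(s^k)^2}{2} L$. A routine induction then produces $r_k \leq C/k$; since step~4 only decreases $j$, the rate is preserved. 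Finally, $r_k \to 0$ combined with the weak-$*$ lower semicontinuity of $j$ — which follows from Lemma~\ref{lem:weak_to_strong} for the smooth part and from lower semicontinuity of the norm — shows that every weak-$*$ accumulation point of the bounded sequence $u^k$ is a minimizer, giving the first assertion.

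The main obstacle is the measure-space bookkeeping in the descent step: one must verify that the specific magnitude $\theta^k = -[\alpha^{-2}\norm{p_d}^2/2]\,\xi^k(\hat x^k)$ makes $\theta^k\delta_{\hat x^k}$ a \emph{feasible} vertex (its mass exceeds the a priori radius, so it can reach $\widehat u$) and that the combined direction descends the \emph{composite} functional rather than merely the smooth part $F$, i.e.\ that the nonsmooth term behaves consistently along the convex combination. Equally, one must check that the curvature constant $L = \norm{\SO}^2 \sup_k \norm{u^k - \theta^k\delta_{\hat x^k}}_{\Mcon}^2$ is finite and uniform in $k$; this again rests on the radius bound and the boundedness of $\SO$. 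Once these structural facts are secured, the remainder reduces to the classical Frank--Wolfe recursion, which is exactly the content of \cite[Theorem~5.8]{BrediesPikkarainen:2013}.
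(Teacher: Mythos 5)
Your proposal is correct and is essentially the same argument as the one behind this theorem: the paper itself gives no proof, importing the result from \cite[Theorem~5.8]{BrediesPikkarainen:2013}, and that proof is precisely the generalized conditional gradient analysis you outline (monotone decrease giving the a priori bound $\alpha\norm{u^k}_{\Mcon} \leq j(u^0)$ and weak-$*$ compactness, the linearized gap dominating the residual $j(u^k)-j(\widehat{u})$ because every minimizer lies in the total-variation ball of radius $\norm{p_d}^2/(2\alpha)$, the exact quadratic descent identity, and the standard recursion yielding the $C/k$ rate). The two subtleties you flag are resolved exactly as you indicate: the insertion element $\theta^k\delta_{\hat{x}^k}$ attains a linearized value no larger than the minimum over that ball (since $\abs{\xi^k(\hat{x}^k)} > \alpha$ scales its mass up by at least the right factor), and the curvature constant is uniform because both $u^k$ and $\theta^k$ are bounded in terms of $j(u^0)$, $\alpha$, and $\norm{\SO}$.
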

To discuss different possible implementations of step~4 in Algorithm~\ref{alg:SPINAT}, we
define for a ordered set of distinct points
\(\mathcal{A} = \set{x_j \in \Oc \;|\; j=1,\ldots,\#\mathcal{A}}\)
the operator \(U_{\mathcal{A}} \colon \C^{N \#\mathcal{A}} \to \Mcon\)  by
\[
U_{\mathcal{A}}(\coeff{u}) = \sum_{j=1}^{\#\mathcal{A}} \coeff{u}_j\delta_{x_j}.
\]
The removal steps are based on the consideration of the finite-dimensional problem
\begin{equation}\label{eq:reduced_cost_active}
  \begin{aligned}
    \min_{\coeff{u}\in\C^{NN_d}} j(U_{\mathcal{A}}(\coeff{u}))
    &= \frac{1}{2}\norm{\SO(U_{\mathcal{A}}(\coeff{u})) - p_d}_{\C^{NK}}^2
    + \alpha \norm{U_{\mathcal{A}}(\coeff{u})}_{\Mcon} \\
    &= \frac{1}{2}\norm{\coeff{S}_{\mathcal{A}}\coeff{u} - p_d}_{\C^{NK}}^2
    + \alpha \sum_{j=1}^{\#\mathcal{A}}\abs{\coeff{u}_j}_{\C^N},
  \end{aligned}
\end{equation}
for \(\mathcal{A}\) determined by an intermediate iterate and
\((\coeff{S}_{\mathcal{A}})_{j,n} = \SO\delta_{x_j}e_n\).
Different concrete choices of step~4 are
discussed in~\cite[Section~5]{BrediesPikkarainen:2013}: it is suggested to
perform one step of the well-know proximal gradient/iterative tresholding algorithm
for the finite dimensional problem~\eqref{eq:reduced_cost_active}.
In this way, step~5 is easy to implement, has a small cost (depending linearly on the
current size of the support), and has the potential to set some coefficients
to zero (by virtue of the soft shrinkage operator). Additional steps of the proximal
gradient method could be performed, to possibly increase this ``sparsifying'' effect.
Note that if we omit step~4, the size of the support will grow monotonically throughout
the iterations due to the particular form of step~3 (except for the unlikely case that
\(s_k = 1\)).

In our setting, we additionally know that solutions consisting of
at most \(2NM\) Dirac delta functions exist; see
Corollary~\ref{cor:dirac_solutions}. Since the proof of the underlying result is
constructive, it directly suggests an algorithm to remove excess point sources; see
Proposition~\ref{prop:remove_diracs}.
\begin{corollary}
  \label{cor:remove_diracs}
  For given \(u^{k+1/2}\) with \(\#{\supp(u^{k+1/2})} > 2NM\), the
  algorithm from the proof of Proposition~\ref{prop:remove_diracs} constructs a new iterate
  \(u^{k+1} = U_{\mathcal{A}}(\coeff{u})\), such that \(\coeff{u}_{\hat\jmath} = 0\) for one
  \(\hat\jmath\) and \(j(u^{k+1}) \leq j(u^{k+1/2})\).
\end{corollary}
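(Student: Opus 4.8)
The plan is to recognize Corollary~\ref{cor:remove_diracs} as a direct instance of the constructive, Carath\'eodory-type reduction in Proposition~\ref{prop:remove_diracs}, applied to the single measure \(u^{k+1/2}\). Write \(u^{k+1/2} = \sum_{j=1}^{N_d} \coeff{u}_j \delta_{x_j}\) with distinct \(x_j\in\Oc\) and, without loss of generality, all \(\coeff{u}_j\neq 0\) (otherwise a zero coefficient is already present and nothing is to be done). I would split each coefficient into magnitude and phase, \(\coeff{u}_j = t_j\coeff{e}_j\) with \(t_j = \abs{\coeff{u}_j}_{\C^N}>0\) and \(\abs{\coeff{e}_j}_{\C^N}=1\), freeze the positions \(x_j\) and directions \(\coeff{e}_j\), and regard the objective as a function of the magnitudes \(t=(t_j)\in\R^{N_d}_{\geq 0}\) alone.

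Setting \(\coeff{g}_j = \SO(\coeff{e}_j\delta_{x_j})\in\C^{NM}\), linearity of \(\SO\) gives \(\SO u^{k+1/2} = \sum_j t_j\coeff{g}_j\), while the group norm collapses to \(\norm{u^{k+1/2}}_{\Mcon}=\sum_j t_j\) since the \(\coeff{e}_j\) are unit vectors. Hence the reduced objective is \(\tfrac12\abs{\sum_j t_j\coeff{g}_j - p_d}^2_{\C^{NM}} + \alpha\sum_j t_j\). The decisive point is the dimension count: the \(N_d\) vectors \(\coeff{g}_j\) live in \(\C^{NM}\), which as a \emph{real} space has dimension \(2NM < N_d\); they are therefore \(\R\)-linearly dependent, so there is \(\lambda\in\R^{N_d}\setminus\set{0}\) with \(\sum_j\lambda_j\coeff{g}_j = 0\). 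Moving \(t\mapsto t+s\lambda\) leaves \(\SO u^{k+1/2}\), and with it the data term, unchanged, so the objective becomes affine in \(s\), equal to a constant plus \(s\,\alpha\sum_j\lambda_j\).

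It then remains to choose the step \(s\). If \(\sum_j\lambda_j = 0\) the objective is constant along \(\lambda\), and as \(\lambda\) has entries of both signs I can move \(s\) in whichever direction first drives a magnitude to zero. If \(\sum_j\lambda_j\neq 0\), I replace \(\lambda\) by \(-\lambda\) if needed to arrange \(\sum_j\lambda_j>0\), so that decreasing \(s\) strictly decreases the objective; as some \(\lambda_j>0\), the choice \(s^\ast = -\min_{\lambda_j>0} t_j/\lambda_j\) keeps \(t+s^\ast\lambda\geq 0\) feasible and sends exactly one magnitude \(t_{\hat\jmath}\) to zero. In both cases \(\coeff{u}^{k+1}_j = (t_j + s^\ast\lambda_j)\coeff{e}_j\) defines \(u^{k+1}=U_{\mathcal{A}}(\coeff{u}^{k+1})\) with \(\coeff{u}^{k+1}_{\hat\jmath}=0\) and \(j(u^{k+1})\leq j(u^{k+1/2})\), as claimed.

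The main obstacle I anticipate lies entirely in this last step: one must simultaneously guarantee feasibility \(t+s^\ast\lambda\geq 0\), non-increase of \(j\), and the vanishing of at least one coordinate, which is what forces the case distinction on the sign of \(\sum_j\lambda_j\) and the explicit minimal-ratio choice of \(s^\ast\). Everything preceding it --- the magnitude/phase split, the two structural identities, and the existence of \(\lambda\) --- is routine linear algebra once the real dimension \(2NM\) of the observation space is used.
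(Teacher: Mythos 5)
Your proof is correct and follows essentially the same route as the paper: Proposition~\ref{prop:remove_diracs} likewise normalizes the coefficients, finds a real null vector \(\lambda\) of the images \(S(\coeff{v}_n\delta_{x_n})\), fixes the sign so that \(\sum_n\lambda_n\geq 0\), and moves by the minimal-ratio step (your \(s^\ast=-\min_{\lambda_j>0}t_j/\lambda_j\) is exactly the paper's \(-1/\tau\) with \(\tau=\max_n\lambda_n/\norm{\coeff{u}_n}_{H_1}\)), after which \(Su\) is unchanged and the norm does not increase, so \(j\) does not increase. Your case split on \(\sum_j\lambda_j=0\) versus \(\sum_j\lambda_j\neq 0\) is handled in the paper uniformly by the weak inequality, but this is only a cosmetic difference.
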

\begin{proposition}
  Suppose that step~5 of Algorithm~\ref{alg:SPINAT} includes the procedure from
  Corollary~\ref{cor:remove_diracs} and that \(u^0\) consists of at most \(2NM\) Dirac
  delta functions. Then the iterates \(u^k\) and each
  weak-\(*\) accumulation point \(\widehat{u}\) of \(u^k\) consists of at most \(2NM\) Dirac
  delta functions (in addition to the properties from Theorem~\ref{thm:sublinear_convergence}).
\end{proposition}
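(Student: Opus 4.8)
The plan is to first bound the support of the iterates by induction on $k$ and then to carry the bound over to weak-$*$ accumulation points by a compactness argument.

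For the induction, the base case is precisely the hypothesis that $u^0$ consists of at most $2NM$ Dirac delta functions. For the inductive step, assume $\#\supp(u^k) \leq 2NM$. Step~3 of Algorithm~\ref{alg:SPINAT} forms $u^{k+1/2} = (1-s^k)u^k + s^k\theta^k\delta_{\hat{x}^k}$, whose support is contained in $\supp(u^k)\cup\set{\hat{x}^k}$, so that $\#\supp(u^{k+1/2}) \leq 2NM + 1$. The subsequent update chooses the active set $\mathcal{A} = \supp(u^{k+1/2})$ and returns $u^{k+1} = U_{\mathcal{A}}(\coeff{u}^{k+1})$, which is supported within $\mathcal{A}$. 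If $\#\mathcal{A} \leq 2NM$ there is nothing to do. Otherwise $\#\mathcal{A} = 2NM+1$, and by assumption this update incorporates the procedure from Corollary~\ref{cor:remove_diracs}, which produces an admissible $u^{k+1}$ with one vanishing coefficient while preserving $j(u^{k+1}) \leq j(u^{k+1/2})$; hence $\#\supp(u^{k+1}) \leq 2NM$ in this case as well. Since the insertion adds at most one atom, a single removal always suffices to restore the bound. This closes the induction, and because the descent inequality $j(u^{k+1}) \leq j(u^{k+1/2})$ is maintained, the conclusions of Theorem~\ref{thm:sublinear_convergence} continue to hold verbatim.

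It remains to treat an arbitrary weak-$*$ accumulation point $\widehat{u}$ of $(u^k)_k$. I would represent each iterate as a sum of exactly $2NM$ weighted Dirac deltas, $u^k = \sum_{j=1}^{2NM} \coeff{u}^k_j \delta_{x^k_j}$, padding with zero coefficients whenever the true support is smaller. Since $j(u^k)$ is non-increasing and the data fidelity term is nonnegative, one has $\alpha\norm{u^k}_{\Mcon} \leq j(u^0)$, so the total variation norms and with them each $\abs{\coeff{u}^k_j}_{\C^N}$ are uniformly bounded. Exploiting the compactness of $\Oc$ for the positions and this uniform bound for the coefficients, I would pass to a subsequence along which $x^k_j \to x^\infty_j \in \Oc$ and $\coeff{u}^k_j \to \coeff{u}^\infty_j$ for every $j$. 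Along this subsequence $u^k \rightharpoonup^* \sum_{j=1}^{2NM} \coeff{u}^\infty_j \delta_{x^\infty_j}$, a measure which, after merging atoms at coinciding limit positions and discarding those with zero coefficient, is supported on at most $2NM$ points. By uniqueness of the weak-$*$ limit this measure equals $\widehat{u}$, which is therefore of the asserted form.

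The main obstacle is exactly this last passage to the limit: being supported on at most $2NM$ points is not a weak-$*$ closed property, since in the limit atoms may coalesce or drift. The device that resolves it is to fix the number of atoms at $2NM$ in advance---allowing zero coefficients---and to track positions and coefficients as a finite parametrization; compactness of $\Oc$ and the uniform coefficient bound then force convergence of this parametrization, and under coincidences or cancellations the number of atoms can only decrease in the limit, never increase.
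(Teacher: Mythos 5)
Your proposal is correct and follows essentially the same route as the paper: the support bound by induction using the removal procedure of Corollary~\ref{cor:remove_diracs}, and for the accumulation point the device of padding each iterate to exactly $2NM$ atoms (zero coefficients allowed) and passing to the limit in the finite parametrization of positions and coefficients, which is precisely the paper's Appendix~\ref{app:weak_closedness} argument (Proposition~\ref{prop:compact_Diracs} and its corollary). The only point to correct is your closing remark: having at most $2NM$ atoms \emph{is} a weak-$*$ closed property when $\Oc$ is compact---this is exactly what your parametrization argument (and the paper's Proposition~\ref{prop:compact_Diracs}) establishes---so the obstacle is that the closedness is not immediate, not that the set fails to be closed.
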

\begin{proof}
The bound on the support size for \(u^k\) is a direct consequence of
Corollary~\ref{cor:remove_diracs}. The bound for the limit follows from a general result
on the weak-\(*\) convergence of measures consisting of a uniformly bounded number of
Dirac delta  functions; see Appendix \ref{app:weak_closedness}.
\QED
\end{proof}

Additionally, \cite{BrediesPikkarainen:2013} suggests acceleration strategies  based on
point moving and merging. Since they cannot be easily realized in our numerical setup
using \(\Cc^0\) finite elements (see section~\ref{sec:numerics}), we do not
discuss them here. Alternatively, we suggest to solve the
subproblem~\eqref{eq:reduced_cost_active} exactly (up to machine precision) to accelerate
the convergence. The resulting procedure is given in Algorithm~\ref{alg:PDAP}.
\begin{algorithm}
\begin{algorithmic}
 \WHILE {``duality-gap large''}
 \STATE 1. Calculate $\xi^k = \SO^*(\SO u^k - p_d)$.
 Determine $\hat{x}^k \in \argmax_{x\in\Oc}\abs{\xi^k(x)}_{\C^N}$.
 \STATE 2. Set $\mathcal{A} = \supp(u^k) \cup \set{\hat{x}^k}$,
  compute a solution \(\widehat{\coeff{u}} \in \C^{N\#{\mathcal{A}}}\) of
  \eqref{eq:reduced_cost_active} with \(\#\supp(U_{\mathcal{A}}(\widehat{\coeff{u}})) \leq 2NM\), and
  set \(u^{k+1} = U_{\mathcal{A}}(\widehat{\coeff{u}})\).
 \ENDWHILE
\end{algorithmic}
\caption{Primal-Dual-Active-Point strategy}\label{alg:PDAP}
\end{algorithm}
Since the point insertion is the same in both algorithms,
Algorithm~\ref{alg:PDAP} is a special case of Algorithm~\ref{alg:SPINAT}.
\begin{proposition}
  \label{prop:PDAPasSPINAT}
  The iterates of Algorithm~\ref{alg:PDAP} coincide with the iterates of
  Algorithm~\ref{alg:SPINAT}, if in step~4, \(\coeff{u}^{k+1}\) is chosen as a
  solution \(\widehat{\coeff{u}} \in \C^{N\#{\mathcal{A}}}\)
  of~\eqref{eq:reduced_cost_active}.
\end{proposition}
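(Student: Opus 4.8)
The plan is to proceed by induction on the iteration index $k$: assuming both algorithms have produced the same iterate $u^k$, I would show that they produce the same $u^{k+1}$. Since both are initialized with the same $u^0$, all iterates then coincide. The decisive observation, which I would isolate first, is that an exact minimizer of the finite-dimensional problem~\eqref{eq:reduced_cost_active} depends only on the support set $\mathcal{A}$ over which it is posed, and not on any coefficient values of an intermediate iterate. Consequently the entire argument reduces to checking that the two support sets entering the coefficient update agree.

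First I would observe that Step~1 is identical in Algorithm~\ref{alg:SPINAT} and Algorithm~\ref{alg:PDAP}: from the common $u^k$ both compute the same adjoint state $\xi^k = \SO^*(\SO u^k - p_d)$ and the same insertion point $\hat{x}^k \in \argmax_{x\in\Oc}\abs{\xi^k(x)}_{\C^N}$. Next I would inspect the intermediate iterate $u^{k+1/2} = (1-s^k)u^k + s^k\theta^k\delta_{\hat{x}^k}$ of Algorithm~\ref{alg:SPINAT}. By construction $\supp(u^{k+1/2}) \subseteq \supp(u^k)\cup\set{\hat{x}^k}$, and the right-hand side is precisely the set $\mathcal{A}$ used in Step~2 of Algorithm~\ref{alg:PDAP}. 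In the non-terminating case $\norm{\xi^k}_{\Ccon} > \alpha$ one has $\theta^k \neq 0$, so for a stepsize $s^k \in (0,1)$ each point of $\supp(u^k)$ distinct from $\hat{x}^k$ retains the nonzero coefficient $(1-s^k)\coeff{u}^k_j$ while $\hat{x}^k$ receives the nonzero mass $s^k\theta^k$; hence equality $\supp(u^{k+1/2}) = \supp(u^k)\cup\set{\hat{x}^k}$ holds.

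With both support sets identified, I would invoke the observation from the first paragraph. Step~4 of Algorithm~\ref{alg:SPINAT}, when $\coeff{u}^{k+1}$ is taken to be a solution of~\eqref{eq:reduced_cost_active} over $\mathcal{A} = \supp(u^{k+1/2})$, minimizes the same functional over the same set $\mathcal{A}$ as Step~2 of Algorithm~\ref{alg:PDAP}, and therefore returns the same $u^{k+1}$. This choice is admissible for Algorithm~\ref{alg:SPINAT}, since $u^{k+1/2}$ itself is feasible for~\eqref{eq:reduced_cost_active} (its support lies in $\mathcal{A}$), so the exact minimizer automatically satisfies the descent requirement $j(u^{k+1}) \leq j(u^{k+1/2})$. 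The additional cardinality bound $\#\supp(U_{\mathcal{A}}(\widehat{\coeff{u}})) \leq 2NM$ is enforced in the same way in both algorithms, through the removal procedure of Corollary~\ref{cor:remove_diracs}, so it does not affect the comparison. This closes the induction.

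The hard part, and the place where I would be most careful, is the reconciliation of the two support sets in the degenerate situations excluded above: a full step $s^k = 1$ collapses $u^{k+1/2}$ to $\theta^k\delta_{\hat{x}^k}$ and discards the previous support, and an accidental cancellation of the combined coefficient at $\hat{x}^k$ (when $\hat{x}^k \in \supp(u^k)$) can likewise shrink $\supp(u^{k+1/2})$ strictly below $\supp(u^k)\cup\set{\hat{x}^k}$. I would handle this by using that the stepsize $s^k \in (0,1]$ is a free parameter of Algorithm~\ref{alg:SPINAT} and may be chosen in $(0,1)$ and away from the single exceptional value, which restores equality of the two sets. A secondary technical point is the possible non-uniqueness of the minimizer of~\eqref{eq:reduced_cost_active} when $\coeff{S}_{\mathcal{A}}$ lacks full column rank; since in that case both algorithms solve the identical finite-dimensional problem, it suffices to require that the same minimizer be selected, which keeps the iterates identical.
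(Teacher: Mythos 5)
Your proof is correct and follows essentially the same route as the paper's: both arguments rest on the observations that step~1 (hence the insertion point \(\hat{x}^k\)) coincides for the two algorithms, that the active set \(\mathcal{A}\) agrees, and that the exact minimizer of~\eqref{eq:reduced_cost_active} is an admissible choice for step~4 because \(u^{k+1/2}\) is feasible for the subproblem, giving \(j(U_{\mathcal{A}}(\widehat{\coeff{u}})) \leq j(u^{k+1/2}) \leq j(u^k)\). The paper's two-line proof silently assumes the generic situation \(\supp(u^{k+1/2}) = \supp(u^k)\cup\set{\hat{x}^k}\); your explicit handling of the degenerate cases (full step \(s^k=1\), coefficient cancellation at \(\hat{x}^k\)) and of possible non-uniqueness of the minimizer merely makes precise what the paper leaves implicit.
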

\begin{proof}
This is a direct consequence of the fact that step~1 and the choice of \(\mathcal{A}\)
coincide for both algorithms, and that \(j(U_{\mathcal{A}}(\widehat{\coeff{u}}))
\leq j(u^{k+1/2}) \leq j(u^k)\); see~\cite[Proposition~5.6]{BrediesPikkarainen:2013}.
\QED
\end{proof}
\begin{remark}
\label{rem:stopping_criterion}
Another possible stopping criterion for Algorithm~\ref{alg:PDAP} would be the condition
that the active set \(\mathcal{A}\) coincides in two subsequent iterations \(k\) and
\(k+1\) i.e., that \(\hat{x}^{k+1} \in \mathcal{A}(u^{k+1})\) in step \(k+1\). Clearly, if
this holds true, we have \(u^{k+1} = u^{k+2} = \widehat{u}\). In fact, the optimality of
\(u^{k+1}\) can be obtained in this situation by formulating the
optimality conditions of~\eqref{eq:reduced_cost_active} from step \(k\) for \(u^{k+1} =
U_{\mathcal{A}}(\widehat{\coeff{u}})\), concluding that \(\hat{x}^{k+1}
\in \mathcal{A}(u^{k+1})\) implies that \(\norm{\xi^{k+1}}_{\Ccon} \leq \alpha\) and
verifying the first order conditions from Corollary~\ref{cor:optimality_conditions_dirac},
which are sufficient for optimality.
\end{remark}

It remains to address the cost associated with the numerical solution of
subproblem~\eqref{eq:reduced_cost_active}. It is well-known that this problem can be
reformulated as a second order cone constrained linear optimization problem, by
introducing \(\#{\mathcal{A}} + 1\) additional variables. Such problems can be solved
efficiently by interior point methods. Since we can bound the number of active points
\(\#{\mathcal{A}}\) a~priori by \(2NM+1\), the cost for the approximate numerical solution
of~\eqref{eq:reduced_cost_active} (up to machine precision) can be regarded as a constant;
see, e.g., \cite{BoydVandenberghe:2004}.
In practice, we choose to implement a semi-smooth Newton method; see, e.g.,
\cite{MilzarekUlbrich:2014}. While there are no complexity bounds for this class of methods,
the local superlinear convergence properties (which, in contrast to interior point
methods, allows for warm starts) makes this alternative seem appealing, since we have a
potentially good initial guess for \(\widehat{\coeff{u}}\) from the previous iteration.


\section{Numerical Results}
\label{sec:numerics}

In this section we briefly describe the discretization methods used for the solution of
the Helmholtz equation in a bounded domain and for the sources from $\Mcon$.
Let $p=p_n^1+\i p_n^2$, $n=1,\ldots,N$ the solution of \eqref{eq:state_n} for the control
$u_n=u_n^1+\i u_n^2$, $u\in \Mcon$.
For the numerical computations we rewrite the state equation \eqref{eq:state_n} in following equivalent real-valued form
\begin{equation}\label{eq:state_n_real}
\begin{curlyeq}
\left(
  \begin{array}{cc}
    -\Lap-\kay_n^2I & 0 \\
    0 & -\Lap-\kay_n^2I \\
  \end{array}
\right)
\left(
  \begin{array}{c}
    p_n^1 \\
    p_n^2 \\
  \end{array}
\right)
&=\frac{1}{w^n}\left(
   \begin{array}{c}
     u_n^1\rvert_{\Omega} \\
     u_n^2\rvert_{\Omega}  \\
   \end{array}
 \right)
&& \text{in } \Omega,\\
\left(
  \begin{array}{cc}
    \partial_\nu & \kappa_n\chi_{\GZ} \\
    -\kappa_n\chi_{\GZ} & \partial_\nu \\
  \end{array}
\right)
\left(
  \begin{array}{c}
    p_n^1 \\
    p_n^2 \\
  \end{array}
\right)
&=\frac{1}{w^n}\left(
   \begin{array}{c}
     u_n^1\rvert_{\Gamma} \\
     u_n^2\rvert_{\Gamma} \\
   \end{array}
 \right)
&& \text{on } \Gamma,
\end{curlyeq}
\end{equation}
where $w$ is one of the weight functions introduced in Section
\ref{sec:weighted_norm}. Based on this formulation of the state equation we employ linear
finite elements on a triangulation of $\Omega$ for the approximation of the state
variables $p_n^1$ and $p_n^2$; cf.~\cite{Ihlenburg:1998,BermudezGamalloRoriguez:2004,DokmanicVetterli:2012}.
We only mention that the discretized state equation has unique
and stable solutions $(p_h^1,p_h^2)$ for a small enough grid size $h$; see, e.g.,
\cite[Theorem~4.4]{BermudezGamalloRoriguez:2004}.
We denote the set of grid nodes in the triangulation with $\mathcal{N}$. Moreover we denote
the number of grid points with $N_h$ and denote number of grid nodes in $\Oc$ with
$N_c$.
Corresponding to the discretization of the state space by finite elements, we discretize
the control space by Dirac-delta functions in the gird nodes (see~\cite{CasasClasonKunisch:2012}):
\begin{equation}\label{eq:discrete_control_space}
\M_h = \left\{u\in \Mcon \;\Big|\;
    u=\sum_{i=1}^{N_c}\coeff{u}_i\delta_{x_i},\;\coeff{u}_i\in \C^N, x_i\in \Oc\cap \mathcal{N}\right\}.
\end{equation}
Since the measure is discretized in the grid nodes, we only need to compute the values of
the weight \(w\) in the grid nodes to obtain a fully discrete problem.
For instance, for the weight function
$w^n_{\Omega,2}=\sqrt{\sum_{m=1}^M\abs{G_n^{x_m}}^2}$, the functions $G_n^{x_m}$ are
approximated again by linear finite elements. Based on the pointwise values of the  finite
element approximations we obtain a discrete approximation of the given weight in the grid
nodes.

We introduce the discrete reweighed observation mapping
$\SO_h^w \colon \M_h\rightarrow \C^{NM}$ defined by
\[
  \SO^w_h\colon u\mapsto \{p_{n,h}^1(x_m)+\i p_{n,h}^2(x_m)\}_{n,m=1}^{N,M}.
\]
Based on the operator $S^w_h$ we formulate the reweighed discrete control problem
\begin{equation}\label{eq:reduced_cost_discrete}
  \begin{aligned}
    &\min_{u\in \M_h} j_h(u)= \frac{1}{2}\sum_{m=1}^{M}\abs{(\SO_h^w u)_m - p_d^m}_{\C^N}^2 + \alpha\norm{u}_{\Mcon}.
  \end{aligned}
\end{equation}
For an $u\in\M_h$ the regularization functional has the form
\[
\alpha\norm{u}_{\Mcon}=\alpha\sum_{j=1}^{N_c}\abs{\coeff u_j}_{\C^N}.
\]
Thus, problem~\eqref{eq:reduced_cost_discrete} is a finite dimensional non-smooth and convex
optimization problem. There are several algorithms which can be used for its
solution. For example, the CVX toolbox~\cite{CVX:2014} reformulates the problem as a cone
constrained problem and solves the resulting problem using an interior point method. While
highly efficient for medium sized problems, the
performance of such a method suffers dramatically from the high dimension $2NN_c$ of the
optimization variable in problem~\eqref{eq:reduced_cost_discrete} (in the case of a fine
discretization).

Finally, we implement the algorithms from section~\ref{sec:optimization} on the discrete
level. To adapt Algorithm~\ref{alg:SPINAT} and
Algorithm~\ref{alg:PDAP} to the discrete level, it suffices to note that the maximization
of the adjoint variable \(\xi^k\) needs to be performed only over the grid points, which
is done by a direct search. The other steps can be implemented directly.
Since the dimension of the observation $2NM$ is
low in comparison to $\dim \mathcal{M}_h = 2NN_c$, we build up the matrix representation
$(\coeff S^w)^* \in \C^{NM\times NN_c}$ of $(\SO_h^w)^\ast$ in a preprocessing step.
This step involves $M$-times the solution of the discrete adjoint state equation. By transposition we get the matrix
representation $\coeff S^w$ of $\SO_h^w$. Note that this matrix is often referred to as
the mixing matrix of a microphone array in Beamforming applications; see
\cite{RouxBoufounosKangHershey:2013}. Thus, the evaluation of the solution operator and the adjoint equation
needed for the application of Algorithm~\ref{alg:SPINAT} resp.~\ref{alg:PDAP} reduces to a
matrix vector multiplication. Due to the convergence analysis on the continuous
level, we can expect the algorithms to behave independently of the number of grid points,
where the cost of each iteration scales linearly in \(N_c\).

\subsection{Interpretation of discrete solutions}
It is known that a discretization of a measure on a finite grid introduces artifacts:
Roughly speaking, a source present in the continuous problem at a off-grid location tends
to appear spread out over the adjacent grid cells, which artificially increases the
number of support points in the discrete solution, and makes the direct interpretation
of the numerical solutions difficult. For a theoretical analysis of this effect
we refer to~\cite{DuvalPeyre:2015}. For practical purposes, we employ the following
post-processing strategy: First, we build the connectivity graph of the sparsity pattern of the finite element
discretization, and interpret all point sources less than two nodes away from each other
as part of a cluster. Then, for each cluster we replace the sources of hat cluster by a source located at
the center of gravity of the cluster with a coefficient given by the sum of the
coefficients. Mathematically, this can be regarded as an interpolation operation on
the space of measures, which introduces an additional error proportional to \(h\)
under reasonable assumptions.

\subsection{Numerical experiments}
In this section we conduct several numerical experiments based on an acoustic inverse source
problem involving the Helmholtz equation. In all considered scenarios we are given a
computational domain $\Omega$ with reflecting as well as absorbing boundary conditions.
We give examples to demonstrate the applicability of
the general approach, and investigate the influence of the choice of
the weight \(w\) and the performance of the presented algorithms.
In all examples, we use the following setting:
\begin{itemize}
\item The computational domain is given by a square of four by four meters, i.e.,
  $\Omega = [0,4]^{2}$.
\item The computational grid $\mathcal T_h$ is given by an uniform triangular
  discretization of $\Omega$ with $h = {\sqrt{2}}/{2^{l}}$ with grid level \(l \in
  \set{6,\ldots,9}\).
\item Two reflecting walls $\Gamma_N$ are located on the left and top and two absorbing walls
  $\Gamma_{Z}$ (with \(\kappa_n = \kay_n\)) on the bottom and right.
\item The speed of sound is set to $c = 345\,$[m/s].
\end{itemize}

\subsubsection{Deterministic comparison of weights}
The results of Proposition~\ref{prop:exact_reconstruction} show that one point source can
be exactly recovered in the noise free case for the weighted
approach~\eqref{eq:weighted_problem}. However, we can construct a
simple example, which numerically demonstrates that the reconstruction
based on the non-weighted approach~\eqref{eq:pre_opt_prob} does not necessarily yield the exact
positions and intensities in this scenario.
To this purpose, we choose an exact source located close to the reflecting boundaries
of $\Omega$ and compute a minimum norm solution for different problem formulations. More
precisely, we set $u^\star = e^{i\pi/4}\delta_{x^\star}$ with $x^\star=(0.5; 3.75)$. Furthermore, for
simplicity, we consider the case with only one frequency $\omega=2\pi\,261.6$, which
corresponds to the tone C4, and
three microphones located in $(3.75,1),~(3.75,2),~(3.75,3)$ as depicted in Figure~\ref{fig:domain}.

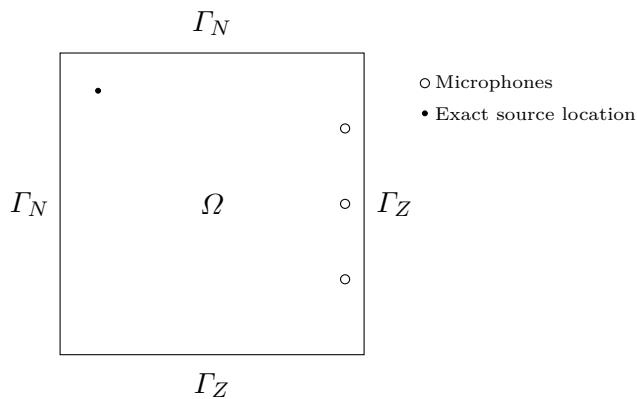
\begin{figure}[htp]
\begin{center}
\begin{tikzpicture}[scale=4]
\draw (0,0) rectangle (1,1);
\node at (.5,.5) {$\Omega$};
\node at (-0.1,.5) {$\Gamma_{N}$};
\node at (1.1,.5) {$\Gamma_{Z}$};
\node at (0.5,1.1) {$\Gamma_{N}$};
\node at (0.5,-0.1) {$\Gamma_{Z}$};
\draw[fill] (0.5/4,3.5/4) circle [radius=0.008];
\foreach \x in {1,...,3}
{\draw (3.75/4,\x/4) circle [radius=.015];}
\draw (1.2,.9) circle [radius=.015];
\node[right] at (1.2,.9) {\tiny Microphones};
\draw[fill] (1.2,.8) circle [radius=0.008];
\node[right] at (1.2,.8) {\tiny Exact source location};
\end{tikzpicture}
\caption{The computational domain $\Omega$, the array of microphones and the exact source position.}
\label{fig:domain}
\end{center}
\end{figure}

Since we compare different problems settings under ideal conditions, we consider noise-free
observations which are generated on the same grid as the subsequent
computations. Therefore, we set $p_d=p_h(u^\star)$ generated by solving the discrete Helmholtz
equation~\eqref{eq:state} with the exact source $u^\star$.
\begin{figure}[htb]
\begin{subfigure}[t]{.48\linewidth}
\centering
\includegraphics[trim = .5cm .5cm .7cm .5cm, clip,height=4.2cm]{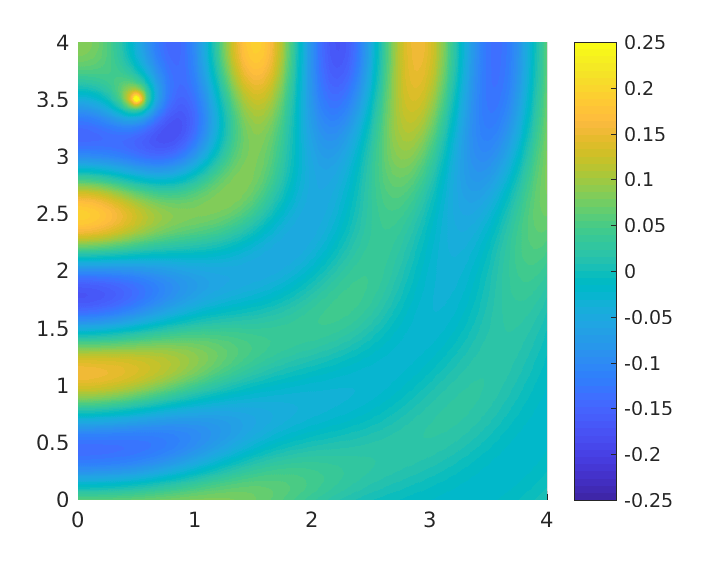}\\
\caption{Real part of the acoustic pressure $p_h(u^\star)$.}\label{fig:state}
\end{subfigure}
\begin{subfigure}[t]{.45\linewidth}
\centering
\includegraphics[trim = .5cm .5cm .75cm .5cm, clip,height=4.2cm]{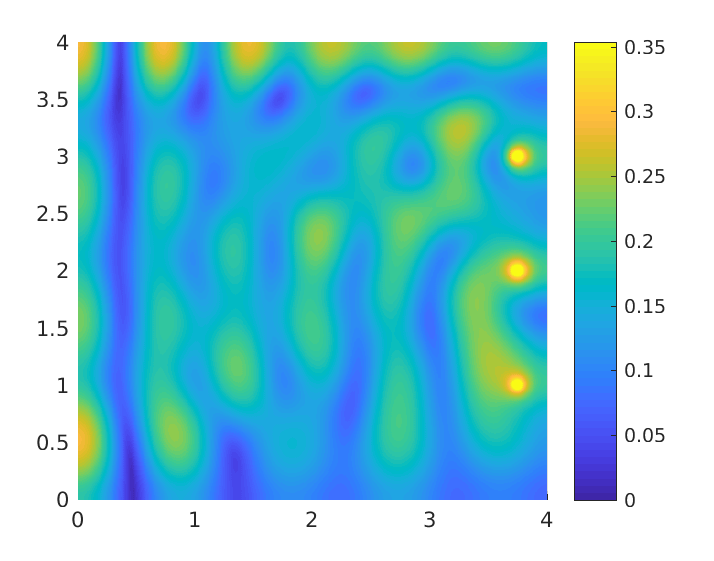}\\
\caption{Discrete approximation of $w_{\Omega,2}$.}\label{fig:weight}
\end{subfigure}
\caption{Exact pressure and weight \(w = w_{\Omega,2}\).}
\end{figure}
In Figure~\ref{fig:state} the real part of the acoustic pressure $p(u^\star)$ is
displayed. Circular waves are generated from the point source and intensified by the
reflections on $\Gamma_{N}$. Figure~\ref{fig:weight} shows the
weight $w = w_{\Omega,2} = \sqrt{\sum_{m=1}^M\abs{G^{x_m}}^2}$.
As mentioned before, the value of the weight at point in the domain corresponds to the
magnitude of the signal that will be received at the microphones.
We clearly see that $w$ has a relatively low value in a
neighborhood of the exact source position. This
behavior of $w$ is caused by negative interference of the generated and
reflected waves. Furthermore, we clearly observe the large values of the weight close to
the microphones.

In the following, we numerically approximate the minimum norm solutions \(u^\dagger\) for
different weights.
To this purpose, we solve the respective discrete problems for a decreasing sequence of cost
parameters ($\alpha=10^{-0},\ldots,10^{-10}$)
up to machine precision (using Algorithm~\ref{alg:PDAP}). Then, we take the solution
\(\widehat{u}_\alpha\) for the smallest \(\alpha\) as an approximation of \(u^\dagger\)
(which is justified by Corollary~\ref{cor:inverse_problem_convergence}).
Furthermore, an approximation of the element \(\xi^\dagger\) from the source
condition~\eqref{eq:source_condition} is given by
\(\widehat{\xi}_\alpha = -\SO^*(\SO\widehat{u}_\alpha - p_d)/\alpha\).

\begin{figure}[htb]
\begin{subfigure}[t]{.45\linewidth}
\centering
\scalebox{.85}{
%
%
\definecolor{mycolor1}{rgb}{0.00000,0.44700,0.74100}%
\begin{tikzpicture}

\begin{axis}[%
width=4.755cm,
height=5cm,
at={(0cm,0cm)},
scale only axis,
xmin=0,
xmax=4,
ymin=0,
ymax=4,
axis background/.style={fill=white},
legend style={at={(0.03,0.03)}, anchor=south west, legend cell align=left, align=left, draw=white!15!black}
]
\addplot [color=mycolor1, draw=none, mark=asterisk, mark options={solid, mycolor1}]
  table[row sep=crcr]{%
0.5	3.5\\
};
\addlegendentry{\small $|u^\dagger|_{\mathbb{C}^N}$}

\node[left, align=right]
at (axis cs:1.8,3.5) {\small $1.0\cdot 10^{0}$};
\end{axis}
\end{tikzpicture}%
}
\caption{Positions and source intensities of the minimum norm solution \(u^\dagger\).}
\label{fig:distributiondirac_weight_no_cd}
\end{subfigure}\quad\quad
\begin{subfigure}[t]{.48\linewidth}
\centering
\includegraphics[trim = .5cm .5cm .75cm .6cm, clip,height=4.5cm]{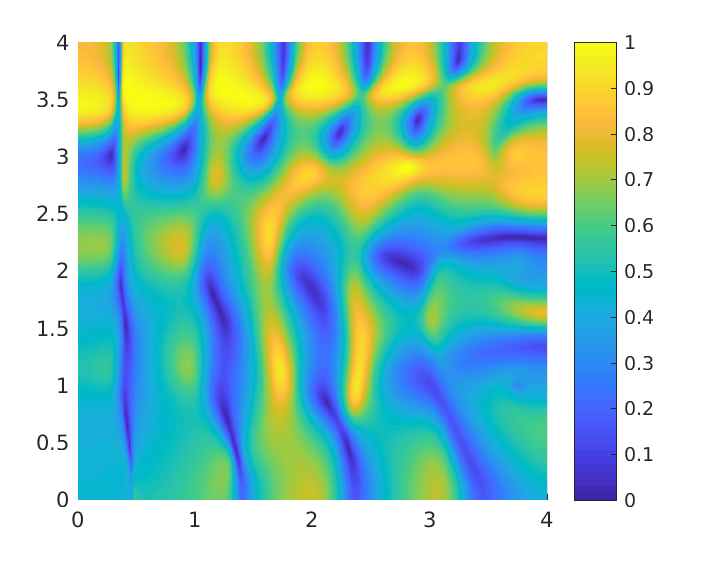}
\caption{Discrete approximation of $\abs{\xi^\dagger(x)/w}_{\C^N}$.}\label{fig:adjoint_weight_cd}
\end{subfigure}
\caption{Noise free reconstructions for $\Oc=\Omega$, weight \(w = w_{\Omega,2}\).}
\end{figure}

We give the results for \(w = w_{\Omega,2}\) in
Figure~\ref{fig:distributiondirac_weight_no_cd}.
Here, for the reconstruction we admit all possible sources and set \(\Oc = \Omega\).
In agreement with Proposition~\ref{prop:exact_reconstruction} we observe that the support of the solution is
recovered exactly, and that the coefficient coincides to the exact one up to the seventh
digit. Moreover, a close inspection of the variable $\abs{\xi^\dagger(x)/w}_{\C^N}$ shows
that its maximum value one is uniquely attained at the exact source position; the next
biggest local minimum has a value of \(\sim 0.995\). This demonstrates uniqueness of the
discrete minimum norm solution in this case (cf.\ Proposition~\ref{prop:criterion_uniqueness}).

\begin{figure}[htb]
\begin{subfigure}[t]{.45\linewidth}
\centering
\scalebox{.85}{
%
%
\definecolor{mycolor1}{rgb}{0.00000,0.44700,0.74100}%
\begin{tikzpicture}

\begin{axis}[%
width=4.755cm,
height=5cm,
at={(0cm,0cm)},
scale only axis,
xmin=0,
xmax=4,
ymin=0,
ymax=4,
axis background/.style={fill=white},
legend style={at={(0.03,0.03)}, anchor=south west, legend cell align=left, align=left, draw=white!15!black}
]
\addplot [color=mycolor1, draw=none, mark=asterisk, mark options={solid, mycolor1}]
  table[row sep=crcr]{%
3.75	2\\
3.75	1\\
3.75	3\\
};
\addlegendentry{\small $|u^\dagger|_{\mathbb{C}^N}$}

\node[left, align=right]
at (axis cs:3.75,2) {\small $5.7\cdot 10^{-2}$};
\node[left, align=right]
at (axis cs:3.75,1) {\small $4.1\cdot 10^{-2}$};
\node[left, align=right]
at (axis cs:3.75,3) {\small $1.3\cdot 10^{-1}$};
\end{axis}
\end{tikzpicture}%
}
\caption{Positions and source intensities of the minimum norm solution \(u^\dagger\).}
\label{fig:distribution_diracs_no_weight_no_cd}
\end{subfigure}\quad\quad
\begin{subfigure}[t]{.48\linewidth}
\centering
\includegraphics[trim = .5cm .5cm .75cm .6cm, clip,height=4.5cm]{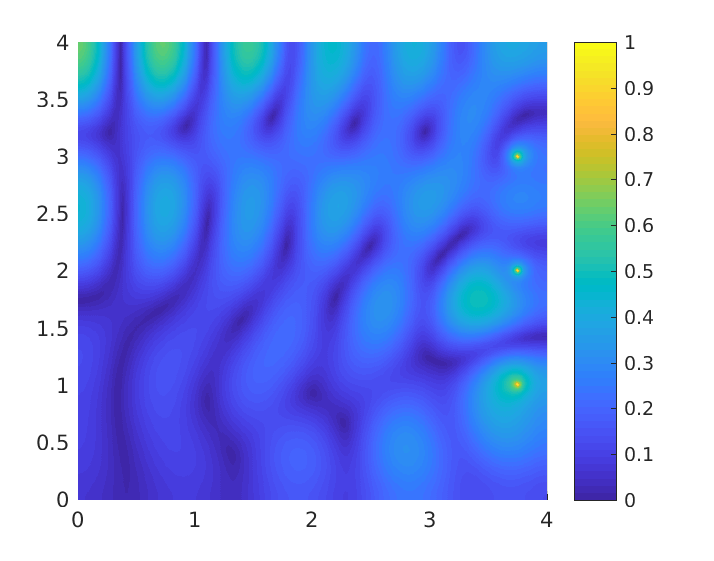}
\caption{Discrete approximation of $\abs{\xi^\dagger(x)}_{\C^N}$.}\label{fig:adjoint_no_weight_no_cd}
\end{subfigure}
\caption{Noise free reconstructions for $\Oc=\Omega$, no weight (\(w \equiv 1\)).}
\end{figure}
Next, we consider the case without weight.
According to Proposition~\ref{prop:counter_example_weight} the corresponding problem with
\(\Oc = \Omega\setminus\Xi\) has no solution since there exists vanishing sequences of point sources
which generate the exact measurements and converge to the positions of the
microphones. However, in the discrete
setting the problem always has a solution, since the discrete Green's functions are bounded by a
mesh-dependent constant. We give the numerical results in
Figure~\ref{fig:distribution_diracs_no_weight_no_cd}. Here, the minimum norm solution $u^\dagger$
consists of three point sources located in the microphone positions.
The maximum of the absolute value of the adjoint state is assumed only there; see
Figure~\ref{fig:adjoint_no_weight_no_cd}. Note that this numerical solution is highly
sensitive to the grid resolution. In fact, for \(h \to 0\) the minimum norm solution
and dual variable converge to zero.

\begin{figure}[htb]
\begin{subfigure}[t]{.45\linewidth}
\centering
\scalebox{.85}{
%
%
\definecolor{mycolor1}{rgb}{0.00000,0.44700,0.74100}%
\begin{tikzpicture}

\begin{axis}[%
width=4.755cm,
height=5cm,
at={(0cm,0cm)},
scale only axis,
xmin=0,
xmax=4,
ymin=0,
ymax=4,
axis background/.style={fill=white},
legend style={at={(0.03,0.03)}, anchor=south west, legend cell align=left, align=left, draw=white!15!black}
]
\addplot [color=mycolor1, draw=none, mark=asterisk, mark options={solid, mycolor1}]
  table[row sep=crcr]{%
2.75	4\\
1.359375	4\\
2.046875	4\\
2.703125	2.53125\\
2.046875	2.296875\\
};
\addlegendentry{\small $|u^\dagger|_{\mathbb{C}^N}$}

\node[left, align=right]
at (axis cs:4,3.7) {\small $1\cdot 10^{-1}$};
\node[left, align=right]
at (axis cs:1.4,3.7) {\small $2.9\cdot 10^{-1}$};
\node[left, align=right]
at (axis cs:2.8,3.6) {\small $3.5\cdot 10^{-2}$};
\node[left, align=right]
at (axis cs:2.703,2.6) {\small $8.5\cdot 10^{-2}$};
\node[left, align=right]
at (axis cs:2.047,2.2) {\small $5.5\cdot 10^{-2}$};
\end{axis}
\end{tikzpicture}%
}
\caption{Positions and source intensities of the minimum norm solution \(u^\dagger\).}
\label{fig:distribution_no weight_cd}
\end{subfigure}\quad\quad
\begin{subfigure}[t]{.48\linewidth}
\centering
\includegraphics[trim = .5cm .5cm .75cm .6cm, clip,height=4.5cm]{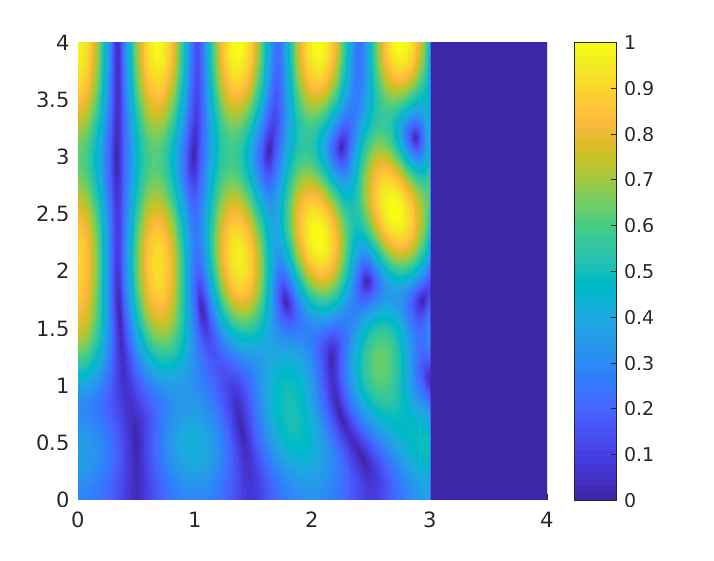}
\caption{Discrete approximation of $\abs{\xi^\dagger(x)}_{\C^N}$.}\label{fig:adjoint_no_weight_cd}
\end{subfigure}
\caption{Results for $\Oc=[0,3]\times [0,4]$, no weight (\(w \equiv 1\)).}
\end{figure}
To obtain a well-posed optimization problem without weight we choose the control
domain $\Oc=[0,3]\times [0,4]$, which excludes the observation positions. The results are
given in Figure~\ref{fig:distribution_no weight_cd}, where we observe that the
optimal solution consists of five point sources: three are located on the reflecting
boundary $\Gamma_{N}$ and three are located in the interior of the domain.
The corresponding function $\abs{\xi^\dagger(x)}_{\C^N}$
attains its global maximum on the support points of \(u^\dagger\). However, the region
close to the exact source position assumes a visibly lower function value, and no source
is placed there. This can be connected to the negative interference at this point; cf.\
Figure~\ref{fig:weight}.

These examples show that even in simple settings the reconstruction results of the
non-weighted approach~\eqref{eq:pre_opt_prob} is affected by negative interference
caused by the reflecting boundaries, as well as the fact that the adjoint state takes
arbitrarily large values close to the microphone positions.

\subsubsection{Statistical comparison of weights}

Now, we consider a more involved problem setup to evaluate
the reconstruction quality for different weights. We consider the
same model as before, but consider the
frequencies \(\omega = 2 \pi (349.2, 523.3, 659.3)\) (corresponding to F4, C5, and E5).
The number of microphones is increased to $30$, and
the control domain is chosen as \(\Oc = [0,3]\times[0,4]\), which does not contain the
microphone locations at \((x_1,x_2)\) with \(x_1 = 3.25\) and \(x_1 = 3.75\)
and \(x_2\) regularly spaced from \(0\) to \(4\); see Figure~\ref{fig:setup_compare}.
All computation are performed on grid level \(l=8\).

To evaluate to reconstruction quality of different weights, we follow a statistical
approach: for each number of point sources \(N_d^\star \in \set{1,2,\ldots,5}\), we
generate a random source by selecting \(N_d^\star\) random indices from the
mesh nodes on the control domain and generating corresponding random coefficients by
drawing from a multivariate complex
Gaussian distribution with unit variance. Then, we compute a minimum norm
solution~\eqref{eq:reduced_cost_zero_w} from the corresponding exact observations for the
given weight, which is either \(w \equiv 1\) or \(w = w_{\Omega,2}\).
Here, we again approximate the minimum norm solution by the solution for a value of
\(\alpha = 10^{-9}\), which we compute by a continuation strategy in
the regularization parameter using Algorithm~\ref{alg:PDAP}.

Finally, we evaluate the average reconstruction error for each weight. Since the
generalized Bregman distance is multivalued, we focus on two simple citeria. The first is
simply the relative difference of the norms with respect to the employed weight,
\begin{equation}
\label{eq:rel_norm_diff}
e_1 = \left[\norm{u^\star}_{\M_w(\Oc,\C^N)} - \norm{u^\dagger}_{\M_w(\Oc,\C^N)}\right]/\norm{u^\star}_{\M_w(\Oc,\C^N)}
\end{equation}
Note that it can be easily verified that
\(\norm{u^\star}_{\M_w(\Oc,\C^N)} - \norm{u^\dagger}_{\M_w(\Oc,\C^N)} \in
D(u^\star,u^\dagger)\) (for the specific choice \(\xi = \SO^* y^\dagger\)), which relates
this criterion to the Bregman distance; cf. Theorem~\ref{thm:conv_breg}.
The results are given in Figure~\ref{fig:rel_norm_diff}. We observe that the difference
is smaller for the weight \(w_{\Omega,2}\), and that it is zero for the case of one
source, as predicted by theory.
However, we can expect the norm difference to severely underestimate the reconstruction
error. Moreover, the results for different weights are not directly comparable, due to the
fact that the error criterion itself depends on the weight.
Therefore, we also consider a second error criterion, which is based on convolution. We
introduce the componentwise convolution operator \(S^\sigma_{\mathrm{heat}} \colon
\M(\Omega,\C^N) \to L^1(\Omega,\C^N)\), which computes the solution at time
\(T = \sigma^2/2\) of the heat equation (endowed with homogeneous Neumann boundary conditions on
the domain \(\Omega\)) with the given initial data at time zero.
Then we define the second error criterion by
\begin{equation}
\label{eq:rel_kern_diff}
e_2 =
\norm{S^\sigma_{\mathrm{heat}}(u^\star - u^\dagger)}_{L^1(\Omega,\C^N)}/\norm{u^\star}_{\M(\Omega,\C^N)}.
\end{equation}
Here, we compare the reconstruction error in the canonical norm after convolution with a
regular kernel with approximate width \(\sigma\). Roughly speaking, we can expect small
errors in the source location to lead to small error terms (which is not the case if we apply
the total variation norm directly), whereas location errors larger than \(\sigma\) lead to
big error contributions. Mathematically, the backwards uniqueness property of the heat
equation guarantees that \(e_2 = 0\) can only occur for \(u^\dagger = u^\star\).  We
implement \(S^\sigma_{\mathrm{heat}}\) by a finite element
approximation on the given grid and an implicit Euler time discretization (with five steps).
The results for \(\sigma = 0.2\) and \(\sigma = 0.05\) are given in
Figures~\ref{fig:rel_kern_diff} and~\ref{fig:rel_kern_diff2}, respectively.
We observe that, although the errors increase for more strict error criteria, the average
errors are consistently smaller when the weight \(w_{\Omega,2}\) is employed.
\begin{figure}[htb]
\begin{subfigure}[t]{.3\linewidth}
\centering
\begin{tabular}{ccc}
  \toprule
  \(N^*_d\) & \(w\equiv1\) & \(w=w_{\Omega,2}\) \\
  \midrule
  1 & 0.0087  &  0.0000  \\
  2 & 0.0233  &  0.0030  \\
  3 & 0.0599  &  0.0174  \\
  4 & 0.0867  &  0.0404  \\
  5 & 0.1443  &  0.0754  \\
\bottomrule
\end{tabular}
\caption{Average relative norm error~\eqref{eq:rel_norm_diff}.}
\label{fig:rel_norm_diff}
\end{subfigure}
\quad
\begin{subfigure}[t]{.3\linewidth}
\centering
\begin{tabular}{ccc}
  \toprule
  \(N^*_d\) & \(w\equiv1\) & \(w=w_{\Omega,2}\) \\
  \midrule
  1 & 0.0875  &  0.0000 \\
  2 & 0.1894  &  0.0387 \\
  3 & 0.4364  &  0.2042 \\
  4 & 0.6416  &  0.4394 \\
  5 & 0.8326  &  0.6691 \\
  \bottomrule
\end{tabular}
\caption{Convolution error~\eqref{eq:rel_kern_diff} with \(\sigma = 0.2\).}
\label{fig:rel_kern_diff}
\end{subfigure}
\quad
\begin{subfigure}[t]{.3\linewidth}
\centering
\begin{tabular}{ccc}
  \toprule
  \(N^*_d\) & \(w\equiv1\) & \(w=w_{\Omega,2}\) \\
  \midrule
  1 & 0.1453  &  0.0000 \\
  2 & 0.2660  &  0.0625 \\
  3 & 0.6129  &  0.2971 \\
  4 & 0.8689  &  0.6271 \\
  5 & 1.1181  &  0.9556 \\
  \bottomrule
\end{tabular}
\caption{Convolution error~\eqref{eq:rel_kern_diff} with \(\sigma = 0.05\).}
\label{fig:rel_kern_diff2}
\end{subfigure}
\caption{Average reconstruction error for 200 randomly generated sources with
  different numbers of point sources \(N_d^\star\).}
\end{figure}

\subsubsection{Comparison of algorithms}

Now, we evaluate the practical performance of the algorithms from
section~\ref{sec:optimization}. We consider the same setting as in the previous section
(frequencies \(\omega = 2 \pi (349.2, 523.3, 659.3)\) and $30$ microphones).
We recover a source consisting of three point sources as depicted in
Figure~\ref{fig:setup_compare} with random coefficients (drawn from
a multivariate complex Gaussian distribution with unit variance). The control domain is
chosen as \(\Oc = [0,3]\times[0,4]\) and the weight \(w_{\Omega,2}\) is employed in all experiments.

We want to study the algorithms for a setting with noise and useful values of the
parameter \(\alpha\). Therefore, we compute synthetic measurements on the finest
grid level \(l = 9\) and perturb them by additive Gaussian noise, such that
\(\norm{Su^\star - p_d}/\norm{Su^\star} = 5\%\).
We then solve the problem on a coarser grid level \(l = 8\), to also take into account a
possible discretization error.
To determine a useful range of regularization parameters, we numerically compute an
L-curve: we solve the problem~\eqref{eq:reduced_cost_w} for a sequence of regularization
parameters \(\alpha_j = 10^{-j/4}\), \(j = 0,1,\ldots,20\) and plot the norm of the
solution \(\widehat{u}_\alpha\) over the data misfit term
\(\norm{S\widehat{u}_\alpha - p_d}_{\C^{NM}}\); see Figure~\ref{fig:lcurve}.
We observe that the data misfit term is reduced below the noise level at
\(\alpha_7 \approx 1.8 \cdot 10^{-2}\) (corresponding to the popular Morozov-criterion for the selection of
a regularization parameter), and at \(\alpha_9 \approx 5.6 \cdot 10^{-3}\) the norm of
the reconstruction starts to exceed the norm of the exact solution \(u^\star\).
We conclude that practically relevant values of \(\alpha\) are around
\(10^{-2}\) in this particular instance.

\begin{figure}[htb]
\begin{subfigure}[t]{.45\linewidth}
\centering
\scalebox{.85}{
%
%
\definecolor{mycolor1}{rgb}{0.00000,0.44700,0.74100}%
\begin{tikzpicture}

\begin{axis}[%
width=5.706cm,
height=6cm,
at={(0cm,0cm)},
scale only axis,
xmin=0,
xmax=4,
ymin=0,
ymax=4,
axis background/.style={fill=white},
legend style={at={(0.03,0.03)}, anchor=south west, legend cell align=left, align=left, draw=white!15!black}
]
\addplot [color=mycolor1, draw=none, mark=asterisk, mark options={solid, mycolor1}]
  table[row sep=crcr]{%
2	2\\
1.5	0.5\\
1	3.5\\
};

\addplot [color=red, draw=none, mark=o, mark options={solid, red}]
  table[row sep=crcr]{%
3.25	0.25\\
3.25	0.5\\
3.25	0.75\\
3.25	1\\
3.25	1.25\\
3.25	1.5\\
3.25	1.75\\
3.25	2\\
3.25	2.25\\
3.25	2.5\\
3.25	2.75\\
3.25	3\\
3.25	3.25\\
3.25	3.5\\
3.25	3.75\\
3.75	0.25\\
3.75	0.5\\
3.75	0.75\\
3.75	1\\
3.75	1.25\\
3.75	1.5\\
3.75	1.75\\
3.75	2\\
3.75	2.25\\
3.75	2.5\\
3.75	2.75\\
3.75	3\\
3.75	3.25\\
3.75	3.5\\
3.75	3.75\\
};

\addplot [color=black]
  table[row sep=crcr]{%
3	0\\
3	4\\
};

\node[left, align=right]
at (axis cs:2,2) {\small $2.5\cdot 10^{0}$};
\node[left, align=right]
at (axis cs:1.5,0.5) {\small $1.2\cdot 10^{0}$};
\node[left, align=right]
at (axis cs:1,3.5) {\small $1.7\cdot 10^{0}$};
\end{axis}
\end{tikzpicture}%
}
\caption{Exact source locations and source intensities on the left, microphone
  locations on the right.}
\label{fig:setup_compare}
\end{subfigure}
\quad\quad
\begin{subfigure}[t]{.45\linewidth}
\centering
\scalebox{.85}{
%
%
\definecolor{mycolor1}{rgb}{0.00000,0.44700,0.74100}%
\definecolor{mycolor2}{rgb}{0.85000,0.32500,0.09800}%
\definecolor{mycolor3}{rgb}{0.92900,0.69400,0.12500}%
\begin{tikzpicture}

\begin{axis}[%
width=5.706cm,
height=6cm,
at={(0cm,0cm)},
scale only axis,
xmode=log,
xmin=0.0227001398238849,
xmax=1.48441433828878,
xminorticks=true,
ymode=log,
ymin=0.478024512942234,
ymax=11.669667988265,
yminorticks=true,
axis background/.style={fill=white},
legend style={legend cell align=left, align=left, draw=white!15!black}
]
\addplot [color=mycolor1, dashed, mark=*, mark size=1.2pt, mark options={solid, mycolor1}]
  table[row sep=crcr]{%
1.23701194857398	0\\
0.867908378150196	0.531138347713593\\
0.564379775671043	1.02803076188303\\
0.32566101680593	1.45896923299507\\
0.190992571731227	1.70969816729845\\
0.118283611471946	1.85376728016957\\
0.081429815308487	1.93776506660221\\
0.0639914086563173	1.98947616185614\\
0.0543715611380531	2.03133937571817\\
0.0495539418679335	2.06372511156217\\
0.0474778395274618	2.08690190545796\\
0.0464972699437735	2.10601033118157\\
0.045187467046939	2.15388378086605\\
0.0418040993589147	2.35976676398775\\
0.0379660265100902	2.72161034835362\\
0.0352420608241753	3.13994852668695\\
0.0335872578561933	3.5585842671804\\
0.0325363572564096	4.02356432995591\\
0.0314779052853676	4.84160986512214\\
0.0301532319920666	6.61092308129921\\
0.0283751747798562	10.6087890802409\\
};
\addlegendentry{\tiny $\|\widehat u_\alpha\|_{\mathcal{M}_w(\Omega,\mathbb{C}^N)}$}

\addplot [color=mycolor2]
  table[row sep=crcr]{%
0.00283751747798562	2.05573751111398\\
12.3701194857398	2.05573751111398\\
};
\addlegendentry{\tiny $\|u^\star\|_{\mathcal{M}_w(\Omega,\mathbb{C}^N)}$}

\addplot [color=mycolor3]
  table[row sep=crcr]{%
0.0685415033685305	0.0531138347713593\\
0.0685415033685305	106.087890802409\\
};
\addlegendentry{\tiny $\|S u^\star - p_d\|_{\mathbb{C}^{NM}}$}

\node[right, align=left]
at (axis cs:0.191,1.8) {\small $\alpha_{4} = 10^{-1}$};
\node[right, align=left]
at (axis cs:0.054,2.3) {\small $\alpha_{8} = 10^{-2}$};
\node[right, align=left]
at (axis cs:0.04,2.8) {\small $\alpha_{12} = 10^{-3}$};
\node[right, align=left]
at (axis cs:0.034,3.7) {\small $\alpha_{16} = 10^{-4}$};
\node[right, align=left]
at (axis cs:0.028,10.) {\small $\alpha_{20} = 10^{-5}$};

\addplot [only marks, mark=*, mark size=1.8pt, mark options={solid, black}]
  table[row sep=crcr]{%
0.190992571731227	1.70969816729845\\
0.0543715611380531	2.03133937571817\\
0.045187467046939	2.15388378086605\\
0.0335872578561933	3.5585842671804\\
0.0283751747798562	10.6087890802409\\
};
\end{axis}
\end{tikzpicture}%
}
\caption{
  Norms of the solutions \(\widehat{u}_\alpha\) over the data misfit
  \(\norm{S\widehat{u}_\alpha - p_d}_{\C^{NM}}\) for different \(\alpha\) for noisy
  observations \(p_d\) (5\% noise).
}
\label{fig:lcurve}
\end{subfigure}
\caption{Problem setup and L-curve at grid level \(l=8\).}
\end{figure}
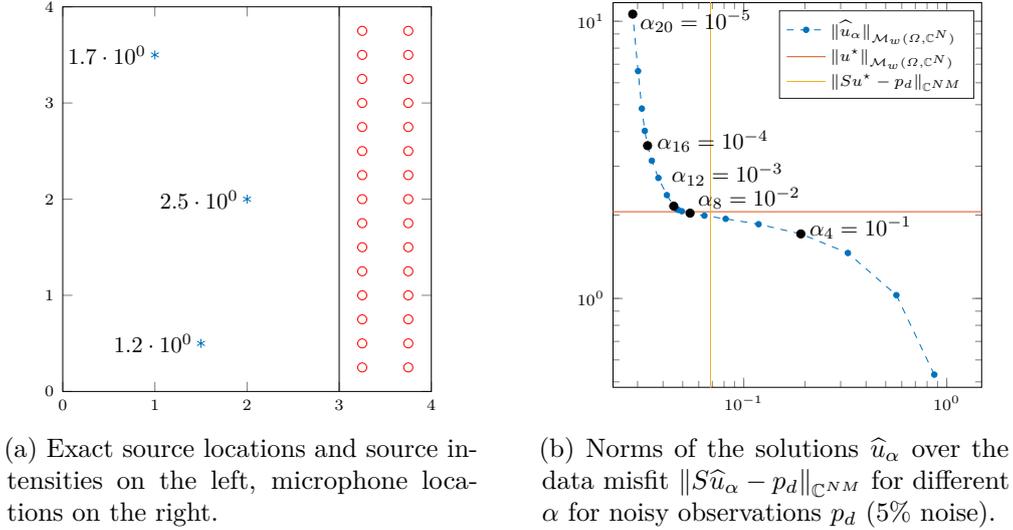

In a first test, we compute reconstructions (on grid level \(l = 8\)) starting from an
initial guess of \(u^0 = 0\) for \(\alpha = 10^{-1},10^{-2},10^{-3}\) with different
algorithms.
A visualization of the corresponding numerical solutions (computed with
Algorithm~\ref{alg:PDAP} up to machine precision) is given in Figure~\ref{fig:uopt}.
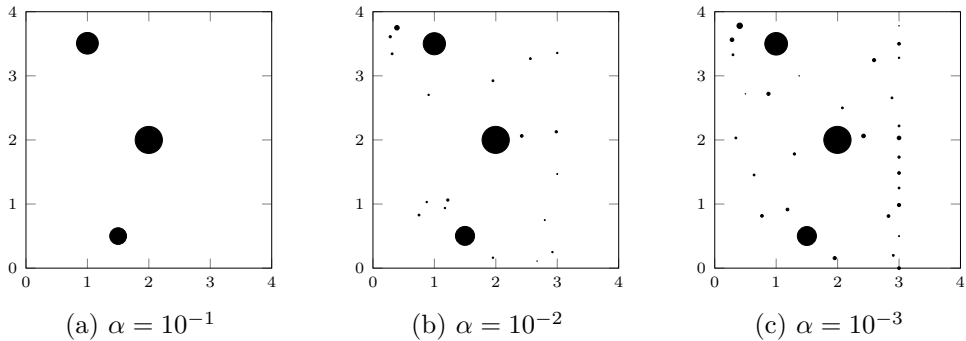
\begin{figure}[htb]
\begin{subfigure}[t]{.33\textwidth}
\centering
\scalebox{.85}{
%
%
\definecolor{mycolor1}{rgb}{0.00000,0.44700,0.74100}%
\begin{tikzpicture}

\begin{axis}[%
width=3.804cm,
height=4cm,
at={(0cm,0cm)},
scale only axis,
xmin=0,
xmax=4,
ymin=0,
ymax=4,
axis background/.style={fill=white}
]
\addplot [color=mycolor1, forget plot]
  table[row sep=crcr]{%
0	0\\
};
\addplot [color=black, draw=none, mark size=6.0pt, mark=*, mark options={solid, fill=black, black}, forget plot]
  table[row sep=crcr]{%
1.99911560817436	2\\
};
\addplot [color=black, draw=none, mark size=4.8pt, mark=*, mark options={solid, fill=black, black}, forget plot]
  table[row sep=crcr]{%
1	3.50783628466933\\
};
\addplot [color=black, draw=none, mark size=3.7pt, mark=*, mark options={solid, fill=black, black}, forget plot]
  table[row sep=crcr]{%
1.5	0.5\\
};
\end{axis}
\end{tikzpicture}%
}
\caption{\(\alpha = 10^{-1}\)}
\label{fig:uopt_1e-1}
\end{subfigure}
\begin{subfigure}[t]{.32\textwidth}
\centering
\scalebox{.85}{
%
%
\definecolor{mycolor1}{rgb}{0.00000,0.44700,0.74100}%
\begin{tikzpicture}

\begin{axis}[%
width=3.804cm,
height=4cm,
at={(0cm,0cm)},
scale only axis,
xmin=0,
xmax=4,
ymin=0,
ymax=4,
axis background/.style={fill=white}
]
\addplot [color=mycolor1, forget plot]
  table[row sep=crcr]{%
0	0\\
};
\addplot [color=black, draw=none, mark size=6.0pt, mark=*, mark options={solid, fill=black, black}, forget plot]
  table[row sep=crcr]{%
1.99779227427805	2.00086197664529\\
};
\addplot [color=black, draw=none, mark size=4.2pt, mark=*, mark options={solid, fill=black, black}, forget plot]
  table[row sep=crcr]{%
1.49848614122952	0.503133033414747\\
};
\addplot [color=black, draw=none, mark size=4.9pt, mark=*, mark options={solid, fill=black, black}, forget plot]
  table[row sep=crcr]{%
0.9994158448772	3.50052666537767\\
};
\addplot [color=black, draw=none, mark size=0.3pt, mark=*, mark options={solid, fill=black, black}, forget plot]
  table[row sep=crcr]{%
3	3.35678905797733\\
};
\addplot [color=black, draw=none, mark size=0.4pt, mark=*, mark options={solid, fill=black, black}, forget plot]
  table[row sep=crcr]{%
2.5625	3.26780416011901\\
};
\addplot [color=black, draw=none, mark size=0.5pt, mark=*, mark options={solid, fill=black, black}, forget plot]
  table[row sep=crcr]{%
2.984375	2.12725713774837\\
};
\addplot [color=black, draw=none, mark size=0.3pt, mark=*, mark options={solid, fill=black, black}, forget plot]
  table[row sep=crcr]{%
1.953125	0.162943485094488\\
};
\addplot [color=black, draw=none, mark size=0.4pt, mark=*, mark options={solid, fill=black, black}, forget plot]
  table[row sep=crcr]{%
1.953125	2.9233292422379\\
};
\addplot [color=black, draw=none, mark size=0.2pt, mark=*, mark options={solid, fill=black, black}, forget plot]
  table[row sep=crcr]{%
3	1.46875\\
};
\addplot [color=black, draw=none, mark size=0.3pt, mark=*, mark options={solid, fill=black, black}, forget plot]
  table[row sep=crcr]{%
0.875	1.03125\\
};
\addplot [color=black, draw=none, mark size=0.5pt, mark=*, mark options={solid, fill=black, black}, forget plot]
  table[row sep=crcr]{%
1.21875	1.0625\\
};
\addplot [color=black, draw=none, mark size=0.4pt, mark=*, mark options={solid, fill=black, black}, forget plot]
  table[row sep=crcr]{%
0.3125	3.34375\\
};
\addplot [color=black, draw=none, mark size=1.0pt, mark=*, mark options={solid, fill=black, black}, forget plot]
  table[row sep=crcr]{%
0.390625	3.75\\
};
\addplot [color=black, draw=none, mark size=0.6pt, mark=*, mark options={solid, fill=black, black}, forget plot]
  table[row sep=crcr]{%
2.421875	2.0625\\
};
\addplot [color=black, draw=none, mark size=0.3pt, mark=*, mark options={solid, fill=black, black}, forget plot]
  table[row sep=crcr]{%
2.921875	0.25\\
};
\addplot [color=black, draw=none, mark size=0.4pt, mark=*, mark options={solid, fill=black, black}, forget plot]
  table[row sep=crcr]{%
0.75	0.828125\\
};
\addplot [color=black, draw=none, mark size=0.2pt, mark=*, mark options={solid, fill=black, black}, forget plot]
  table[row sep=crcr]{%
2.796875	0.75\\
};
\addplot [color=black, draw=none, mark size=0.3pt, mark=*, mark options={solid, fill=black, black}, forget plot]
  table[row sep=crcr]{%
1.171875	0.9375\\
};
\addplot [color=black, draw=none, mark size=0.5pt, mark=*, mark options={solid, fill=black, black}, forget plot]
  table[row sep=crcr]{%
0.28125	3.609375\\
};
\addplot [color=black, draw=none, mark size=0.3pt, mark=*, mark options={solid, fill=black, black}, forget plot]
  table[row sep=crcr]{%
0.90625	2.703125\\
};
\addplot [color=black, draw=none, mark size=0.1pt, mark=*, mark options={solid, fill=black, black}, forget plot]
  table[row sep=crcr]{%
2.671875	0.109375\\
};
\end{axis}
\end{tikzpicture}%
}
\caption{\(\alpha = 10^{-2}\)}
\label{fig:uopt_1e-2}
\end{subfigure}
\begin{subfigure}[t]{.32\textwidth}
\centering
\scalebox{.85}{
%
%
\definecolor{mycolor1}{rgb}{0.00000,0.44700,0.74100}%
\begin{tikzpicture}

\begin{axis}[%
width=3.804cm,
height=4cm,
at={(0cm,0cm)},
scale only axis,
xmin=0,
xmax=4,
ymin=0,
ymax=4,
axis background/.style={fill=white}
]
\addplot [color=mycolor1, forget plot]
  table[row sep=crcr]{%
0	0\\
};
\addplot [color=black, draw=none, mark size=6.0pt, mark=*, mark options={solid, fill=black, black}, forget plot]
  table[row sep=crcr]{%
1.99728731211866	2.00069732467548\\
};
\addplot [color=black, draw=none, mark size=5.0pt, mark=*, mark options={solid, fill=black, black}, forget plot]
  table[row sep=crcr]{%
0.999897035070147	3.49972650691618\\
};
\addplot [color=black, draw=none, mark size=4.2pt, mark=*, mark options={solid, fill=black, black}, forget plot]
  table[row sep=crcr]{%
1.5	0.501189863331968\\
};
\addplot [color=black, draw=none, mark size=0.5pt, mark=*, mark options={solid, fill=black, black}, forget plot]
  table[row sep=crcr]{%
3	1.7319133156928\\
};
\addplot [color=black, draw=none, mark size=0.4pt, mark=*, mark options={solid, fill=black, black}, forget plot]
  table[row sep=crcr]{%
2.88491268213012	2.65625\\
};
\addplot [color=black, draw=none, mark size=0.7pt, mark=*, mark options={solid, fill=black, black}, forget plot]
  table[row sep=crcr]{%
0.874437981202791	2.71931201879721\\
};
\addplot [color=black, draw=none, mark size=0.7pt, mark=*, mark options={solid, fill=black, black}, forget plot]
  table[row sep=crcr]{%
2.59375	3.24464749718241\\
};
\addplot [color=black, draw=none, mark size=1.2pt, mark=*, mark options={solid, fill=black, black}, forget plot]
  table[row sep=crcr]{%
0.40625	3.78074148136388\\
};
\addplot [color=black, draw=none, mark size=0.4pt, mark=*, mark options={solid, fill=black, black}, forget plot]
  table[row sep=crcr]{%
2.90625	0.199027621453051\\
};
\addplot [color=black, draw=none, mark size=0.6pt, mark=*, mark options={solid, fill=black, black}, forget plot]
  table[row sep=crcr]{%
0.768515454431676	0.815390454431676\\
};
\addplot [color=black, draw=none, mark size=0.6pt, mark=*, mark options={solid, fill=black, black}, forget plot]
  table[row sep=crcr]{%
1.18399301936074	0.914861038721482\\
};
\addplot [color=black, draw=none, mark size=0.6pt, mark=*, mark options={solid, fill=black, black}, forget plot]
  table[row sep=crcr]{%
3	0\\
};
\addplot [color=black, draw=none, mark size=0.2pt, mark=*, mark options={solid, fill=black, black}, forget plot]
  table[row sep=crcr]{%
3	0.5\\
};
\addplot [color=black, draw=none, mark size=0.6pt, mark=*, mark options={solid, fill=black, black}, forget plot]
  table[row sep=crcr]{%
3	3.5\\
};
\addplot [color=black, draw=none, mark size=0.4pt, mark=*, mark options={solid, fill=black, black}, forget plot]
  table[row sep=crcr]{%
3	1.25\\
};
\addplot [color=black, draw=none, mark size=0.1pt, mark=*, mark options={solid, fill=black, black}, forget plot]
  table[row sep=crcr]{%
1.375	3\\
};
\addplot [color=black, draw=none, mark size=0.8pt, mark=*, mark options={solid, fill=black, black}, forget plot]
  table[row sep=crcr]{%
3	2.03125\\
};
\addplot [color=black, draw=none, mark size=0.1pt, mark=*, mark options={solid, fill=black, black}, forget plot]
  table[row sep=crcr]{%
3	3.78125\\
};
\addplot [color=black, draw=none, mark size=0.4pt, mark=*, mark options={solid, fill=black, black}, forget plot]
  table[row sep=crcr]{%
3	2.21875\\
};
\addplot [color=black, draw=none, mark size=0.3pt, mark=*, mark options={solid, fill=black, black}, forget plot]
  table[row sep=crcr]{%
3	3.28125\\
};
\addplot [color=black, draw=none, mark size=0.1pt, mark=*, mark options={solid, fill=black, black}, forget plot]
  table[row sep=crcr]{%
0.5	2.71875\\
};
\addplot [color=black, draw=none, mark size=0.4pt, mark=*, mark options={solid, fill=black, black}, forget plot]
  table[row sep=crcr]{%
0.34375	2.03125\\
};
\addplot [color=black, draw=none, mark size=0.8pt, mark=*, mark options={solid, fill=black, black}, forget plot]
  table[row sep=crcr]{%
0.28125	3.5625\\
};
\addplot [color=black, draw=none, mark size=0.7pt, mark=*, mark options={solid, fill=black, black}, forget plot]
  table[row sep=crcr]{%
3	0.984375\\
};
\addplot [color=black, draw=none, mark size=0.6pt, mark=*, mark options={solid, fill=black, black}, forget plot]
  table[row sep=crcr]{%
3	1.484375\\
};
\addplot [color=black, draw=none, mark size=0.4pt, mark=*, mark options={solid, fill=black, black}, forget plot]
  table[row sep=crcr]{%
2.078125	2.5\\
};
\addplot [color=black, draw=none, mark size=0.8pt, mark=*, mark options={solid, fill=black, black}, forget plot]
  table[row sep=crcr]{%
2.421875	2.0625\\
};
\addplot [color=black, draw=none, mark size=0.6pt, mark=*, mark options={solid, fill=black, black}, forget plot]
  table[row sep=crcr]{%
2.828125	0.8125\\
};
\addplot [color=black, draw=none, mark size=0.4pt, mark=*, mark options={solid, fill=black, black}, forget plot]
  table[row sep=crcr]{%
0.296875	3.328125\\
};
\addplot [color=black, draw=none, mark size=0.7pt, mark=*, mark options={solid, fill=black, black}, forget plot]
  table[row sep=crcr]{%
1.953125	0.15625\\
};
\addplot [color=black, draw=none, mark size=0.4pt, mark=*, mark options={solid, fill=black, black}, forget plot]
  table[row sep=crcr]{%
0.640625	1.453125\\
};
\addplot [color=black, draw=none, mark size=0.5pt, mark=*, mark options={solid, fill=black, black}, forget plot]
  table[row sep=crcr]{%
1.296875	1.78125\\
};
\end{axis}
\end{tikzpicture}%
}
\caption{\(\alpha = 10^{-3}\)}
\label{fig:uopt_1e-3}
\end{subfigure}
\caption{Visualization of the numerical reconstructions; each dot is one support point and dot
  area is proportional to source magnitude.}
\label{fig:uopt}
\end{figure}

In the following, we consider Algorithm~\ref{alg:PDAP} (denoted by PDAP), and
different versions of the accelerated conditional gradient method~\ref{alg:SPINAT} without
exact resolution of the subproblems. The unaccelerated version is denoted by GCG, and the
version performing one iterative tresholding step for the subproblem in each iteration is
denoted by SPINAT (cf.~\cite{BrediesPikkarainen:2013}). An suffix +PP denotes an
additional application of the sparsifying post-processing step from
Corollary~\ref{cor:remove_diracs}.
The numerical results are given in Figure~\ref{fig:res}, where we plot the evolution of
the residual over the computation time (in seconds).
We opt for computation times over the step counter \(k\)
to account for the fact that one step of an accelerated method may be more costly. We note
that all algorithms are implemented in MATLAB (version R2017a) and the computations are
performed on a compute node with a Intel\textregistered{} Xeon\textregistered{} CPU E5-2670
with eight cores at 2.60GHz.
\begin{figure}[htb]
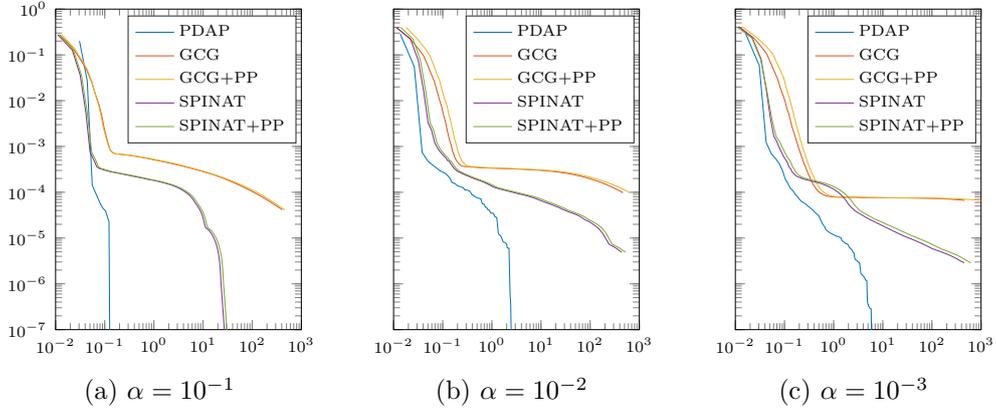

\begin{subfigure}[t]{.34\textwidth}
\centering
\scalebox{.85}{
\input{figures/compare_algs_res_tics_1e-1}
}
\caption{\(\alpha = 10^{-1}\)}
\label{fig:res_1e-1}
\end{subfigure}
\begin{subfigure}[t]{.32\textwidth}
\centering
\scalebox{.85}{
\input{figures/compare_algs_res_tics_1e-2}
}
\caption{\(\alpha = 10^{-2}\)}
\label{fig:res_1e-2}
\end{subfigure}
\begin{subfigure}[t]{.32\textwidth}
\centering
\scalebox{.85}{
\input{figures/compare_algs_res_tics_1e-3}
}
\caption{\(\alpha = 10^{-3}\)}
\label{fig:res_1e-3}
\end{subfigure}
\caption{Residuals \(j(u^k) - j(\widehat{u}_\alpha)\) over computation time in s.\ for
  different \(\alpha\).}
\label{fig:res}
\end{figure}
We observe that PDAP outperforms the other versions in almost all
situations. With the exception of \(\alpha = 10^{-1}\) it is the only implementation that
is able to solve the problem up the tolerance within the computational budget of \(50000\)
iterations (in fact it performs \(10\), \(96\), and \(129\) iterations, respectively).
We also see that SPINAT improves upon GCG, but not by as much as PDAP.

Additionally, we also give the current support size in Figure~\ref{fig:supp}.
\begin{figure}[htb]
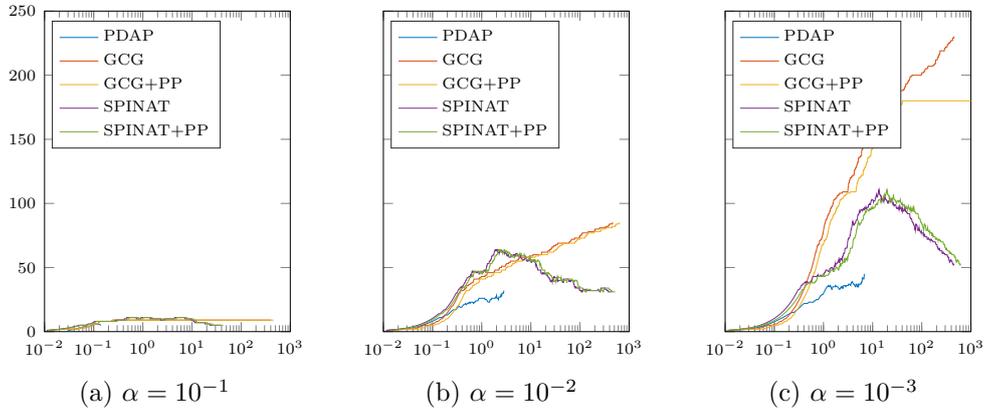

\centering
\begin{subfigure}[t]{.33\textwidth}
\centering
\scalebox{.85}{
\input{figures/compare_algs_supp_tics_1e-1}
}
\caption{\(\alpha = 10^{-1}\)}
\label{fig:supp_1e-1}
\end{subfigure}
\begin{subfigure}[t]{.32\textwidth}
\centering
\scalebox{.85}{
\input{figures/compare_algs_supp_tics_1e-2}
}
\caption{\(\alpha = 10^{-2}\)}
\label{fig:supp_1e-2}
\end{subfigure}
\begin{subfigure}[t]{.32\textwidth}
\centering
\scalebox{.85}{
\input{figures/compare_algs_supp_tics_1e-3}
}
\caption{\(\alpha = 10^{-3}\)}
\label{fig:supp_1e-3}
\end{subfigure}
\caption{Support size \(\# \supp u^k\) over computation time in s.\ for
  different \(\alpha\).}
\label{fig:supp}
\end{figure}
In the case of \(\alpha = 10^{-1}\), which is under-fitting the data, all algorithms
quickly identify a set of grid points which contain the support of the discrete numerical
solution and thus effectively stop to insert new points. However, note that this is only
the possible due to the finite grid, which limits the number of support point a~priori.
Note also that PDAP terminates once all support
points have been identified; cf.\ Remark~\ref{rem:stopping_criterion}.
In the other cases, the size of the support of the iterates is
negatively impacted by the spurious point sources introduced from over-fitting the
data. We note that for PDAP the support size of the iterates stays bounded by the
numerical support of the optimal solution (see Figure~\ref{fig:supp}), which keeps the
cost of resolution of the subproblems small. The theoretical upper bound on the support
size is \(2NM = 180\), which is very pessimistic for this example, and only provides an
advantage for GCG in the third setting.

Finally, we comment on the computation of the L-curve: Due to the fact that the
solution for a big \(\alpha\) can be used as an initial guess for a smaller \(\alpha\), the
computation of the L-curve up to \(\alpha_{12} = 10^{-3}\) with PDAP up to machine
precision is not much more expensive than computing just the solution for the last \(\alpha\)
starting from zero. For instance, in this case the number of iterations for each \(\alpha\) are
\((1,3,3,6,2,3,7,20,27,40,34,33,49)\), which results in a combined \(\sim 24\) seconds of
computation time versus \(129\) iterations in \(\sim 7\) seconds for just the last
value.

\subsubsection{Mesh independence}

Additionally, we investigate the behavior of the algorithms with respect to the mesh
width. Here, we only focus on PDAP, since we want to investigate if the improved
convergence observed before depends on the finite discretization.
Here, we compare iteration numbers, since the computation times are dominated by the
assembly of the gradients \(p^k_h\), which scales linearly in \(N_h\).
We give the results for the previous example on mesh levels \(l=7,8,9\) in
Figure~\ref{fig:res_mesh_ind}. We observe that although the number of iterations to reach
machine precision increases on finer meshes, the functional residual follows a similar
trajectory in the initial iterations. In the later iterations, the finite termination of
the method is reached earlier on coarse grids.
\begin{figure}[htb]
\begin{subfigure}[t]{.33\textwidth}
\centering
\scalebox{.85}{
%
%
\definecolor{mycolor1}{rgb}{0.00000,0.44700,0.74100}%
\definecolor{mycolor2}{rgb}{0.85000,0.32500,0.09800}%
\definecolor{mycolor3}{rgb}{0.92900,0.69400,0.12500}%
\definecolor{mycolor4}{rgb}{0.49400,0.18400,0.55600}%
\begin{tikzpicture}

\begin{axis}[%
width=3.804cm,
height=5cm,
at={(0cm,0cm)},
scale only axis,
xmin=0,
xmax=30,
ymode=log,
ymin=1e-08,
ymax=1,
yminorticks=true,
axis background/.style={fill=white},
legend style={legend cell align=left, align=left, draw=white!15!black}
]
\addplot [color=mycolor1]
  table[row sep=crcr]{%
1	0.57584150233727\\
2	0.201195551793215\\
3	0.0285384699786963\\
4	0.00167728801968131\\
5	0.00151972279385207\\
6	0.00118841407479989\\
7	0.00110338016525566\\
8	0.000977322738723779\\
9	0.00073905860147111\\
10	0.00068799552763521\\
11	0.000646452856884278\\
12	0.000594380797104999\\
13	0.000449122310647559\\
14	0.000331502726795607\\
15	0.000282843732202692\\
16	0.000233387435274263\\
17	0.000180246994292727\\
18	8.34164565014228e-05\\
19	4.84453717697342e-08\\
20	1.00000008274037e-10\\
};
\addlegendentry{\tiny $l = 9$}

\addplot [color=mycolor2]
  table[row sep=crcr]{%
1	0.575890382599302\\
2	0.201019200934506\\
3	0.0271013106019785\\
4	0.000141916818615445\\
5	8.87952786227386e-05\\
6	5.71211998832977e-05\\
7	4.46583497456798e-05\\
8	3.92402578966722e-05\\
9	2.24577768378476e-05\\
10	1.00000008274037e-10\\
};
\addlegendentry{\tiny $l = 8$}

\addplot [color=mycolor3]
  table[row sep=crcr]{%
1	0.575464098084636\\
2	0.200479510884596\\
3	0.02759598748372\\
4	0.0017961558768505\\
5	0.00120488270422187\\
6	0.00086610648593044\\
7	0.000692608672065137\\
8	0.000664450739018718\\
9	0.000427180869716021\\
10	0.000152840507874535\\
11	1.99591400720789e-06\\
12	1.00000008274037e-10\\
};
\addlegendentry{\tiny $l = 7$}


\end{axis}
\end{tikzpicture}%
}
\caption{\(\alpha = 10^{-1}\)}
\label{fig:res_mesh_ind_1e-1}
\end{subfigure}
\begin{subfigure}[t]{.32\textwidth}
\centering
\scalebox{.85}{
%
%
\definecolor{mycolor1}{rgb}{0.00000,0.44700,0.74100}%
\definecolor{mycolor2}{rgb}{0.85000,0.32500,0.09800}%
\definecolor{mycolor3}{rgb}{0.92900,0.69400,0.12500}%
\definecolor{mycolor4}{rgb}{0.49400,0.18400,0.55600}%
\begin{tikzpicture}

\begin{axis}[%
width=3.804cm,
height=5cm,
at={(0cm,0cm)},
scale only axis,
xmin=0,
xmax=120,
ymode=log,
ymin=1e-08,
ymax=1,
yminorticks=true,
yticklabels={,,},
axis background/.style={fill=white},
legend style={legend cell align=left, align=left, draw=white!15!black}
]
\addplot [color=mycolor1]
  table[row sep=crcr]{%
1	0.743318508264571\\
2	0.286717073967357\\
3	0.0566929612994478\\
4	0.0028056312045531\\
5	0.00162861588391916\\
6	0.00110553499016327\\
7	0.000862436784220058\\
8	0.0007983179562609\\
9	0.000742621949995786\\
10	0.000717304112977889\\
11	0.000693554777959359\\
12	0.000623992190508814\\
13	0.000591851702581088\\
14	0.000580905553455937\\
15	0.000454808683515776\\
16	0.000438575487936018\\
17	0.000433473166412804\\
18	0.000427967046529049\\
19	0.000423206790993651\\
20	0.000370976359132687\\
21	0.000356252213722999\\
22	0.000336824405381284\\
23	0.000333100124287847\\
24	0.000330402361404806\\
25	0.000317292312016024\\
26	0.000312916236738962\\
27	0.000311628284083619\\
28	0.000308879252250725\\
29	0.000305063012658553\\
30	0.000262811084243652\\
31	0.000261422037074802\\
32	0.000245238432281022\\
33	0.000244222960507086\\
34	0.000243594427112023\\
35	0.000181677230641341\\
36	0.000177871608875963\\
37	0.000177179762870096\\
38	0.000175923887987418\\
39	0.00017497952448766\\
40	0.0001661343228881\\
41	0.000164871965627537\\
42	0.000164346870927208\\
43	0.000163945560881944\\
44	0.000162705414439163\\
45	0.000162528777956542\\
46	0.000158042570359236\\
47	0.000157812900807089\\
48	0.000144646420673605\\
49	0.000144344115051268\\
50	0.00014373760713967\\
51	0.000142491336683755\\
52	0.000142276130398987\\
53	0.000141539896675522\\
54	0.000114594390632519\\
55	0.000114339773469942\\
56	0.000114133267664466\\
57	8.69509456695311e-05\\
58	8.59943729015419e-05\\
59	8.59485676557442e-05\\
60	5.11834428958899e-05\\
61	5.05062864517118e-05\\
62	5.04915093781991e-05\\
63	5.0284127206663e-05\\
64	4.97080547002399e-05\\
65	4.80332543329708e-05\\
66	4.79633227166328e-05\\
67	4.79386297156763e-05\\
68	4.78300105841677e-05\\
69	4.24770142294535e-05\\
70	4.24577652705094e-05\\
71	3.97676891063375e-05\\
72	3.96455751363115e-05\\
73	3.49742925300697e-05\\
74	3.4770832670214e-05\\
75	3.47304907423428e-05\\
76	3.45744512312111e-05\\
77	3.18124000644807e-05\\
78	3.12215984082029e-05\\
79	3.12027133300107e-05\\
80	2.6215403498786e-05\\
81	2.60542300847509e-05\\
82	2.58035541874264e-05\\
83	2.5610841883638e-05\\
84	2.54954320839836e-05\\
85	2.54732586657154e-05\\
86	2.54537284732452e-05\\
87	2.54189268804744e-05\\
88	1.73475342355595e-05\\
89	1.69521200385735e-05\\
90	1.6898920303942e-05\\
91	1.68620869076333e-05\\
92	1.65070097317421e-05\\
93	1.58647526788021e-05\\
94	1.58234852623389e-05\\
95	1.58211911931438e-05\\
96	7.50373730835224e-06\\
97	7.49525973315138e-06\\
98	6.30678130759954e-06\\
99	6.30328720596998e-06\\
100	6.02049687374248e-06\\
101	4.84322397085665e-06\\
102	4.76769195536728e-06\\
103	4.74761637308044e-06\\
104	1.57834052905495e-06\\
105	1.52944764878274e-06\\
106	1.4689251453294e-06\\
107	1.46224461522113e-06\\
108	1.4564517273781e-06\\
109	1.4242139604291e-06\\
110	1.37819439118697e-06\\
111	2.48531543019237e-08\\
112	1.23215725227055e-08\\
113	9.32627119282214e-09\\
114	5.88870244233597e-09\\
115	5.37223678737098e-10\\
116	2.26854469237425e-10\\
117	1.81217939959621e-10\\
118	1.23579521615902e-10\\
119	1.9725051453312e-11\\
120	1.e-20\\
};
\addlegendentry{\tiny $l = 9$}

\addplot [color=mycolor2]
  table[row sep=crcr]{%
1	0.743307753369925\\
2	0.286457673708496\\
3	0.0549925849712904\\
4	0.000743417186054932\\
5	0.000474574933497712\\
6	0.000394708639403094\\
7	0.000340994400940695\\
8	0.000299495577647208\\
9	0.000278407344270055\\
10	0.000261183451182957\\
11	0.000235576634728639\\
12	0.000206873114675557\\
13	0.000163896012422702\\
14	0.000158920172058367\\
15	0.000150987954464821\\
16	0.000138054957713695\\
17	0.000134669230525838\\
18	0.000132683541625014\\
19	0.000128020217720133\\
20	0.000126008547851844\\
21	0.000113685077705195\\
22	0.00011288095411097\\
23	0.000111350283086981\\
24	0.000109551556184855\\
25	0.000108098015673797\\
26	0.000101668424970176\\
27	8.83880526334306e-05\\
28	8.71639572193224e-05\\
29	8.56650181555313e-05\\
30	7.98544229557267e-05\\
31	7.833195689326e-05\\
32	7.6634960069339e-05\\
33	7.58084885722279e-05\\
34	7.40944566262336e-05\\
35	7.37769945135698e-05\\
36	7.21436976838692e-05\\
37	7.17719529082228e-05\\
38	5.62522594969028e-05\\
39	5.59267948609318e-05\\
40	5.48522386352897e-05\\
41	5.4656187738155e-05\\
42	4.73403990953088e-05\\
43	4.69357794298242e-05\\
44	4.63480225904277e-05\\
45	4.13231906278046e-05\\
46	4.10993483298226e-05\\
47	4.09399338727884e-05\\
48	3.55059334310355e-05\\
49	3.52585985847788e-05\\
50	3.50514693285633e-05\\
51	3.46828016378822e-05\\
52	3.44627114720296e-05\\
53	2.91918030279129e-05\\
54	2.89203289319979e-05\\
55	2.84551738202708e-05\\
56	2.82036601073586e-05\\
57	1.34740631362075e-05\\
58	1.25687990122254e-05\\
59	1.20754941612296e-05\\
60	1.18439522727755e-05\\
61	1.16068414871932e-05\\
62	1.15873873623717e-05\\
63	1.12273251814954e-05\\
64	1.095930842758e-05\\
65	1.08116667738196e-05\\
66	1.01157651738702e-05\\
67	9.60879571601861e-06\\
68	9.39417711470422e-06\\
69	8.34796244873412e-06\\
70	8.12005067345883e-06\\
71	8.06568052371184e-06\\
72	8.04128931801082e-06\\
73	7.93510248003546e-06\\
74	7.83719635154626e-06\\
75	7.78142291094769e-06\\
76	7.75100403410667e-06\\
77	7.36086536860456e-06\\
78	7.31155487462137e-06\\
79	6.15380458563602e-06\\
80	6.11292558777912e-06\\
81	6.02782662870519e-06\\
82	6.01100667908805e-06\\
83	6.00480395750863e-06\\
84	5.99035727363861e-06\\
85	5.60878908692497e-07\\
86	3.51458294103185e-07\\
87	1.65495419722494e-07\\
88	1.02042745146053e-07\\
89	6.19482300005803e-08\\
90	4.55680589546426e-08\\
91	4.45397781365098e-08\\
92	4.24339333879309e-08\\
93	7.39495661927769e-09\\
94	3.84332018640854e-09\\
95	1.04214680982095e-09\\
96	1.e-11\\
};
\addlegendentry{\tiny $l = 8$}

\addplot [color=mycolor3]
  table[row sep=crcr]{%
1	0.74306978775346\\
2	0.286094515974436\\
3	0.0557340698243721\\
4	0.00391285849892876\\
5	0.00230577003836231\\
6	0.00169515969119435\\
7	0.00140397602233393\\
8	0.00118324409103236\\
9	0.00105342633551987\\
10	0.00094806796324038\\
11	0.000911127409117633\\
12	0.000784436273026729\\
13	0.000641095188587897\\
14	0.000612370121496779\\
15	0.000574254542434524\\
16	0.000536949227210079\\
17	0.00050850239793607\\
18	0.000489628560953161\\
19	0.000471981203819759\\
20	0.000446556150685375\\
21	0.000435515327898733\\
22	0.000428870942415602\\
23	0.00042277378975622\\
24	0.000306028070816526\\
25	0.00029923446061678\\
26	0.000295569670867128\\
27	0.0002922996584106\\
28	0.000288231085537799\\
29	0.000285382270113699\\
30	0.000228164899316177\\
31	0.00022296532720667\\
32	0.000218491013679517\\
33	0.000217325424147308\\
34	0.000215255589968939\\
35	0.000182285467973692\\
36	0.000176593962781038\\
37	0.000170952282591202\\
38	0.000139381027014482\\
39	0.000137872474113392\\
40	0.000136167280440729\\
41	0.000127656984031565\\
42	0.00012663390547862\\
43	0.000124180870348795\\
44	0.000121569339015019\\
45	0.000121054320300181\\
46	9.80936807102144e-05\\
47	9.78537281355642e-05\\
48	9.66086636789189e-05\\
49	9.62213391801012e-05\\
50	9.6158393092946e-05\\
51	8.60990813403677e-05\\
52	7.88541358172634e-05\\
53	7.85929929796383e-05\\
54	7.84148156862362e-05\\
55	5.32536531177158e-05\\
56	5.30560408104584e-05\\
57	5.27878408478552e-05\\
58	5.264804958088e-05\\
59	3.49710767402339e-05\\
60	3.47165203599051e-05\\
61	3.40785280914663e-05\\
62	3.40053335266335e-05\\
63	3.34186114947684e-05\\
64	3.32694219860527e-05\\
65	2.92454971374222e-05\\
66	2.91808221933414e-05\\
67	2.89386372833433e-05\\
68	2.85714890952728e-05\\
69	2.98836008540831e-06\\
70	2.67118145003131e-06\\
71	8.34048093114464e-07\\
72	8.11585307531476e-07\\
73	3.14035178264338e-07\\
74	2.98252906930779e-07\\
75	1.02911069010253e-07\\
76	9.87163946419845e-09\\
77	2.38715892539321e-09\\
78	1e-11\\
};
\addlegendentry{\tiny $l = 7$}

\end{axis}
\end{tikzpicture}%
}
\caption{\(\alpha = 10^{-2}\)}
\label{fig:res_mesh_ind_1e-2}
\end{subfigure}
\begin{subfigure}[t]{.32\textwidth}
\centering
\scalebox{.85}{
\input{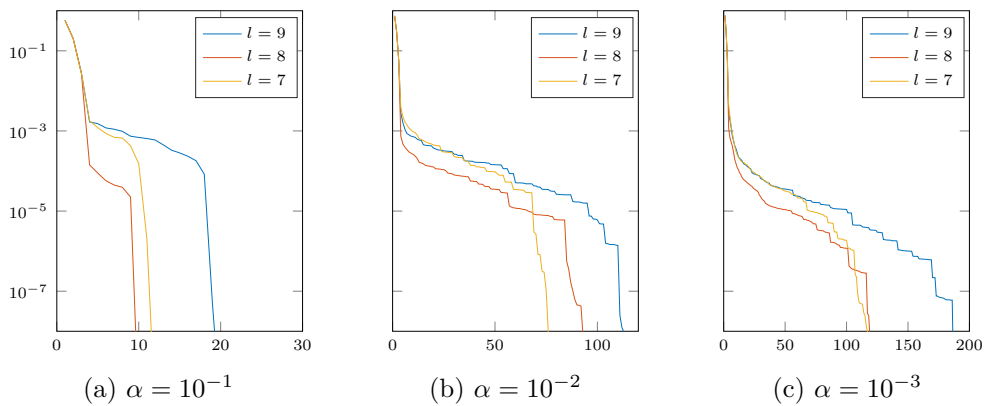}
}
\caption{\(\alpha = 10^{-3}\)}
\label{fig:res_mesh_ind_1e-3}
\end{subfigure}
\caption{Function residuals \(j(u^k) - j(\widehat{u}_\alpha)\) over iterations for
  different mesh levels \(l\).}
\label{fig:res_mesh_ind}
\end{figure}
Concerning the maximal support of the numerical solution throughout the iterations, we
observe that it seems to be dependent on \(\alpha\), but bounded by a similar constant
independent of the grid level.
\begin{figure}[htb]
\begin{subfigure}[t]{.33\textwidth}
\centering
\scalebox{.85}{
%
%
\definecolor{mycolor1}{rgb}{0.00000,0.44700,0.74100}%
\definecolor{mycolor2}{rgb}{0.85000,0.32500,0.09800}%
\definecolor{mycolor3}{rgb}{0.92900,0.69400,0.12500}%
\definecolor{mycolor4}{rgb}{0.49400,0.18400,0.55600}%
\begin{tikzpicture}

\begin{axis}[%
width=3.804cm,
height=5cm,
at={(0cm,0cm)},
scale only axis,
xmin=0,
xmax=30,
ymin=0,
ymax=50,
axis background/.style={fill=white},
legend style={at={(0.03,0.97)}, anchor=north west, legend cell align=left, align=left, draw=white!15!black}
]
\addplot [color=mycolor1]
  table[row sep=crcr]{%
1	0\\
2	1\\
3	2\\
4	3\\
5	4\\
6	5\\
7	6\\
8	6\\
9	6\\
10	7\\
11	8\\
12	8\\
13	7\\
14	7\\
15	7\\
16	7\\
17	7\\
18	5\\
19	3\\
20	4\\
};
\addlegendentry{\tiny $l = 9$}

\addplot [color=mycolor2]
  table[row sep=crcr]{%
1	0\\
2	1\\
3	2\\
4	3\\
5	4\\
6	4\\
7	5\\
8	6\\
9	6\\
10	5\\
};
\addlegendentry{\tiny $l = 8$}

\addplot [color=mycolor3]
  table[row sep=crcr]{%
1	0\\
2	1\\
3	2\\
4	3\\
5	4\\
6	5\\
7	6\\
8	7\\
9	7\\
10	7\\
11	7\\
12	7\\
};
\addlegendentry{\tiny $l = 7$}


\end{axis}
\end{tikzpicture}%
}
\caption{\(\alpha = 10^{-1}\)}
\label{fig:supp_mesh_ind_1e-1}
\end{subfigure}
\begin{subfigure}[t]{.32\textwidth}
\centering
\scalebox{.85}{
%
%
\definecolor{mycolor1}{rgb}{0.00000,0.44700,0.74100}%
\definecolor{mycolor2}{rgb}{0.85000,0.32500,0.09800}%
\definecolor{mycolor3}{rgb}{0.92900,0.69400,0.12500}%
\definecolor{mycolor4}{rgb}{0.49400,0.18400,0.55600}%
\begin{tikzpicture}

\begin{axis}[%
width=3.804cm,
height=5cm,
at={(0cm,0cm)},
scale only axis,
xmin=0,
xmax=120,
ymin=0,
ymax=50,
yticklabels={,,},
axis background/.style={fill=white},
legend style={at={(0.03,0.97)}, anchor=north west, legend cell align=left, align=left, draw=white!15!black}
]
\addplot [color=mycolor1]
  table[row sep=crcr]{%
1	0\\
2	1\\
3	2\\
4	3\\
5	4\\
6	5\\
7	6\\
8	7\\
9	8\\
10	9\\
11	10\\
12	11\\
13	12\\
14	12\\
15	12\\
16	12\\
17	13\\
18	14\\
19	15\\
20	15\\
21	15\\
22	16\\
23	17\\
24	18\\
25	17\\
26	18\\
27	19\\
28	18\\
29	19\\
30	19\\
31	20\\
32	21\\
33	22\\
34	23\\
35	21\\
36	20\\
37	20\\
38	21\\
39	22\\
40	22\\
41	22\\
42	23\\
43	24\\
44	24\\
45	25\\
46	25\\
47	26\\
48	26\\
49	27\\
50	27\\
51	27\\
52	27\\
53	27\\
54	22\\
55	23\\
56	24\\
57	23\\
58	22\\
59	22\\
60	20\\
61	21\\
62	22\\
63	22\\
64	22\\
65	23\\
66	24\\
67	24\\
68	24\\
69	24\\
70	25\\
71	25\\
72	25\\
73	23\\
74	23\\
75	23\\
76	23\\
77	23\\
78	23\\
79	24\\
80	24\\
81	24\\
82	24\\
83	23\\
84	23\\
85	24\\
86	24\\
87	24\\
88	22\\
89	22\\
90	22\\
91	23\\
92	23\\
93	21\\
94	21\\
95	22\\
96	21\\
97	22\\
98	21\\
99	22\\
100	22\\
101	22\\
102	22\\
103	22\\
104	22\\
105	22\\
106	23\\
107	23\\
108	24\\
109	23\\
110	22\\
111	22\\
112	22\\
113	23\\
114	24\\
115	24\\
116	24\\
117	25\\
118	26\\
119	27\\
120	28\\
};
\addlegendentry{\tiny $l = 9$}

\addplot [color=mycolor2]
  table[row sep=crcr]{%
1	0\\
2	1\\
3	2\\
4	3\\
5	4\\
6	5\\
7	6\\
8	7\\
9	8\\
10	9\\
11	10\\
12	11\\
13	11\\
14	12\\
15	13\\
16	13\\
17	14\\
18	15\\
19	16\\
20	17\\
21	17\\
22	17\\
23	18\\
24	19\\
25	20\\
26	19\\
27	19\\
28	20\\
29	21\\
30	21\\
31	22\\
32	21\\
33	22\\
34	22\\
35	23\\
36	22\\
37	23\\
38	23\\
39	23\\
40	22\\
41	23\\
42	23\\
43	24\\
44	24\\
45	24\\
46	25\\
47	26\\
48	26\\
49	26\\
50	26\\
51	26\\
52	26\\
53	26\\
54	25\\
55	25\\
56	25\\
57	23\\
58	23\\
59	23\\
60	23\\
61	23\\
62	24\\
63	25\\
64	25\\
65	25\\
66	25\\
67	26\\
68	26\\
69	26\\
70	26\\
71	26\\
72	26\\
73	26\\
74	26\\
75	27\\
76	27\\
77	27\\
78	28\\
79	26\\
80	26\\
81	26\\
82	25\\
83	26\\
84	27\\
85	27\\
86	27\\
87	28\\
88	29\\
89	29\\
90	28\\
91	29\\
92	30\\
93	30\\
94	30\\
95	31\\
96	32\\
};
\addlegendentry{\tiny $l = 8$}

\addplot [color=mycolor3]
  table[row sep=crcr]{%
1	0\\
2	1\\
3	2\\
4	3\\
5	4\\
6	5\\
7	6\\
8	7\\
9	8\\
10	9\\
11	10\\
12	11\\
13	12\\
14	12\\
15	12\\
16	13\\
17	14\\
18	14\\
19	15\\
20	16\\
21	17\\
22	17\\
23	18\\
24	17\\
25	16\\
26	17\\
27	18\\
28	19\\
29	20\\
30	19\\
31	19\\
32	19\\
33	19\\
34	20\\
35	20\\
36	20\\
37	21\\
38	21\\
39	22\\
40	21\\
41	22\\
42	22\\
43	22\\
44	21\\
45	22\\
46	22\\
47	23\\
48	23\\
49	24\\
50	25\\
51	24\\
52	25\\
53	26\\
54	27\\
55	25\\
56	26\\
57	26\\
58	26\\
59	26\\
60	26\\
61	26\\
62	27\\
63	27\\
64	28\\
65	28\\
66	28\\
67	28\\
68	28\\
69	26\\
70	27\\
71	25\\
72	26\\
73	26\\
74	26\\
75	26\\
76	27\\
77	28\\
78	29\\
};
\addlegendentry{\tiny $l = 7$}


\end{axis}
\end{tikzpicture}%
}
\caption{\(\alpha = 10^{-2}\)}
\label{fig:supp_mesh_ind_1e-2}
\end{subfigure}
\begin{subfigure}[t]{.32\textwidth}
\centering
\scalebox{.85}{
%
%
\definecolor{mycolor1}{rgb}{0.00000,0.44700,0.74100}%
\definecolor{mycolor2}{rgb}{0.85000,0.32500,0.09800}%
\definecolor{mycolor3}{rgb}{0.92900,0.69400,0.12500}%
\definecolor{mycolor4}{rgb}{0.49400,0.18400,0.55600}%
\begin{tikzpicture}

\begin{axis}[%
width=3.804cm,
height=5cm,
at={(0cm,0cm)},
scale only axis,
xmin=0,
xmax=200,
ymin=0,
ymax=50,
yticklabels={,,},
axis background/.style={fill=white},
legend style={at={(0.97,0.03)}, anchor=south east, legend cell align=left, align=left, draw=white!15!black}
]
\addplot [color=mycolor1]
  table[row sep=crcr]{%
1	0\\
2	1\\
3	2\\
4	3\\
5	4\\
6	5\\
7	6\\
8	7\\
9	8\\
10	9\\
11	10\\
12	11\\
13	12\\
14	13\\
15	14\\
16	15\\
17	16\\
18	17\\
19	17\\
20	17\\
21	18\\
22	17\\
23	18\\
24	18\\
25	19\\
26	20\\
27	20\\
28	20\\
29	21\\
30	21\\
31	21\\
32	22\\
33	23\\
34	23\\
35	24\\
36	25\\
37	25\\
38	24\\
39	24\\
40	24\\
41	25\\
42	26\\
43	26\\
44	26\\
45	27\\
46	26\\
47	27\\
48	28\\
49	29\\
50	30\\
51	31\\
52	32\\
53	33\\
54	33\\
55	33\\
56	34\\
57	34\\
58	34\\
59	35\\
60	36\\
61	36\\
62	37\\
63	36\\
64	36\\
65	34\\
66	34\\
67	34\\
68	34\\
69	35\\
70	36\\
71	37\\
72	36\\
73	37\\
74	37\\
75	38\\
76	38\\
77	39\\
78	40\\
79	40\\
80	39\\
81	40\\
82	41\\
83	40\\
84	40\\
85	40\\
86	40\\
87	39\\
88	38\\
89	36\\
90	37\\
91	37\\
92	38\\
93	37\\
94	38\\
95	37\\
96	38\\
97	39\\
98	40\\
99	40\\
100	40\\
101	39\\
102	40\\
103	40\\
104	40\\
105	39\\
106	39\\
107	40\\
108	41\\
109	40\\
110	41\\
111	41\\
112	40\\
113	39\\
114	40\\
115	41\\
116	42\\
117	42\\
118	42\\
119	42\\
120	42\\
121	42\\
122	43\\
123	43\\
124	43\\
125	43\\
126	42\\
127	41\\
128	39\\
129	39\\
130	39\\
131	40\\
132	39\\
133	39\\
134	39\\
135	40\\
136	40\\
137	40\\
138	41\\
139	41\\
140	42\\
141	42\\
142	40\\
143	40\\
144	41\\
145	41\\
146	41\\
147	41\\
148	41\\
149	41\\
150	41\\
151	42\\
152	43\\
153	42\\
154	42\\
155	42\\
156	42\\
157	42\\
158	42\\
159	42\\
160	42\\
161	43\\
162	42\\
163	42\\
164	43\\
165	44\\
166	44\\
167	43\\
168	43\\
169	43\\
170	40\\
171	41\\
172	42\\
173	40\\
174	40\\
175	40\\
176	41\\
177	41\\
178	42\\
179	41\\
180	41\\
181	42\\
182	43\\
183	44\\
184	45\\
185	44\\
186	45\\
187	42\\
188	43\\
189	44\\
190	45\\
191	45\\
192	45\\
193	46\\
194	47\\
195	47\\
196	48\\
197	49\\
198	50\\
};
\addlegendentry{\tiny $l = 9$}

\addplot [color=mycolor2]
  table[row sep=crcr]{%
1	0\\
2	1\\
3	2\\
4	3\\
5	4\\
6	5\\
7	6\\
8	7\\
9	8\\
10	9\\
11	10\\
12	11\\
13	12\\
14	13\\
15	14\\
16	15\\
17	15\\
18	16\\
19	17\\
20	18\\
21	19\\
22	20\\
23	21\\
24	22\\
25	22\\
26	22\\
27	22\\
28	23\\
29	23\\
30	23\\
31	23\\
32	23\\
33	24\\
34	24\\
35	25\\
36	25\\
37	25\\
38	26\\
39	27\\
40	28\\
41	28\\
42	29\\
43	29\\
44	30\\
45	31\\
46	32\\
47	33\\
48	34\\
49	35\\
50	35\\
51	36\\
52	35\\
53	35\\
54	36\\
55	35\\
56	33\\
57	34\\
58	35\\
59	35\\
60	35\\
61	35\\
62	36\\
63	35\\
64	35\\
65	33\\
66	33\\
67	33\\
68	34\\
69	34\\
70	35\\
71	34\\
72	35\\
73	34\\
74	35\\
75	36\\
76	36\\
77	35\\
78	35\\
79	36\\
80	36\\
81	36\\
82	36\\
83	36\\
84	37\\
85	37\\
86	37\\
87	36\\
88	37\\
89	38\\
90	38\\
91	39\\
92	38\\
93	37\\
94	37\\
95	35\\
96	35\\
97	35\\
98	36\\
99	37\\
100	37\\
101	37\\
102	35\\
103	35\\
104	35\\
105	35\\
106	36\\
107	37\\
108	37\\
109	36\\
110	37\\
111	37\\
112	37\\
113	37\\
114	37\\
115	37\\
116	38\\
117	38\\
118	39\\
119	39\\
120	40\\
121	41\\
122	42\\
123	42\\
124	41\\
125	41\\
126	42\\
127	43\\
128	44\\
129	45\\
};
\addlegendentry{\tiny $l = 8$}

\addplot [color=mycolor3]
  table[row sep=crcr]{%
1	0\\
2	1\\
3	2\\
4	3\\
5	4\\
6	5\\
7	6\\
8	7\\
9	8\\
10	9\\
11	10\\
12	11\\
13	12\\
14	13\\
15	14\\
16	15\\
17	16\\
18	17\\
19	18\\
20	19\\
21	20\\
22	20\\
23	20\\
24	19\\
25	20\\
26	20\\
27	21\\
28	21\\
29	21\\
30	22\\
31	23\\
32	23\\
33	23\\
34	23\\
35	23\\
36	23\\
37	23\\
38	24\\
39	25\\
40	26\\
41	27\\
42	28\\
43	26\\
44	27\\
45	28\\
46	29\\
47	30\\
48	31\\
49	32\\
50	32\\
51	33\\
52	32\\
53	32\\
54	31\\
55	31\\
56	32\\
57	33\\
58	34\\
59	34\\
60	32\\
61	33\\
62	34\\
63	34\\
64	34\\
65	32\\
66	31\\
67	31\\
68	30\\
69	31\\
70	32\\
71	32\\
72	32\\
73	33\\
74	33\\
75	34\\
76	35\\
77	34\\
78	35\\
79	35\\
80	36\\
81	37\\
82	34\\
83	34\\
84	33\\
85	32\\
86	33\\
87	34\\
88	35\\
89	36\\
90	35\\
91	35\\
92	36\\
93	35\\
94	36\\
95	37\\
96	38\\
97	39\\
98	37\\
99	37\\
100	37\\
101	37\\
102	38\\
103	38\\
104	37\\
105	38\\
106	38\\
107	36\\
108	37\\
109	37\\
110	37\\
111	38\\
112	39\\
113	40\\
114	40\\
115	40\\
116	41\\
117	41\\
118	42\\
119	43\\
120	43\\
};
\addlegendentry{\tiny $l = 7$}


\end{axis}
\end{tikzpicture}%
}
\caption{\(\alpha = 10^{-3}\)}
\label{fig:supp_mesh_ind_1e-3}
\end{subfigure}
\caption{Support size \(\# \supp u^k\) over iterations for
  different mesh levels \(l\).}
\label{fig:supp_mesh_ind}
\end{figure}
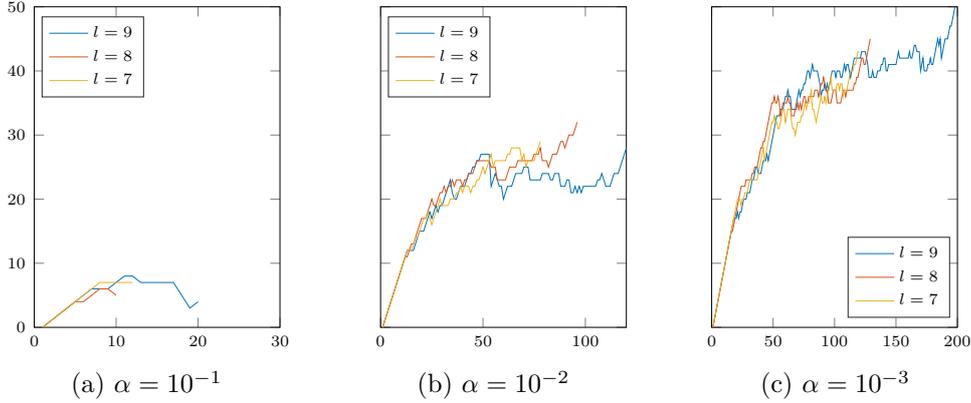

\appendix
\section{Sparse minimization with finite rank operators}
\label{app:minimization_finite_rank}

Let \(H_1\) be a separable real Hilbert space, and \(\Mgen = \Cgen^*\) be the associated space
of vector measures. Introduce the solution operator
\[
S\colon \Mgen \to H_2,
\]
where \(H_2\) is another separable real Hilbert space.
\(S\) is assumed to be linear, and weak-\(*\) to weak continuous (the weak-\(*\)
topology on the dual of the separable space \(\Cgen\) can be normed, therefore, this is
the same as the sequential equivalent). Moreover, \(S\) can be written as the Banach
space dual of a continuous operator
\[
S^*\colon H_2 \to \Cgen.
\]

In this section, we give some results for the two abstract minimization problems relevant
for this paper. Most of these results are slight generalizations of known results, which
we could not directly find in the literature. We consider the problem
\begin{align}
\label{eq:Palpha_gen}
\tag{\ensuremath{P_\alpha}}
\min_{u\in\Mgen}&\; \left[\frac{1}{2\alpha} \norm{S u - p_d}^2_{H_2} + \norm{u}_{\Mgen}\right],
\end{align}
for given \(p_d \in H_2\) and \(\alpha > 0\).
Note, that in contrast to~\eqref{eq:problem_convex}, we have multiplied the objective
function by \(1/\alpha\), which obviously does not change the solution set, but leads to a
more convenient form of the dual problem below.
Moreover, we consider the associated minimum norm problem
\begin{align}
\label{eq:Pzero_gen}
\tag{\ensuremath{P_0}}
\min_{u\in\Mgen}&\; \norm{u}_{\Mgen} \quad\text{subject to } S u = p_d,
\end{align}
for some \(p_d = S u^\star\), \(u^\star \in \Mgen\).
It is know that under the general assumptions on \(S\), both problems have solutions.
This can be verified with the direct method of the calculus of variations.
Moreover, the dual problem of~\eqref{eq:Palpha_gen},
\begin{align}
\label{eq:Dalpha_gen}
\tag{\ensuremath{D_\alpha}}
\max_{y \in H_2}&\; \left[(p_d, y)_{H_2} - \frac{\alpha}{2} \norm{y}^2_{H_2}\right]
\quad\text{subject to } \norm{S^* y}_{\Cgen} \leq 1,
\end{align}
has a unique solution, and the strong duality \(\max \eqref{eq:Dalpha_gen}
= \min \eqref{eq:Palpha_gen}\) holds; see~\cite[Proposition~3.5]{BrediesPikkarainen:2013}
(the proof is only given for \(H_1 = \R^n\), but works unmodified in the general case).
For~\eqref{eq:Pzero_gen}, the dual problem is given by
\begin{align}
\label{eq:Dzero_gen}
\tag{\ensuremath{D_0}}
\max_{y \in H_2}&\; (p_d, y)
\quad\text{subject to } \norm{S^* y}_{\Cgen} \leq 1.
\end{align}
Since \(p_d = S u^\star\), strong duality holds with \(\sup \eqref{eq:Dzero_gen}
= \min \eqref{eq:Pzero_gen}\); see~\cite[Proposition~13]{DuvalPeyre:2015} (the proof is
only given for \(H_1 = \R\) and \(D\) equal to the torus, but works unmodified in the
general setting).
\begin{proposition}[{\cite[Proposition~13]{DuvalPeyre:2015}}]
\label{prop:strong_duality}
Let \(p_d = S u^\star\), \(u^\star \in \Mgen\).
Then, strong duality (see, e.g., \cite[Chapter~3.4]{EkelandTemam:1999}) holds for the
problem~\eqref{eq:Pzero_gen} and the dual problem~\eqref{eq:Dzero_gen}.
If the dual problem admits a solution, any pair of solutions \((u^\dagger,y^\dagger)\) to
both problems is characterized by the subdifferential inclusion \(S^* y^\dagger
\in \partial\norm{u^\dagger}_{\Mgen}\).
\end{proposition}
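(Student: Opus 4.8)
The plan is to realize both problems as a Fenchel--Rockafellar primal--dual pair and to derive the absence of a duality gap from a constraint qualification. The delicate point is that the primal~\eqref{eq:Pzero_gen} carries the \emph{hard} equality constraint $Su = p_d$, whose feasible set has empty interior whenever $\operatorname{Ran} S \neq H_2$; hence no Slater point exists on the primal side, and the standard continuity form of Fenchel duality cannot be applied directly to~\eqref{eq:Pzero_gen}. I would circumvent this by exchanging the roles of primal and dual, treating~\eqref{eq:Dzero_gen} as the primal problem, for which a Slater point is readily available.

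Concretely, I would write~\eqref{eq:Dzero_gen} in minimization form as $\min_{y\in H_2} \tilde F(y) + \tilde G(S^* y)$ with $\tilde F(y) = -(p_d,y)_{H_2}$ and $\tilde G = \iota_{B}$ the indicator of the closed unit ball $B \subset \Cgen$. Since $S^* \colon H_2 \to \Cgen$ is continuous and $S = (S^*)^*$, the Fenchel dual of this problem is posed over $\Cgen^* = \Mgen$. Using that the conjugate of $\tilde F$ is the indicator of $\{-p_d\}$, and that the conjugate of $\iota_{B}$ on $\Mgen$ is the support function of $B$, i.e.\ the norm $\norm{\cdot}_{\Mgen}$, a direct computation identifies this dual, after the substitution $u \mapsto -u$, with $-\min\eqref{eq:Pzero_gen}$. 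Thus~\eqref{eq:Pzero_gen} is, up to sign, the Fenchel dual of~\eqref{eq:Dzero_gen}.

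For this reformulation the constraint qualification holds: taking $y_0 = 0$ gives $S^* y_0 = 0$, which lies in the interior of $B$, so $\tilde G$ is continuous at $S^* y_0$ while $\tilde F$ is finite everywhere. Fenchel--Rockafellar duality in the form of~\cite[Chapter~III]{EkelandTemam:1999} then yields zero duality gap together with attainment on the dual ($\Mgen$) side; combined with the feasibility $p_d = Su^\star$, which makes the value finite, this gives $\sup\eqref{eq:Dzero_gen} = \min\eqref{eq:Pzero_gen}$ and reconfirms that the minimum is attained. Note that attainment is obtained only on the $\Mgen$-side, consistent with the hypothesis ``if the dual problem admits a solution.''

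Finally, for the extremality characterization I would read off the Fenchel equalities that any optimal pair $(u^\dagger, y^\dagger)$ must satisfy: feasibility $Su^\dagger = p_d$ on the one hand, and the equality $\pair{u^\dagger, S^* y^\dagger} = \norm{u^\dagger}_{\Mgen}$ on the other, the latter expressing that $\tilde G$ and $\tilde G^*$ meet in the Fenchel--Young inequality. Since in addition $\norm{S^* y^\dagger}_{\Cgen} \le 1$ by dual feasibility, these two conditions are exactly the statement that $S^* y^\dagger \in \partial\norm{u^\dagger}_{\Mgen}$, via the standard characterization of the subdifferential of a norm through its (pre)dual norm. The main obstacle throughout is the constraint qualification, and the whole argument hinges on the observation that it becomes available only after passing to the dual formulation, where $y = 0$ supplies an interior point of the ball constraint.
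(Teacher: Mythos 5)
Your proposal is correct and takes essentially the same route as the paper: the paper defers to \cite[Proposition~13]{DuvalPeyre:2015}, whose proof likewise applies Fenchel--Rockafellar duality in the form of \cite{EkelandTemam:1999} with \eqref{eq:Dzero_gen} recast as the primal problem, using $y=0$ as the interior point for the ball constraint (precisely because \eqref{eq:Pzero_gen} itself admits no such qualification), obtaining attainment for \eqref{eq:Pzero_gen} on the dual side, and reading the inclusion $S^* y^\dagger \in \partial\norm{u^\dagger}_{\Mgen}$ off the extremality relations. Your reconstruction carries this argument over to the general setting exactly as the paper asserts can be done.
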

In general, \eqref{eq:Dzero_gen} does not necessarily have a solution.
However, if \(S\) is a finite rank operator (the range of \(S\) or \(S^*\) is finite
dimensional), the dual problem~\eqref{eq:Dzero_gen} admits a solution. This result is mentioned
and used in~\cite{DuvalPeyre:2015}; however, since no proof is given, we provide one for
the general setting above.
\begin{proposition}
\label{prop:existence_dual_zero}
Suppose that \(\Ran S\) is finite dimensional and $p_d\in \Ran S$. Then, the dual problem~\eqref{eq:Dzero_gen}
admits a solution.
Suppose additionally that the adjoint \(S^*\colon H_2 \to \Cgen\) is injective. Then, the above result
holds for any \(p_d \in H_2\) and the solution set of~\eqref{eq:Dzero_gen} is bounded.
\end{proposition}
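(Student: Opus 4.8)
The plan is to reduce the dual problem~\eqref{eq:Dzero_gen} to a finite-dimensional optimization over a compact set and then invoke the Weierstrass theorem. First I would record the basic duality identity between \(S\) and \(S^*\): for any \(u \in \Mgen\) and \(y \in H_2\) we have \((Su, y)_{H_2} = \pair{u, S^* y}\). From this it follows that \(y \in \ker S^*\) if and only if \((Su, y)_{H_2} = 0\) for all \(u \in \Mgen\), that is, \(\ker S^* = (\Ran S)^\perp\) in \(H_2\). Since \(\Ran S\) is finite-dimensional it is closed, and we obtain the orthogonal decomposition \(H_2 = \Ran S \oplus \ker S^*\).

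Next I would decouple the problem along this decomposition. Writing \(y = y_0 + y_1\) with \(y_0 \in \Ran S\) and \(y_1 \in \ker S^*\), the constraint reads \(\norm{S^* y}_{\Cgen} = \norm{S^* y_0}_{\Cgen} \leq 1\), since \(S^* y_1 = 0\); and the objective becomes \((p_d, y)_{H_2} = (p_d, y_0)_{H_2}\), because \(p_d \in \Ran S\) is orthogonal to \(y_1 \in (\Ran S)^\perp\). Hence both the objective and the feasible set depend only on the component \(y_0\) lying in the finite-dimensional space \(\Ran S\), and the problem is equivalent to maximizing \((p_d, \cdot)_{H_2}\) over \(C_0 = \set{y_0 \in \Ran S \;|\; \norm{S^* y_0}_{\Cgen} \leq 1}\).

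The crucial observation, which makes \(C_0\) compact, is that \(S^*\) restricted to \(\Ran S\) is injective: if \(y_0 \in \Ran S\) with \(S^* y_0 = 0\), then \(y_0 \in \ker S^* = (\Ran S)^\perp\), so \(y_0 \in \Ran S \cap (\Ran S)^\perp = \set{0}\). Consequently \(y_0 \mapsto \norm{S^* y_0}_{\Cgen}\) is a genuine norm on the finite-dimensional space \(\Ran S\); by equivalence of norms in finite dimensions the set \(C_0\) is bounded, and it is clearly closed, hence compact. Since \((p_d, \cdot)_{H_2}\) is continuous, it attains its maximum on \(C_0\) at some \(y_0^\dagger\), and \(y^\dagger = y_0^\dagger\) solves~\eqref{eq:Dzero_gen}.

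For the second part, if \(S^*\) is injective then \(\ker S^* = \set{0}\), so \((\Ran S)^\perp = \set{0}\) and therefore \(\Ran S = H_2\) (again using that \(\Ran S\) is closed). In particular \(H_2\) is finite-dimensional and every \(p_d \in H_2\) lies in \(\Ran S\), so the first part applies for arbitrary data; moreover the full feasible set \(\set{y \in H_2 \;|\; \norm{S^* y}_{\Cgen} \leq 1}\) now coincides with \(C_0\) and is compact by the same norm-equivalence argument, so the solution set is a closed subset of a compact set and hence bounded. The only real subtlety throughout is the careful use of the duality between \(S\) and \(S^*\) to identify \(\ker S^*\) with \((\Ran S)^\perp\); once this is in place, the finite-dimensionality together with the injectivity of \(S^*\) on \(\Ran S\) makes the required compactness essentially automatic.
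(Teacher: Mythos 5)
Your proof is correct, and it rests on the same two structural facts as the paper's: the identity \(\Ker S^* = (\Ran S)^\perp\) (note that the ``if'' direction of your characterization uses that \(\Mgen\) separates points of \(\Cgen\), e.g.\ via Dirac measures --- standard, but worth a word), which yields the splitting \(H_2 = \Ran S \oplus \Ker S^*\), and the injectivity of \(S^*\) restricted to \(\Ran S\). The organization and the compactness mechanism differ, however. The paper first treats the case of injective \(S^*\) (where, given \(\dim \Ran S < \infty\), the space \(H_2\) is forced to be finite dimensional) and proves existence by contradiction: a maximizing sequence \(y_k\) with \(\norm{y_k}_{H_2} \to \infty\) is normalized, a convergent subsequence extracted, and the limit \(\hat{y}\) satisfies \(\norm{S^*\hat{y}}_{\Cgen} = 0\) while \(\norm{\hat{y}}_{H_2} = 1\), contradicting injectivity; the general case is then reduced to this one via the splitting. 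You run the reduction first and handle the general case directly, observing that \(y_0 \mapsto \norm{S^* y_0}_{\Cgen}\) is a genuine norm on the finite-dimensional space \(\Ran S\), so that by equivalence of norms the reduced feasible set \(C_0\) is compact and Weierstrass applies; the injective case then falls out as the degenerate situation \(\Ran S = H_2\). Your route proves something slightly stronger than the paper states in the general case (the whole reduced feasible set is compact, not merely that maximizing sequences are bounded), avoids any sequence extraction, and makes the boundedness assertion in the second part immediate; the paper's sequential argument, on the other hand, is the one that matches the cited semi-infinite programming framework and would still function in settings where one only has coercivity along maximizing sequences rather than a norm structure on the constraint functional.
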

\begin{proof}
  We first assume that \(S^*\colon H_2 \to \Cgen\) is injective. Note that this implies \(H_2\) is finite dimensional. In this case, \eqref{eq:Dzero_gen} can be reformulated
  as a semi-infinite optimization problem, and the result can be deduced as an application
  of the general result~\cite[Theorem~5.99]{BonnansShapiro:2000} (injectivity of \(S^*\)
  is equivalent to the regularity condition mentioned there).
  However, in our case, it can be also shown directly.
  In fact, any maximizing sequence for \eqref{eq:Dzero_gen}  is bounded: Take by
  contradiction \(\{y_k\}\) with \(\norm{S^*y_k}_{\Cgen} \leq 1\) and
  \(\norm{y_k}_{H_2}\rightarrow \infty\). Considering the renormed sequence \(\{\tilde{y}_k\}_{k\in \N}\) with
  \(\tilde{y}_k = y_k / \norm{y_k}_{H_2}\) there exists a subsequence denoted by
  the same symbol and a \(\hat{y} \in H_2\) with \(\tilde{y}_k \to \hat{y}\) and
  \(\norm{\hat y}_{H_2}=1\) (since \(H_2\) is finite dimensional). Consequently there holds
  \begin{align*}
   \norm{y_k}_{H_2} \norm{S^*\tilde{y}_k}_{\Cgen}
   =\norm{S^*y_k}_{\Cgen} \leq 1.
  \end{align*}
  From this we directly conclude that \(\norm{S^*\hat{y}}_{\Cgen}=0\) since $S^\ast$ is bounded. Then the injectivity of \(S^*\) implies a contradiction to
  \(\norm{\hat{y}}_{H_2}=1\) . Consequently, any
  minimizing sequence is bounded, and by using the
  continuity of \(S^*\), it follows that there exits
  at least one optimal solution to~\eqref{eq:Dzero_gen}. Boundedness of the solution set
  follows in the same way.

  Now, we address the general case, where \(S^*\) is not necessarily injective, and
  show that it can be reduced to the previous case. Consider the problem
  \begin{equation}
    \label{eq:dual_modified}
    \max_{y \in \Ran S} \; (y,p_d)
    \quad\text{subject to } \norm{S^* y}_{\Cgen} \leq 1.
  \end{equation}
  Since \(\Ran S\) is finite dimensional (and therefore a closed subspace), we
  have \((\Ran S)^\bot = \Ker S^*\), and \(H_2 = \Ran S \oplus \Ker S^*\). For any \(y \in
  H_2\) we have \(y = y_1 +  y_0\) with \(y_1 \in \Ran S = (\Ker S^*)^\bot\) and \(y_0 \in
  \Ker S^*\). Let $u^\star\in \Mgen$ be an element with $Su^\star=p_d$ which exists according to our assumptions. Then we have
  \[
    (p_d,y) = \pair{u^\star, S^* y_1}=(p_d,y_1),
    \quad\text{and } \norm{S^* y}_{\Cgen} = \norm{S^* y_1}_{\Cgen}
  \]
  which implies that~\eqref{eq:dual_modified} and~\eqref{eq:Dzero_gen} have the same
  value. Moreover, the restricted operator \(S^*\rvert_{\Ran S} \colon \Ran S \to
  \Cgen\) is injective. Using the result from before,~\eqref{eq:dual_modified} admits a solution, and for any
  solution \(y_1\) and any \(y_0 \in \Ker S^*\), \(y = y_1 + y_0\) is a solution of~\eqref{eq:Dzero_gen}.
\QED
\end{proof}

\section{Extremal solutions}
\label{app:extremal_solutions}

Since the dual problems~\eqref{eq:Dalpha_gen} and~\eqref{eq:Dzero_gen} fall into the category of
\emph{semi-infinite} optimization problems, it follows that solutions of~\eqref{eq:Palpha_gen}
and~\eqref{eq:Pzero_gen} consisting of
finitely many Dirac delta functions exist; see, e.g.,
\cite[Section~5.4.2]{BonnansShapiro:2000}.

For the convenience of the reader, we provide a direct proof, which also leads to an
algorithmic strategy for reducing the support of any suboptimal point of~\eqref{eq:Palpha_gen}
or~\eqref{eq:Pzero_gen}.
To this purpose, we analyze the corresponding solution sets,
which we denote for \(\alpha \geq 0\) by
\[
U_{p_d,\alpha} = \set{u \in \Mgen \;|\; u \text{ solves~\eqref{eq:Palpha_gen} for \(\alpha>0\)
    or~\eqref{eq:Pzero_gen} for \(\alpha=0\)}}.
\]
This is a convex bounded subset of \(\Mgen\).
Furthermore the following properties are easily derived.
\begin{proposition}
Let \(\widehat{u} \in U_{p_d,\alpha}\) be arbitrary, and \(\widehat{p} = S\widehat{u}\).
For all elements \(u \in U_{p_d,\alpha}\) we have
\begin{align*}
\SO u = \widehat{p}, \qquad
\norm{u}_{\Mgen} = \norm{\widehat{u}}_{\Mgen}.
\end{align*}
\end{proposition}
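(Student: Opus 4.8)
The statement splits into the two cases $\alpha = 0$ and $\alpha > 0$, and my plan is to treat them separately. For $\alpha = 0$ the claim is essentially immediate, so I would dispose of it first: every $u \in U_{p_d,0}$ is by definition a solution of the minimum norm problem~\eqref{eq:Pzero_gen}, hence feasible, so the constraint $\SO u = p_d$ holds verbatim; in particular $\SO u = \SO\widehat{u} = \widehat{p}$ for any fixed $\widehat{u} \in U_{p_d,0}$. The norm identity is then just the observation that all minimizers attain the common optimal value $\min\eqref{eq:Pzero_gen}$, so that $\norm{u}_{\Mgen} = \norm{\widehat{u}}_{\Mgen}$ trivially.

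For $\alpha > 0$ I would argue by strict convexity of the data-fidelity term along the range of $\SO$. Denoting the objective by $j(u) = \tfrac{1}{2\alpha}\norm{\SO u - p_d}_{H_2}^2 + \norm{u}_{\Mgen}$ and its minimal value by $m$, I would take two arbitrary minimizers $u_1, u_2 \in U_{p_d,\alpha}$ and form the midpoint $u = \tfrac12(u_1 + u_2)$. Since $j$ is convex and $u$ is admissible, one has $m \le j(u) \le \tfrac12 j(u_1) + \tfrac12 j(u_2) = m$, so equality holds throughout. As $j$ is the sum of the convex fidelity and norm terms, equality in the midpoint estimate forces equality simultaneously in the convexity estimate for each of the two summands.

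The key step will be to exploit the equality in the quadratic term. Writing $a = \SO u_1 - p_d$ and $b = \SO u_2 - p_d$ in the Hilbert space $H_2$, so that $\SO u - p_d = \tfrac{a+b}{2}$, the parallelogram identity gives
\[
\tfrac12\norm{a}_{H_2}^2 + \tfrac12\norm{b}_{H_2}^2 - \norm{\tfrac{a+b}{2}}_{H_2}^2 = \tfrac14\norm{a - b}_{H_2}^2 \ge 0,
\]
and equality in the convexity estimate for the fidelity term means precisely that the right-hand side vanishes, i.e.\ $a = b$, that is $\SO u_1 = \SO u_2$. Hence $\SO u = \widehat{p}$ for every $u \in U_{p_d,\alpha}$. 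Finally, since all minimizers share the value $m$ and now also the misfit term, their regularization values $\norm{u}_{\Mgen}$ must coincide.

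I do not expect any serious obstacle here; the computation is routine. The only point worth emphasizing is that the $\Mgen$-norm is merely convex and not strictly convex, so one cannot hope for uniqueness of the minimizer itself. What genuinely carries over is uniqueness of the image $\SO u$, which comes from the strict convexity of $\norm{\cdot}_{H_2}^2$ composed with the linear operator $\SO$, and, as a consequence, the common value of the regularization term.
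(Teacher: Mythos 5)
Your proposal is correct and follows essentially the same route as the paper: the case \(\alpha=0\) is immediate from feasibility, and for \(\alpha>0\) the identity \(\SO u = \widehat{p}\) follows from strict convexity of the tracking term together with linearity of \(\SO\), after which the equality of the \(\Mgen\)-norms is forced by the common optimal value. The paper states this argument in compressed form; your midpoint-plus-parallelogram computation is exactly the detail behind it.
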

\begin{proof}
The statement is clear for \(\alpha=0\), where \(\widehat{p} = p_d\).
For \(\alpha > 0\) the first part follows from the strict convexity of the tracking term and the
linearity of \(S\). Therefore, the value of the first term of the
objective assumes a unique value for all optimal solutions. By the optimality follows that
also the second term must be of the same value for all optimal solutions.
\QED
\end{proof}
As a corollary, we obtain a characterization of \(U_{p_d,\alpha}\).
\begin{corollary}
\label{cor:opt}
Let \(\widehat{u} \in U_{p_d,\alpha}\) be arbitrary, and \(\widehat{p} = S\widehat{u}\).
It holds,
\begin{align*}
U_{p_d,\alpha} &= \set{u \in \Mgen
  \;|\; \SO u = \widehat{p}
  \text{ and } \norm{u}_{\Mgen} = \norm{\widehat{u}}_{\Mgen}}
\\
&= \set{u \in \Mgen
  \;|\; \SO u = \widehat{p}
  \text{ and } \norm{u}_{\Mgen} \leq \norm{\widehat{u}}_{\Mgen}}.
\end{align*}
\end{corollary}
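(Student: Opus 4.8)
The plan is to establish both set equalities at once through a short cyclic chain of inclusions. Write $A = U_{p_d,\alpha}$ for the solution set, $B = \set{u \in \Mgen \;|\; \SO u = \widehat{p} \text{ and } \norm{u}_{\Mgen} = \norm{\widehat{u}}_{\Mgen}}$ for the first candidate set on the right, and $C = \set{u \in \Mgen \;|\; \SO u = \widehat{p} \text{ and } \norm{u}_{\Mgen} \leq \norm{\widehat{u}}_{\Mgen}}$ for the second. It then suffices to prove the three inclusions $A \subseteq B$, $B \subseteq C$, and $C \subseteq A$, since together they force $A = B = C$, which is exactly the asserted identity.

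Two of these are immediate. The inclusion $A \subseteq B$ is precisely the content of the preceding Proposition: any solution $u \in U_{p_d,\alpha}$ shares the common image $\SO u = \widehat{p}$ and the common norm $\norm{u}_{\Mgen} = \norm{\widehat{u}}_{\Mgen}$, so it lies in $B$. The inclusion $B \subseteq C$ is trivial, as equality of the norms implies in particular the inequality defining $C$.

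The only substantive step is $C \subseteq A$, where one checks that feasibility together with the norm bound already forces optimality. I would split according to the value of $\alpha$. For $\alpha = 0$, any $u \in C$ satisfies the constraint $\SO u = \widehat{p} = p_d$ of~\eqref{eq:Pzero_gen} and obeys $\norm{u}_{\Mgen} \leq \norm{\widehat{u}}_{\Mgen} = \min\eqref{eq:Pzero_gen}$; thus $u$ is feasible with objective value no larger than the minimum, hence it solves~\eqref{eq:Pzero_gen} and $u \in A$. For $\alpha > 0$, I would instead compare objective values directly: since $\SO u = \widehat{p}$, the tracking term of~\eqref{eq:Palpha_gen} takes the same value $\frac{1}{2\alpha}\norm{\widehat{p} - p_d}^2_{H_2}$ at $u$ as at $\widehat{u}$, while $\norm{u}_{\Mgen} \leq \norm{\widehat{u}}_{\Mgen}$ by assumption; adding the two contributions shows the objective at $u$ does not exceed its value at the minimizer $\widehat{u}$, so $u$ is optimal and $u \in A$.

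I expect no genuine obstacle here: the argument is elementary once the preceding Proposition is available. The only point that demands a moment's care is to treat the constrained formulation $\alpha = 0$ and the penalized formulation $\alpha > 0$ separately, since the precise mechanism by which ``correct image plus small enough norm implies optimality'' is read off differently in the two cases.
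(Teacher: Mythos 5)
Your proof is correct and follows exactly the route the paper intends: the paper states this corollary without proof as an immediate consequence of the preceding Proposition, and your cyclic chain $A \subseteq B \subseteq C \subseteq A$ (with the Proposition giving $A \subseteq B$ and the elementary optimality check, split between $\alpha = 0$ and $\alpha > 0$, giving $C \subseteq A$) is precisely the argument left implicit there.
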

Now, we recall the concept of extremal points of convex set: A point in the convex set
\(U_{p_d,\alpha}\) is called \emph{extremal}, if it can not be written as a nontrivial
convex combination of other elements of \(U_{p_d,\alpha}\). Furthermore, we have the
theorem of Krein and Milman.
\begin{proposition}\label{prop:krein_milman}
The closure (in the sense of the weak-\(*\) topology)
of the convex combinations of the extremal points of \(U_{p_d,\alpha}\) is
equal to \(U_{p_d,\alpha}\), i.e.,
\[
U_{p_d,\alpha} = \widebar{\conv\set{u \in U_{p_d,\alpha} \;|\; u \text{ extremal }}}^{\text{weak}-*}
\]
\end{proposition}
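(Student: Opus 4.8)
The plan is to verify that $U_{p_d,\alpha}$ satisfies the hypotheses of the Krein--Milman theorem in the space $\Mgen$ equipped with its weak-$*$ topology, and then to invoke that theorem directly. Recall that $\Mgen = \Cgen^*$ is a dual space, so its weak-$*$ topology is locally convex and Hausdorff; since $\Cgen$ is separable, this topology is moreover metrizable on bounded sets, which I may use to argue sequentially if convenient. The set $U_{p_d,\alpha}$ is nonempty (solutions of both \eqref{eq:Palpha_gen} and \eqref{eq:Pzero_gen} exist by the direct method) and convex, as already noted in the text. It therefore remains only to establish that $U_{p_d,\alpha}$ is weak-$*$ compact.

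First I would fix an arbitrary $\widehat{u} \in U_{p_d,\alpha}$ and set $\widehat{p} = \SO\widehat{u}$. By Corollary~\ref{cor:opt} the solution set admits the representation
\[
U_{p_d,\alpha} = \set{u \in \Mgen \;|\; \SO u = \widehat{p}} \cap \set{u \in \Mgen \;|\; \norm{u}_{\Mgen} \leq \norm{\widehat{u}}_{\Mgen}}.
\]
The second set is a closed ball in the dual norm, hence bounded and weak-$*$ closed (the dual norm is weak-$*$ lower semicontinuous). For the first set, I would use that $\SO$ is weak-$*$ to weak continuous by assumption and that the singleton $\set{\widehat{p}}$ is weakly closed in the Hilbert space $H_2$; consequently its preimage $\SO^{-1}(\set{\widehat{p}})$ is weak-$*$ closed. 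Thus $U_{p_d,\alpha}$ is the intersection of a weak-$*$ closed set with a weak-$*$ closed bounded set, and is therefore itself weak-$*$ closed and bounded.

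With boundedness and weak-$*$ closedness in hand, the Banach--Alaoglu theorem yields that $U_{p_d,\alpha}$ is weak-$*$ compact. Applying the Krein--Milman theorem to the nonempty weak-$*$ compact convex set $U_{p_d,\alpha}$ then produces exactly the claimed identity, namely that $U_{p_d,\alpha}$ coincides with the weak-$*$ closed convex hull of its extreme points.

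I expect no serious obstacle: this is essentially a routine verification of the standard hypotheses of Krein--Milman. The single point requiring care is the weak-$*$ closedness of $U_{p_d,\alpha}$, where one must keep the weak-$*$ topology on $\Mgen$ distinct from the weak topology on $H_2$; the representation from Corollary~\ref{cor:opt}, combined with the weak-$*$-to-weak continuity of $\SO$, is precisely what makes the preimage argument go through and hence secures compactness.
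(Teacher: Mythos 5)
Your proof is correct and follows essentially the same route as the paper: Corollary~\ref{cor:opt}, the Banach--Alaoglu theorem, and the weak-$*$ continuity of $\SO$ yield weak-$*$ compactness of the convex set $U_{p_d,\alpha}$, after which Krein--Milman gives the claim directly. You have merely spelled out the details (weak-$*$ closedness of the preimage $\SO^{-1}(\set{\widehat{p}})$ and of the norm ball) that the paper's proof leaves implicit.
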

\begin{proof}
Corollary \ref{cor:opt}, the Banach-Alaoglu Theorem and the weak-\(*\) continuity of $S$ imply that \(U_{p_d,\alpha}\) is
compact with respect to the weak-\(*\) topology. Then the assertion is a direct application of the theorem of Krein-Milman; see,
e.g.,~\cite[Theorem~2.19]{BonnansShapiro:2000}.
\QED
\end{proof}
Furthermore, if \(S\) is a finite rank operator, the extremal points can be characterized
as follows (cf., e.g., \cite[Proposition~2.177]{BonnansShapiro:2000}).
\begin{theorem}\label{thm:extremal_dirac}
Suppose that \(\dim \Ran S = N_S < \infty\).
The extremal points of \(U_{p_d,\alpha}\) can be written as a linear combinations of no
more than \(N_S\) Dirac delta functions:
\[
\set{u \in U_{p_d,\alpha} \;|\; u \text{ extremal }}
\subset \left\{\sum_{j=1}^{N_S} \coeff{u}_j \delta_{x_j} \;\Big|\; \coeff{u}_j \in H_1,\; x_j \in D\right\}.
\]
\end{theorem}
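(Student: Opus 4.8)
The plan is to show that an extremal point $u$ of $U_{p_d,\alpha}$ must be a purely atomic measure with at most $N_S$ atoms, by producing a constraint- and norm-preserving perturbation $u \mapsto u \pm t\nu$ whenever the support of $\abs{u}$ is too large. Using Corollary~\ref{cor:opt} I would regard $U_{p_d,\alpha}$ as the intersection $\set{u \in \Mgen \;|\; S u = \widehat{p},\ \norm{u}_{\Mgen} \leq r}$ with $r = \norm{\widehat{u}}_{\Mgen}$; the case $r = 0$ gives $u = 0$ (the empty combination), so I assume $r > 0$. First I would fix the polar decomposition $\de u = u'\,\de\abs{u}$ with $\abs{u'}_{H_1} = 1$ for $\abs{u}$-a.e.\ $x$, and restrict attention to \emph{aligned} perturbations $\de\nu = c\, u'\,\de\abs{u}$ with $c \in L^\infty(D,\abs{u},\R)$. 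These lie in $\Mgen$ and make the total variation behave affinely: for $\abs{t}\,\norm{c}_{L^\infty} < 1$ one computes exactly
\[
\norm{u \pm t\nu}_{\Mgen} = \int_D \abs{1 \pm tc}\,\de\abs{u} = \norm{u}_{\Mgen} \pm t\int_D c\,\de\abs{u}.
\]

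The decisive ingredient is a dual certificate $\eta = S^* y^\dagger \in \partial\norm{u}_{\Mgen}$. For $\alpha > 0$ this is the adjoint state from the optimality conditions (Proposition~\ref{prop:optimality_conditions}, in the present abstract form), while for $\alpha = 0$ it is furnished by Proposition~\ref{prop:strong_duality} together with the existence of a dual solution from Proposition~\ref{prop:existence_dual_zero}, which applies precisely because $\Ran S$ is finite-dimensional. In both cases the subdifferential characterization gives $\eta(x) = u'(x)$ for $\abs{u}$-a.e.\ $x$ and $\norm{\eta}_{\Cgen} \leq 1$. The role of the certificate is to make the linear term above vanish automatically for any aligned $\nu$ with $S\nu = 0$:
\[
\int_D c\,\de\abs{u} = \int_D (u',\eta)_{H_1}\, c\,\de\abs{u} = \pair{\nu, \eta} = \pair{\nu, S^* y^\dagger} = (S\nu, y^\dagger)_{H_2} = 0,
\]
where the first equality uses $(u',\eta)_{H_1} = \abs{u'}^2_{H_1} = 1$ on $\supp\abs{u}$. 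Thus $S\nu = 0$ \emph{by itself} already forces $\norm{u \pm t\nu}_{\Mgen} = r$.

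With this in hand I would consider the bounded linear map $T\colon L^\infty(D,\abs{u},\R) \to \Ran S$, $T(c) = S(c\,u'\,\abs{u})$. If $\Ker T \neq \set{0}$, I pick $0 \neq c \in \Ker T$ and set $\nu = c\,u'\,\abs{u} \neq 0$; then $u \pm t\nu \in U_{p_d,\alpha}$ for small $t > 0$, and $u = \tfrac12(u + t\nu) + \tfrac12(u - t\nu)$ writes $u$ as a nontrivial convex combination, contradicting extremality. Hence $T$ is injective, so $\dim L^\infty(D,\abs{u},\R) \leq \dim\Ran S = N_S$. Since the dimension of $L^\infty(D,\abs{u},\R)$ equals the number of atoms of $\abs{u}$ and is infinite as soon as $\abs{u}$ has a diffuse part or infinitely many atoms, this forces $\abs{u}$ to be purely atomic with at most $N_S$ atoms, i.e.\ $u = \sum_{j=1}^{N_S}\coeff{u}_j\delta_{x_j}$ with $\coeff{u}_j \in H_1$, $x_j \in D$, as claimed.

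The main obstacle I anticipate is arranging the argument so that the sharp bound $N_S$ emerges rather than $N_S + 1$: enforcing \emph{both} $S\nu = 0$ and preservation of the total variation would naively impose one constraint too many, and the clean cancellation $\int_D c\,\de\abs{u} = (S\nu, y^\dagger)_{H_2}$ is exactly what removes the spurious norm-balance constraint. Consequently the genuine content lies in the subdifferential/complementarity step $\eta = u'$ ($\abs{u}$-a.e.) and in the existence of the dual solution (Proposition~\ref{prop:existence_dual_zero}), both of which use finite-rankness of $S$. The remaining measure-theoretic point — that finiteness of $\dim L^\infty(D,\abs{u},\R)$ excludes a diffuse part and caps the number of atoms — is routine, as is the elementary exact expansion of the total variation under the aligned perturbation, which only needs $c \in L^\infty$ with $\abs{t}\,\norm{c}_{L^\infty} < 1$.
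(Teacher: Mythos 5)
Your proof is correct, but it follows a genuinely different route from the paper's. The paper argues purely on the primal side: assuming the support of an extremal \(u\) has more than \(N_S\) points, it partitions \(D\) into \(N_S+1\) pieces of positive \(\abs{u}\)-measure, takes the renormalized restrictions \(v_n = u\rvert_{D_n}/\norm{u\rvert_{D_n}}_{\Mgen}\), finds a nontrivial null vector \(\lambda\) of the map \(\lambda \mapsto \sum_n \lambda_n S v_n\) into the \(N_S\)-dimensional range, and perturbs \(u_\pm = u \pm \tau^{-1}\sum_n \lambda_n v_n\); since the perturbation is aligned, the total variation changes affinely by \(\pm\tau^{-1}\sum_n\lambda_n\), and the crucial step is to choose the sign of \(\lambda\) so that \(\norm{u_-} \leq \norm{u}\), whence \(u_-\) is feasible and hence optimal, and Corollary~\ref{cor:opt} (all solutions share the same norm) forces \(\sum_n\lambda_n = 0\) --- no dual certificate is ever invoked. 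Your proof replaces this sign-and-optimality trick by the dual certificate \(\eta = S^*y^\dagger \in \partial\norm{u}_{\Mgen}\), which makes the norm-preservation of aligned kernel perturbations automatic, and then converts extremality into injectivity of \(T\colon L^\infty(D,\abs{u},\R)\to \Ran S\) and a dimension count. What the paper's argument buys is self-containedness and constructiveness: it needs no existence result for the dual problem (your route for \(\alpha = 0\) must lean on Propositions~\ref{prop:strong_duality} and~\ref{prop:existence_dual_zero}, so finite-rankness enters twice), and it directly yields the point-removal algorithm of Proposition~\ref{prop:remove_diracs} used in the optimization method. What your argument buys is a cleaner conceptual statement --- extremality forces \(\dim L^\infty(D,\abs{u},\R) \leq \dim\Ran S\), which simultaneously rules out diffuse parts and excess atoms --- and it exposes the role of the source condition \(\eta = u'\) \(\abs{u}\)-a.e., tying the result to the machinery of Section~\ref{sec:regularization}; the measure-theoretic closing step (finite-dimensional \(L^\infty\) implies purely atomic with at most \(N_S\) atoms) is indeed routine.
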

\begin{proof}
Let \(u \in U_{p_d,\alpha}\) be extremal. The proof will be done by contradiction. Assume,
therefore, that \(\supp u\) consists of more than \(N_S\) points. Then, there exists
a disjoint partition \(\set{D_n}_{n = 1,\ldots,N_S+1}\) of the set \(D\) with the
properties
\[
\abs{u}(D_n) > 0 \quad\text{for all } n = 1,\ldots,N_S+1.
\]
Define for \(n = 1,\ldots,N_S+1\) the restrictions
\[
u_n = u\rvert_{D_n} \in \Mgen.
\]
It is clear that \(\norm{u_n}_{\Mgen} = \abs{u}(D_n) > 0\). Now, we consider
the renormalized measures and their image under \(S\), i.e.
\begin{align*}
v_n &= \frac{u_n}{\norm{u_n}_{\Mgen}}, \\
w_n &= Sv_n \in \Ran S \subset H_2,
\end{align*}
and look for a nontrivial solution \(\lambda \in \R^{N_S+1} \setminus\set{0}\) of the
system of linear equations
\[
\sum_{n=1}^{N_S+1} \lambda_n Sv_n = \sum_{n=1}^{N_S+1} \lambda_n w_n = 0 \in \Ran S.
\]
Since the number of equations is one smaller than the number of variables, such a
solution exists.
Without restriction, we may assume \(\sum_{n=1,\ldots,N_S+1} \lambda_n \geq
0\) (otherwise, we take the negative of \(\lambda\)). We define
\[
  \tau = \max_{n=1,\ldots,N_S+1} \frac{\abs{\lambda_n}}{\norm{u_n}_{\Mgen}}
\]
and \(u_{+}\) and \(u_{-}\) as
\[
u_{\pm} = u \pm \frac{1}{\tau} \sum_{n=1}^{N_S+1} \lambda_n v_n
 = \sum_{n=1}^{N_S+1} \left(1 \pm \frac{\lambda_n}{\tau\,\norm{u_n}_{\Mgen}} \right) u_n.
\]
Clearly, \(u_{+} \neq u_{-} \neq u\).
By construction and linearity of \(S\) we have \(Su_{\pm} = Su =
\widehat{p}\). Furthermore, we directly verify that
\begin{multline*}
\norm{u_{\pm}}_{\Mgen}
= \int_{D} \d \abs{u_\pm}
= \sum_{n=1}^{N_S+1} \int_{D_n} \d \abs{u_\pm} \\
= \sum_{n=1}^{N_S+1} \left(\norm{u_n}_{\Mgen} \pm \frac{\lambda_n}{\tau}\right)
= \norm{u}_{\Mgen} \pm \frac{1}{\tau}\sum_{n=1}^{N_S+1} \lambda_n
\end{multline*}
since $\abs{\lambda_n}/\tau\leq \norm{u_n}_{\Mgen}$. Since
\(\sum_{n=1,\ldots,N_S+1} \lambda_n \geq 0\) we have
\(\norm{u_{-}}_{\Mgen} \leq \norm{u}_{\Mgen}\), and \(u_{-}\) is an optimal
solution of~\eqref{eq:problem_convex}, i.e., \(u_{-} \in U_{\alpha,p_d}\)
(Corollary~\ref{cor:opt}). Moreover, we see that it must hold
\[
\sum^{N_S+1}_{n=1} \lambda_n = 0,
\]
since the norm cannot be strictly smaller,
since \(u \in U_{\alpha,p_d}\). It follows that also \(u_{+}\) is optimal.
We conclude the proof with the observation that
\[
u = \frac{1}{2} u_{+} + \frac{1}{2} u_{-},
\]
which contradicts the assumption that \(u\) is extremal in \(U_{\alpha,p_d}\).
\QED
\end{proof}
The given proof can be modified into a constructive procedure to remove excess points from
the support of an existing (suboptimal) solution of~\eqref{eq:problem_convex}.
\begin{proposition}
\label{prop:remove_diracs}
Suppose that \(\dim \Ran S = N_S < \infty\).
Let \(u = \sum_{n=1,\ldots,P} \coeff{u}_n \delta_{x_n}\) be a arbitrary with \(P \in \mathbb{N}\),
\(u_n \in H_1\), \(x_n \in D\) (pairwise distinct). Then, there exists a
\(u^{new} = \sum_{n=1,\ldots,P} \coeff{u}^{new}_n \delta_{x_n}\) such that
\[
\norm{u^{new}}_{\Mgen} \leq \norm{u}_{\Mgen},
\quad Su^{new} = Su,
\]
and all but \(N_S\) of the coefficients \(\coeff{u}^{new}_n\) are equal to zero.
\end{proposition}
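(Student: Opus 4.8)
The plan is to adapt the argument from the proof of Theorem~\ref{thm:extremal_dirac} into a finite, iterative reduction scheme. The key observation is that the argument there is already constructive: given a discrete measure whose support exceeds \(N_S\) points, it produces a nontrivial perturbation \(\sum_n \lambda_n v_n\) lying in the kernel of \(S\) (restricted to the span of the relevant Diracs), and then moves along this direction to zero out at least one coefficient while not increasing the norm. First I would fix the measure \(u = \sum_{n=1}^P \coeff{u}_n \delta_{x_n}\) and, assuming \(P > N_S\), consider the \(P\) vectors \(w_n = S(\coeff{u}_n \delta_{x_n}) \in \Ran S \subset H_2\) (or their renormalizations as in the cited proof). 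Since \(P > N_S = \dim\Ran S\), the system \(\sum_n \lambda_n w_n = 0\) has a nontrivial solution \(\lambda \in \R^P \setminus \set{0}\).

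The main construction then mirrors Theorem~\ref{thm:extremal_dirac}. I would normalize by setting \(\tau = \max_{n} \abs{\lambda_n}/\norm{\coeff{u}_n \delta_{x_n}}_{\Mgen} = \max_n \abs{\lambda_n}/\abs{\coeff{u}_n}_{H_1}\), choosing the sign of \(\lambda\) so that \(\sum_n \lambda_n \geq 0\), and defining
\[
u^{new} = u - \frac{1}{\tau}\sum_{n=1}^{P} \lambda_n \frac{\coeff{u}_n \delta_{x_n}}{\abs{\coeff{u}_n}_{H_1}}
= \sum_{n=1}^{P} \left(1 - \frac{\lambda_n}{\tau\,\abs{\coeff{u}_n}_{H_1}}\right) \coeff{u}_n \delta_{x_n}.
\]
By linearity of \(S\) and the defining property \(\sum_n \lambda_n w_n = 0\), one has \(S u^{new} = S u\). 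The coefficientwise factor \(1 - \lambda_n/(\tau\abs{\coeff{u}_n}_{H_1})\) is nonnegative for every \(n\) by the choice of \(\tau\), and it vanishes for at least one index \(\hat\jmath\) where the maximum defining \(\tau\) is attained. A direct computation of the total variation, exactly as in the cited proof, gives
\[
\norm{u^{new}}_{\Mgen} = \norm{u}_{\Mgen} - \frac{1}{\tau}\sum_{n=1}^{P}\lambda_n \leq \norm{u}_{\Mgen},
\]
using \(\sum_n \lambda_n \geq 0\). Thus \(u^{new}\) has strictly fewer support points, the same image under \(S\), and no larger norm.

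To conclude, I would iterate: the measure \(u^{new}\) again has finitely many Dirac deltas, supported on a subset of the original \(\set{x_n}\), so if its support still exceeds \(N_S\) points the construction applies once more, strictly reducing the number of active coefficients each time. After at most \(P - N_S\) steps, the support size drops to \(N_S\), and relabeling gives the claimed \(u^{new} = \sum_{n=1}^P \coeff{u}^{new}_n \delta_{x_n}\) with all but \(N_S\) coefficients equal to zero, while the estimates \(\norm{u^{new}}_{\Mgen} \leq \norm{u}_{\Mgen}\) and \(S u^{new} = S u\) are preserved through the chain by transitivity. The only point requiring mild care — and the closest thing to an obstacle — is verifying that the total variation of a purely atomic measure behaves additively under the coefficientwise rescaling, i.e.\ that \(\norm{\sum_n c_n \coeff{u}_n \delta_{x_n}}_{\Mgen} = \sum_n \abs{c_n}\abs{\coeff{u}_n}_{H_1}\) precisely when the scalars \(c_n = 1 - \lambda_n/(\tau\abs{\coeff{u}_n}_{H_1})\) are all nonnegative; this nonnegativity is exactly what the choice of \(\tau\) guarantees, so no absolute-value cancellations occur and the norm computation is elementary. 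Since everything is explicit and finite, the procedure is algorithmic, which is the intended use in Corollary~\ref{cor:remove_diracs}.
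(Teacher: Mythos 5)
Your overall strategy (turn the extremal-point argument into an explicit one-step reduction and iterate) is exactly the paper's, but there is a genuine flaw in the normalization step: you define \(\tau = \max_n \abs{\lambda_n}/\abs{\coeff{u}_n}_{H_1}\) with the absolute value, copied from the proof of Theorem~\ref{thm:extremal_dirac}, and then claim that the factor \(1-\lambda_n/(\tau\abs{\coeff{u}_n}_{H_1})\) vanishes at an index where this maximum is attained. That is only true if the maximum is attained at an index with \(\lambda_{\hat\jmath}>0\); if it is attained only at indices with \(\lambda_{\hat\jmath}<0\), the factor there equals \(2\), not \(0\). Concretely, take \(P=3\), unit coefficients \(\abs{\coeff{u}_n}_{H_1}=1\), and \(\lambda=(-4,3,3)\): then \(\sum_n\lambda_n = 2\geq 0\), so your sign convention forces this choice of sign (flipping it gives \(\sum_n\lambda_n=-2<0\)), yet \(\tau=4\) is attained only at \(n=1\) where \(\lambda_1<0\), and the resulting factors are \((2,\tfrac14,\tfrac14)\) --- no coefficient is zeroed. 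Your one-step construction then returns a measure with the same support size, the iteration need not terminate, and the reduction breaks.

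The fix --- and this is precisely what the paper does, flagging it with the words ``in contrast to the previous proof'' --- is to drop the absolute value and take the signed maximum \(\tau=\max_n \lambda_n/\norm{\coeff{u}_n}_{H_1}\). Since \(\lambda\neq 0\) and \(\sum_n\lambda_n\geq 0\), at least one \(\lambda_n\) is positive, so \(\tau>0\); every factor \(1-\lambda_n/(\tau\norm{\coeff{u}_n}_{H_1})\) is still nonnegative (for \(\lambda_n\leq 0\) it is \(\geq 1\), and for \(\lambda_n>0\) it is \(\geq 0\) by maximality of \(\tau\)), so the additive total-variation computation you rightly flag at the end still goes through and gives \(\norm{u^{new}}_{\Mgen}=\norm{u}_{\Mgen}-\tfrac{1}{\tau}\sum_n\lambda_n\leq\norm{u}_{\Mgen}\); and now the factor genuinely vanishes at any index attaining the signed maximum, because there \(\lambda_{\hat\jmath}>0\). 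In Theorem~\ref{thm:extremal_dirac} the absolute value was needed because both perturbations \(u_{+}\) and \(u_{-}\) had to remain admissible; here only one direction of perturbation is used, which is exactly what permits --- and requires --- the signed normalization. Everything else in your proposal (the rank argument producing \(\lambda\), preservation of \(Su\) by linearity, the iteration on the support size, and the nonnegativity remark ensuring additivity of the norm) matches the paper's proof.
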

\begin{proof}
The proof is done by induction on \(P\). We only perform the step \(N_S + 1\) to \(N_S\).
As in the previous proof, we define
\[
u_n = u\rvert_{\set{x_n}} = \coeff{u}_n \delta_{x_n},
\text{ and } w_n = S(\coeff{v}_n \delta_{x_n}), \text{ where } \coeff{v}_n =
\frac{\coeff{u}_n}{\norm{\coeff{u}_n}_{H_1}}.
\]
We find the nontrivial solution of \(\sum_{n=1,\ldots,N_S+1} \lambda_n w_n = 0\) with
\(\sum_{n=1,\ldots,N_S+1} \lambda_n \geq 0\).
Now, in contrast to the previous proof, we set
\[
\tau = \max_{n=1,\ldots,N_S+1} \frac{\lambda_n}{\norm{\coeff{u}_n}_{H_1}} \geq 0.
\]
We set
\[
u_{new} = u - \frac{1}{\tau} \sum_{n=1}^{N_S+1} \lambda_n \coeff{v}_n\delta_{x_n}
= \sum_{n=1}^{N_S+1} \left(1-\frac{\lambda_n}{\tau\norm{\coeff{u}_n}_{H_1}}\right) \coeff{u}_n \delta_{x_n}
\]
Thus, the coefficients of \(u^{new}\) are given as \(\coeff{u}^{new}_n =
[1-\lambda_n/(\tau\norm{\coeff{u}_n}_{H_1})]\coeff{u}_n\). It holds that \(\norm{u^{new}}_{\Mgen}
 = \norm{u}_{\Mgen} - \sum_{n=1,\ldots,N_S+1} \lambda_n/\tau
 \leq \norm{u}_{\Mgen}\) since $\lambda_n/\tau\leq \norm{u_n}_{H_1}$
and we finish the proof with the observation that
\[
  \coeff{u}^{new}_{\widehat{n}} = 0 \quad\text{for }
  \widehat{n} \in \argmax_{n=1,\ldots,N_S+1} \frac{\lambda_n}{\norm{\coeff{u}_n}_{H_1}}.
\qedhere
\qquad\QED
\]
\end{proof}

\section{Weak-\(*\) convergence of discrete measures}
\label{app:weak_closedness}
We prove the closedness of sets comprising vector measures
supported on a uniformly bounded number of support points with respect to the weak-\(*\)
topology on $\Mgen$.

\begin{proposition}
\label{prop:compact_Diracs}
Let $D$ be compact. For any $N_d \in \N$ the set
\begin{align*}
P ^{N_d}=\left\{\sum_{j=1}^{N_d} \coeff{u}_j \delta_{x_j} \;\Big|\; \coeff{u}_j\in H_1, x_j\in D \right\}
\end{align*}
is weak-\(*\) closed.
\end{proposition}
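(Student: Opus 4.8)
The plan is to prove weak-$*$ sequential closedness of $P^{N_d}$. Since $\Cgen$ is separable, the weak-$*$ topology on bounded subsets of $\Mgen = \Cgen^*$ is metrizable, so it suffices to work with sequences. First I would take an arbitrary sequence $u^k = \sum_{j=1}^{N_d} \coeff{u}^k_j \delta_{x^k_j} \in P^{N_d}$ converging weak-$*$ to some $u \in \Mgen$, and show $u \in P^{N_d}$. The natural first reduction is to establish that the sequence is norm-bounded: weak-$*$ convergent sequences are bounded by the uniform boundedness principle, so $\sup_k \sum_{j=1}^{N_d} \norm{\coeff{u}^k_j}_{H_1} = \sup_k \norm{u^k}_{\Mgen} =: C < \infty$. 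In particular each coefficient sequence $(\coeff{u}^k_j)_k$ is bounded in $H_1$ and each position sequence $(x^k_j)_k$ lives in the compact set $D$.

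The key step is a compactness-and-extraction argument. Using compactness of $D$ and boundedness in $H_1$, I would pass to a subsequence (not relabelled) along which $x^k_j \to \bar x_j \in D$ for every $j$, and along which $\coeff{u}^k_j \rightharpoonup \bar{\coeff{u}}_j$ weakly in $H_1$ for every $j$ (bounded sequences in a Hilbert space have weakly convergent subsequences). The candidate limit is then $\bar u = \sum_{j=1}^{N_d} \bar{\coeff{u}}_j \delta_{\bar x_j}$, which visibly belongs to $P^{N_d}$. It remains to identify $\bar u$ with $u$, which by uniqueness of weak-$*$ limits reduces to verifying $\pair{u^k, \phi} \to \pair{\bar u, \phi}$ for every test function $\phi \in \Cgen$. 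For each fixed $\phi$ I would write
\[
\pair{u^k, \phi} = \sum_{j=1}^{N_d} (\coeff{u}^k_j, \phi(x^k_j))_{H_1},
\]
and split the difference $(\coeff{u}^k_j, \phi(x^k_j))_{H_1} - (\bar{\coeff{u}}_j, \phi(\bar x_j))_{H_1}$ into a term $(\coeff{u}^k_j, \phi(x^k_j) - \phi(\bar x_j))_{H_1}$, controlled by the uniform continuity of $\phi$ on the compact set $D$ together with the uniform bound on $\norm{\coeff{u}^k_j}_{H_1}$, and a term $(\coeff{u}^k_j - \bar{\coeff{u}}_j, \phi(\bar x_j))_{H_1}$, which vanishes by the weak convergence $\coeff{u}^k_j \rightharpoonup \bar{\coeff{u}}_j$ tested against the fixed vector $\phi(\bar x_j) \in H_1$. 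Summing the $N_d$ contributions gives $\pair{u^k, \phi} \to \pair{\bar u, \phi}$, hence $u = \bar u \in P^{N_d}$.

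The main obstacle I anticipate is the mixed mode of convergence in the two arguments of the inner product: the positions converge strongly in $D$ while the coefficients converge only weakly in $H_1$, so one cannot simply pass to the limit in the product. The decomposition above is precisely what resolves this, exploiting that $\phi$ is a continuous $H_1$-valued function (so $\phi(x^k_j) \to \phi(\bar x_j)$ strongly in $H_1$, pairing well against the norm-bounded $\coeff{u}^k_j$) while the weak convergence of the coefficients is only ever tested against the single fixed vector $\phi(\bar x_j)$. A secondary technical point is that the extraction is performed for a subsequence, so strictly I prove that every subsequence of $(u^k)$ admits a further subsequence converging weak-$*$ to an element of $P^{N_d}$; combined with the assumed weak-$*$ convergence of the full sequence to $u$ and uniqueness of the limit, this forces $u \in P^{N_d}$, completing the proof.
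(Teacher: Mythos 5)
Your proof is correct and follows essentially the same route as the paper's: extract a subsequence along which the positions converge in the compact set $D$ and the coefficients converge weakly in $H_1$, pass to the limit in $\pair{u^k,\phi}$ by splitting off the strongly convergent term $\phi(x^k_j)\to\phi(\bar x_j)$ from the weakly convergent coefficients, and conclude by uniqueness of the weak-$*$ limit. The only differences are cosmetic refinements on your side (explicitly invoking the uniform boundedness principle, the metrizability remark justifying the reduction to sequences, and the subsequence-of-a-subsequence argument at the end), which the paper leaves implicit.
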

\begin{proof}
Let an arbitrary weak-\(*\) convergent sequence $\set{u_k}_{k\in \N} \subset P^{N_d}$ with
limit $\widehat{u}$ be given. For each $k\in \N$ there exist
$\coeff{u}^k_j \in H_1$, $x^k_j\in D$, \(j= 1,\ldots,N_d\) with
\begin{align*}
u_k = \sum_{j=1}^{N_d} \coeff{u}^k_j \delta_{x^k_j}
\quad\text{and}\quad \norm{u_k}_{\Mgen}=\sum_{j=1,\ldots,N_d} \norm{\coeff{u}^k_j}_{H_1}\leq C,
\end{align*}
for some $C>0$. Introducing the vectors
$\coeff{u}^k=(\coeff{u}^k_1,\ldots,\coeff{u}^k_{N_d} )^T\in H^{N_d}_1$ and
$x^k=(x^k_1,\ldots,x^k_{N_d} )^T\in D^{N_d}$, there exist a subsequence of
$(\coeff{u}^k,x^k) \in H_1^{N_d}\times D^{N_d}$ denoted by the same symbol and
$(\coeff{u},x) \in H_1^{N_d}\times D^{N_d}$ with $\coeff{u}^k \rightharpoonup^* \coeff{u}$
and $x^k \rightarrow x$ due to the compactness of $D$ and the boundedness of
$\coeff{u}^k$. Defining
\begin{align*}
u = \sum_{j=1,\ldots,N_d} \coeff{u}_j \delta_{x_j},
\end{align*}
we arrive at
\begin{align*}
\pair{\phi,{u}}
  = \lim_{k\rightarrow \infty} \sum_{j=1,\ldots,N_d} (\coeff{u}^k_j, \phi(x^k_j))_{H_1}
  = \lim_{k\rightarrow \infty} \pair{\phi,u_k}
  = \pair{\phi,\widehat{u}}
\end{align*}
for all $\phi\in \Cgen$ since $\coeff{u}^k_j \rightharpoonup \coeff{u}_j$ and
$\norm{\phi(x^k_j)-\phi(x_j)}_{H_1}\rightarrow 0$. Due to the uniqueness of the weak-\(*\) limit
we get $\widehat{u}={u}\in P^{N_d}$ yielding the weak-* closedness of $P^{N_d}$.
\QED
\end{proof}
As a corollary each accumulation point of a sequence of measures with uniformly bounded support size is also finitely supported.
\begin{corollary}
Let $D$ be compact. Consider a sequence $u_k\in \Mgen$ with $\#
\supp \abs{u_k}\leq N_d$ for some $N_d \in \N$. Then every accumulation point $\widehat{u}$ of
$u_k$ fulfills $\#\supp \abs{\widehat{u}}\leq N_d$.
\end{corollary}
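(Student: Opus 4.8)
The plan is to reduce the statement immediately to the weak-\(*\) closedness of the set \(P^{N_d}\) established in Proposition~\ref{prop:compact_Diracs}. The only genuine content is a bookkeeping observation: any measure \(u\) with \(\#\supp\abs{u}\le N_d\) can be written as an element of \(P^{N_d}\), i.e.\ as a sum of \emph{exactly} \(N_d\) Dirac masses. First I would make this explicit: if \(\supp\abs{u_k} = \set{x^k_1,\dots,x^k_{P_k}}\) with \(P_k \le N_d\), I set \(\coeff{u}^k_j\) equal to the corresponding weights for \(j \le P_k\) and pad with zero coefficients \(\coeff{u}^k_j = 0\) (at arbitrary points \(x^k_j \in D\)) for \(P_k < j \le N_d\). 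Then \(u_k = \sum_{j=1}^{N_d}\coeff{u}^k_j\delta_{x^k_j} \in P^{N_d}\).

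Second, I would unpack the notion of accumulation point, which here is understood in the weak-\(*\) topology on \(\Mgen\). By definition an accumulation point \(\widehat{u}\) of \(\set{u_k}\) is the weak-\(*\) limit of some subsequence \(u_{k_\ell} \rightharpoonup^* \widehat{u}\). Since \(u_{k_\ell} \in P^{N_d}\) for all \(\ell\), and \(P^{N_d}\) is weak-\(*\) closed by Proposition~\ref{prop:compact_Diracs}, the limit satisfies \(\widehat{u} \in P^{N_d}\). Note that existence of an accumulation point is not something we need to establish: we take a given one, and if none exist the assertion is vacuous.

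Finally, membership \(\widehat{u} \in P^{N_d}\) furnishes a representation \(\widehat{u} = \sum_{j=1}^{N_d}\hat{\coeff{u}}_j\delta_{\hat{x}_j}\). The total variation measure \(\abs{\widehat{u}}\) is then supported on \(\set{\hat{x}_j \;|\; \hat{\coeff{u}}_j \ne 0}\), a set of at most \(N_d\) points (coincidences among the \(\hat{x}_j\) or vanishing coefficients can only decrease the count), whence \(\#\supp\abs{\widehat{u}} \le N_d\). I do not expect any real obstacle here: the compactness of \(D\) and the uniform norm bound needed inside the closedness argument have already been absorbed into Proposition~\ref{prop:compact_Diracs}, so the corollary is a direct application. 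The one point requiring care is simply the padding argument ensuring that ``at most \(N_d\) support points'' is the same condition as ``lies in \(P^{N_d}\).''
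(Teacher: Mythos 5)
Your proposal is correct and follows essentially the same route as the paper's own proof: pad each \(u_k\) with zero-coefficient Dirac masses so that ``\(\#\supp\abs{u_k}\le N_d\)'' becomes membership in \(P^{N_d}\), then apply the weak-\(*\) closedness of \(P^{N_d}\) from Proposition~\ref{prop:compact_Diracs} to the subsequence converging to the given accumulation point. The paper compresses exactly this padding observation and application into one sentence, so there is nothing to add beyond your more explicit bookkeeping.
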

\begin{proof}
Since every measure of support less that \(N_d\) can be written as a sum over \(N_d\)
Dirac delta functions (by possibly adding additional Dirac delta functions with zero
coefficient), applying Proposition~\ref{prop:compact_Diracs} yields the result.
\QED
\end{proof}
\begin{align*}
\end{align*}



\bibliographystyle{siam}
\bibliography{helmholtz_sparsity}

\end{document}